\newcounter{res}[section]
\numberwithin{res}{section}
\newtheorem{thm}[res]{Theorem}
\newtheorem{lem}[res]{Lemma}
\newtheorem{prop}[res]{Proposition}
\newtheorem{cor}[res]{Corollary}
\theoremstyle{definition}
\newtheorem{notation}[res]{Notation}
\newtheorem{dfn}[res]{Definition}
\newtheorem{req}[res]{Remark}
\newcommand{\id}{\ensuremath{\mathrm{id}}}
\newcommand{\NN}{\mathbb N} 
\newcommand{\ZZ}{\mathbb Z} 
\newcommand{\CC}{\mathbb C} 
\newcommand{\QQ}{\mathbb Q}
\newcommand{\RR}{\mathbb R} 
\newcommand{\D}{\mathcal{D}}
\newcommand{\F}{\mathcal{F}}
\newcommand{\cjg}[1]{\overline{#1}}
\renewcommand {\epsilon}{\varepsilon}
\renewcommand {\leq}{\leqslant}
\renewcommand {\geq}{\geqslant}
\renewcommand {\bar}{\cjg}
\renewcommand\marginpar[1]{}
\newcommand\eqdef{\ensuremath{\stackrel{\textrm{def}}{=}}}
\newcommand\kup[1]{\ensuremath\left\langle #1 \right\rangle}
\newcommand{\sll}{\ensuremath{\mathfrak{sl}}}
\newcommand{\imagesfolder}{.}
\newcommand{\ie}{i.~e.~}
\newcommand{\ed}{\ensuremath{\textrm{ed}}}
\newcommand{\resp}{resp.~{}}
\newcommand{\FR}[1][]{\textbf{FR}#1}
\newcommand{\G}{\ensuremath{\mathcal{G}}}
\newcommand{\cwebvert}[2][0.8]{\vcenter{\hbox{\!
 \begin{tikzpicture}[scale={#1},decoration={markings, mark=at
     position 0.5 with {\arrow{>}}},postaction={decorate}]
      \draw[postaction= {decorate}, color=dark#2] (0.5,0.5) -- (0.5, 2.5);
      \node at (1,1.5) {\tiny #2};
  \end{tikzpicture}}}}
\newcommand{\cwebbigon}[4][0.9]{\vcenter{\hbox{\!
 \begin{tikzpicture}[scale={#1},decoration={markings, mark=at
     position 0.5 with {\arrow{>}}},postaction={decorate}]
      \draw[postaction= {decorate},color=dark#2] (0.5,0.5) -- (0.5, 1);
      \draw[postaction= {decorate},color=dark#2] (0.5,2) -- (0.5, 2.5);
      \draw[postaction= {decorate},color= dark#3] (0.5,2) ..controls (0,1.5) and (0,1.5).. (0.5, 1);
      \draw[postaction= {decorate},color=dark#4] (0.5,2) ..controls (1,1.5) and (1,1.5).. (0.5, 1);
      \node at (1,0.7) {\tiny #2};
      \node at (1,2.2) {\tiny #2};      
      \node at (-0.5,1.5) {\tiny #3};
      \node at (1.5,1.5) {\tiny #4};
\end{tikzpicture}}}}  
\newcommand{\scwebsquare}[4][0.8]{\vcenter{\hbox{\!
 \begin{tikzpicture}[scale={#1},decoration={markings, mark=at
     position 0.5 with {\arrow{>}}},postaction={decorate}]
      \draw[postaction = {decorate},color =dark#2 ] (0.5,0.5)--(1,1);
      \draw[postaction = {decorate},color =dark#2 ] (2.5,2.5)--(2,2);
      \draw[postaction = {decorate},color =dark#4 ] (2,1)--(1,1);
      \draw[postaction = {decorate},color =dark#3 ] (2,1)--(2,2);
      \draw[postaction = {decorate},color =dark#2 ] (2,1)--(2.5,0.5);
      \draw[postaction = {decorate},color =dark#4 ] (1,2)--(2,2);
      \draw[postaction = {decorate},color =dark#3 ] (1,2)--(1,1);
      \draw[postaction = {decorate},color =dark#2 ] (1,2)--(0.5,2.5);
\node at (0.5,0.4) {\tiny #2 };
\node at (2.5, 0.4) {\tiny #2 };
\node at (0.5, 2.6) {\tiny #2 };
\node at (2.5,2.6) {\tiny #2 };
\node at (1.5, 0.8) {\tiny #4 };
\node at (1.5,2.2) {\tiny #4 };
\node at (0.5,1.5) {\tiny #3 };
\node at (2.5,1.5) {\tiny #3 }; 
\end{tikzpicture}}}}
\newcommand{\dchwebsquare}[4][0.8]{\vcenter{\hbox{\!
 \begin{tikzpicture}[scale={#1},decoration={markings, mark=at
     position 0.5 with {\arrow{>}}},postaction={decorate}]
      \draw[postaction = {decorate},color =dark#2 ] (0.5,0.5)--(1,1);
      \draw[postaction = {decorate},color =dark#3 ] (2.5,2.5)--(2,2);
      \draw[postaction = {decorate},color =dark#3 ] (2,1)--(1,1);
      \draw[postaction = {decorate},color =dark#4 ] (2,1)--(2,2);
      \draw[postaction = {decorate},color =dark#2 ] (2,1)--(2.5,0.5);
      \draw[postaction = {decorate},color =dark#2 ] (1,2)--(2,2);
      \draw[postaction = {decorate},color =dark#4 ] (1,2)--(1,1);
      \draw[postaction = {decorate},color =dark#3 ] (1,2)--(0.5,2.5);
\node at (0.5,0.4) {\tiny #2 };
\node at (2.5, 0.4) {\tiny #2 };
\node at (0.5, 2.6) {\tiny #3 };
\node at (2.5,2.6) {\tiny #3 };
\node at (1.5, 0.8) {\tiny #3 };
\node at (1.5,2.2) {\tiny #2 };
\node at (0.5,1.5) {\tiny #4 };
\node at (2.5,1.5) {\tiny #4 };
 \end{tikzpicture}}}}
\newcommand{\dcvwebsquare}[4][0.8]{\vcenter{\hbox{\!
 \begin{tikzpicture}[scale={#1},decoration={markings, mark=at
     position 0.5 with {\arrow{>}}},postaction={decorate}]
      \draw[postaction = {decorate},color =dark#2 ] (0.5,0.5)--(1,1);
      \draw[postaction = {decorate},color =dark#3 ] (2.5,2.5)--(2,2);
      \draw[postaction = {decorate},color =dark#4 ] (2,1)--(1,1);
      \draw[postaction = {decorate},color =dark#2 ] (2,1)--(2,2);
      \draw[postaction = {decorate},color =dark#3 ] (2,1)--(2.5,0.5);
      \draw[postaction = {decorate},color =dark#4 ] (1,2)--(2,2);
      \draw[postaction = {decorate},color =dark#3 ] (1,2)--(1,1);
      \draw[postaction = {decorate},color =dark#2 ] (1,2)--(0.5,2.5);
\node at (0.5,0.4) {\tiny #2 };
\node at (2.5, 0.4) {\tiny #3 };
\node at (0.5, 2.6) {\tiny #2 };
\node at (2.5,2.6) {\tiny #3 };
\node at (1.5, 0.8) {\tiny #4 };
\node at (1.5,2.2) {\tiny #4 };
\node at (0.5,1.5) {\tiny #3 };
\node at (2.5,1.5) {\tiny #2 };
 \end{tikzpicture}}}}
\newcommand{\cwebtwovert}[3][0.8]{
\vcenter{\hbox{\!
 \begin{tikzpicture}[scale={#1},decoration={markings, mark=at
     position 0.5 with {\arrow{>}}},postaction={decorate}]
      \draw[postaction= {decorate},color =dark#2 ] (0.5,0.5) .. controls (1,1) and (1,2) .. (0.5, 2.5);
      \draw[postaction= {decorate},color =dark#3 ] (2.5,2.5) .. controls (2,2) and (2,1) .. (2.5, 0.5);
\draw[left] (1,1.5)node {\tiny #2 };
\draw[right] (2,1.5) node {\tiny #3};
 \end{tikzpicture}}}}
\newcommand{\cwebtwohori}[3][0.8]{\vcenter{\hbox{\!
 \begin{tikzpicture}[scale={#1},decoration={markings, mark=at
     position 0.5 with {\arrow{>}}},postaction={decorate}]
      \draw[postaction= {decorate},color =dark#2 ] (0.5,0.5) .. controls (1,1) and (2,1) .. (2.5, 0.5);
      \draw[postaction= {decorate},color =dark#3 ] (2.5,2.5) .. controls (2,2) and (1,2) .. (0.5, 2.5);
\draw[above] (1.5, 1) node {\tiny #2 };
\draw[below] (1.5,2) node {\tiny #3 };
 \end{tikzpicture}}}}
\newcommand{\cwebIvertS}[4][0.6]{
\vcenter{\hbox{\!
 \begin{tikzpicture}[scale={#1},decoration={markings, mark=at
     position 0.5 with {\arrow{>}}},postaction={decorate}]
      \draw[>-, color=dark#2] (0.5,0.5) -- (1.5, 1);
      \draw[>-, color=dark#3] (2.5,0.5) -- (1.5, 1);
      \draw[postaction={decorate}, color=dark#4] (1.5,2) -- (1.5,1);
      \draw[->, color=dark#2] (1.5,2)-- (0.5,2.5);
      \draw[->, color=dark#3] (1.5,2)-- (2.5,2.5);
\node at (0.5,0.4) {\tiny #2 };
\node at (2.5, 0.4) {\tiny #3 };
\node at (0.5, 2.6) {\tiny #2 };
\node at (2.5,2.6) {\tiny #3 };
\draw[left] (1.5, 1.5) node {\tiny \textrm{#4}};
\end{tikzpicture}}}}
\newcommand{\cunocross}[1][0.35]{
\vcenter{\hbox{\!
    \begin{tikzpicture}[scale= #1]
      \draw[dotted] (0,0) circle (0.5);
      \draw (135:0.5)-- (-45:0.5);
      \fill[white] (0,0) circle (0.2);
      \draw (-135:0.5)-- (45:0.5);
    \end{tikzpicture}
}
}}
\newcommand{\cunotwoh}[1][0.35]{
\vcenter{\hbox{\!
    \begin{tikzpicture}[scale= #1]
      \draw[dotted] (0,0) circle (0.5);
      \draw (135:0.5).. controls (135: 0.2) and (45:0.2).. (45:0.5);
      \draw (-135:0.5).. controls (-135: 0.2) and (-45:0.2).. (-45:0.5);
    \end{tikzpicture}
}
}}
\newcommand{\cunotwov}[1][0.35]{
\vcenter{\hbox{\!
    \begin{tikzpicture}[scale= #1]
      \draw[dotted] (0,0) circle (0.5);
      \draw (-45:0.5).. controls (-45: 0.2) and (45:0.2).. (45:0.5);
      \draw (-135:0.5).. controls (-135: 0.2) and (135:0.2).. (135:0.5);
    \end{tikzpicture}
}
}}
\definecolor{bleufonce}{rgb}{0,0,0.3}
\definecolor{vertfonce}{rgb}{0,0.3,0}
\definecolor{darkred}{rgb}{0.8,0,0}
\definecolor{darkblue}{rgb}{0,0,0.5}
\newcommand{\blue}{\mathrm{\color{darkblue}blue}}
\definecolor{darkgreen}{rgb}{0,0.6,0}
\newcommand{\green}{\mathrm{\color{darkgreen}green}}
\begin{document}
\title[A characterisation of indecomposable web-modules over $K^\epsilon$]{A characterisation of indecomposable web-modules over Khovanov-Kuperberg algebras}
\address{Louis-Hadrien Robert \\ IRMA \\ 7, rue René Descartes  \\ 67084 Strasbourg Cedex  \\ France}
\email{robert@math.unistra.fr}
\date{\today}
\begin{abstract}
  After shortly recalling the construction of the Khovanov-Kuperberg algebras, we give a characterisation of indecomposable web-modules. It says that a web-module is indecomposable if and only if one can deduce it directly from the Kuperberg bracket (via a Schur lemma argument). The proofs relies on the construction of idempotents given by explicit foams. These foams are encoded by combinatorial data called red graphs. The key point is to show that when, for a web $w$ the Schur lemma does not apply, one can find an appropriate red graph for $w$.
\end{abstract}
\maketitle
\tableofcontents
\section{Introduction}
The Khovanov-Kuperberg algebras $K^\epsilon$ were introduced in 2012 by Mackaay -- Pan -- Tubbenhauer \cite{2012arXiv1206.2118M} and the author \cite{LHR1} in order to give an algebraic definition of $\sll_3$-homology for tangles. 

In  \cite{2012arXiv1206.2118M}, the split Grothendieck group of the category $K^\epsilon$-$\mathsf{pmod}$ is computed. This shows, as in the $\sll_2$ case, that we have a categorification of $\hom_{U_q(\sll_3)}(\CC, V^{\otimes \epsilon})$ (here $\epsilon$ is an admissible  sequence of signs).\marginpar{sens: $\CC$ avant ou après ?} 
The proof of this result is far from being easy\footnote{In the $\sll_2$ case this is quite direct thanks to a Schur lemma argument.}, furthermore, the non-elliptic webs, the good candidates to correspond
 to the indecomposable modules, fails to play this role, the story starts to become dramatically more complicated than in the $\sll_2$ case.

This paper have a down-to-earth approach and gives a fully combinatorial characterisation of indecomposable web-modules.

\subsubsection*{Acknowledgments} The author wishes to thank Christian Blanchet for suggesting the subject. 
\label{sec:acknoledgment}
\section{The Khovanov--Kuperberg algebras}
\label{sec:khov-kuperb-algebr}







\subsection{The 2-category of web-tangles}
\label{sec:webs-foams}

\subsubsection{Webs}
In the following $\epsilon=(\epsilon^1,\dots,\epsilon^n)$ (or $\epsilon_0$, $\epsilon_1$ etc.) will always be a finite sequence of signs, its \emph{length} $n$ will be denoted by $l(\epsilon)$, such an $\epsilon$ will be \emph{admissible} if $\sum_{i=1}^{l(\epsilon)}\epsilon^i$ is divisible by 3.
\label{sec:webs}
\label{sec:two-category-web}
\begin{dfn}[Kuperberg, \cite{MR1403861}]\label{dfn:closed-web}
  A \emph{closed web} is a plane trivalent oriented finite graph (with possibly some vertexless loops and multiple edges) such that every vertex is either a sink or a source.
\end{dfn}
\begin{figure}[h]
  \centering
  \begin{tikzpicture}[yscale= 0.8, xscale= 0.8]
    \begin{scope}
   [yscale = {1}, xscale={1},decoration={markings, mark=at
     position 0.5 with {\arrow{>}}},postaction={decorate}]
\coordinate (A) at (0,0);
\coordinate (B) at (1,0);
\coordinate (C) at (2,-0.5);
\coordinate (D) at (3,0);
\coordinate (E) at (4,0);
\coordinate (F) at (5,0.5);
\coordinate (A1) at (0,1);
\coordinate (B1) at (1,1);
\coordinate (C1) at (2,1.5);
\coordinate (D1) at (3,1);
\coordinate (E1) at (4,1);
\coordinate (F1) at (4,2);
\coordinate (G) at (5,-0.5);
\coordinate (H) at (3.5,-1);

\draw[postaction=decorate] (A1) --(A);
\draw[postaction=decorate] (A1) .. controls +(-0.5,0)  and  +(-0.5,0).. (A);
\draw[postaction=decorate] (A1) -- (B1);
\draw[postaction=decorate] (B)-- (A);
\draw[postaction=decorate] (B)--(B1);
\draw[postaction=decorate] (B)--(C);
\draw[postaction=decorate] (C1)--(B1);
\draw[postaction=decorate] (C1)--(D1);
\draw[postaction=decorate] (C1)--(F1);
\draw[postaction=decorate] (D)--(C);
\draw[postaction=decorate] (D)--(D1);
\draw[postaction=decorate] (D)--(E);
\draw[postaction=decorate] (F) -- (G);
\draw[postaction=decorate] (F)-- (E);
\draw[postaction=decorate] (F) .. controls +(0,0.5) and  +(0.4,0.4) .. (F1);
\draw[postaction=decorate] (E1)--(F1);
\draw[postaction=decorate] (E1)--(D1);
\draw[postaction=decorate] (E1)--(E);
\draw[postaction=decorate] (H)--(C);
\draw[postaction=decorate] (H) .. controls +(0.5,0) and  +(0,-0.5) .. (G);
\draw[postaction=decorate] (H) .. controls +(0.2,0.3) and  +(-0.4,-0.1) .. (G);
\draw[postaction=decorate] (7,1) circle (0.5cm);
\end{scope}
  \end{tikzpicture}  
  \caption{Example of a closed web.}
  \label{fig:example-closed-web}
\end{figure}
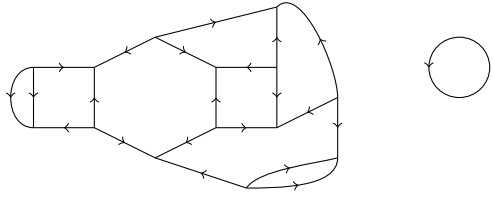
\begin{req}\label{req:basic-on-web}
The orientation condition is equivalent to say that the graph is bipartite (by sinks and sources).  
\begin{prop}
  A closed web contains at least a square, a digon or a vertexless circle.
\end{prop}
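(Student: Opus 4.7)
The plan is to apply Euler's formula to a single connected component of the web, exploiting both the trivalent condition and the bipartite structure coming from the sink/source condition (Remark 2.2).

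First, I would dispose of trivial cases: if the web contains no trivalent vertex, then being non-empty it must consist of at least one vertexless loop and we are done. Otherwise, pick a connected component $C$ containing at least one vertex, and view $C$ as a plane graph with $V$ vertices, $E$ edges, and $F$ faces (including the unbounded one). Two numerical inputs are immediate: the trivalence of $C$ gives the handshake relation $3V = 2E$, hence $V = \tfrac{2}{3}E$, and Euler's formula for the connected plane graph $C$ gives $V - E + F = 2$.

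The second ingredient is the bipartite structure. Since every vertex is a sink or a source, the underlying graph of $C$ is bipartite, so every closed walk in $C$ has even length. Each face boundary is such a closed walk, so every face of $C$ has even length $\ell(f)$. Now I would argue by contradiction: assume $C$ contains no digon (no face with $\ell(f)=2$) and no square (no face with $\ell(f)=4$). Combined with the parity constraint this forces $\ell(f) \geq 6$ for every face, and the edge/face incidence identity $2E = \sum_{f} \ell(f)$ yields $2E \geq 6F$, i.e. $F \leq E/3$.

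Plugging $V = \tfrac{2}{3}E$ into Euler gives $F = \tfrac{E}{3}+2$, which contradicts $F \leq E/3$. Hence $C$ must contain a digon or a square, proving the proposition. The argument is essentially Kuperberg's standard Euler-characteristic count, and there is no real obstacle; the only points requiring a bit of care are (i) reducing to a single connected component so that Euler's formula reads $V-E+F=2$ (rather than $1+\#\text{components}$), (ii) including the unbounded face in the face count, and (iii) observing that multiple edges between two vertices automatically create a length-$2$ face and are therefore covered by the "digon" alternative.
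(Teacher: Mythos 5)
Your proof is correct and follows essentially the same route as the paper's: reduce to a connected component, combine Euler's formula $V-E+F=2$ with the handshake identity $3V=2E$ and the face--edge incidence $\sum_f \ell(f)=2E$, and use bipartiteness to restrict face lengths to even values, forcing a face of length $2$ or $4$. The paper phrases the count directly as $\sum_i F_i\bigl(1-\tfrac{i}{6}\bigr)=2$ while you run a contradiction, but the computation is identical.
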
\label{prop:closed2elliptic}
\begin{proof}
  It is enough to consider $w$ a connected web. A connected web is always 2-connected (because of the flow), hence it makes sense to use the Euler characteristic. Suppose that $w$ is not a circle. We have:
\[\#F - \#E + \#V = 2, \] but we have $3\#V=2\#E$. And if we denote by $F_i$ the set of faces with $i$ sides we have:
\[\sum_{i>0}i\#i\F_i = 2\#E.\]
All together, this gives:
\[
\sum_{i>0}F_i - \frac{i}6 F_i =2,
\]
and this proves that some faces have strictly less than 6 sides.
\end{proof}

\end{req}
\begin{prop}[Kuperberg\cite{MR1403861}] \label{prop:Kup}
  There exists one and only one map $\kup{\cdot}$ from closed webs to Laurent polynomials in $q$ which is invariant by isotopy, multiplicative with respect to disjoint union and which satisfies the following local relations~:
  \begin{align*}
   \kup{\websquare[0.4]} &= \kup{\webtwovert[0.4]} + \kup{\webtwohori[0.4]}, \\
   \kup{\webbigon[0.4]\,}  &= [2] \cdot \kup{\webvert[0.4]\,},\\
   \kup{\webcircle[0.4]} &= \kup{\webcirclereverse[0.4]} = [3],
  \end{align*}
where $[n]\eqdef\frac{q^n-q^{-n}}{q-q^{-1}}.$ We call this polynomial the \emph{Kuperberg bracket}. It's easy to check that the Kuperberg bracket of a web is symmetric in $q$ and $q^{-1}$.
\end{prop}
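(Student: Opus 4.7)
The plan is a two-step argument: uniqueness by induction on a suitable complexity, and existence by an independent construction.

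For uniqueness I would argue by strong induction on the number of vertices of $w$, with the number of connected components as a tie-breaker. If $w$ is empty, multiplicativity forces $\kup{w}=1$. If some connected component of $w$ is a vertexless circle, the third relation combined with multiplicativity expresses $\kup{w}$ as $[3]\cdot \kup{w'}$ where $w'$ has strictly fewer components and no more vertices. Otherwise, by Proposition \ref{prop:closed2elliptic}, some component contains a digon or a square, so one of the first two relations rewrites $\kup{w}$ as a $\Z[q,q^{-1}]$-linear combination of $\kup{w_i}$ with each $w_i$ having strictly fewer vertices than $w$. Since the complexity is well-founded, the inductive hypothesis pins down $\kup{w}$ uniquely.

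For existence, the delicate point is that the value produced must not depend on the order in which reductions are performed. I would follow the representation-theoretic route: realise a closed web $w$ as an intertwiner in $\hom_{U_q(\sll_3)}(\CC,\CC)$ by reading it bottom-to-top, with each trivalent vertex interpreted as the canonical (co)projection $V\otimes V\to V^*$ or $V^*\otimes V^*\to V$, each arc as identity or (co)evaluation, and set $\kup{w}\in\Z[q,q^{-1}]$ to be the scalar by which this intertwiner acts. This construction is isotopy invariant because the functor from tangle categories into $U_q(\sll_3)$-modules is, it is multiplicative under disjoint union because tensor product of intertwiners $\CC\to\CC$ multiplies scalars, and the three local relations are the classical identities $V\otimes V^*\simeq\CC\oplus\mathrm{adj}$ (square relation), $\dim_q V=[3]$ (circle relation) and the quantum dimension of the adjoint component being $[2]$ (bigon relation). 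The uniqueness step above then forces this construction to agree with any value obtained by successive local rewrites.

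The hard part is the existence construction: either one relies on the (nontrivial) input that the three Kuperberg relations are a complete set of $U_q(\sll_3)$-intertwiner identities on this generating data, or, working purely combinatorially, one instead verifies local confluence of the rewriting system via Newman's diamond lemma by enumerating overlap patterns (two digons sharing an edge, a digon inside a square, two squares sharing a vertex or an edge, a vertexless circle adjacent to either, etc.) and checking case-by-case that both resolutions lead after further reduction to the same Laurent polynomial. Termination of the rewriting is immediate from the well-founded order used for uniqueness, so local confluence is the only outstanding verification. Once existence is granted, the symmetry $\kup{w}(q)=\kup{w}(q^{-1})$ is clear from the observation that all three local relations are invariant under $q\leftrightarrow q^{-1}$.
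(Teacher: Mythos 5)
Your proposal follows essentially the same route as the paper: uniqueness by reduction using the fact that every closed web contains a circle, digon or square (Proposition \ref{prop:closed2elliptic}), and existence by appeal to Kuperberg's representation-theoretic construction. You spell out the induction and the well-definedness issue in more detail than the paper, which simply cites \cite{MR1403861} for existence, but the substance is the same.
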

\begin{proof}
  Uniqueness comes from remark \ref{req:basic-on-web}. The existence follows from the representation theoretic point of view developed in \cite{MR1403861}. Note that a non-quantified version of this result is in \cite{MR1172374}.
\end{proof}
\begin{dfn}
  The \emph{degree} of a symmetric Laurent polynomial $P(q)=\sum_{i\in \ZZ}a_iq^{i}$ is  $\max_{i\in \ZZ}\{i \textrm{ such that }a_i\neq 0\}$.
\end{dfn}
\begin{dfn}\label{dfn:webtangle}
  A \emph{$(\epsilon_0,\epsilon_1)$-web-tangle} $w$ is an intersection of a closed web $w'$ with $[0,1]\times [0,1]$ such that :
  \begin{itemize}
  \item there exists $\eta_0 \in ]0,1]$ such that: \\ \(w\cap [0,1] \times [0,\eta_0] = \{\frac{1}{2l(\epsilon_0)}, \frac{1}{2l(\epsilon_0)} + \frac{1}{l(\epsilon_0)},  \frac{1}{2l(\epsilon_0)} + \frac{2}{l(\epsilon_0)}, \dots, \frac{1}{2l(\epsilon_0)} + \frac{l(\epsilon_0)-1}{l(\epsilon_0)} \}\times [0,\eta_0],\)
  \item there exists $\eta_1 \in [0,1[$ such that: \\ \(w\cap [0,1] \times [\eta_1,1] = \{\frac{1}{2l(\epsilon_1)}, \frac{1}{2l(\epsilon_1)} + \frac{1}{l(\epsilon_1)},  \frac{1}{2l(\epsilon_1)} + \frac{2}{l(\epsilon_1)}, \dots, \frac{1}{2l(\epsilon_1)} + \frac{l(\epsilon_1)-1}{l(\epsilon_1)} \}\times [\eta_1,1],\)
  \item the orientations of the edges of $w$, match $-\epsilon_0$ and $+\epsilon_1$ (see figure \ref{fig:exampl_webtangle} for conventions).
  \end{itemize}
When $\epsilon_1$ is the empty sequence, then we'll speak of \emph{$\epsilon_0$-webs}. And if $w$ is an $\epsilon$-web we will say that $\epsilon$ is the \emph{boundary} of $w$.
\end{dfn}
If $w_1$ is a $(\epsilon_0,\epsilon_1)$-web-tangle and $w_2$ is a $(\epsilon_1, \epsilon_2)$-web-tangle we define $w_1w_2$ to be the $(\epsilon_0,\epsilon_2)$-web-tangle obtained by gluing $w_1$ and $w_2$ along $\epsilon_1$ and resizing. Note that this can be thought as a composition if we think about a $(\epsilon,\epsilon')$-web-tangle as a morphism from $\epsilon'$ to $\epsilon$ (\ie the web-tangles should be read as morphisms from top to bottom). The \emph{mirror image} of a $(\epsilon_0,\epsilon_1)$-web-tangle $w$ is mirror image of $w$ with respect to $\RR\times \{\frac12\}$ with all orientations reversed. This is a $(\epsilon_1,\epsilon_0)$-web-tangle and we denote it by $\bar{w}$. If $w$ is a $(\epsilon,\epsilon)$-web-tangle the \emph{closure} of $w$ is the closed web obtained by connecting the top and the bottom by simple arcs (this is like a braid closure). We denote it by $\mathrm{tr}(w)$.   
\begin{figure}[h]
  \centering
  \begin{tikzpicture}[yscale= 0.5, xscale= 0.5]
    \input{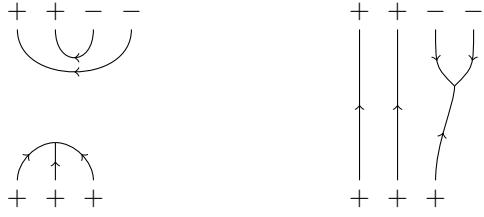}
  \end{tikzpicture}
  \caption{Two examples of $(\epsilon_0,\epsilon_1)$-web-tangles with $\epsilon_0 = (-,-,-)$ and $\epsilon_1=(-,-,+,+)$ and the mirror image of the second one.}
  \label{fig:exampl_webtangle}
\end{figure}
\begin{dfn}
  A web-tangle with no circle, no digon and no square is said to be \emph{non-elliptic}. The non-elliptic web-tangles are the minimal ones in the sense that they cannot be reduced by the relations of proposition \ref{prop:Kup}.
\end{dfn}
\begin{prop}[Kuperberg, \cite{MR1403861}] \label{prop:NEFinite}
  For any given couple $(\epsilon_0,\epsilon_1)$ of sequences of signs there are finitely many non-elliptic $(\epsilon_0,\epsilon_1)$-web-tangles.
\end{prop}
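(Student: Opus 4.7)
The plan is to fix $n := l(\epsilon_0) + l(\epsilon_1)$ and to bound the number of internal vertices $V$ of any non-elliptic $(\epsilon_0,\epsilon_1)$-web-tangle by a quantity depending only on $n$; once this is done, finiteness is immediate because there are only finitely many plane trivalent graphs with $V$ internal vertices and $n$ prescribed boundary legs, up to planar isotopy preserving the marked points.

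For the bound on $V$, I work with the web-tangle $w$ embedded in the square (viewed as a disk) and, after adjoining $\partial D^2$ to the graph, apply Euler's formula on $S^2$. Writing $E$ for the number of web edges and $f = f_i + f_b$ for the number of faces (internal versus boundary-touching), trivalence at the internal vertices together with the $n$ univalent boundary vertices gives $2E = 3V + n$, while Euler's formula yields $V - E + f = 1$, hence $f = 1 + V/2 + n/2$. Since $w$ is bipartite (Remark \ref{req:basic-on-web}) and non-elliptic, every internal face has an even number of web edges on its boundary and contains neither $2$ nor $4$, so it has at least $6$. Double-counting edge-face incidences then gives $2E \geq 6 f_i + (\text{web-edge contribution from boundary faces})$. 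Combining this with the Euler relation and with the observation that each of the $n$ boundary arcs is bordered by exactly two boundary faces, a combinatorial analysis should produce an inequality of the shape $V \leq C(n)$.

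With $V$ bounded, the finiteness is routine: only finitely many abstract trivalent plane graphs have $V$ internal vertices and $n$ marked legs, and for each such graph only finitely many isotopy classes of embedding in $[0,1]^2$ realise the prescribed positions and orientations of the endpoints. The disconnected case reduces to the connected one: by Proposition \ref{prop:closed2elliptic} no component can be closed (that would force a circle, digon, or square), so every component meets the boundary and the number of components is at most $n$.

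The main obstacle is the step extracting a quantitative bound on $V$ from the Euler relation: internal faces enjoy the "$\geq 6$" constraint but boundary faces do not, so a naive count only controls $f_i$ and leaves the possibility that boundary faces could absorb arbitrarily much interior structure. I expect this to be resolved either by a sub-induction on $n$, peeling off an outermost boundary vertex and producing a smaller non-elliptic tangle (using the bipartite parity of web-edge paths between boundary points to exclude hidden small faces), or, more representation-theoretically, by identifying non-elliptic webs with a basis of the finite-dimensional invariant space $\mathrm{Hom}_{U_q(\sll_3)}(V^{\otimes \epsilon_0}, V^{\otimes \epsilon_1})$; I would favour the combinatorial route to keep the argument self-contained.
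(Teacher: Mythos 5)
The paper does not prove this proposition; it is stated as Kuperberg's and cited from \cite{MR1403861}, so there is no in-house argument to measure yours against. Taken on its own terms, your plan has a genuine gap, one you flag but, I think, underestimate: the Euler count by itself does \emph{not} bound the number $V_{\mathrm{int}}$ of internal vertices. Carrying the computation through --- $2E_w = 3V_{\mathrm{int}} + n$, the Euler relation $V_{\mathrm{int}} - E_w + f_{\mathrm{int}} + f_b = 1$, and the face-size bound $6 f_{\mathrm{int}} \leq 2E_w - s_b$, where $s_b$ counts web-edge sides of boundary faces --- the terms in $V_{\mathrm{int}}$ cancel identically, leaving only $f_b \geq 1 + n/3 + s_b/6$ and hence (via $f_b \leq n$) the boundary bound $s_b \leq 4n-6$. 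A second pass on the union of the interior faces meets the same fate: its perimeter is controlled by $n$, but Euler's formula inside that region collapses to an identity in which the number of ``deep'' vertices drops out. This is not a bookkeeping accident: in a hexagonal patch the perimeter grows only like the square root of the number of faces, so any correct bound on $V_{\mathrm{int}}$ must be at least quadratic in $n$, and a single linear double count cannot produce it. Kuperberg's actual finiteness argument is a genuinely more refined combinatorial analysis, not a one-shot Euler estimate.

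Your two fallback suggestions also do not close the gap as cheaply as the sketch implies. The peeling induction stalls on the $H$ case (cf.\ lemma~\ref{lem:UHYinNE}): removing an $H$ leaves $n$ unchanged, so an induction on $n$ does not advance, and switching to an induction on the number of vertices presupposes the very bound you are trying to establish; it is also not automatic that removing a $\cap$, $\lambda$, or $H$ from a non-elliptic tangle yields a non-elliptic tangle, which is precisely what makes the growth algorithm of \cite{MR1684195} delicate. The representation-theoretic route is essentially Kuperberg's own: to deduce finiteness from finite-dimensionality of the invariant space one must already know that the non-elliptic web-tangles are linearly independent there, and that is the other --- and harder --- half of his basis theorem, so this path is not more self-contained. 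In short, the skeleton (bound $V_{\mathrm{int}}$ in terms of $n$, then count plane graphs) is the right one, but the quantitative heart of the proof is missing and neither proposed patch supplies it.
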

\begin{req}
  From the combinatorial flow modulo 3, we obtain that there exist some $(\epsilon_0,\epsilon_1)$-webs if and only if the sequence $-\epsilon_0$ concatenated with $\epsilon_1$ is admissible.
\end{req}
\subsubsection{Foams} All material here comes from \cite{MR2100691}.
\label{sec:foams}
\begin{dfn}
  A \emph{pre-foam} is a smooth oriented compact surface $\Sigma$ (its connected component are called \emph{facets}) together with the following data~:
\begin{itemize}
\item A partition of the connected components of the boundary into cyclically ordered 3-sets and for each 3-set $(C_1,C_2,C_3)$, three orientation preserving diffeomorphisms $\phi_1:C_2\to C_3$, $\phi_2:C_3\to C_1$ and $\phi_3:C_1\to C_2$ such that $\phi_3 \circ \phi_2 \circ \phi_1 = \mathrm{id}_{C_2}$.
\item A function from the set of facets to the set of non-negative integers (this gives the number of \emph{dots} on each facet).
\end{itemize}
The \emph{CW-complex associated with a pre-foam} is the 2-dimensional CW-complex $\Sigma$ quotiented by the diffeomorphisms so that the three circles of one 3-set are identified and become just one called a \emph{singular circle}.
The \emph{degree} of a pre-foam $f$ is equal to $-2\chi(\Sigma')$ where $\chi$ is the Euler characteristic, $\Sigma'$ is the CW-complex associated with $f$ with the dots punctured out (\ie a dot increases the degree by 2).
\end{dfn}
\begin{req}
  The CW-complex has two local models depending on whether we are on a singular circle or not. If a point $x$ is not on a singular circle, then it has a neighborhood diffeomorphic to a 2-dimensional disk, else it has a neighborhood diffeomorphic to a Y shape times an interval (see figure \ref{fig:yshape}).
  \begin{figure}[h]
    \centering
    \begin{tikzpicture}[scale=1]
      \input{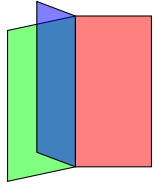}
    \end{tikzpicture}
    \caption{Singularities of a pre-foam}
    \label{fig:yshape}
  \end{figure}
\end{req}
\begin{dfn}
  A \emph{closed foam} is the image of an embedding of the CW-complex associated with a pre-foam such that the cyclic orders of the pre-foam are compatible with the left-hand rule in $\RR^3$ with respect to the orientation of the singular circles\footnote{We mean here that if, next to a singular circle, with the forefinger of the left hand we go from face 1 to face 2 to face 3 the thumb points to indicate the orientation of the singular circle (induced by orientations of facets). This is not quite canonical, physicists use more the right-hand rule, however this is the convention used in \cite{MR2100691}.}. The \emph{degree} of a closed foam is the degree of the underlying pre-foam. 
\end{dfn}
\begin{dfn}\label{dfn:wwfoam}
  If $w_b$ and $w_t$ are $(\epsilon_0,\epsilon_1)$-web-tangles, a \emph{$(w_b,w_t)$-foam} $f$ is the intersection of a foam $f'$ with $\RR\times [0,1]\times[0,1]$ such that
  \begin{itemize}
  \item there exists $\eta_0 \in ]0,1]$ such that $f\cap \RR \times [0,\eta_0]\times [0,1] = \{\frac{1}{2l(\epsilon_0)}, \frac{1}{2l(\epsilon_0)} + \frac{1}{l(\epsilon_0)},  \frac{1}{2l(\epsilon_0)} + \frac{2}{l(\epsilon_0)}, \dots, \frac{1}{2l(\epsilon_0)} + \frac{l(\epsilon_0)-1}{l(\epsilon_0)} \}\times [0,\eta_0]\times [0,1]$,
  \item there exists $\eta_1 \in [0,1[$ such that $f\cap \RR \times [\eta_1,1]\times [0,1] = \{\frac{1}{2l(\epsilon_1)}, \frac{1}{2l(\epsilon_1)} + \frac{1}{l(\epsilon_1)},  \frac{1}{2l(\epsilon_1)} + \frac{2}{l(\epsilon_1)}, \dots, \frac{1}{2l(\epsilon_1)} + \frac{l(\epsilon_1)-1}{l(\epsilon_1)}\}\times [\eta_1,1]\times [0,1]$,
  \item there exists $\eta_b \in ]0,1]$ such that $f\cap \RR \times [0,1 ]\times  [0, \eta_b] = w_b \times [0, \eta_b]$,
  \item there exists $\eta_t \in [0,1[$ such that $f\cap \RR \times [0,1 ]\times  [\eta_t, 1] = w_t \times [\eta_t,1]$,
  \end{itemize}
with compatibility of orientations of the facets of $f$ with the orientation of $w_t$ and the reversed orientation of $w_b$.
The \emph{degree} of a $(w_b,w_t)$-foam $f$ is equal to $\chi(w_b)+\chi(w_t)-2\chi(\Sigma)$ where $\Sigma$ is the underlying CW-complex associated with $f$ with the dots punctured out. 
\end{dfn}
If $f_b$ is a $(w_b,w_m)$-foam and $f_t$ is a $(w_m, w_t)$-foam we define $f_bf_t$ to be the $(w_b,w_t)$ foam obtained by gluing $f_b$ and $f_t$ along $w_m$ and resizing. This operation may be thought as a composition if we think of a $(w_1,w_2)$-foam as a morphism from $w_2$ to $w_1$ \ie from the top to the bottom. This composition map is degree preserving. Like for the webs, we define the \emph{mirror image} of a $(w_1,w_2)$-foam $f$ to be the $(w_2,w_1)$-foam which is the mirror image of $f$ with respect to $\RR\times\RR\times \{\frac12\}$ with all orientations reversed. We denote it by $\bar{f}$.
\begin{dfn}
  If $\epsilon_0=\epsilon_1=\emptyset$ and $w$ is a closed web, then a $(\emptyset,w)$-foam is simply called \emph{foam} or \emph{$w$-foam} when one wants to focus on the boundary of the foam.
\end{dfn}
All these data together lead  to the definition of a monoidal 2-category. 
\begin{dfn}
  The 2-category $\mathcal{WT}$ is the monoidal\footnote{Here we choose a rather strict point of view and hence the monoidal structure is strict (we consider everything up to isotopy), but it is possible to define the notion in a non-strict context, and the same data gives us a monoidal bicategory.} 2-category given by the following data~:
  \begin{itemize}
  \item The objects are finite sequences of signs,
  \item The 1-morphism from $\epsilon_1$ to $\epsilon_0$ are isotopy classes (with fixed boundary) of $(\epsilon_0,\epsilon_1)$-web-tangles,
  \item The 2-morphism from $\widehat{w_t}$ to $\widehat{w_b}$ are $\QQ$-linear combinations of isotopy classes of $(w_b,w_t)$-foams, where\ \ $\widehat{\cdot}$\ \ stands for the ``isotopy class of''. The 2-morphisms come with a grading, the composition respects the degree.
  \end{itemize}
The monoidal structure is given by concatenation of sequences at the $0$-level, and disjoint union of vertical strands or disks (with corners) at the $1$ and $2$ levels. 
\end{dfn}
\subsection{Khovanov's TQFT for web-tangles}
\label{sec:khovanov-tqft-web}
In \cite{MR2100691}, Khovanov defines a numerical invariant for pre-foams and this allows him to construct a TQFT $\mathcal{F}$ from the category $\hom_{\mathcal{WT}}(\emptyset,\emptyset)$ to the category of graded $\QQ$-modules, (via a universal construction à la BHMV 
\cite{MR1362791}). This TQFT is graded (this comes from the fact that pre-foams with non-zero degree are evaluated to zero), and satisfies the following local relations (brackets indicate grading shifts)~:
\begin{align*}
\mathcal{F}\left(\websquare[0.4]\,\right) &= 
\mathcal{F}\left({\webtwovert[0.4]}\,\right) \oplus
\mathcal{F}\left({\webtwohori[0.4]}\,\right), \\
\mathcal{F}\left({\webbigon[0.4]}\,\right)  &= 
\mathcal{F}\left({\webvert[0.4]}\,\right)\{-1\}\oplus \mathcal{F}\left({\webvert[0.4]}\right)\{1\},\\
\mathcal{F}\left({\webcircle[0.4]\,}\right) &= 
\mathcal{F}\left({\webcirclereverse[0.4]}\,\right) = \QQ\{-2\} \oplus \QQ \oplus\QQ\{2\}.
  \end{align*}
These relations show that $\mathcal{F}$ is a categorified counterpart of the Kuperberg bracket. We sketch the construction below.
\begin{dfn}
We denote by $\mathcal{A}$ the Frobenius algebra $\ZZ[X]/(X^3)$ with trace $\tau$ given by:
\[\tau(X^2)=-1, \quad \tau(X)=0, \quad \tau(1)=0.\] 
We equip $\mathcal{A}$ with a graduation by setting $\deg(1)=-2$, $\deg(X)=0$ and $\deg(X^2)=2$. With these settings, the multiplication has degree 2 and the trace has degree -2. The co-multiplication is determined by the multiplication and the trace and we have :
\begin{align*}
  &\Delta(1) = -1\otimes X^2 - X\otimes X - X^2\otimes 1 \\
  &\Delta(X) = -X\otimes X^2 - X^2\otimes X \\
  &\Delta(X^2) = -X^2\otimes X^2
\end{align*}
\end{dfn}
This Frobenius algebra gives us a 1+1 TQFT (this is well-known, see~\cite{MR2037238} for details), we denote it by $\mathcal{F}$~: the circle is sent to $\mathcal{A}$, a cup to the unity, a cap to the trace, and a pair of pants either to multiplication or co-multiplication. A dot on a surface will represent multiplication by $X$ so that $\mathcal{F}$ extends to the category of oriented dotted (1+1)-cobordisms.
We then have a surgery formula given by figure~\ref{fig:surg}. 
\begin{figure}[h!]
  \centering
  \begin{tikzpicture}[scale=0.5]
    \begin{scope}[xshift=-0.5cm]
       \draw (0,-1) arc (270:90:0.5 and 1);
      \draw[dotted] (0,-1) arc (-90:90:0.5 and 1);
      \draw (3,0) ellipse (0.5 and 1);
      \draw (0,-1) -- (3,-1);
      \draw (0,1) -- (3,1);
\end{scope}
\node at (4.5,0) {$=-$};
\node at (10,0) {$-$};
\node at (15,0) {$-$};
\begin{scope}[xshift=6cm]
       \draw (0,-1) arc (270:90:0.5 and 1);
      \draw[dotted] (0,-1) arc (-90:90:0.5 and 1);
      \draw (3,0) ellipse (0.5 and 1);
      \draw (0,-1) .. controls +(1.5,0) and +(1.5,0) .. (0,1);
      \draw (3,-1) .. controls +(-1.5,0) and +(-1.5,0) .. (3,1);
      \filldraw (0.7,0.2) ellipse (1pt and 2pt);
      \filldraw (0.7,-0.2) ellipse (1pt and 2pt);
\end{scope}
\begin{scope}[xshift=11cm]
       \draw (0,-1) arc (270:90:0.5 and 1);
      \draw[dotted] (0,-1) arc (-90:90:0.5 and 1);
      \draw (3,0) ellipse (0.5 and 1);
      \draw (0,-1) .. controls +(1.5,0) and +(1.5,0) .. (0,1);
      \draw (3,-1) .. controls +(-1.5,0) and +(-1.5,0) .. (3,1);
      \filldraw (0.7,0) ellipse (1pt and 2pt);
      \filldraw (2.2,0) ellipse (1pt and 2pt);
\end{scope}
\begin{scope}[xshift=16cm]
       \draw (0,-1) arc (270:90:0.5 and 1);
      \draw[dotted] (0,-1) arc (-90:90:0.5 and 1);
      \draw (3,0) ellipse (0.5 and 1);
      \draw (0,-1) .. controls +(1.5,0) and +(1.5,0) .. (0,1);
      \draw (3,-1) .. controls +(-1.5,0) and +(-1.5,0) .. (3,1);
      \filldraw (2.2,-0.2) ellipse (1pt and 2pt);
      \filldraw (2.2,0.2) ellipse (1pt and 2pt);
\end{scope}
  \end{tikzpicture}
  \caption{The surgery formula for the TQFT $\mathcal{F}$.}
  \label{fig:surg}
\end{figure}
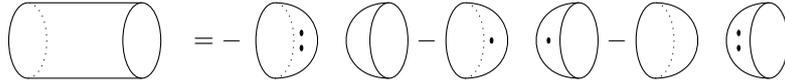

 This TQFT gives of course a numercial invariant for closed dotted oriented surfaces. If one defines numerical values for the differently dotted theta pre-foams (the theta pre-foam consists of 3 disks with trivial diffeomorphisms between their boundary see figure \ref{fig:thetapre}) then by applying the surgery formula, one is able to compute a numerical value for pre-foams. 

\begin{figure}[h!]
  \centering
  \begin{tikzpicture}
    \input{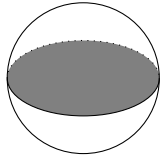}
  \end{tikzpicture}
  \caption{The dotless theta pre-foam.}
  \label{fig:thetapre}
\end{figure}
\begin{figure}[h!]
  \centering
\begin{tikzpicture}[scale = 0.7]
\input{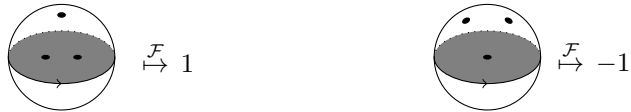}
\end{tikzpicture}
  \caption{The evaluations of dotted theta foams, the evaluation is unchanged when one cyclically permutes the faces. All the configurations which cannot be obtained from these by cyclic permutation are sent to $0$ by $\mathcal{F}$.}
  \label{fig:thetaeval}
\end{figure}

In \cite{MR2100691}, Khovanov shows that setting the evaluations of the dotted theta foams as shown on figure \ref{fig:thetaeval}, leads to a well defined numerical invariant $\mathcal{F}$ for pre-foams. This numerical invariant gives the opportunity to build a (closed web, $(\cdot,\cdot)$-foams)-TQFT~: for a given web $w$, consider the $\QQ$-module generated by all the $(w,\emptyset)$-foams, and mod this space out by the kernel of the bilinear map $(f,g)\mapsto \mathcal{F}(\bar{f}g)$. Note that $\bar{f}g$ is a closed foam. Khovanov showed that the obtained graded vector spaces are finite dimensional with graded dimensions given by the Kuperberg formulae, and he showed that we have the local relations described on figure~\ref{fig:localrel}.

\begin{figure}[h]
  \centering 
  \begin{tikzpicture}[scale=0.25]
\begin{scope}[xshift=0cm, yshift= 0cm, decoration={markings, mark=at
     position 0.5 with {\arrow{>}}},postaction={decorate}]
\draw[postaction= {decorate}] (-2,0) arc (-180:0:2 and 1);
\draw (-2,0) arc (180:0:2);
\draw[dashed] (-2,0) arc (180:0:2 and 1);
\draw[dashed] (-2,0) arc (-180:-135:2);
\draw[dashed] (2,0) arc (0:-45:2);
\draw (0,0)++(-135:2) arc (-135:-45:2);
\draw[dashed] (0,0) +(135:2) -- +(45:2);
\draw (0,0)++(135:2) --  ++(-0.7,0) -- ++(-135:4) -- ++(7,0) -- ++(45:4) -- (45:2);
\filldraw (0,1.8) ellipse (3pt and 1.5pt);
\end{scope}
\node at (5,0) {$=-$};
\begin{scope}[xshift=10cm, yshift= 0cm, decoration={markings, mark=at
     position 0.5 with {\arrow{>}}},postaction={decorate}]
\draw[postaction= {decorate}] (-2,0) arc (-180:0:2 and 1);
\draw (-2,0) arc (180:0:2);
\draw[dashed] (-2,0) arc (180:0:2 and 1);
\draw[dashed] (-2,0) arc (-180:-135:2);
\draw[dashed] (2,0) arc (0:-45:2);
\draw (0,0)++(-135:2) arc (-135:-45:2);
\draw[dashed] (0,0) +(135:2) -- +(45:2);
\draw (0,0)++(135:2) --  ++(-0.7,0) -- ++(-135:4) -- ++(7,0) -- ++(45:4) -- (45:2);
\filldraw (0,-1.8) ellipse (3pt and 1.5pt);
\end{scope}
\node at (15,0) {$=$};
\begin{scope}[xshift=20cm, yshift= 0cm, decoration={markings, mark=at
     position 0.5 with {\arrow{>}}},postaction={decorate}]
\draw (0,0)++(135:2) --  ++(-0.7,0) -- ++(-135:4) -- ++(7,0) -- ++(45:4) -- (135:2);
\end{scope}
\begin{scope} [yshift= -6cm]
  \begin{scope}[xshift=0cm, yshift= 0cm, decoration={markings, mark=at
     position 0.5 with {\arrow{>}}},postaction={decorate}]
\draw[postaction= {decorate}] (-2,0) arc (-180:0:2 and 1);
\draw (-2,0) arc (180:0:2);
\draw[dashed] (-2,0) arc (180:0:2 and 1);
\draw[dashed] (-2,0) arc (-180:-135:2);
\draw[dashed] (2,0) arc (0:-45:2);
\draw (0,0)++(-135:2) arc (-135:-45:2);
\draw[dashed] (0,0) +(135:2) -- +(45:2);
\draw (0,0)++(135:2) --  ++(-0.7,0) -- ++(-135:4) -- ++(7,0) -- ++(45:4) -- (45:2);
\end{scope}
\node at (5,0) {$=$};
\begin{scope}[xshift=10cm, yshift= 0cm, decoration={markings, mark=at
     position 0.5 with {\arrow{>}}},postaction={decorate}]
\draw[postaction= {decorate}] (-2,0) arc (-180:0:2 and 1);
\draw (-2,0) arc (180:0:2);
\draw[dashed] (-2,0) arc (180:0:2 and 1);
\draw[dashed] (-2,0) arc (-180:-135:2);
\draw[dashed] (2,0) arc (0:-45:2);
\draw (0,0)++(-135:2) arc (-135:-45:2);
\draw[dashed] (0,0) +(135:2) -- +(45:2);
\draw (0,0)++(135:2) --  ++(-0.7,0) -- ++(-135:4) -- ++(7,0) -- ++(45:4) -- (45:2);
\filldraw (0,1.8) ellipse (3pt and 1.5pt);
\filldraw (0,-1.8) ellipse (3pt and 1.5pt);
\end{scope}
\node at (15,0) {$=$};
\begin{scope}[xshift=20cm, yshift= 0cm, decoration={markings, mark=at
     position 0.5 with {\arrow{>}}},postaction={decorate}]
\draw (0,0)++(135:2) --  ++(-0.7,0) -- ++(-135:4) -- ++(7,0) -- ++(45:4) -- (135:2);
\filldraw (0.5,0) ellipse (3pt and 1.5pt);
\filldraw (0,0) ellipse (3pt and 1.5pt);
\filldraw (-0.5,0) ellipse (3pt and 1.5pt);
\end{scope}
\node at (25,0) {$=0$};
\end{scope}
\begin{scope}[yshift=-12cm]
  \begin{scope}[xshift=0cm, yshift= 0cm, decoration={markings, mark=at
     position 0.5 with {\arrow{>}}},postaction={decorate}]
\draw[postaction= {decorate}] (-2,0) arc (-180:0:2 and 1);
\draw (-2,0) arc (180:0:2);
\draw[dashed] (-2,0) arc (180:0:2 and 1);
\draw[dashed] (-2,0) arc (-180:-135:2);
\draw[dashed] (2,0) arc (0:-45:2);
\draw (0,0)++(-135:2) arc (-135:-45:2);
\draw[dashed] (0,0) +(135:2) -- +(45:2);
\draw (0,0)++(135:2) --  ++(-0.7,0) -- ++(-135:4) -- ++(7,0) -- ++(45:4) -- (45:2);
\filldraw (-0.2,-1.8) ellipse (3pt and 1.5pt);
\filldraw (0.2,-1.8) ellipse (3pt and 1.5pt);
\end{scope}
\node at (5,0) {$=-$};
\begin{scope}[xshift=10cm, yshift= 0cm, decoration={markings, mark=at
     position 0.5 with {\arrow{>}}},postaction={decorate}]
\draw[postaction= {decorate}] (-2,0) arc (-180:0:2 and 1);
\draw (-2,0) arc (180:0:2);
\draw[dashed] (-2,0) arc (180:0:2 and 1);
\draw[dashed] (-2,0) arc (-180:-135:2);
\draw[dashed] (2,0) arc (0:-45:2);
\draw (0,0)++(-135:2) arc (-135:-45:2);
\draw[dashed] (0,0) +(135:2) -- +(45:2);
\draw (0,0)++(135:2) --  ++(-0.7,0) -- ++(-135:4) -- ++(7,0) -- ++(45:4) -- (45:2);
\filldraw (0.2,1.8) ellipse (3pt and 1.5pt);
\filldraw (-0.2,1.8) ellipse (3pt and 1.5pt);
\end{scope}
\node at (15,0) {$=$};
\begin{scope}[xshift=20cm, yshift= 0cm, decoration={markings, mark=at
     position 0.5 with {\arrow{>}}},postaction={decorate}]
\draw (0,0)++(135:2) --  ++(-0.7,0) -- ++(-135:4) -- ++(7,0) -- ++(45:4) -- (135:2);
\filldraw (0,0) ellipse (3pt and 1.5pt);
\end{scope}
\end{scope}
\end{tikzpicture}

\vspace{0.7cm}
\begin{tikzpicture}[scale=0.24]

\begin{scope}[xshift=0cm, yshift= 0cm, decoration={markings, mark=at
     position 0.5 with {\arrow{>}}},postaction={decorate}]
\draw (0,-2) arc (270:90:1 and 2);
\draw[dashed] (0,-2) arc (-90:90:1 and 2);
\fill[gray] (3,0) ellipse (1 and 2);
\draw (3,-2) arc (270:90:1 and 2);
\draw[dashed] (3,-2) arc (-90:90:1 and 2);
\fill[gray] (6,0) ellipse (1 and 2);
\draw (6,-2) arc (270:90:1 and 2);
\draw[dashed] (6,-2) arc (-90:90:1 and 2);
\draw (9,0) ellipse (1 and 2);
\draw (0,-2) -- (9,-2);
\draw (0,2) -- (9,2);
\node at (12,0) {$=-$};
\draw (15,-2) arc (270:90:1 and 2);
\draw[dashed] (15,-2) arc (-90:90:1 and 2);
\draw (15,-2) arc (-90:90:2 and 2);
\draw (20,-2) arc (-90:-270:2 and 2);
\draw(20,0) ellipse (1 and 2);
\end{scope}

\begin{scope}[xshift=-2cm, yshift= -6cm, decoration={markings, mark=at
     position 0.5 with {\arrow{>}}},postaction={decorate}]
\draw (0,-2) arc (270:90:1 and 2);
\draw[dashed] (0,-2) arc (-90:90:1 and 2);
\fill[gray] (3,0) ellipse (1 and 2);
\draw[postaction= {decorate}] (3,-2) arc (270:90:1 and 2);
\draw[dashed] (3,-2) arc (-90:90:1 and 2);
\draw (6,0) ellipse (1 and 2);
\draw (0,-2) -- (6,-2);
\draw (0,2) -- (6,2);
\node at (8,0) {$=$};
\draw (10,-2) arc (270:90:1 and 2);
\draw[dashed] (10,-2) arc (-90:90:1 and 2);
\draw (10,-2) arc (-90:90:2 and 2);
\draw (15,-2) arc (-90:-270:2 and 2);
\draw(15,0) ellipse (1 and 2);
\fill (15,0) ellipse (2pt and 4pt);
\node at (17,0) {$-$};
\fill (19,0) ellipse (2pt and 4pt);b
\draw (19,-2) arc (270:90:1 and 2);
\draw[dashed] (19,-2) arc (-90:90:1 and 2);
\draw (19,-2) arc (-90:90:2 and 2);
\draw (24,-2) arc (-90:-270:2 and 2);
\draw(24,0) ellipse (1 and 2);
\end{scope} 
\end{tikzpicture}

\vspace{0.7cm}
\begin{tikzpicture}[scale=0.4]
\begin{scope}[xshift=0cm, yshift= 0cm, decoration={markings, mark=at
     position 0.5 with {\arrow{>}}},postaction={decorate}]
\draw (-2,0) -- +(1,0);
\draw (1,0) -- +(1,0);
\draw[dashed] (-1,0) .. controls +(1,0.5) and +(-1,0.5).. +(2,0);
\draw (-1,0) .. controls +(1,-0.5) and +(-1,-0.5).. +(2,0);
\draw (-2,3) -- +(1,0);
\draw (1,3) -- +(1,0);
\draw (-1,3) .. controls +(1,0.5) and +(-1,0.5).. +(2,0);
\draw (-1,3) .. controls +(1,-0.5) and +(-1,-0.5).. +(2,0);
\draw (-2,0) -- +(0,3);
\draw[postaction={decorate}] (-1,0) -- +(0,3);
\draw[postaction={decorate}] (1,3) -- +(0,-3);
\draw (2,0) -- +(0,3);
\end{scope}
\node at (3,1.5) {$=$};
\begin{scope}[xshift=6cm, yshift= 0cm, decoration={markings, mark=at
     position 0.5 with {\arrow{>}}},postaction={decorate}]
\draw (-2,0) -- +(1,0);
\draw (1,0) -- +(1,0);
\draw[dashed] (-1,0) ..controls +(1,0.5) and +(-1,0.5).. +(2,0);
\draw (-1,0) ..controls +(1,-0.5) and +(-1,-0.5).. +(2,0);
\draw (-2,3) -- +(1,0);
\draw (1,3) -- +(1,0);
\draw (-1,3) ..controls +(1,0.5) and +(-1,0.5).. +(2,0);
\draw (-1,3) ..controls +(1,-0.5) and +(-1,-0.5).. +(2,0);
\draw (-2,0) -- +(0,3);
\draw[postaction={decorate}] (-1,0) .. controls +(0,1.5) and +(0,1.5).. +(2,0);
\draw[postaction={decorate}] (1,3) .. controls +(0,-1.5) and +(0,-1.5).. +(-2,0);
\draw (2,0) -- +(0,3);
\fill (0,3) circle (2pt and 2pt);
\end{scope}
\node at (9,1.5) {$-$}; 
\begin{scope}[xshift=12cm, yshift= 0cm, decoration={markings, mark=at
     position 0.5 with {\arrow{>}}},postaction={decorate}]
\draw (-2,0) -- +(1,0);
\draw (1,0) -- +(1,0);
\draw[dashed] (-1,0).. controls +(1,0.5) and +(-1,0.5).. +(2,0);
\draw (-1,0) ..controls +(1,-0.5) and +(-1,-0.5).. +(2,0);
\draw (-2,3) -- +(1,0);
\draw (1,3) -- +(1,0);
\draw (-1,3).. controls +(1,0.5) and +(-1,0.5).. +(2,0);
\draw (-1,3).. controls +(1,-0.5) and +(-1,-0.5).. +(2,0);
\draw (-2,0) -- +(0,3);
\draw[postaction={decorate}] (-1,0) .. controls +(0,1.5) and +(0,1.5).. +(2,0);
\draw[postaction={decorate}] (1,3) .. controls +(0,-1.5) and +(0,-1.5).. +(-2,0);
\draw (2,0) -- +(0,3);
\fill (0,0) circle (2pt and 2pt);
\end{scope} 
\end{tikzpicture}

\vspace{0.7cm}
\begin{tikzpicture}[scale=0.32]
\begin{scope}[xshift=0cm, yshift= 0cm]
\draw (0,0) -- (2,0);
\draw (2,0) -- (1,1);
\draw (1,1) -- (-1,1); 
\draw (-1,1) -- (0,0);
\draw (0,0) -- +(-0.5,-0.5);
\draw (2,0) -- +(0.7,-0.3);
\draw (1,1) -- +(0.5,0.5); 
\draw (-1,1) -- +(-0.7,0.3);
\draw (0,-4) -- (2,-4);
\draw[dashed] (2,-4) -- (1,-3);
\draw[dashed] (1,-3) -- (-1,-3); 
\draw (-1,-3) -- (-0.5,-3.5);
\draw[dashed] (-0.5,-3.5) -- (0,-4);
\draw (0,-4) -- ++(-0.5,-0.5) -- +(0,4);
\draw (2,-4) -- ++(0.7,-0.3)--+(0,4);
\draw[dashed] (1,-3) -- ++(0.5,0.5)--+(0,3);
\draw (1.5,0.5) -- (1.5,1.5); 
\draw (-1,-3) -- ++(-0.7,0.3)--+(0,4);
\draw (0,-4) -- +(0,4);
\draw (2,-4) -- +(0,4);
\draw[dashed] (1,-3) -- +(0,3);
\draw (1,0)-- +(0,1);
\draw (-1,-3) -- +(0,4);
\end{scope}
\node at (3.9,-2) {$=-$};
\begin{scope}[xshift=7cm, yshift= 0cm, decoration={markings, mark=at
     position 0.5 with {\arrow{>}}},postaction={decorate}]
\fill[gray, opacity =0.5] (-1,1) -- (0,0) arc (180:225:1) -- +(-1,1) arc (225:180:1)-- cycle;
\fill[gray,opacity =0.5] (1,1) -- (2,0) arc (0:-135:1) -- +(-1,1) arc(-135:0:1) --cycle;
\fill[gray,opacity=0.5] (-1,-3) -- (0,-4) arc (180:45:1) -- +(-1,1) arc (45:180:1)-- cycle;
\fill[gray,opacity=0.5] (1,-3) -- (2,-4) arc (0:45:1) -- +(-1,1) arc(45:0:1) --cycle;
\draw (0,0) -- (2,0);
\draw (2,0) -- (1,1);
\draw (1,1) -- (-1,1); 
\draw (-1,1) -- (0,0);
\draw (0,0) -- +(-0.5,-0.5);
\draw (2,0) -- +(0.7,-0.3);
\draw (1,1) -- +(0.5,0.5); 
\draw (-1,1) -- +(-0.7,0.3);
\draw (0,-4) -- (2,-4);
\draw[dashed] (2,-4) -- (1,-3);
\draw[dashed] (1,-3) -- (-1,-3); 
\draw (-1,-3) -- (-0.5,-3.5);
\draw[dashed] (-0.5,-3.5) -- (0,-4);
\draw (0,-4) -- ++(-0.5,-0.5) -- +(0,4);
\draw (2,-4) -- ++(0.7,-0.3)--+(0,4);
\draw[dashed] (1,-3) -- ++(0.15,0.15);
\draw (1,-3) + (0.15,0.15) --++(0.5,0.5) -- +(0,1.7);
\draw[dashed] (1.5,0.5) -- +(0,-1.3);
\draw (1.5,0.5) -- (1.5,1.5); 
\draw (-1,-3) -- ++(-0.7,0.3)--+(0,4);
\draw (0,-4) arc (180:0:1);
\draw (0,0) arc (-180:0:1);
\draw[dashed] (1,1) arc (0:-180:1);
\draw (-1,-3) arc (180:120:1);
\draw[dashed] (1,-3) arc (0:120:1);
\draw (0,-3)++(45:1) -- +(1,-1);
\draw (0,1)++(-135:1) -- +(1,-1);
\end{scope}
\node at (10.9,-2) {$-$};
\begin{scope}[xshift=14cm, yshift= 0cm, decoration={markings, mark=at
     position 0.5 with {\arrow{>}}},postaction={decorate}]
\fill[gray,opacity=0.5] (-1,1) ..controls +(0,-1.5) and +(-0.1,0).. (-0.5,-0.8) -- ++(2,0).. controls +(-0.1,0) and +(0,-1.5) .. (1,1);
\fill[gray,opacity=0.5] (0,0) ..controls +(0,-0.8) and +(0.1,0).. (-0.5,-0.8) -- ++(2,0).. controls +(0.1,0) and +(0,-0.8) .. (2,0);
\fill[gray,opacity=0.5] (-1,-3) ..controls +(0,0.8) and +(-0.1,0).. ++(0.5,0.8) -- ++(2,0).. controls +(-0.1,0) and +(0,0.8) .. (1,-3);
\fill[gray,opacity=0.5] (0,-4) ..controls +(0,1.5) and +(0.1,0).. (-0.5,-2.2) -- ++(2,0).. controls +(0.1,0) and +(0,1.5) .. (2,-4);
\draw (0,0) -- (2,0);
\draw (2,0) -- (1,1);
\draw (1,1) -- (-1,1); 
\draw (-1,1) -- (0,0);
\draw (0,0) -- +(-0.5,-0.5);
\draw (2,0) -- +(0.7,-0.3);
\draw (1,1) -- +(0.5,0.5); 
\draw (-1,1) -- +(-0.7,0.3);
\draw (0,-4) -- (2,-4);
\draw[dashed] (2,-4) -- (1,-3);
\draw[dashed] (1,-3) -- (-1,-3); 
\draw (-1,-3) -- (-0.5,-3.5);
\draw[dashed] (-0.5,-3.5) -- (0,-4);
\draw (0,-4) -- ++(-0.5,-0.5) -- +(0,4);
\draw (2,-4) -- ++(0.7,-0.3)--+(0,4);
\draw[dashed] (1,-3) -- ++(0.5,0.5)--+(0,0.3);
\draw (1.5,-2.2) -- (1.5,-0.8);
\draw[dashed] (1.5,-0.8) -- (1.5,0.5);
\draw (1.5,0.5)-- (1.5,1.5 );
\draw (1.5,0.5) -- (1.5,1.5); 
\draw (-1,-3) -- ++(-0.7,0.3)--+(0,4);
\draw[dashed] (1,-3) ..controls +(0,0.8) and +(-0.1,0) .. ++(0.5,0.8).. controls +(0.1,0) and +(0,1.5) .. (2,-4);
\draw (-1,-3) ..controls +(0,0.8) and +(-0.1,0) .. ++(0.5,0.8).. controls +(0.1,0) and +(0,1.5) .. (0,-4);
\draw (1,1) ..controls +(0,-1.5)  and +(-0.1,0) .. (1.5,-0.8).. controls +(0.1,0) and +(0,-0.8) .. (2,0);
\draw (-1,1) ..controls +(0,-1.5) and +(-0.1,0) .. (-0.5,-.8).. controls +(0.1,0) and +(0,-0.8) .. (0,0);
\draw (-0.5,-0.8)-- +(2,0);
\draw (-0.5,-2.2)-- +(2,0);
\end{scope} 
\end{tikzpicture}

\vspace{0.7cm}
\begin{tikzpicture}[scale=0.5]
\input{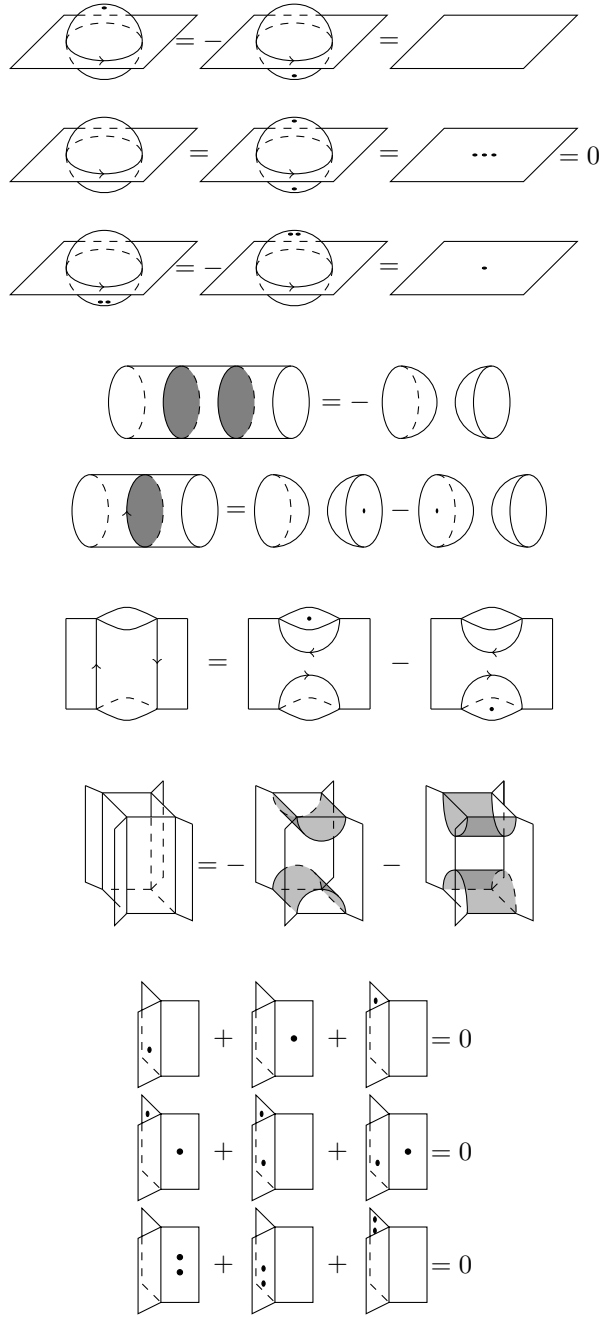} 
\end{tikzpicture}
  \caption{Local relations for 2-morphism in $\mathbb{WT}$. The first 3 lines are called bubbles relations, the 2 next are called bamboo relations, the one after digon relation, then we have the square relation and the 3 last ones are the dots migration relations.}
  \label{fig:localrel}
\end{figure}

This method allows us to define a new graded 2-category $\mathbb{WT}$. Its objects and its 1-morphisms are the ones of the 2-category $\mathcal{WT}$ while its 2-morphisms-spaces are the ones of $\mathcal{WT}$ mod out like in the last paragraph. One should notice that a $(w_b,w_t)$-foam can always be deformed into a $(\mathrm{tr}(\bar{w_b}w_t),\emptyset)$-foam and vice-versa. Khovanov's results restated in this language give that if $w_b$ and $w_t$ are $(\epsilon_0,\epsilon_1)$-web-tangles, the graded dimension of $\hom_{\mathbb{WT}}(w_t,w_b)$ is given by $\kup{\mathrm{tr}(\bar{w_b}w_t)}\cdot q^{l(\epsilon_0)+l(\epsilon_1)}$. Note that when $\epsilon_1=\emptyset$, there is no need to take the closure, because $w_b\bar{w_t}$ is already a closed web. The shift by $l(\epsilon_0)+l(\epsilon_1)$ comes from the fact that $\chi(\mathrm{tr}(\bar{w_b}w_t)) = \chi(w_t)+\chi(w_b) - (l(\epsilon_0)+l(\epsilon_1))$.

\begin{prop}\label{prop:relFR}
We consider the set \FR{} of local relations which consists of:
\begin{itemize}
\item the surgery relation,
\item the evaluations of the dotted spheres and of the dotted theta-foams,
\item the square relations and the digon relations (see figure~\ref{fig:localrel}).
\end{itemize}
We call them the \emph{foam relations} or relations \FR, then for any closed web $w$ $\F(w)$ is isomorphic to $\mathcal{G}(w)$ modded out by \FR.
\end{prop}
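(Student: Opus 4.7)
The plan is to verify the two inclusions separately. Let $\pi\colon \mathcal{G}(w)\twoheadrightarrow \F(w)$ be the natural surjection coming from the universal construction; we must show that $\ker\pi$ is exactly the submodule $\langle\FR\rangle$ generated by local $\FR$-relations.

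For the easy inclusion $\langle\FR\rangle\subseteq \ker\pi$, I would check one by one that each relation in $\FR$ lies in the kernel. Since $\FR$-relations are local, it suffices to check them after closing up with an arbitrary $(w',\emptyset)$-foam $g$: the resulting closed foam must evaluate to zero under $\mathcal{F}$. The sphere and theta evaluations are by definition the ones used to construct $\mathcal{F}$, so they automatically hold. The surgery relation is the local expression of the fact that the underlying Frobenius algebra $\mathcal{A}$ satisfies $\Delta\circ m = \sum e_i \otimes e^i$ paired appropriately (figure~\ref{fig:surg} is precisely the universal surgery identity from~\cite{MR2037238}). The digon and square relations are the ones proved directly by Khovanov in~\cite{MR2100691} (figure~\ref{fig:localrel}), hence hold in $\F$.

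For the hard inclusion $\ker\pi\subseteq \langle\FR\rangle$, I would proceed by a graded dimension count combined with the Kuperberg reduction. The key point is that the relations in $\FR$ are strong enough to mimic, at the level of foams, each of the three local moves of Proposition~\ref{prop:Kup}. More precisely, I will derive from $\FR$:
\begin{itemize}
\item a circle-removal identity: if $w=w'\sqcup\bigcirc$, any $(w,\emptyset)$-foam equals modulo $\FR$ a $\QQ$-linear combination of three $(w',\emptyset)$-foams (obtained by capping the circle with a cup carrying $0,1,$ or $2$ dots), yielding the graded decomposition $\mathcal{G}(w)/\FR \cong \mathcal{G}(w')/\FR\otimes(\QQ\{-2\}\oplus\QQ\oplus\QQ\{2\})$;
\item the digon and square decomposition identities at the level of quotients: these are just the content of the digon and square relations in $\FR$, which one checks produce explicit idempotent decompositions $\mathrm{id}_{\text{digon}} = e_{-1}+e_{1}$ and $\mathrm{id}_{\text{square}}=e_{\mathrm{vert}}+e_{\mathrm{hor}}$.
\end{itemize}
The circle-removal identity follows by applying the surgery formula to a tube around the circle and then using the dotted sphere evaluations; this is also where the bubble and bamboo relations of figure~\ref{fig:localrel} can be recovered as derived consequences of $\FR$ (and similarly for the dot-migration relations, which arise from surgery applied next to a singular circle together with the dotted theta evaluations).

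With these three reductions in hand, I apply Proposition~\ref{prop:Kup} (or rather its proof via Remark~\ref{req:basic-on-web}): any closed web $w$ admits a non-elliptic face, so we may iteratively apply the three reductions until reaching the empty web. This gives the graded dimension bound
\[
\dim_q \mathcal{G}(w)/\langle\FR\rangle \;\le\; \kup{w}.
\]
But Khovanov's theorem recalled in the text says $\dim_q \F(w) = \kup{w}$, and $\pi$ factors as the surjection $\mathcal{G}(w)/\langle\FR\rangle\twoheadrightarrow \F(w)$. Equality of graded dimensions then forces this surjection to be an isomorphism, which is the claim.

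The main obstacle will be the careful derivation of the circle-removal identity purely from surgery and theta/sphere evaluations, since one needs to control dots living on facets incident to the removed circle; this is where the non-trivial theta evaluations of figure~\ref{fig:thetaeval} intervene, and where a bit of bookkeeping is needed to make sure all the bubble and bamboo moves used implicitly in the Kuperberg-style reduction are genuinely consequences of the listed $\FR$ relations rather than of the larger set in figure~\ref{fig:localrel}.
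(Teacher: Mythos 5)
The paper states Proposition~\ref{prop:relFR} without proof (it is in substance a reformulation of results of Khovanov~\cite{MR2100691}), so there is no argument of the author's to compare yours against; your outline is the standard proof and it is correct. The inclusion $\langle\FR\rangle\subseteq\ker\pi$ is indeed just a citation of~\cite{MR2100691}, and spanning-plus-dimension-count is the right strategy for the converse: the three $\FR{}$-reductions (circle removal via surgery on the collar of the vertexless circle followed by the sphere evaluations, digon and square via the corresponding relations read as decompositions of the identity foam $\id_w=\sum_i\iota_i\circ\pi_i$) show, by induction on the web using Proposition~\ref{prop:closed2elliptic}, that the graded dimension of $\mathcal{G}(w)/\langle\FR\rangle$ is bounded above coefficientwise by $\kup{w}$, and Khovanov's equality $\dim_q\F(w)=\kup{w}$ then upgrades the surjection to an isomorphism. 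The two points that must be written out carefully are the one you flag and one you pass over quickly: (a) the base case $w=\emptyset$ requires that every closed foam be evaluated using only surgery and the sphere/theta values — this is Khovanov's evaluation algorithm (neck-cut every facet into disks, leaving a disjoint union of dotted spheres and dotted theta-foams), and it needs the three-dots relation, which you should first derive inside $\FR{}$ by neck-cutting next to a triply dotted facet and using the vanishing of spheres carrying at least three dots; and (b) the induction should be run on a quantity that strictly decreases under all three reductions (for instance the number of edges plus the number of vertexless circles), which is immediate but worth stating so that the recursion visibly terminates at the empty web.
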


\subsection{The Kuperberg-Khovanov algebra $K^\epsilon$}
\label{sec:algebra-k_s}
We want to extend the Khovanov TQFT to the 0-dimensional objects \ie to build a 2-functor from the 2-category $\mathcal{WT}$ to the 2-category of algebras. We follow the methodology of \cite{MR1928174} and we start by defining the image of the $0$-objects~: they are the algebras $K^\epsilon$. This can be compared with \cite{2012arXiv1206.2118M}.
\begin{dfn}
  Let $\epsilon$ be an admissible finite sequence of signs. We define $\tilde{K}^\epsilon$ to be the full sub-category of $\hom_{\mathbb{WT}}(\emptyset,\epsilon)$ whose objects are non-elliptic $\epsilon$-webs. This is a graded $\QQ$-algebroid. We recall that a $k$-algebroid is a $k$-linear category. This can be seen as an algebra by setting~:
\[K^\epsilon = \bigoplus_{(w_b,w_t)\in (\mathrm{ob}(\tilde{K}^\epsilon))^2} \hom_{\mathbb{WT}}(w_b,w_t)\]
and the multiplication on $K^\epsilon$ is given by the composition of morphisms in $\tilde{K}_\epsilon$ whenever it's possible and by zero when it's not. We will denote $\tensor[_{w_1}]{K}{^{\epsilon}_{w_2}}\eqdef \hom_{\mathbb{WT}}(w_2,w_1)$. This is a unitary algebra because of proposition \ref{prop:NEFinite}. The unite element is $\sum_{w\in \mathrm{ob}(\tilde{K}_\epsilon)} 1_w$. Suppose $\epsilon$ is fixed, for $w$ a non-elliptic $\epsilon$-web, we define $P_w$ to be the left $K^\epsilon$-module~:
\[
P_w=\bigoplus_{w'\in\mathrm{ob}(\tilde{K}_\epsilon)}\hom_{\mathbb{WT}}(w,w') = \bigoplus_{w'\in\mathrm{ob}(\tilde{K}_\epsilon)} \tensor[_{w'}]{K}{^\epsilon_{w}}.
\]The structure of module is given by composition on the left.
\end{dfn}
For a given $\epsilon$, the modules $P_w$ are all projective and we have the following decomposition in the category of left $K^\epsilon$-modules~:
\[
K^\epsilon=\bigoplus_{w\in \mathrm{ob}(\tilde{K}_\epsilon)} P_w.
\]
\begin{prop}
Let $\epsilon$ be an admissible sequence of signs, and $w_1$ and $w_2$ two non-elliptic $\epsilon$-webs, then the graded dimension of $\hom_{K^\epsilon}(P_{w_1}, P_{w_2})$ is given by $\kup{(\bar{w_1}w_2)}\cdot q^{l(\epsilon)}$.
\end{prop}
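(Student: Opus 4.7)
The plan is to reduce the computation to the graded dimension of a hom-space in $\mathbb{WT}$, which has already been established in the paragraph preceding the statement.

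First I would set $e_w \eqdef 1_w \in K^\epsilon$, so that the idempotents $\{e_w\}_{w \in \mathrm{ob}(\tilde{K}^\epsilon)}$ are pairwise orthogonal, sum to the unit, and satisfy $P_w = K^\epsilon \cdot e_w$ as graded left $K^\epsilon$-modules. Indeed, by definition $P_w = \bigoplus_{w'} \tensor[_{w'}]{K}{^\epsilon_w}$, and multiplication on the right by $e_w$ in $K^\epsilon$ picks out exactly the summands of the form $\tensor[_{w'}]{K}{^\epsilon_w}$.

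Next I would invoke the standard Yoneda-style isomorphism of graded vector spaces
\[
\hom_{K^\epsilon}(K^\epsilon e_{w_1}, K^\epsilon e_{w_2}) \;\xrightarrow{\;\sim\;}\; e_{w_1}K^\epsilon e_{w_2}, \qquad \phi \longmapsto \phi(e_{w_1}),
\]
with inverse given by left-multiplication. The grading is preserved because the multiplication in $K^\epsilon$ is degree-preserving (which itself comes from the fact that composition of foams preserves degree, as noted in section~\ref{sec:foams}). Identifying the right-hand side with the hom-space in $\mathbb{WT}$ via the definition $\tensor[_{w_1}]{K}{^\epsilon_{w_2}} = \hom_{\mathbb{WT}}(w_2,w_1)$, one obtains a graded isomorphism
\[
\hom_{K^\epsilon}(P_{w_1}, P_{w_2}) \;\cong\; \hom_{\mathbb{WT}}(w_2,w_1).
\]

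It remains to plug in the formula restated in the last paragraph of section~\ref{sec:khovanov-tqft-web}: for $(\epsilon_0,\epsilon_1)$-web-tangles $w_b$ and $w_t$, the graded dimension of $\hom_{\mathbb{WT}}(w_t,w_b)$ equals $\kup{\mathrm{tr}(\bar{w_b}w_t)}\cdot q^{l(\epsilon_0)+l(\epsilon_1)}$. In our situation $\epsilon_0 = \epsilon$ and $\epsilon_1 = \emptyset$, so $l(\epsilon_0)+l(\epsilon_1) = l(\epsilon)$; moreover, as the paper observes, when $\epsilon_1 = \emptyset$ the composite $\bar{w_1}w_2$ is already a closed web, and no trace closure is needed. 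Setting $w_b = w_1$ and $w_t = w_2$ yields the graded dimension $\kup{\bar{w_1}w_2}\cdot q^{l(\epsilon)}$, which is precisely the claimed formula.

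There is no real obstacle here: everything is a formal consequence of the previously established dimension formula for $\hom_{\mathbb{WT}}$ together with the general principle that $\hom$ between modules of the form $Ae$, $Af$ is computed by $eAf$. The only point requiring minor care is checking that this isomorphism respects the grading, and this is immediate from the fact that all structure maps in $\mathbb{WT}$ (and hence in $K^\epsilon$) are degree-preserving.
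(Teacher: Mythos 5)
Your proof is correct and takes essentially the same route as the paper's: the paper also observes that a morphism $P_{w_1}\to P_{w_2}$ is determined by the image of $1_{w_1}$, which is exactly the isomorphism $\hom_{K^\epsilon}(K^\epsilon e_{w_1},K^\epsilon e_{w_2})\cong e_{w_1}K^\epsilon e_{w_2}$ you invoke. Your version is, if anything, a cleaner write-up of the same argument, correctly identifying $e_{w_1}K^\epsilon e_{w_2}$ with $\hom_{\mathbb{WT}}(w_2,w_1)$ where the paper's one-line proof slightly abuses notation by writing $\hom_{\mathbb{WT}}(P_{w_2},P_{w_1})$.
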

\begin{proof}
  An element of  $\hom_{K^\epsilon}(P_{w_1}, P_{w_2})$ is completely determined by the image of $1_{w_1}$ and this element can be sent on any element of $\hom_{\mathbb{WT}}(P_{w_2}, P_{w_1})$, and $\dim_q(\hom_{\mathbb{WT}}(P_{w_1}, P_{w_2}))=\kup{(\bar{w_1}w_2)}\cdot q^{l(\epsilon)}$.
\end{proof}
In what follows we will prove that some modules are indecomposable, they all have finite dimension over $\QQ$ hence it's enough to show that their rings of endomorphisms contain no non-trivial idempotents. It appears that an idempotent must have degree zero, so we have the following lemma~:
\begin{lem}\label{lem:monic2indec}
  If $w$ is a non-elliptic $\epsilon$-web such that $\kup{\bar{w}w}$ is monic of degree $l(\epsilon)$, then the graded $K^\epsilon$-module $P_w$ is indecomposable.
\end{lem}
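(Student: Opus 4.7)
The approach I would take is a graded Schur-lemma argument applied to the endomorphism algebra $E := \mathrm{End}_{K^\epsilon}(P_w)$. By the proposition immediately preceding the lemma, $E$ has graded dimension $\kup{\bar w w}\cdot q^{l(\epsilon)}$. Since by Proposition \ref{prop:Kup} the Kuperberg bracket $\kup{\bar w w}$ is symmetric under $q\leftrightarrow q^{-1}$, and by hypothesis it is monic of degree $l(\epsilon)$, its lowest-degree coefficient (in degree $-l(\epsilon)$) also equals $1$, with all other nonzero coefficients strictly between these two extremes. After multiplying by $q^{l(\epsilon)}$, this shows that $E$ is concentrated in non-negative degrees, with its degree-$0$ component of dimension exactly $1$.

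Next I would identify this degree-$0$ component with $\QQ\cdot\mathrm{id}_{P_w}$. Under the isomorphism $\mathrm{End}_{K^\epsilon}(P_w) \cong \hom_{\mathbb{WT}}(w,w)$ furnished by the preceding proposition, the identity endomorphism corresponds to the cylinder foam $w\times[0,1]$, whose degree is $\chi(w)+\chi(w)-2\chi(w\times[0,1])=0$ by Definition \ref{dfn:wwfoam}. Since this element is nonzero and the degree-$0$ part of $E$ is one-dimensional, it spans that part.

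It then remains to rule out non-trivial idempotents in $E$. Let $e\in E$ be an idempotent and decompose it into homogeneous components $e=\sum_{i\geq 0}e_i$, which is possible because $E$ sits in non-negative degrees. The degree-$0$ part of the relation $e^2=e$ yields $e_0^2=e_0$; writing $e_0=\lambda\cdot\mathrm{id}_{P_w}$ forces $\lambda^2=\lambda$, so $\lambda\in\{0,1\}$. If $\lambda=0$ and $e\neq 0$, pick the minimal $k\geq 1$ with $e_k\neq 0$; then the lowest-degree homogeneous component of $e^2$ lives in degree $2k>k$, contradicting $e^2=e$. If $\lambda=1$, apply the same argument to the idempotent $\mathrm{id}_{P_w}-e$, whose degree-$0$ component vanishes, to conclude that $e=\mathrm{id}_{P_w}$.

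In summary, the only idempotents of $E$ are $0$ and $\mathrm{id}_{P_w}$, and therefore the finite-dimensional module $P_w$ is indecomposable. The only step requiring more than formal manipulation is the degree computation showing that $\mathrm{id}_{P_w}$ sits in degree $0$; this is immediate from the degree formula in Definition \ref{dfn:wwfoam}, so I do not anticipate any real obstacle in carrying out the argument.
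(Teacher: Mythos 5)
Your proof is correct and follows essentially the same route as the paper: compute the graded dimension of $\mathrm{End}_{K^\epsilon}(P_w)$ from the preceding proposition, use the symmetry and monicity of $\kup{\bar w w}$ to force the degree-$0$ piece to be one-dimensional, and conclude there are no non-trivial idempotents. The paper's own proof is terser and simply invokes the remark made just before the lemma ("an idempotent must have degree zero") and the fact that two orthogonal idempotents would give two linearly independent degree-$0$ elements; you supply the positivity-of-grading argument and the lowest-degree-component computation that justify that remark, which is a helpful amplification rather than a different method.
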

\begin{proof}
  This follows from previous discussion~: if $\hom_{K^\epsilon}(P_w,P_w)$ contained a-non trivial idempotent, there would be at least two linearly independent elements of degree 0, but $\dim((\hom_{\mathbb{WT}}(P_{w}, P_{w})_0) = a_{-l(\epsilon)}$ if we write $\kup{\bar{w}w}=\sum_{i\in \ZZ}a_iq^i$ but as $\kup{\bar{w}w}$ is symmetric (in $q$ and $q^{-1}$) of degree $l(\epsilon)$ and monic, $a_{-l(\epsilon)}$ is equal to 1 and this is a contradiction.
\end{proof}
We have a similar lemma to prove that two modules are not isomorphic.
\begin{lem}\label{lem:0monic2noiso}
  If $w_1$ and $w_2$ are two non-elliptic $\epsilon$-webs such that $\kup{\bar{w_1}w_2}$ has degree strictly smaller than $l(\epsilon)$, then the graded $K^\epsilon$-modules $P_{w_1}$  and $P_{w_2}$ are not isomorphic.
\end{lem}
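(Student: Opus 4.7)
The plan is to mirror the argument used in Lemma \ref{lem:monic2indec}, replacing ``no non-trivial idempotents'' by ``no isomorphism''. The key observation is that any isomorphism between two graded $K^\epsilon$-modules must preserve the grading, hence be of degree $0$; as in the previous lemma, this reduces the problem to controlling the degree-$0$ part of the appropriate hom-space. So it suffices to show that the degree-$0$ summand of $\hom_{K^\epsilon}(P_{w_1},P_{w_2})$ is trivial.

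First I would invoke the proposition preceding Lemma \ref{lem:monic2indec}, which identifies the graded dimension of $\hom_{K^\epsilon}(P_{w_1},P_{w_2})$ with $\kup{\bar{w_1}w_2}\cdot q^{l(\epsilon)}$. Writing $\kup{\bar{w_1}w_2}=\sum_{i\in\ZZ} a_i q^i$, this identifies the dimension of the degree-$0$ piece with the coefficient $a_{-l(\epsilon)}$.

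The next step is to exploit the $q\leftrightarrow q^{-1}$ symmetry of the Kuperberg bracket noted in Proposition \ref{prop:Kup}, which forces $a_{-l(\epsilon)}=a_{l(\epsilon)}$. The hypothesis $\deg\kup{\bar{w_1}w_2}<l(\epsilon)$ means $a_j=0$ for every $j\ge l(\epsilon)$, and in particular $a_{l(\epsilon)}=0$, so $a_{-l(\epsilon)}=0$. Hence $\hom_{K^\epsilon}(P_{w_1},P_{w_2})$ contains no nonzero morphism of degree $0$, and therefore $P_{w_1}$ and $P_{w_2}$ cannot be isomorphic as graded $K^\epsilon$-modules.

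There is no real obstacle here: the argument is the same bookkeeping as in Lemma \ref{lem:monic2indec}. The only point to keep straight is the grading shift by $q^{l(\epsilon)}$ in the Poincaré polynomial, so that the degree-$0$ part of the hom-space corresponds to the $q^{-l(\epsilon)}$ coefficient of the bracket (not its $q^0$ coefficient); once that is set up, the symmetry of $\kup{\cdot}$ closes the argument immediately.
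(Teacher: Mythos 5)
Your proof is correct and follows essentially the same route as the paper: both arguments combine the graded dimension formula $\dim_q\hom_{K^\epsilon}(P_{w_1},P_{w_2})=\kup{\bar{w_1}w_2}\cdot q^{l(\epsilon)}$ with the $q\leftrightarrow q^{-1}$ symmetry of the Kuperberg bracket to rule out a degree-zero isomorphism. The paper phrases this as ``every morphism in either direction has strictly positive degree, so no composition can equal the identity,'' whereas you phrase it as ``the degree-$0$ part of the hom-space vanishes''; these are the same computation.
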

\begin{proof}
  If they were isomorphic, there would exist two morphisms $f$ and $g$ such that $f\circ g=1_{P_{w_1}}$ and therefore $f\circ g$ would have degree zero. The hypothesis made implies that $f$ and $g$ (because $\kup{\bar{w_1}w_2} = \kup{\bar{w_2}w_1}$) have positive degree so that the degree of their composition is as well positive.
\end{proof}
\begin{req}
  The way we constructed the algebra $K^\epsilon$ is very similar to the construction of $H^n$ in \cite{MR1928174}. Using the same method we can finish the construction of a $0+1+1$ TQFT~:
  \begin{itemize}
  \item If $\epsilon$ is an admissible sequence of signs, then $\F(\epsilon) = K^\epsilon$.
  \item If $w$ is a $(\epsilon_1,\epsilon_2)$-web-tangle with $\epsilon_1$ and $\epsilon_2$ admissible, then 
\[\F(w) = \bigoplus_{\substack{u\in \mathrm{ob}(\tilde{K}^{\epsilon_1}) \\ v\in \mathrm{ob}(\tilde{K}^{\epsilon_2})}} \F(\bar{u}wv),
\] and it has a structure of graded $K^{\epsilon_1}$-module-$K^{\epsilon_2}$. Note that if $w$ is a non-elliptic $\epsilon$-web, then $\F(w)=P_w$.
\item If $w$ and $w'$ are two $(\epsilon_1,\epsilon_2)$-web-tangles, and $f$ is a $(w,w')$-foam, then we set 
\[
\F(f) = \sum_{\substack{u\in \mathrm{ob}(\tilde{K}^{\epsilon_1}) \\ v\in \mathrm{ob}(\tilde{K}^{\epsilon_2})}} \F(\tensor[_{\bar{u}}]{f}{_v}),
\] where $\tensor[_{\bar{u}}]{f}{_v}$ is the foam $f$ with glued on its sides $\bar{u}\times[0,1]$ and $v\times [0,1]$. This is a map of  graded $K^{\epsilon_1}$-modules-$K^{\epsilon_2}$.
  \end{itemize}
We encourage the reader to have a look at this beautiful construction for the $sl_2$ case in \cite{MR1928174}. 
\end{req}

In the $\sll_2$ case the classification of projective indecomposable modules is fairly easy, and a analogous result, would state in our context that the projective indecomposable modules are exactly the modules associated with non-elliptic webs. However we have:

\begin{prop}[\cite{MR2457839}, see\cite{LHRThese} for details]\label{prop:Pwdec}
  Let $\epsilon$ be the sequence of signs:  $(+,-,-,+,+,-,-,+,+,-,-,+)$, and let $w$ and $w_0$ be the two $\epsilon$-webs given by figure~\ref{fig:thewebw}. Then the web-module $P_w$ is decomposable and admits $P_{w_0}$ as a direct factor.
\end{prop}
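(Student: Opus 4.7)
The plan is to exhibit an explicit pair of degree-zero foams realising $P_{w_0}$ as a direct summand of $P_w$, and to detect non-triviality of the summand by a computation of Kuperberg brackets. First, I would compute $\kup{\bar{w_0}w_0}$ using the local relations of Proposition \ref{prop:Kup}, reducing $\bar{w_0}w_0$ step by step through its digons, squares and vertexless circles. Once this polynomial is shown to be monic of degree $l(\epsilon)=12$, Lemma \ref{lem:monic2indec} yields that $P_{w_0}$ is indecomposable. In parallel, I would compute $\kup{\bar ww}$ and check that it differs from $\kup{\bar{w_0}w_0}$; by the graded-dimension formula for morphisms of web-modules, this already forces $P_w\not\cong P_{w_0}$.

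Next, I would construct a $(w,w_0)$-foam $\alpha$ and a $(w_0,w)$-foam $\beta$, both of degree zero and related by $\alpha=\bar\beta$. A natural recipe for $\beta$ is to identify a short sequence of local web-moves, i.e.\ square and digon reductions in the sense of Proposition \ref{prop:Kup}, relating $w$ to a resolution containing $w_0$, and to set $\beta$ to be the associated composite of elementary zip, unzip and saddle foams. Under $\F$, these sit in the degree-zero pieces of $\hom_{K^\epsilon}(P_{w_0},P_w)$ and $\hom_{K^\epsilon}(P_w,P_{w_0})$. The crucial step is then to verify that $\beta\circ\alpha$, a $(w_0,w_0)$-foam, equals the identity foam on $w_0$ in $\mathbb{WT}$; this is a concrete, if lengthy, application of the surgery formula together with the bubble, bamboo, digon and square relations of Figure \ref{fig:localrel}.

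Granted $\beta\circ\alpha=\id_{w_0}$, the element $e\eqdef\alpha\circ\beta \in \hom_{K^\epsilon}(P_w,P_w)$ is an idempotent and $\alpha$ induces an isomorphism $\im e\cong P_{w_0}$. Non-triviality of $e$ is automatic: if $e=\id_{P_w}$, then $\alpha$ and $\beta$ would be inverse isomorphisms, contradicting $P_w\not\cong P_{w_0}$ established in the first step. Hence $P_w\cong P_{w_0}\oplus\ker e$ with both summands non-zero, which is the desired decomposition.

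The main obstacle is the equality $\beta\circ\alpha=\id_{w_0}$: it requires a non-local sequence of foam simplifications, and one must track intermediate pre-foams carefully so that the theta-evaluations of Figure \ref{fig:thetaeval} and the resulting signs combine correctly. A useful organising principle is to factor $\alpha$ and $\beta$ along a common reduction scheme for $w$, so that their composition exhibits obvious cancelling digon and bamboo pairs that can be removed one after the other via the relations of Figure \ref{fig:localrel}; this mirrors the red-graph/explicit-idempotent philosophy promised in the introduction of the paper.
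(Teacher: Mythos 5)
The paper itself does not prove Proposition~\ref{prop:Pwdec}: it defers to \cite{MR2457839} and to \cite{LHRThese}, and only later in the paper are the tools developed that would let one carry out such a proof systematically. So there is no in-paper argument to compare against line by line. Your outline does, however, match the philosophy that the paper turns into Theorem~\ref{thm:RG2idempotent} and Proposition~\ref{prop:pciisid}: to an exact paired red graph $G$ for $w$ one associates foams $p_G$ and $i_G$ with $p_G\circ i_G = \lambda_G\,\id_{w_G}$ for some nonzero scalar $\lambda_G$, and $\tfrac{1}{\lambda_G}\, i_G\circ p_G$ is then a nontrivial idempotent whose image is the web-module $P_{w_G}$. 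Figure~\ref{fig:exsimplification} even exhibits the pair $(w,w_0)$ of Figure~\ref{fig:thewebw} as exactly such a simplification. So your $\alpha$, $\beta$ are the paper's $i_G$, $p_G$ for a suitable $G$, and your reduction scheme is the red-graph data.

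The genuine gap is that the one step on which the whole argument hinges, constructing explicit degree-zero foams $\alpha$ and $\beta$ and showing $\beta\circ\alpha$ is a nonzero multiple of $\id_{w_0}$, is entirely deferred. That verification is the content of the proposition; in the paper's framework it is precisely what occupies Lemmas~\ref{lem:sumoforientation}--\ref{lem:fitting2id}, and the nonvanishing of the resulting scalar (a signed count of fitting orientations of $G$) is not a formality. Until that composition is computed, the proposal only restates what must be shown. Two minor remarks: the computation of $\kup{\bar{w_0}w_0}$ to show $P_{w_0}$ indecomposable is not needed for the statement as given, only the non-isomorphism $P_w\not\cong P_{w_0}$ enters your non-triviality argument; and demanding $\beta\circ\alpha = \id_{w_0}$ exactly is stronger than necessary, a nonzero scalar multiple suffices and is what the paper's construction actually yields.
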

\begin{figure}[ht]
  \centering
\begin{tikzpicture}[scale= 0.7]
  \input{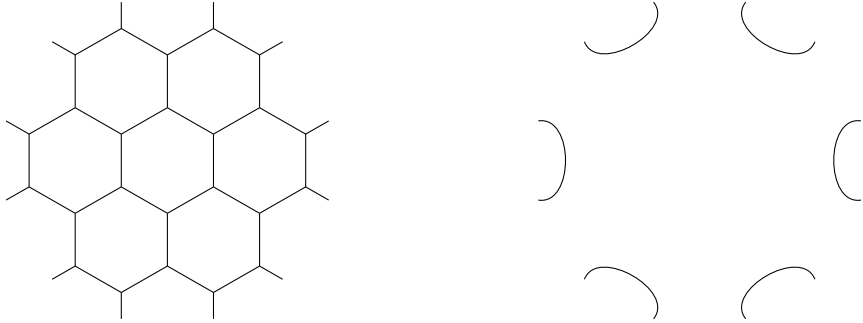}
\end{tikzpicture}
  \caption{The $\epsilon$-webs $w$ (on the left) and $w_0$ (on the right), to fit in formal context of the 2-category one should stretch the outside edges to horizontal line below the whole picture, we draw it this way to enjoy more symmetry. To simplify we didn't draw the arrows.}
  \label{fig:thewebw}
\end{figure}

\label{cha:red-graphs}
In \cite{LHR1}, one gives a sufficient condition for a web-module to be indecomposable. All the argumentation relies on the computation of the dimension of the space of the degree 0 endomorphisms of web-modules: in fact, when for a web $w$, this space has dimension 1, then the web-module $P_w$ is indecomposable. Translated in terms of Kuperberg bracket, it says (see as well lemma~\ref{lem:monic2indec}): 
\begin{quotation}
\noindent\emph{If $w$ is an $\epsilon$-web such that $\kup{\overline{w}w}$ is monic of degree $l(\epsilon)$, then the $K^\epsilon$-module $P_w$ is indecomposable.}
\end{quotation}
The aim of this chapter is to prove the converse. This will give the following characterisation of indecomposable web-modules:
\begin{thm}
  Let $w$ be an $\epsilon$-web. The $K^\epsilon$-module $P_w$ is indecomposable if and only if $\kup{\overline{w}w}$ is monic of degree $l(\epsilon)$. Furthermore if the $K^\epsilon$-module $P_w$ is decomposable it contains another web-module as a direct factor.
\end{thm}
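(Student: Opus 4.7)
The ``if'' direction is Lemma~\ref{lem:monic2indec}, so the real content is the converse together with the identification of the complementary summand as a web-module. Assume $\kup{\overline{w}w}$ is not monic of degree $l(\epsilon)$; by the symmetry of the Kuperberg bracket and the dimension computation in the proof of Lemma~\ref{lem:monic2indec}, the degree-zero part of $\hom_{\mathbb{WT}}(w,w)$ has dimension at least $2$, so Schur's lemma no longer excludes a non-trivial idempotent. The plan is to produce such an idempotent explicitly, and to show that its image is isomorphic to $P_{w'}$ for some non-elliptic $\epsilon$-web $w'\neq w$.

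Following the strategy announced in the introduction, candidate idempotents are to be encoded by combinatorial decorations on $w$, called \emph{red graphs} $\Gamma$: each such $\Gamma$ determines an explicit degree-zero foam $f_\Gamma\colon w \to w$ obtained from the identity cobordism $w\times[0,1]$ by inserting local surgeries (saddles, digon and square pieces) along the red edges. I would carry out the argument in four steps. \emph{Step 1.} Define red graphs; define a combinatorial surgery $w \rightsquigarrow w_\Gamma$ and, after the reductions of Proposition~\ref{prop:Kup}, an associated non-elliptic web $w'$. By construction $f_\Gamma$ factors as a composition of foams $w \to w' \to w$, so any idempotent obtained from $f_\Gamma$ projects onto a quotient of $P_{w'}$. \emph{Step 2.} Compute $f_\Gamma \cdot f_\Gamma$ using the foam relations \FR{} of Proposition~\ref{prop:relFR} — theta evaluations, bubble, bamboo, digon, square and dot migration — and show that for suitable $\Gamma$ one has $f_\Gamma^2 = \lambda f_\Gamma$ with $\lambda \neq 0$, so that $e_\Gamma := \lambda^{-1}f_\Gamma$ is an idempotent. \emph{Step 3.} Show that whenever $\kup{\overline{w}w}$ is not monic of degree $l(\epsilon)$, some red graph $\Gamma$ with $w_\Gamma$ reducing to $w'\neq w$ has $e_\Gamma \neq 0, 1_w$. \emph{Step 4.} Conclude that $e_\Gamma P_w \cong P_{w'}$: the factorisation from Step~1 gives maps in both directions between $e_\Gamma P_w$ and $P_{w'}$ whose composites are identities up to the idempotent, while Lemma~\ref{lem:0monic2noiso} ensures that $P_{w'}$ is genuinely distinct from $P_w$.

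The decisive obstacle is Step~3. Steps~2 and~4 reduce to local computations governed by \FR{} together with the Schur-type dimension arguments of Lemmas~\ref{lem:monic2indec} and~\ref{lem:0monic2noiso}, and Step~1 is largely definitional. Step~3, by contrast, is a combinatorial existence statement: the failure of the Schur criterion must always be witnessed by a concrete red subgraph of $w$. I would approach it by analysing the closed web $\overline{w}w$ — which, by Proposition~\ref{prop:closed2elliptic}, must contain a square, digon or vertexless circle — and by showing how to lift such an elliptic face through $\overline{w}$ back to a red graph on $w$ whose associated surgery produces a non-elliptic $w' \neq w$ coupling non-trivially with $w$. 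The web-module $P_{w_0}$ appearing as a direct factor of $P_w$ in Proposition~\ref{prop:Pwdec} should emerge as the image of the corresponding $e_\Gamma$, and will serve both as a guiding example and as a sanity check that the construction captures genuine decompositions beyond the elliptic case.
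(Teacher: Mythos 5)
Your overall architecture coincides with the paper's: red graphs give explicit degree-zero foams $i_G\circ p_G$ factoring through a reduced web $w_G$, one checks $p_G\circ i_G=\lambda\,\id_{w_G}$ with $\lambda\neq 0$ via the relations \FR{} (this is proposition~\ref{prop:pciisid} and theorem~\ref{thm:RG2idempotent}), and the image of the resulting idempotent is the web-module $P_{w_G}$. You are also right that the decisive obstacle is your Step~3. But that is precisely where your proposal has a genuine gap: the route you sketch — use proposition~\ref{prop:closed2elliptic} to find a square, digon or circle in $\overline{w}w$ and ``lift'' it to a red graph on $w$ — cannot work as stated, because $\overline{w}w$ \emph{always} contains such an elliptic face, including when $\kup{\overline{w}w}$ is monic of degree $l(\epsilon)$ and $P_w$ is indecomposable. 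The hypothesis of non-monicity must enter quantitatively, not merely through the existence of an elliptic face. The paper does this by defining the \emph{level} $k=\frac12(\deg\kup{\overline{w}w}-l(\epsilon))$ and proving (theorem~\ref{thm:on-monic-2-RG}, via proposition~\ref{prop:techRG}) that a virtually decomposable web of level $k$ carries an admissible red graph of level at least $k$; this is a delicate induction on the number of edges which requires handling elliptic intermediate webs, introducing $\partial$-connectedness, \emph{stacks} of red graphs, and a flattening lemma (lemma~\ref{lem:tech}) whose proof occupies the entire second half of the paper. None of that machinery is visible in your sketch, and without it Step~3 does not go through. Moreover, the existence of a red graph of level $k\geq 0$ only yields an \emph{admissible}, then \emph{exact}, red graph after the further reductions of propositions~\ref{prop:2admissible} and~\ref{prop:exist-exact}, which your plan omits.

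Two smaller points. First, in Step~4 you invoke lemma~\ref{lem:0monic2noiso} to show the summand is proper, but its hypothesis (that $\kup{\overline{w}w'}$ has degree strictly less than $l(\epsilon)$) is not available; the paper instead shows directly that $\frac1\lambda i_G\circ p_G$ is not the identity, via the analysis of reduced $(w,w)$-foams in proposition~\ref{prop:idfoam} (a foam factoring through a web with fewer vertices cannot be a nonzero multiple of $w\times[0,1]$). Second, the reduced web $w_G$ need not be non-elliptic, so identifying the direct factor as a web-module attached to a \emph{non-elliptic} web requires an extra word.
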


The proof relies on some combinatorial tools called red graphs. In a first part we give an explicit construction (in terms of foams) of a non-trivial idempotent associated to a red graph. In a second part we show that when an $\epsilon$-web $w$ is such that $\kup{\overline{w}w}$ is not monic of degree $l(\epsilon)$, then it contains a red graph.

\section{Red graphs}
\label{sec:redgraph}
\subsection{Definitions}
\label{sec:defRG}
The red graphs are sub-graphs of the dual graphs webs, we recall here the definition of a dual graph. For an introduction to graph theory we refer to~\cite{MR0256911} and \cite{MR2368647}.\marginpar{trouver une bonne référence pour la theorie des graphe topologique} 
\begin{dfn}
  Let $G$ be a plane graph (with possibly some vertex-less loops), we define \emph{the dual graph $D(G)$ of $G$} to be the abstract graph given as follows:
  \begin{itemize}
  \item The set of vertices $V(D(G))$ of $D(G)$ is in one-one correspondence with the set of connected components of $\RR^2\setminus G$ (including the unbounded connected component). Such connected component are called \emph{faces}. 
  \item The set of edges of $D(G)$ is in one-one correspondence with the set of edges of $G$ (in this construction, vertex-less loops are not seen as edges). If an edge $e$ of $G$ is adjacent to the faces $f$ and $g$ (note that $f$ may be equal to $g$ if $e$ is a bridge), then the corresponding edge $e'$ in $D(G)$ joins $f'$ and $g'$, the vertices of $D(G)$ corresponding to $f$ and $g$.
  \end{itemize}
\end{dfn}
Note that in general the faces need not to be diffeomorphic to disks. It is easy to see that the dual graph of a plane graph is planar:  we place one vertex inside each face, and we draw an edge $e'$ corresponding to $e$ so that it crosses $e$ exactly once and it crosses no other edges of $G$. Such an embedding of $D(G)$ is a \emph{plane dual} of the graph $G$ (see figure~\ref{fig:exdualgrpah}).
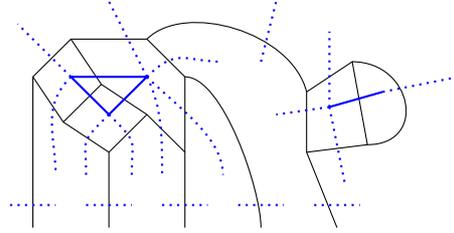
\begin{figure}[ht]
  \centering
  \begin{tikzpicture} 
    \begin{scope}[xshift = 6cm, yscale = {1}, xscale={1},decoration={markings, mark=at
     position 0.5 with {\arrow{>}}},postaction={decorate}]
\draw (0,0) -- (0,2) -- (0.5,2.5) -- (1.5, 2.5) -- (2,2) -- (2,1) -- (2,0);
\draw (1,0) --(1,1)-- (0.4,1.4) -- (0,2);
\draw (0.5,2.5) -- (0.9,1.9) -- (1.5, 1.5) -- (2,1);
\draw (0.4,1.4) -- (0.9,1.9);
\draw (1,1) -- (1.5, 1.5);
\draw (2,2) .. controls +(0.5,0) and +(0,0.5) .. (3,0);
\draw (1.5, 2.5) .. controls +(0.5,0.5) and +(-0.2,0.8) .. (3.6,1.8) -- (3.6,1) -- (4,0);
\draw (3.6,1.8) -- (4.2,2.2) -- (4.4, 1.1) -- (3.6,1);
\draw (4.2,2.2) .. controls +(0.8,0) and + (0.8,0) .. (4.4,1.1);  
\coordinate (A) at (0.5,2);
\coordinate (B) at (1,1.5);
\coordinate (C) at (1.5,2);
\coordinate (D) at (4.6, 1.8);
\coordinate (E) at (3.9, 1.6);
\draw[color = blue, thick] (A) -- (B) -- (C)-- (A);
\draw[color = blue, thick] (D) -- (E);
\draw[color = blue, dotted, thick] (A) -- +(-0.7,0.7);
\draw[color = blue, dotted, thick] (A) .. controls +(-0.4,-0.4) and +(0,0.5) .. (0.3,0.7);
\draw[color = blue, dotted, thick] (B) .. controls +(-0.4,-0.4) and +(0,0.5) .. (0.7,0.7);
\draw[color = blue, dotted, thick] (B) .. controls +(+0.4,-0.4) and +(0,0.5) ..
(1.3,0.7);
\draw[color = blue, dotted, thick] (C) .. controls +(+0.2,-0.4) and +(0,0.5)..  (1.7,0.7);
\draw[color = blue, dotted, thick] (C) .. controls +(+0.4,-0.4) and +(0,0.5)..  (2.5,0.7);
\draw[color = blue, dotted, thick] (C) .. controls +(+0.4,+0.4) and +(-0.5,0) .. +(1,0.2);
\draw[color = blue, dotted, thick] (C) -- +(-0.5,1);
\draw[color = blue, dotted, thick] (D) -- +(1,0.2);
\draw[color = blue, dotted, thick] (E) -- +(-0.7,-0.1);
\draw[color = blue, dotted, thick] (E) -- +(0,1);
\draw[color = blue, dotted, thick] (E) -- +(0.2,-1);
\draw[color = blue, dotted, thick] (-0.3,0.3) -- (0.3,0.3);
\draw[color = blue, dotted, thick] (0.7,0.3) -- (1.3,0.3);
\draw[color = blue, dotted, thick] (1.7,0.3) -- (2.3,0.3);
\draw[color = blue, dotted, thick] (2.7,0.3) -- (3.3,0.3);
\draw[color = blue, dotted, thick] (3.7,0.3) -- (4.3,0.3);
\draw[color = blue, dotted, thick] (3,2.2) -- (3.2,3);
\end{scope}
  \end{tikzpicture}
  \caption{In black an $\epsilon$-web $w$ and in blue the dual graph of $w$. The dotted edges are all meant to belong to $D(w)$ and to reach the vertex $u$ corresponding to the unbounded component of $\RR^2\setminus w$.}
\label{fig:exdualgrpah}
\end{figure}

\begin{dfn}\label{dfn:red-graph}
  Let $w$ be an $\epsilon$-web, a \emph{red graph} for $w$ is a non-empty subgraph $G$ of $D(w)$ such that:
  \begin{enumerate}[(i)]
  \item\label{item:dfnRG1} All faces belonging to $V(G)$ are diffeomorphic to disks. In particular, the unbounded face is not in $V(G)$.
  \item\label{item:dfnRG2} If $f_1$, $f_2$ and $f_3$ are three faces of $w$ which share together a vertex, then at least one of the three does not belong to $V(G)$.
  \item\label{item:dfnRG3} If $f_1$ and $f_2$ belongs to $V(G)$ then every edge of $D(w)$ between $f_1$ and $f_2$ belongs to $E(G)$, \ie $G$ is an induced subgraph of $D(w)$.
  \end{enumerate}
  If $f$ is a vertex of $G$ we define $\ed(f)$, \emph{the external degree of $f$}, by the formula: \[ \ed(f) = \deg_{D(w)}(f) - 2\deg_G(f).\]
\end{dfn}
\begin{figure}[ht]
  \centering
  \begin{tikzpicture}[scale=1]
     \begin{scope}
\foreach \n/\a in {a/30, b/60, c/120, d/150, e/210, f/240, g/300, h/330}
{
\draw (\a:0.5) -- (\a:1.5);
}
\draw (30:0.5) -- (60:0.5) -- (90:0.5) -- (120:0.5) --  (150:0.5) --  (210:0.5) --  (240:0.5) --  (270:0.5) --  (300:0.5) --  (330:0.5) -- cycle;
\draw (30:1.5) -- (60:1.5) -- (90:1.5) -- (120:1.5)  -- (127:1.5) .. controls (135:1.2) and (135:1.2) .. (143:1.5) --  (150:1.5) --  (170:1.5) --  (190:1.5)--  (210:1.5) --  (240:1.5) --  (270:1.5) --  (300:1.5) --  (310:1.5)--  (320:1.5) --  (330:1.5) --  (350:1.5)--  (10:1.5) -- cycle;
\draw (127:1.5) .. controls (135:1.8) and (135:1.8) .. (143:1.5);
\draw (0,0.5) -- (0,-0.5);
\draw (170:1.5) .. controls +(-1.6,0.1) and +(0, 0.5) .. (-1, -2);
\draw (190:1.5) .. controls +(-1,-0.1) and +(0, 0.5) .. (-0.5, -2); 
\draw (270:1.5) -- (0,-2);
\draw (350:1.5) .. controls +(0.4,-0.1) and +(0, 0.5) .. (1.5, -2);
\draw (10:1.5) .. controls +(0.4,+0.1) and +(0, 0.5) .. (2, -2);
\draw (310:1.5) .. controls +(0.15,-0.2) and +(0, 0.3) .. (0.5, -2);
\draw (320:1.5) .. controls +(0.2,-0.15) and +(0, 0.3) .. (1, -2); 
\draw (90:1.5) .. controls +(0.7,1) and +(0,3) .. (2.5,-2);
\foreach \n/\a in {a/0, b/45, d/90, e/315, f/270, c/180}
{
\filldraw[red] (\a:1) circle (2pt);
}
\filldraw[red] (180:0.3) circle (2pt);
\filldraw[red] (135:1.5) circle (2pt);
\draw[very thick, red] (180:1) -- (180:0.3) -- (90:1) -- (45:1) -- (0:1) -- (315:1) -- (270:1)--(180:0.3);
\end{scope}
  \end{tikzpicture}
  \caption{Example of a red graph.}
  \label{fig:exRG}
\end{figure}
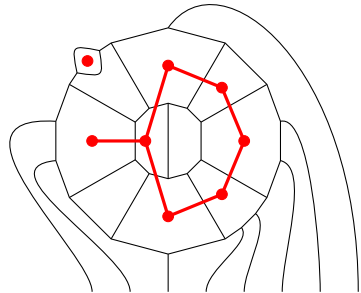

\begin{req}\label{rk:evenextdeg}
  Note that the external degree of a face $f$ is always an even number because $w$ being bipartite, all cycles are of even length and hence $\deg_{D(w)}$ is even.
\end{req}

Let $G$ be a red graph for $w$, then if on the web we colour the faces which belongs to $V(G)$, then the external degree of a face $f$ in $V(G)$ is the number of half-edges of $w$ which touch the face $f$ and lie in the uncoloured region. These half-edges are called the \emph{grey half-edges} of $f$ in $G$ or of $G$ when we consider the set of all grey half-edges of all vertices of $G$. See figure~\ref{fig:halfgrey}.
\begin{figure}[ht]
  \centering
  \begin{tikzpicture}[scale=0.6]
    \input{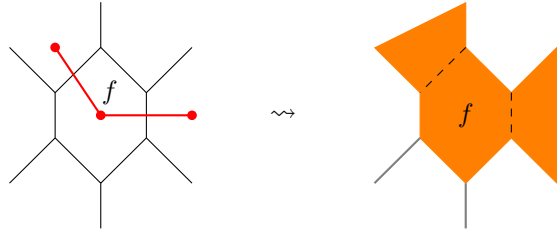}
  \end{tikzpicture}
  \caption{Interpetation of the external degree in terms of grey half-edges. On the left, a portion of a web $w$ with a red graph; on the right, the same portion of $w$ with the vertices of $G$ orange-coloured. The external degree of $f$ is the number of half edges touching $f$ which are not orange. In our case $\ed(f)=2$.}
  \label{fig:halfgrey}
\end{figure}

An oriented red graph is a red graph together with an orientation, \emph{a priori} there is no restriction on the orientations, but as we shall see just a few of them will be relevant to consider.
\begin{dfn}\label{dfn:index-red-graph}
  Let $w$ be an $\epsilon$-web, $G$ be a red graph for $w$ and $o$ an orientation for $G$, we define the level $i_o(f)$ (or $i(f)$ when this is not ambiguous) of a vertex $f$ of $G$ by the formula:
\begin{align*}
i_o(f) &\eqdef  2 - \frac{1}{2}\ed(f) - \#\{\textrm{edges of $G$  pointing to $f$}\} \\
&= 2- \frac{\deg_{D(w)}}2 + \#\{\textrm{edges of $G$  pointing away from $f$}\}
\end{align*}
and the level $I(G)$ of $G$ is the sum of levels of all vertices of $G$. 
\end{dfn}
\begin{req}\label{req:formindex}
The level is an integer because of remark~\ref{rk:evenextdeg}.
Note that the level of $G$ does not depend on the orientation of $G$ and we have the formula:\[ I(G) = 2\#V(G) - \#E(G) - \frac{1}{2}\sum _{f\in v(G)}\ed(f).\]
\end{req}

\begin{dfn}\label{dfn:RG-admissible}
A red graph is \emph{admissible} if one can choose an orientation such that for each vertex $f$ of $G$ we have: $i(f)\geq 0$. Such an orientation is called \emph{a fitting orientation}. An admissible red graph $G$ for $w$ is \emph{exact} if $I(G)=0$. 
\marginpar{For a web $w$ we define $M(w) = \max I(G)$ where the maximum is taken over all admissible red graph $G$ for $w$.???}
\end{dfn}

\begin{dfn}\label{dfn:paired-RG}\marginpar{should we put the orientation ?}
  Let $w$ be an $\epsilon$-web and $G$ be a red graph for $w$. A \emph{pairing} of $G$ is a partition of the grey half-edges of $G$ into subsets of 2 elements such that for any subset the two half-edges touch the same face $f$, and one points to $f$ and the other one points away from $f$. A red graph together with a pairing is called a \emph{paired red graph.}
\end{dfn}

\begin{dfn}
  A red graph $G$ in an $\epsilon$-web $w$ is \emph{fair} (\resp \emph{nice}) if for every vertex $f$ of $G$ we have $\ed(f)\leq 4$ (\resp $\ed(f)\leq 2$). 
\end{dfn}

\begin{lem}\label{lem:RGadm2fair}
  If $G$ is an admissible red graph in an $\epsilon$-web $w$, then $G$ is fair.
\end{lem}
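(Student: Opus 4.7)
The plan is to derive the bound $\ed(f) \leq 4$ directly from the definition of $i_o(f)$ together with non-negativity of edge counts, with essentially no combinatorial work required.

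By hypothesis $G$ is admissible, so we may fix a fitting orientation $o$ of $G$, meaning $i_o(f) \geq 0$ for every vertex $f$ of $G$. Unpacking Definition~\ref{dfn:index-red-graph}, this inequality reads
\[
2 - \tfrac{1}{2}\ed(f) - \#\{\textrm{edges of $G$ pointing to $f$}\} \;\geq\; 0.
\]
The quantity $\#\{\textrm{edges of $G$ pointing to $f$}\}$ is a cardinality and hence non-negative, so discarding it only weakens the inequality, yielding $2 - \tfrac{1}{2}\ed(f) \geq 0$, i.e.\ $\ed(f) \leq 4$. Since this holds for every $f \in V(G)$, the red graph $G$ is fair.

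The only thing to double-check is that the formula for $i_o(f)$ is indeed the one used to define admissibility (this is immediate from the definitions), and that $\ed(f)$ is well-defined and non-negative so that the argument makes sense (it is, by definition, and in fact by Remark~\ref{rk:evenextdeg} it is an even integer, which is consistent with $\ed(f) \in \{0,2,4\}$ when a fitting orientation exists). There is no main obstacle here: the lemma is a direct consequence of the definition, and the substantive content is in the companion notion of \emph{nice} red graph (where $\ed(f)\leq 2$), which presumably requires a more delicate argument later on.
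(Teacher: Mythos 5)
Your argument is correct and is exactly what the paper means by ``follows directly from the definition of the level'': with a fitting orientation, $0 \leq i_o(f) = 2 - \tfrac12\ed(f) - \#\{\textrm{edges pointing to }f\} \leq 2 - \tfrac12\ed(f)$, giving $\ed(f)\leq 4$. No difference in approach.
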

\begin{proof}
 It follows directly from the definition of the level.
\end{proof}
\begin{cor}\label{cor:RGinNEhaveVs}
  Let $w$ be a non-elliptic $\epsilon$ web, then if $G$ is an admissible red graph for $w$ then it has at least two edges.
\end{cor}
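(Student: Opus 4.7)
The plan is to combine the fairness bound of Lemma~\ref{lem:RGadm2fair} with the fact that non-ellipticity forces each disk-face of $w$ to have boundary length at least $6$. Once these two ingredients are in place, the conclusion follows by an almost immediate counting argument, so the main (still very mild) work is to secure the first of them.

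First, I would establish the face-degree lower bound. Any vertex $f$ of $G$ corresponds to a disk-face of $w$ by condition~\ref{item:dfnRG1} of Definition~\ref{dfn:red-graph}. The boundary of such a disk is a simple closed curve, hence a cycle lying in a single ($2$-connected) component of $w$. Bipartiteness forces its length to be even, and non-ellipticity rules out lengths $0$, $2$ and $4$, so $\deg_{D(w)}(f)\geq 6$ for every $f\in V(G)$. Lemma~\ref{lem:RGadm2fair} then gives $\ed(f)\leq 4$, and combining with the previous step yields
\[\deg_G(f)=\tfrac12\bigl(\deg_{D(w)}(f)-\ed(f)\bigr)\geq 1.\]
In particular $G$ has no isolated vertex, so $|V(G)|\geq 2$ and $|E(G)|\geq 1$.

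It remains to exclude $|E(G)|=1$. In that case the unique edge would join two vertices $f_1,f_2$ of $G$-degree $1$, both with external degree at least $4$; whichever way the edge is oriented, the target vertex $f$ would satisfy $i(f)=2-\tfrac12\ed(f)-1\leq -1$, contradicting the existence of a fitting orientation. Hence $|E(G)|\geq 2$.
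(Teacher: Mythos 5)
Your proof is correct, and its core is the same as the paper's: combine the bound $\deg_{D(w)}(f)\geq 6$ forced by non-ellipticity with the fairness bound $\ed(f)\leq 4$ of Lemma~\ref{lem:RGadm2fair}. But your write-up is actually more complete than the printed argument. The paper's proof only observes that a red graph with a single vertex would have external degree at least $6$, which rules out $\#V(G)=1$; as written this yields that $G$ has at least two vertices, each of positive $G$-degree, hence at least \emph{one} edge — not yet two. Your final step, showing that a unique edge would force both endpoints to have $\ed\geq 4$ so that the endpoint the edge points to has level at most $-1$, contradicting the existence of a fitting orientation, is exactly what is needed to reach the statement as literally claimed; the paper instead only gestures at the much stronger facts (Corollary~\ref{cor:no-tree}, Proposition~\ref{prop:largecycle}, proved later) that such a $G$ has at least six vertices. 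One minor point of precision: the boundary of a disk-face of a plane graph need not be a \emph{simple} closed curve in general, but the count of incident edges with multiplicity is still even by bipartiteness and still at least $6$ by non-ellipticity (a shorter disk-face would be a circle, digon or square), which is all your argument uses.
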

\begin{proof}
  If G would contain just one vertex $f$, this would have external degree greater or equal to 6, contradicting lemma~\ref{lem:RGadm2fair}. We can actually show that such a red graph contains at least 6 vertices (see corollary~\ref{cor:no-tree} and proposition~\ref{prop:largecycle}).
\end{proof}
\begin{req}\label{req:number-of-pairing}
If a red graph $G$ is nice, there is only one possible pairing. If it is fair the number of pairing is $2^n$ where $n$ denote the number of vertices with external degree equal to 4. 

If on a picture one draws together a web $w$ and a red graph $G$ for $w$, one can encode a pairing of $G$ on the picture by joining\footnote{We impose that $w$ intersect the dashed lines only at their ends.} with dashed line the paired half-edges. Note that if $G$ is fair it's always possible to draw disjoint dashed lines (see figure~\ref{fig:pairings} for an example).

\end{req}
\begin{figure}[ht]
  \centering
  \begin{tikzpicture}[scale=0.7]
    \begin{scope}[yscale = {1}, xscale={1},decoration={markings, mark=at
     position 0.5 with {\arrow{>}}},postaction={decorate}]
\draw (-3.5,-1) .. controls +(0,2.5) and +(0,1) .. (-0.5, 0.5) -- (0.5, 0.5) .. controls +(0,1) and +(0,2.5).. (3.5, -1);
\draw (-3, -1) .. controls +(0,1) and +(-1, 0) .. (-2, 0.5) -- (2, 0.5).. controls +(+1,0) and +(0,1) .. (3, -1);
\draw (-2.5, -1) .. controls +(0, 0.5) and +(-0.5, 0) ..  (-2, -0.5) -- (2, -0.5) ..controls +(+0.5,0) and +(0,0.5) .. (2.5, -1);
\draw (-2, 0.5) -- (-2, -0.5);
\draw (-1, 0.5) -- (-1, -0.5);
\draw (1, 0.5) -- (1, -0.5);
\draw (2, 0.5) -- (2, -0.5);
\draw (-1, 0.5) -- (-1, -0.5);
\draw (-0.5,-1) -- +(0,0.5);
\draw (0.5,-1) -- +(0,0.5);
\draw[dashed] (-0.5, -0.5) -- +(0, 1);
\draw[dashed] (+0.5, -0.5) -- +(0, 1);
\draw[dashed] (-2, -0.5) .. controls +(0.5, 0) and +(0.5,0) .. +(0,1);
\draw[dashed] (2, -0.5) .. controls +(-0.5, 0) and +(-0.5,0) .. +(0,1);
\draw[fill, color =red] (0,0) circle (3pt);
\draw[fill, color =red] (-1.5,0) circle (3pt);
\draw[fill, color =red] (1.5,0) circle (3pt);
\draw[very thick, color =red] (-1.5,0) -- (1.5,0);
\end{scope}

\begin{scope}[xshift = 10cm, yscale = {1}, xscale={1},decoration={markings, mark=at
     position 0.5 with {\arrow{>}}},postaction={decorate}]
\draw (-3.5,-1) .. controls +(0,2.5) and +(0,1) .. (-0.5, 0.5) -- (0.5, 0.5) .. controls +(0,1) and +(0,2.5).. (3.5, -1);
\draw (-3, -1) .. controls +(0,1) and +(-1, 0) .. (-2, 0.5) -- (2, 0.5).. controls +(+1,0) and +(0,1) .. (3, -1);
\draw (-2.5, -1) .. controls +(0, 0.5) and +(-0.5, 0) ..  (-2, -0.5) -- (2, -0.5) ..controls +(+0.5,0) and +(0,0.5) .. (2.5, -1);
\draw (-2, 0.5) -- (-2, -0.5);
\draw (-1, 0.5) -- (-1, -0.5);
\draw (1, 0.5) -- (1, -0.5);
\draw (2, 0.5) -- (2, -0.5);
\draw (-1, 0.5) -- (-1, -0.5);
\draw (-0.5,-1) -- +(0,0.5);
\draw (0.5,-1) -- +(0,0.5);
\draw[dashed] (-0.5, -0.5) .. controls +(0,0.3) and +(0,0.3) .. (+0.5,-0.5);
\draw[dashed] (-0.5, +0.5) .. controls +(0,-0.3) and +(0,-0.3) .. (+0.5,+0.5);
\draw[dashed] (-2, -0.5) .. controls +(0.5, 0) and +(0.5,0) .. +(0,1);
\draw[dashed] (2, -0.5) .. controls +(-0.5, 0) and +(-0.5,0) .. +(0,1);
\draw[fill, color =red] (0,0) circle (3pt);
\draw[fill, color =red] (-1.5,0) circle (3pt);
\draw[fill, color =red] (1.5,0) circle (3pt);
\draw[very thick, color =red] (-1.5,0) -- (1.5,0);
\end{scope}
  \end{tikzpicture}
  \caption{A web $w$, a red graph $G$ and the two possible pairings for $G$.}
  \label{fig:pairings}
\end{figure}
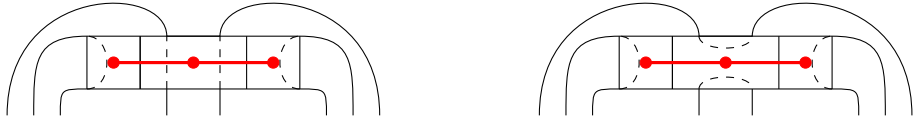
The rest of the chapter (respectively in section~\ref{sec:RG2idem} and \ref{sec:kup2RG}) will be devoted to show the following two theorems:
\begin{thm}\label{thm:RG2idempotent}
To every exact paired red graph of $w$ we can associate a non trivial idempotent of $Hom_{K^\epsilon}(P_w,P_w)$. Further more the direct factor associated with the idempotent is a web-module.
\end{thm}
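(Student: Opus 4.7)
The plan is to produce an explicit $(w,w)$-foam $e_{G,p}$ built locally from the data of $(G,p)$, and then verify in turn that it has degree zero, that it is an idempotent in $\hom_{K^\epsilon}(P_w,P_w)$, that it is non-trivial, and that the summand it cuts out is a web-module $P_{w'}$ for an $\epsilon$-web $w'$ read off from $(G,p)$.

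\emph{Step 1: construction.} First I would set $e_{G,p}$ to agree with $w\times[0,1]$ outside the coloured faces; inside each face $f\in V(G)$ I would cap $f$ off by a horizontal facet at mid-height, and for each pair of grey half-edges prescribed by $p$ I would attach a vertical strip glued to that cap along a singular arc. Since $G$ is admissible, lemma~\ref{lem:RGadm2fair} ensures $\ed(f)\le 4$, so the local piece is always a standard zip/unzip on a digon or on a square. A direct Euler-characteristic calculation shows that each face contributes to the degree of $e_{G,p}$ a term proportional to $i_o(f)$ for any fitting orientation $o$; exactness $I(G)=0$ together with the non-negativity built into definition~\ref{dfn:RG-admissible} forces $i_o(f)=0$ at every $f$, and hence $\deg e_{G,p}=0$, so $e_{G,p}$ lies in the degree-zero part of $\hom_{K^\epsilon}(P_w,P_w)$.

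\emph{Step 2: idempotency.} Next I would show $e_{G,p}\circ e_{G,p} = e_{G,p}$ modulo \FR{}. Composing the two copies leaves $w\times[0,1]$ untouched away from the coloured faces and produces, inside each face $f$, two stacked local pieces. The plan is to reduce them face by face: the digon and square relations of Figure~\ref{fig:localrel} together with the surgery formula collapse the two caps into one, up to scalar, and the stray dotted spheres and theta-foams produced along the way are evaluated by Proposition~\ref{prop:relFR} and Figure~\ref{fig:thetaeval}; the exactness condition is exactly what is needed for these scalars to reduce to $1$. Patching these local identities globally gives $e_{G,p}\circ e_{G,p}=e_{G,p}$.

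\emph{Step 3: the direct summand is a web-module and non-triviality.} From $(G,p)$ I would read off a new $\epsilon$-web $w'$ by resolving $w$ inside each coloured face: a digon face is removed via the bigon resolution and a square face is replaced by the unique square resolution selected by the pairing $p$. The lower and upper halves of $e_{G,p}$ define foams $\alpha\colon w\to w'$ and $\beta\colon w'\to w$ with $\beta\circ\alpha=e_{G,p}$. The same kind of local cancellation as in Step~2 then yields $\alpha\circ\beta=\mathrm{id}_{w'}$ modulo \FR{}, which simultaneously identifies the image of $e_{G,p}$ with $P_{w'}$ and proves that $e_{G,p}\neq 0$; moreover $e_{G,p}\neq\mathrm{id}_w$ because $w'\neq w$, since by corollary~\ref{cor:RGinNEhaveVs} any admissible red graph in a non-elliptic $w$ touches at least two faces where a genuine resolution occurs.

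\emph{Main obstacle.} I expect the real difficulty to sit in the local computation at a face with $\ed(f)=4$. There the square relation produces two summands and only the one matching the paired strips should survive; one has to verify that the coefficients generated by the theta-foam evaluation exactly cancel the spurious summand, and that these local identifications stay consistent when two paired faces share a grey half-edge of $w$. Showing that this combinatorial-to-foam dictionary is well defined uniformly in $(G,p)$, rather than just at an isolated face, is the technical heart of the proof.
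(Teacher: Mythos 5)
Your outline follows the paper's strategy (build $p_G\colon w\to w_G$ and $i_G=\overline{p_G}$ from unzips, digon/square moves and caps, show $p_G\circ i_G$ is a multiple of $\id_{w_G}$, and normalise to get the idempotent $i_G\circ p_G$ cutting out $P_{w_G}$), but Step~2 contains a genuine gap. The assertion that ``the exactness condition is exactly what is needed for these scalars to reduce to $1$'' is false: exactness only guarantees, via the degree count $\deg(p_G\circ i_G)=\deg(\id_{w_G})+2I(G)$, that $p_G\circ i_G=\lambda\cdot\id_{w_G}$ for some \emph{integer} $\lambda$ with no dots appearing. The paper computes $\lambda=\pm\#\{\textrm{fitting orientations of }G\}$ (proposition~\ref{prop:pciisid}), and since reversing a fitting orientation along any oriented cycle yields another fitting orientation, for the exact red graphs that actually occur one has $|\lambda|\geq 2$; one must divide by $\lambda$ to get an idempotent. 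More seriously, you give no mechanism for proving $\lambda\neq 0$, which is the real content of the theorem: the square relation produces two summands at each $\ed(f)=4$ face and the E-relation produces two at each edge of $G$, and \emph{a priori} these could cancel globally to give $\lambda=0$. The paper's device is to expand $p_G\circ i_G$ as a signed sum over all $2^{\#E(G)}$ orientations of $G$ (lemma~\ref{lem:sumoforientation}), kill every non-fitting orientation using the sphere and digon relations (lemma~\ref{lem:nonfitting20}), and check via the bookkeeping of $\gamma(o)$ and $\mu(o)$ that all fitting orientations contribute with the \emph{same} sign (lemma~\ref{lem:fitting2id} and the lemma following it); exactness then guarantees at least one fitting orientation exists, so $\lambda\neq 0$. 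You correctly identify in your ``main obstacle'' paragraph that this coefficient-tracking is the technical heart, but the proposal does not supply the argument.

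A second, smaller gap: non-triviality of the idempotent does not follow merely from $w_G\neq w$. One must rule out that the degree-zero endomorphism $i_G\circ p_G$ is a nonzero multiple of $\id_w$ (in which case, being idempotent, it would equal $\id_w$). The paper does this in proposition~\ref{prop:idfoam} by showing that a reduced $(w,w)$-foam equivalent to a nonzero multiple of $w\times[0,1]$ must be diffeomorphic to $w\times[0,1]$, which $i_G\circ p_G$ is not since it factors through a web with fewer vertices.
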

\begin{thm}\label{thm:on-monic-2-RG}
  Let $w$ be a non-elliptic $\epsilon$-web, then if  $\kup{\overline{w}w}$ is non-monic or have degree bigger than $l(\epsilon)$, then there exists an exact red graph for $w$, therefore the $K^\epsilon$-module $P_w$ is decomposable.
\end{thm}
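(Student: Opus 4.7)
The plan is to extract an exact red graph from the combinatorics of $\kup{\overline{w}w}$. Since $w$ is non-elliptic, every reducible substructure (square, digon, circle) appearing in the closed web $\overline{w}w$ must involve the gluing between $w$ and $\overline{w}$, and my first step would be to show that any iterated application of the Kuperberg relations of Proposition~\ref{prop:Kup} to $\overline{w}w$ can be organised as the choice of a set of faces of $w$ together with a pairing of their grey half-edges, subject to conditions (i)--(iii) of Definition~\ref{dfn:red-graph}. The patterns contributing to monomials of $q$-degree at least $l(\epsilon)$ are then precisely the admissible paired red graphs, and the $q$-degree of the corresponding contribution is $l(\epsilon) + I(G)$, with $I(G)$ the level of Definition~\ref{dfn:index-red-graph}. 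In particular, the exclusion (ii) (no three coloured faces share a vertex) reflects the fact that three mutually incident faces of $w$ cannot be simultaneously reduced via Kuperberg moves.

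Once this dictionary is set up, the hypothesis translates to: either a non-empty exact red graph already exists (giving an extra contribution to the coefficient of $q^{l(\epsilon)}$, which breaks monicity), or there exists an admissible red graph $G$ with $I(G) > 0$ (producing a term of $q$-degree strictly larger than $l(\epsilon)$). The first case is the desired conclusion. In the second case, I would construct an exact red graph from $G$ by iterated trimming: with a fitting orientation $o$ fixed, at each stage I would delete a vertex $f$ satisfying $i_o(f) > 0$ and adjust the orientation of the newly exposed edges. Using Remark~\ref{req:formindex} and the handshake lemma for $G$, each such deletion decreases $I$ by a controlled amount, while conditions (i), (ii), (iii) of Definition~\ref{dfn:red-graph} are inherited by the induced subgraph. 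Eventually one reaches $I = 0$ without emptying the graph, and the final invocation of Theorem~\ref{thm:RG2idempotent} yields the sought decomposition of $P_w$.

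The main obstacle will be ensuring that the trimming procedure never destroys admissibility. Removing a vertex of positive level may leave a neighbour with a newly negative level once its edges to the deleted vertex are gone, and the external degree $\ed$ of that neighbour, being essentially rigid, cannot compensate. To control this, I would argue globally rather than greedily: among admissible red graphs in $w$ with $I \geq 0$, pick one whose vertex set is minimal for inclusion, and show via conditions (i)--(iii) together with non-ellipticity of $w$ (which rules out isolated low-degree vertices in $D(w)$) that any such minimal $G$ must already be exact. A secondary difficulty lies in the dictionary step itself: distinct resolution sequences applied to $\overline{w}w$ can produce the same monomial in $\kup{\overline{w}w}$, so one must verify that the bookkeeping genuinely detects red graphs without double-counting or overlooking configurations. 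The rigidity imposed by (ii), combined with non-ellipticity, is the key combinatorial input that should make the accounting go through.
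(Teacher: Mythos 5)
Your closing step --- picking an admissible red graph whose vertex set is minimal for inclusion, and showing that such a minimal red graph is necessarily exact --- is sound and coincides with what the paper does (Lemma~\ref{lem:no_cut}, Lemma~\ref{lem:strong-connected}, Lemma~\ref{lem:minimal2exact}, Proposition~\ref{prop:exist-exact}). That part of your plan is on solid ground, and your instinct that greedy vertex-deletion can break admissibility, while a global minimality argument succeeds, is exactly right.

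The gap is the first step, the ``dictionary.'' You assert that iterated Kuperberg resolutions of $\overline{w}w$ ``can be organised as the choice of a set of faces of $w$ together with a pairing of their grey half-edges, subject to conditions (i)--(iii),'' and that admissible paired red graphs are \emph{precisely} the patterns producing $q$-degree $\geq l(\epsilon)$, with contribution $l(\epsilon)+I(G)$. None of this is justified, and it is in fact the entire content of the theorem. Several concrete obstacles block a naive bijection: a single resolution step in $\overline{w}w$ creates new circles, digons, and squares that were present in neither $w$ nor $\overline{w}$ and that cannot be read off as faces of $w$; distinct resolution trees contribute to the same monomial with signs and multiplicities governed by $[2]$ and $[3]$, so matching monomials to red graphs is not a one-to-one bookkeeping problem but a genuine accounting identity that must be proved; and the intermediate webs one passes through are in general elliptic, so non-ellipticity of $w$ is not available for a direct induction on resolution depth. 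The paper's proof never establishes such a dictionary. Instead, Theorem~\ref{thm:on-monic-2-RG} is reduced to Theorem~\ref{thm:thmwithRG}, which in turn is proved by recursion on the number of edges of $w$ via Proposition~\ref{prop:techRG}. That recursion introduces and controls $\partial$-connectedness (Section~\ref{sec:part-conn}), builds \emph{stacks} of nice red graphs by lifting red graphs back along digon and square reductions, and then flattens two stacked red graphs into one via Lemma~\ref{lem:tech}. Lemma~\ref{lem:tech} itself rests on the alternative ``simplification'' viewpoint of Section~\ref{sec:new-approach-to-red-graph} (relating the level of $G_{w\to w'}$ to counts of essential faces, Lemma~\ref{lem:essfacesmatters}) and on the $D$-graph Euler-characteristic inequalities of Section~\ref{sec:proof-techn-lemm} (Lemmas~\ref{lem:techtech0} and~\ref{lem:techtech2}). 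To turn your sketch into a proof you would either have to reconstruct this machinery or supply a genuinely new argument for the dictionary, and as written you have supplied neither.
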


\subsection{Combinatorics on red graphs}
\label{sec:comb-red-graphs}
On the one hand, the admissibility of a red graph relies on the local non-negativity of the level for some orientation, on the other hand the global level $I$ does not depend on the orientation. However, it turns out that the existence of admissible red graph $G$ for an $\epsilon$-web $w$ can be understood thanks to $I$ in some sense: 
\begin{prop}\label{prop:2admissible}
  Let $w$ be an $\epsilon$-web, suppose that there exists  $G$ a red graph for $w$ such that $I(G)\geq 0$, then there exists an admissible red graph $\widetilde{G}$ for $w$ such that $I(\widetilde{G})\geq I(G)$.\marginpar{mettre non-empty dans la defnition de red graph}
\end{prop}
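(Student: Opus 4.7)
My plan is to reinterpret admissibility as a capacity-constrained orientation problem and then apply an iterative pruning argument driven by a Hall/Hakimi-type obstruction. For each vertex $f \in V(G)$, set $c(f) := 2 - \tfrac{1}{2}\ed(f)$; this is an integer by Remark~\ref{rk:evenextdeg}. From the definition of $i_o(f)$, an orientation $o$ of $G$ is fitting if and only if the in-degree at each $f$ is at most $c(f)$. So $G$ is admissible precisely when it admits an orientation with in-degrees bounded by the capacities $c$. This is the classical setting of Hakimi's theorem (a corollary of Hall's marriage theorem, obtained by matching each edge to a ``capacity slot'' at one of its endpoints): such an orientation exists iff $\#E(G[T]) \leq \sum_{v\in T} c(v)$ for every $T \subseteq V(G)$.

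The next step is to translate this Hakimi inequality into the language of $I$ using the formula of Remark~\ref{req:formindex}. Expanding $\sum_{v \in T} c(v) - \#E(G[T])$ with $\ed_G(v) = \deg_{D(w)}(v) - 2\deg_G(v)$ and the identity $\sum_{v \in T} \deg_G(v) = 2\#E(G[T]) + \#E_{T, V(G)\setminus T}$, where $\#E_{T, V(G)\setminus T}$ denotes the number of edges of $G$ between $T$ and its complement, a short computation yields
\[\sum_{v \in T} c(v) - \#E(G[T]) = I(G[T]) + \#E_{T, V(G)\setminus T}.\]
The same formula applied to the partition $V(G) = T \sqcup (V(G)\setminus T)$ also gives the decomposition
\[I(G) = I(G[T]) + I(G \setminus T) + \#E_{T, V(G)\setminus T}.\]
Thus admissibility of $G$ is equivalent to $I(G[T]) + \#E_{T, V(G)\setminus T} \geq 0$ for every $T \subseteq V(G)$. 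Some care is needed here because $\ed_{G[T]}(v)$ and $\ed_G(v)$ differ by $2(\deg_G(v) - \deg_{G[T]}(v))$.

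The pruning is then immediate. If $G$ is already admissible, set $\widetilde G := G$. Otherwise pick a Hall-violating $T \subseteq V(G)$, so that $I(G[T]) + \#E_{T, V(G)\setminus T} < 0$. By the decomposition, $I(G \setminus T) = I(G) - \bigl(I(G[T]) + \#E_{T, V(G)\setminus T}\bigr) > I(G) \geq 0$. Moreover $T \subsetneq V(G)$, for the choice $T = V(G)$ would give $I(G) < 0$, contradicting the hypothesis. So $G \setminus T$ is a non-empty induced subgraph of $G$, and conditions~(i), (ii), (iii) of Definition~\ref{dfn:red-graph} are trivially preserved under passage to a subset of vertices (the first two being properties of the face set, the last automatic for an induced subgraph of an induced subgraph of $D(w)$). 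Thus $G \setminus T$ is again a red graph for $w$; replace $G$ by $G \setminus T$ and iterate. Since $V(G)$ is finite and $I$ strictly increases at each step, the procedure terminates with an admissible red graph $\widetilde G$ satisfying $I(\widetilde G) \geq I(G)$. The main obstacle is the Hakimi equivalence; once one has it, the rest is bookkeeping with the formula of Remark~\ref{req:formindex}.
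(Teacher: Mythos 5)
Your proof is correct, and it takes a genuinely different route from the paper's. The paper argues directly with orientations: it picks an orientation $o$ minimising $\sum_{f}|i_o(f)|$, shows (by reversing a path, which decreases the total by $2$) that there can be no oriented path from a positive-level vertex to a negative-level one, and then takes $\widetilde G$ to be the set of vertices reachable from positive-level vertices; the edges leaving $\widetilde G$ all point inward, so the levels inside are unchanged and $\widetilde G$ is admissible with $I(\widetilde G)\geq I(G)$ in a single step. You instead recast admissibility of $G$ as the Hakimi/Hall condition $\#E(G[T])\leq \sum_{v\in T}c(v)$ for all $T$ (with $c(f)=2-\tfrac12\ed(f)$, correctly handling possibly negative capacities via the singleton sets), translate the Hall defect into the clean identity $\sum_{v\in T}c(v)-\#E(G[T])=I(G[T])+\#E_{T,V\setminus T}$, and derive the additive decomposition $I(G)=I(G[T])+I(G\setminus T)+\#E_{T,V\setminus T}$; dropping a Hall-violating $T$ then strictly increases $I$, the red-graph axioms (i)--(iii) pass to the induced subgraph on $V(G)\setminus T$, and the process terminates because $V(G)$ is finite. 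Both arguments are valid. Yours leans on a classical external orientation theorem and iterates rather than pruning in one shot, but the payoff is a cleaner structural characterisation of admissibility and the decomposition of $I$ over vertex partitions, which would be reusable elsewhere. One small point worth making explicit if you write this up: you should note (as you effectively do via the $T=\{v\}$ case) that no non-negativity hypothesis on $c$ is needed for the ``iff,'' since a vertex with $c(v)<0$ already violates the Hall condition on its own.
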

\begin{proof} If $G$ is already admissible, there is nothing to show, hence we suppose that $G$ is not admissible.  Among all the orientations for $G$, we choose one such that $\sum_{f\in V(G)} |i(f)|$ is minimal, we denote is by $o$. From now on $G$ is endowed with this orientation. As $G$ is not admissible there exists some vertices with negative level and some with positive level. 

We first show that there is no oriented path from a vertex $f_p$ with $i_o(f_p)> 0$ to a vertex $f_n$ with $i_o(f_n)<0$. Suppose there exists $\gamma$ such a path. Let us inspect $o'$ the orientation which is the same as $o$ expect along the path $\gamma$ where it is reversed. For all vertices $f$ of $G$ but $f_p$ and $f_n$, we have $i_o(f)=i_{o'}(f)$ (for all the vertices inside the path, the position of the edges pointing to them is changed, but not their number), and we have:
\[
i_{o'}(f_p) = i_o(f_p)-1 \quad i_{o'}(f_n) = i_o(f_n)+1.
\] But then $\sum_{f\in V(G)} |i_{o'}(f)|$ would be strictly smaller than $\sum_{f\in V(G)} |i_o(f)|$ and this contradicts that $o$ is minimal.

We consider $(\widetilde{G}, \tilde{o})$ the induced oriented sub-graph of $(G,o)$ with set of vertices $V(\widetilde{G})$ equal to the vertices of $G$ which can be reach from a vertex with positive level by an oriented path. This set is not empty since it contains the vertices with positive degree. It contains no vertex with negative degree. For all vertices of $\widetilde{G}$, we have:
\begin{align*} i_{\tilde{o}}(f) &=  2 - \frac{\deg_{D(w)}(f)}2 + \#\{\textrm{edges of $G$  pointing away from  $f$ in $\widetilde{G}$}\} \\ &=
2 - \frac{\deg_{D(w)}(f)}2 + \#\{\textrm{edges of $G$  pointing away from $f$ in $G$}\} \\
&= i_o(f).\end{align*} 
The second equality holds because if $f$ is in $V(\widetilde{G})$ all the edges in $E(G)\setminus E(G')$. $G$ which are not in $\widetilde{G}$ point to $f$ by definition of $\widetilde{G}$. This shows that $\widetilde{G}$ is admissible and $I(\widetilde{G})>I(G)$.
\end{proof}

\begin{lem}\label{lem:RGinNE2niceRG}
  Let $w$ be a non-elliptic web, suppose that it contains a red graph of level $k$, then it contains an admissible nice red graph of level at least $k$.
\end{lem}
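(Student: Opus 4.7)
My plan is in two steps: first reduce to the admissible case via Proposition~\ref{prop:2admissible}, then iteratively delete vertices of external degree~$4$ to reach a nice red graph.

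For the first step, Proposition~\ref{prop:2admissible} (applied with $k\geq 0$, which is the case of interest) yields an admissible red graph $G$ with $I(G)\geq k$ together with a fitting orientation $o$.

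The core of the argument is the following trimming step. Suppose $G$ has a vertex $f$ with $\ed(f)>2$. By Lemma~\ref{lem:RGadm2fair} (admissibility implies fairness) we have $\ed(f)\leq 4$, so $\ed(f)=4$, and then the admissibility inequality
\[
0\leq i_o(f) = 2 - \tfrac{1}{2}\ed(f) - \#\{\text{edges of $G$ pointing to }f\} = -\#\{\text{edges of $G$ pointing to }f\}
\]
forces $f$ to be a \emph{source} of $(G,o)$: every edge of $G$ incident to $f$ points away from $f$. Let $G'=G\setminus\{f\}$ be the induced subgraph of $D(w)$ on $V(G)\setminus\{f\}$, endowed with the restricted orientation $o'$. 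The three conditions of Definition~\ref{dfn:red-graph} pass from $G$ to $G'$ for free. For each vertex $f'\in V(G')$ joined to $f$ in $G$ by $m$ edges, removing $f$ raises $\ed(f')$ by $2m$ while lowering by $m$ the count of edges pointing to $f'$; the two effects cancel in the level formula, so $i_{o'}(f')=i_o(f')\geq 0$ for all $f'\in V(G')$. Hence $G'$ is admissible with fitting orientation $o'$, and $I(G')=I(G)-i_o(f)=I(G)\geq k$.

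I would then iterate this trimming step. Since $\#V(G)$ strictly decreases at each step, the process terminates; applying Corollary~\ref{cor:RGinNEhaveVs} to each intermediate admissible red graph in the non-elliptic web $w$ ensures the graph never shrinks below two vertices. The procedure therefore stops at an admissible red graph $\widetilde{G}$ of level at least $k$ in which every vertex has external degree $\leq 2$, that is, at a nice red graph.

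The main point to check carefully is the cancellation $i_{o'}(f')=i_o(f')$ in the trimming step; this hinges on the observation that $\ed(f)=4$ saturates the admissibility bound and thereby forces $f$ to be a source, so that removing $f$ affects the orientation only at its neighbors in a perfectly compensating way.
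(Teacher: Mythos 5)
Your proof is correct and follows essentially the same route as the paper: reduce to the admissible case via Proposition~\ref{prop:2admissible}, observe that admissibility forces any vertex of external degree $4$ to be a source under a fitting orientation, and delete it without changing admissibility or the level (the paper phrases the iteration as a minimality/contradiction argument rather than an explicit loop, but the content is identical). Your verification that the level of each neighbour is unchanged after deletion is in fact more detailed than the paper's.
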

\begin{proof}
  We consider $G$ a red graph of $w$ of level $k$. Thanks to lemma \ref{prop:2admissible} we can suppose that it is admissible. We can take a minimal red graph $G$ for the property of being of level at least $k$ and admissible. The graph $G$ is endowed with a fitting orientation. Now suppose that it is not nice, it means that there exists a vertex $v$ of $G$ which have exterior degree equal to 4. But $G$ being admissible all the edges of $G$ adjacent to $v$ point out of $v$, so that we can remove $v$ \ie we can consider then induced sub-graph $G'$ with all the vertex of $G$ but $v$ with the induced orientation. Then it is admissible, with the same level, hence $G$ is not minimal, contradiction.
\end{proof}

For a non-elliptic $\epsilon$-web, the existence of an exact red graph may appear as an exceptional situation between the case where there is no admissible red graph and the case where all admissible red graphs are non-exact. The aim of the rest of this section is to show the proposition~\ref{prop:exist-exact} which indicates that this is not the case. On the way we state some small results which are not directly useful for the proof but may alight what red-graphs look like.
\begin{prop}\label{prop:exist-exact}
  Let $w$ be a non-elliptic $\epsilon$-web. If there exists an admissible red graph for $w$ then there exists an exact red graph for $w$.
\end{prop}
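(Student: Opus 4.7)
The plan is to argue by strong induction on $|V(G)|$, ranging over admissible red graphs $G$ for $w$. If $I(G) = 0$ then $G$ is exact; otherwise we produce a strictly smaller admissible red graph, to which the inductive hypothesis applies. Using Lemma~\ref{lem:RGinNE2niceRG}, we may assume at the outset that $G$ is nice, so that $\ed(f) \in \{0,2\}$ and the capacity $c(f) := 2 - \tfrac{1}{2}\ed(f)$ lies in $\{1,2\}$ for every $f \in V(G)$.

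Fix a fitting orientation $o$ on $G$ and pick $v \in V^{+} := \{f \in V(G) : i_o(f) > 0\}$. Introduce the reverse-reachable and forward-reachable sets
\[
A = \{u \in V(G) : u \to \cdots \to v \text{ in } G\} \cup \{v\},
\]
\[
F = \{u \in V(G) : v \to \cdots \to u \text{ in } G\} \cup \{v\}.
\]
Every edge of $G$ between $A$ and $V(G) \setminus A$ is forced to be oriented from $A$ to $V(G) \setminus A$, because an in-edge of a vertex of $A$ necessarily originates in $A$; symmetrically for $F$. These closure properties imply that the orientations induced on $G|_{V(G) \setminus A}$ and $G|_F$ are both fitting, and a short calculation yields
\[
I(G|_{V(G) \setminus A}) = I(G) - \sum_{u \in A} i_o(u), \qquad I(G|_F) = \sum_{u \in F} i_o(u).
\]
Since $i_o(v) \geq 1$, both quantities lie in $[0, I(G)]$. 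If $V(G) \setminus A \neq \emptyset$, then $G|_{V(G) \setminus A}$ is a strictly smaller admissible red graph with $I$ at most $I(G)-1$; if $F$ is a proper subset of $V(G)$, then $G|_F$ is a strictly smaller admissible red graph. In either case, the inductive hypothesis closes the step.

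The remaining and most delicate case is $A = F = V(G)$, so that $G$ is strongly connected from $v$. The key claim is that in this situation some $u \in V(G)$ must satisfy $\ed(u) = 2$. Indeed, if every vertex of $V(G)$ had $\ed = 0$, then $\deg_G(f) = \tfrac{1}{2}\deg_{D(w)}(f)$ for every $f \in V(G)$, and comparing this identity with the decomposition of the edges of $w$ incident to $V(G)$ into internal edges (both adjacent faces in $V(G)$) and boundary edges (only one adjacent face in $V(G)$) shows that $V(G)$ would have no boundary edges; consequently $V(G)$ would be closed under face-adjacency in $w$, so since $D(w)$ is connected for a connected web $w$, this would force the unbounded face to belong to $V(G)$, contradicting condition~(i) of Definition~\ref{dfn:red-graph}. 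Picking such a $u$ and a directed path $v \to \cdots \to u$ in $G$ (which exists because $u \in F = V(G)$), and reversing it, produces a new fitting orientation $o'$ with $i_{o'}(u) = 1$ and $d^{-}_{o'}(u) = c(u) - 1 = 0$. Removing $u$ then yields an admissible red graph with strictly fewer vertices, closing the induction. The main obstacle is this auxiliary geometric claim that $\ed$ is not identically $0$, which is the only point in the proof where non-ellipticity of $w$ genuinely enters; everything else is routine bookkeeping with induced subgraphs and directed-path reversals.
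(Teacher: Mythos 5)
Your overall strategy — induct on $|V(G)|$, passing to a strictly smaller admissible red graph until the level drops to zero — is in essence the paper's route, rephrased. The paper takes a minimal admissible red sub-graph and shows it is exact (Lemma~\ref{lem:minimal2exact}); your $A$ and $F$ constructions are the content of Lemma~\ref{lem:no_cut} (and of the proof of Proposition~\ref{prop:2admissible}), and your path-reversal to push positive level onto a cheap vertex is the same device the paper uses. Those reduction steps, and the bookkeeping showing the induced orientations remain fitting, are all correct.

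The gap is the auxiliary claim you flag as ``the only point where non-ellipticity enters.'' You assert that $\ed\equiv 0$ on $V(G)$ would force $V(G)$ to have no boundary edges, hence to be closed under face-adjacency, hence to contain the unbounded face. That implication is false. The identity $\ed(f)=0$, i.e.\ $\deg_G(f)=\tfrac12\deg_{D(w)}(f)$, says that exactly half of the boundary edges of $f$ are internal to the orange region (they in fact alternate with grey ones around $f$, by condition~(\ref{item:dfnRG2})); the other half are boundary edges. Summing $\deg_{D(w)}(f)=2\deg_G(f)$ over $f\in V(G)$ and writing the left side as $2\,\mathrm{int}+b$ (internal edges counted twice, boundary edges once) and the right side as $4\,\mathrm{int}$ gives $b=2\#E(G)$, which is strictly positive as soon as $G$ has an edge. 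So $V(G)$ is very far from being closed under face-adjacency, and the contradiction you aim for never materializes by this route.

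The fact you need is nevertheless true, and the paper proves the closely related statement — a minimal admissible red graph for a non-elliptic web has a vertex of $G$-degree $2$ — in the unnumbered lemma just before Lemma~\ref{lem:minimal2exact}, by the Euler/girth argument. That is the argument you should substitute here: if $\ed\equiv 0$ then, since $w$ is non-elliptic, $\deg_{D(w)}(f)\geq 6$ and hence $\deg_G(f)\geq 3$ for every $f\in V(G)$, while Proposition~\ref{prop:largecycle} forces every simple cycle of $G$ to have length at least $6$. A plane graph with minimum degree at least $3$ and girth at least $6$ violates Euler's formula ($\#E\geq\tfrac32\#V$ versus $\#E\leq\tfrac32(\#V-2)$), so no such $G$ exists. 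With this replacement your proof goes through; note also that niceness is not automatically preserved when you pass to $G|_{V(G)\setminus A}$, $G|_F$, or $G\setminus\{u\}$, so you should reinvoke Lemma~\ref{lem:RGinNE2niceRG} at each inductive step (which is harmless, as its proof only deletes vertices).
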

\begin{dfn}\label{dfn:subred}
  Let $w$ be an $\epsilon$-web, and $G$ and $G'$ two admissible red graphs for $w$. We say that $G'$ is a \emph{red sub-graph} of $G$ if $V(G')\subset V(G')$. We denote by $\G(G)$ the set of all admissible red sub-graphs. It is endowed with the order given by the inclusion of sets of vertices. We say that $G$ is \emph{minimal} if $\G(G) = \{G\}$.
\end{dfn}
Note that a red sub-graph is an induced sub-graph and that a minimal red-graph is connected.

\begin{lem}\label{lem:no_cut}
   Let $w$ be an $\epsilon$-web and $G$ a minimal admissible red graph endowed with a fitting orientation. There is no non-trivial partition of $V(G)$ into two sets $V_1$ and $V_2$ such that for each vertex $v_1$ in $V_1$ and each vertex $v_2$ in $V_2$ every edge between $v_1$ and $v_2$ is oriented from $v_1$ to $v_2$.
\end{lem}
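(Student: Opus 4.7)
The plan is to derive a contradiction from the existence of such a partition by exhibiting a proper admissible red sub-graph of $G$, which would violate the minimality of $G$ in $\G(G)$.

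Suppose for contradiction that $V(G) = V_1 \sqcup V_2$ is such a non-trivial partition. I consider $G_2$, the subgraph of $G$ induced by $V_2$, endowed with the orientation $o'$ inherited from $o$. The first step is to verify that $G_2$ is indeed a red graph in the sense of Definition \ref{dfn:red-graph}: conditions (\ref{item:dfnRG1}) and (\ref{item:dfnRG2}) are purely local at vertices of $G_2 \subset G$ and so are inherited from $G$, while condition (\ref{item:dfnRG3}) follows from the transitivity of ``induced subgraph'': $G_2$ is induced in $G$, which is itself induced in $D(w)$, so $G_2$ is induced in $D(w)$.

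The key observation is that $o'$ is a fitting orientation for $G_2$. Fix $f \in V_2$. By the hypothesis on the partition, every edge of $G$ connecting $f$ to a vertex of $V_1$ is oriented from $V_1$ to $f$, hence points \emph{towards} $f$. Therefore every edge of $G$ incident to $f$ and pointing away from $f$ already lies in $G_2$. Using the formula of Definition \ref{dfn:index-red-graph} in the form
\[
i_o(f) = 2 - \tfrac{1}{2}\deg_{D(w)}(f) + \#\{\textrm{edges of $G$ pointing away from $f$}\},
\]
we get $i_{o'}^{G_2}(f) = i_o^G(f) \geq 0$, since $o$ is fitting. Hence $G_2$ is admissible.

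Finally, because the partition is non-trivial we have $V_1 \neq \emptyset$, so $V(G_2) \subsetneq V(G)$, which means $G_2$ is a red sub-graph of $G$ strictly smaller than $G$. This contradicts the minimality of $G$ in $\G(G)$. The only delicate point in the argument is the observation that all ``outgoing'' edges at a vertex of $V_2$ stay inside $G_2$, which makes the level strictly preserved rather than merely non-increasing; everything else is routine bookkeeping.
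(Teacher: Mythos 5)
Your proof is correct and takes essentially the same route as the paper: both pass to the induced sub-graph on $V_2$, observe that every edge from $V_1$ into a vertex $f\in V_2$ points \emph{towards} $f$ so that $i(f)$ is unchanged when those edges are discarded, and then contradict minimality. You merely spell out the verification that the induced sub-graph is a red graph and that the levels are literally preserved, which the paper leaves implicit.
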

\begin{proof}
  If there were a such a partition, we could consider the red sub-graph $G'$ with $V(G') =V_2$. For every vertex in $V_2$ the level is the same in $G$ and in $G'$ and hence, $G'$ would be admissible and $G$ would not be minimal.
\end{proof}
\begin{cor}
  \label{cor:noleaf4minimal} Let $w$ be a $\epsilon$-web and $G$ a minimal admissible red graph for $w$, then the graph $G$ has no leaf\footnote{We mean vertex of degree 1.}. Therefore if it has 2 or more vertices, then it is not a tree.
\end{cor}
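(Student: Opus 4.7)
The plan is to derive this directly from Lemma~\ref{lem:no_cut} by considering the partition that isolates a putative leaf. Suppose toward a contradiction that $G$ contains a leaf $v$, and let $e$ be the unique edge of $G$ incident to $v$, with other endpoint $v'$. Since $G$ is admissible it carries a fitting orientation $o$; this orients $e$ either from $v$ to $v'$ or from $v'$ to $v$. Assuming $G$ has at least two vertices (otherwise there is no edge and so no leaf), the partition $V(G) = \{v\} \sqcup (V(G)\setminus \{v\})$ is non-trivial, and because $v$ is a leaf the edge $e$ is the only edge of $G$ between these two parts.

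I then pick the ordering of the partition according to the orientation of $e$: if $o$ orients $e$ from $v$ to $v'$, set $V_1 = \{v\}$ and $V_2 = V(G)\setminus\{v\}$; otherwise swap them. In either case every edge between $V_1$ and $V_2$ (namely $e$) is oriented from $V_1$ to $V_2$, which is exactly the forbidden configuration of Lemma~\ref{lem:no_cut}. This contradicts the minimality of $G$ and proves that $G$ has no leaf.

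For the second half, I would then recall the elementary fact that any finite tree with at least two vertices has at least two leaves; equivalently, a connected acyclic graph on $n\geq 2$ vertices has a vertex of degree $1$. Since a minimal admissible red graph is connected (as noted immediately after Definition~\ref{dfn:subred}), the absence of leaves forces $G$ not to be a tree whenever $\#V(G) \geq 2$.

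The only delicate point is making sure the partition used above is legitimately non-trivial, which is why I separate the case $\#V(G)=1$ (vacuous) from $\#V(G)\geq 2$. No further combinatorial input on the web $w$ or on the external degrees is needed: the argument is purely about the interaction between minimality and the orientation of a bridge, which is precisely what Lemma~\ref{lem:no_cut} packages.
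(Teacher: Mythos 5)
Your proof is correct and follows essentially the same route as the paper: the paper likewise observes that a leaf must be a sink or a source under a fitting orientation, so that $\{v\}$ and $V(G)\setminus\{v\}$ give a partition forbidden by Lemma~\ref{lem:no_cut}, and then concludes via the standard fact that a tree on at least two vertices has a leaf. Your added care about the non-triviality of the partition and the choice of ordering of $V_1, V_2$ is a fine (if slightly more verbose) elaboration of the same argument.
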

\begin{proof}
Indeed, if $v$ were a leaf of $G$, the vertex $v$ would be either a sink or a source, hence $V(G)\setminus\{v\}$ and $\{v\}$ would partitioned $V(G)$ in a way forbidden by lemma~\ref{lem:no_cut}.
\end{proof}
\begin{cor}\label{cor:no-tree}
  If $G$ is an admissible red graph for a non-elliptic $\epsilon$-web $w$, then $G$ is not a tree.
\end{cor}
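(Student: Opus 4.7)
The plan is to reduce to the minimal case, which has already been handled by Corollary~\ref{cor:noleaf4minimal}. Given an admissible red graph $G$ for a non-elliptic $\epsilon$-web $w$, I would first extract a minimal admissible red sub-graph $G' \in \G(G)$. Such a $G'$ exists because $\G(G)$ is non-empty (it contains $G$) and is finite: a red sub-graph of $G$ is determined by its vertex set (being induced), so $\G(G)$ is a sub-poset of $2^{V(G)}$ with respect to inclusion and therefore has at least one minimal element. By construction $\G(G')=\{G'\}$, that is, $G'$ is itself a minimal admissible red graph in the sense of Definition~\ref{dfn:subred}.

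Next I would invoke the two corollaries already at my disposal. Corollary~\ref{cor:RGinNEhaveVs}, applied to $G'$ (which is admissible and lies in a non-elliptic web), ensures that $G'$ has at least two edges and a fortiori at least two vertices. Corollary~\ref{cor:noleaf4minimal} then says that the minimal admissible $G'$ has no leaf. Combining this with the elementary graph-theoretic fact that every finite tree with at least two vertices has at least two leaves, $G'$ cannot be a tree, so it must contain a cycle $\gamma$.

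Finally I would lift $\gamma$ back up to $G$. Since a red sub-graph is an induced sub-graph (condition~(\ref{item:dfnRG3}) of Definition~\ref{dfn:red-graph} carried through Definition~\ref{dfn:subred}), every edge of $\gamma$ lies in $E(G')\subset E(G)$; hence $\gamma$ is a cycle of $G$ and $G$ itself is not a tree. There is essentially no obstacle in this plan: the delicate combinatorial content has already been packaged into Lemma~\ref{lem:no_cut} and Corollary~\ref{cor:noleaf4minimal}, and the only real point to be careful about is to pass to a minimal sub-graph \emph{before} trying to produce a cycle, since a general admissible $G$ is not assumed to be minimal and Corollary~\ref{cor:noleaf4minimal} does not apply to it directly.
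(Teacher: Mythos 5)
Your proposal is correct and follows essentially the same route as the paper: pass to a minimal admissible red sub-graph, apply Corollaries~\ref{cor:RGinNEhaveVs} and~\ref{cor:noleaf4minimal} to see it is not a tree, and conclude for $G$. The extra care you take (existence of a minimal element of $\G(G)$, and lifting the cycle back to $G$) only makes explicit what the paper leaves implicit.
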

\begin{proof}
  Consider a minimal red sub-graph of $G$. Thanks to corollaries~\ref{cor:RGinNEhaveVs} and \ref{cor:noleaf4minimal}, it is not a tree, hence $G$ is not a tree.
\end{proof}
\begin{lem}
Let $w$ be an $\epsilon$-web and $G$ a minimal red graph for $w$. If $G$ has more than 2 vertices, then it is nice.
\end{lem}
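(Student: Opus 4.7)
The approach is a proof by contradiction. Suppose $G$ is a minimal (admissible) red graph with $\#V(G) > 2$ and that some vertex $v \in V(G)$ fails niceness. By lemma~\ref{lem:RGadm2fair} and remark~\ref{rk:evenextdeg}, the only way this can happen is $\ed(v) = 4$. The plan is then to exhibit a proper admissible red sub-graph of $G$, contradicting minimality; the natural candidate is $G' \eqdef G \setminus \{v\}$, the subgraph induced on $V(G) \setminus \{v\}$.

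First I would fix a fitting orientation of $G$. Plugging $\ed(v) = 4$ into the formula of definition~\ref{dfn:index-red-graph} gives $i(v) = -\#\{\textrm{edges of } G \textrm{ pointing to } v\}$, and admissibility forces this to be $\ge 0$. Hence every edge of $G$ incident to $v$ must be oriented away from $v$, and $i(v) = 0$. This is the one place where the hypothesis $\ed(v) = 4$ is used, and it is what makes the removal argument work.

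Next I would check that $G'$ is a red graph in the sense of definition~\ref{dfn:red-graph}: conditions (i)--(iii) are inherited from $G$, and non-emptiness comes from the assumption $\#V(G) > 2$. The real computation is to verify that the inherited orientation is still fitting, \ie $i_{G'}(u) \ge 0$ for every $u \in V(G')$. For $u$ not adjacent to $v$ in $G$, the level is literally unchanged. For a neighbour $u$ of $v$, let $k$ denote the number of edges between $u$ and $v$ in $G$; since $G$ is an induced subgraph of $D(w)$, these are exactly the $D(w)$-edges between the two faces, and by the previous paragraph they all leave $v$ and arrive at $u$. Removing $v$ increases $\ed(u)$ by $2k$ (lost neighbour contributions) and decreases $\#\{\textrm{edges pointing to } u\}$ by $k$, so that in the level formula the two corrections cancel:
\[ i_{G'}(u) \;=\; 2 - \tfrac{1}{2}(\ed_G(u) + 2k) - \bigl(\#\{\textrm{edges of } G \textrm{ pointing to } u\} - k\bigr) \;=\; i_G(u) \;\ge\; 0. \]

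Thus $G'$ is an admissible red sub-graph of $G$ strictly smaller than $G$ (since $v \notin V(G')$), contradicting the minimality of $G$. The only substantive step is the cancellation of the two corrections in the level formula at neighbours of $v$; everything else is bookkeeping against the axioms in definitions~\ref{dfn:red-graph} and~\ref{dfn:index-red-graph}.
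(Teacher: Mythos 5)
Your proof is correct and follows essentially the same approach as the paper: fix a fitting orientation, note that $\ed(v)=4$ forces $i(v)=-\#\{\textrm{edges pointing to }v\}\ge 0$, so every edge at $v$ points away from $v$, and conclude that deleting $v$ yields a smaller admissible red sub-graph, contradicting minimality. The only difference is presentational: the paper factors the removal step through lemma~\ref{lem:no_cut} (applied to the partition $\{v\}$, $V(G)\setminus\{v\}$, where incidentally $v$ is a source, not a sink as the paper writes), whereas you carry out the cancellation $i_{G'}(u)=i_G(u)$ explicitly --- which is exactly the computation inside the proof of that lemma.
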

\begin{proof}
  Suppose that we have a vertex $v$ of $G$ with external degree equal to 4. Consider a fitting orientation for $G$. All edges of $G$ adjacent to $v$ must point out, otherwise the degree of $v$ would be negative. So $v$ would be a sink and, thanks to lemma~\ref{lem:no_cut}, this is not possible.
\end{proof}
\begin{lem}\label{lem:strong-connected}
  Let $w$ be a non-elliptic $\epsilon$-web and $G$ a minimal admissible red graph. If the red graph $G$ is endowed with a fitting orientation, then it is strongly connected. 
\end{lem}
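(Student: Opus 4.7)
The plan is to argue by contradiction, extracting the forbidden partition of Lemma~\ref{lem:no_cut} from a hypothetical failure of strong connectivity. Suppose $G$ is not strongly connected. First I would pass to the condensation $\widetilde{G}$ of $G$ into its strongly connected components; this is a finite directed acyclic graph with at least two vertices. Any finite non-empty DAG admits a source vertex, so I can select a strongly connected component $S$ of $G$ that receives no edge from any other component.

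Next I would set $V_1 \eqdef V(S)$ and $V_2 \eqdef V(G)\setminus V(S)$. This partition is non-trivial because the condensation has at least two vertices, so both pieces are non-empty. By the defining property of $S$ as a source in $\widetilde{G}$, every edge of $G$ that has one endpoint in $V_1$ and the other in $V_2$ must be directed from $V_1$ to $V_2$: any edge in the opposite direction would be an incoming edge at $S$ in the condensation, which is excluded. This is precisely the configuration forbidden by Lemma~\ref{lem:no_cut} (applied to $G$ with its fitting orientation), giving the required contradiction and forcing $G$ to be strongly connected.

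I do not expect any serious obstacle: the whole argument is the standard extraction of a source in the SCC-DAG, and the technical content (minimality plus fitting orientation) is already packaged in Lemma~\ref{lem:no_cut}. The only point to watch is that one genuinely needs $\widetilde{G}$ to have two or more vertices to guarantee that $V_1$ and $V_2$ are both non-empty, which is exactly what the hypothesis ``not strongly connected'' provides.
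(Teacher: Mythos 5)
Your proof is correct and rests on exactly the same engine as the paper's, namely Lemma~\ref{lem:no_cut} applied to a one-way directed cut. The paper packages this a bit more lightly (for an arbitrary vertex $v$ it takes $V_v$ to be the set of vertices reachable from $v$ and observes that all edges between $V(G)\setminus V_v$ and $V_v$ point into $V_v$), whereas you pass through the SCC-condensation and extract a source component, but both produce the same forbidden partition.
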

The terms \emph{weakly connected} and \emph{strongly connected} are classical in graph theory the first means that the underlying unoriented graph is connected in the usual sense. The second that for any pair of vertices $v_1$ and $v_2$, there exists an oriented path from $v_1$ to $v_2$ and an oriented path from $v_2$ to $v_1$.
\begin{proof}
Let $v$ be a vertex of $G$, consider the subset $V_v$ of $V(G)$ which contains the vertices of $G$ reachable from $v$ by an oriented path. The sets $V_v$ and $V(G)\setminus V_v$ form a partition of $V(G)$ which must be trivial because of lemma~\ref{lem:no_cut}, but $v$ is in $V_v$ therefore $V_v=V(G)$, this is true for any vertex $v$, and this shows that $G$ is strongly connected.
\end{proof}
\begin{prop}\label{prop:largecycle} 
  If $G$ is a red graph for a non-elliptic $\epsilon$-web $w$, then any (not-oriented)  simple cycle has at least 6 vertices. 
\end{prop}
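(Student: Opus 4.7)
The plan is to apply Euler's formula to a planar subgraph of $w$ cut out by the cycle $C$, using that $w$ is trivalent with all faces of length at least $6$ (non-ellipticity) together with condition~\eqref{item:dfnRG2} on three faces meeting at a vertex.

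Let $C$ be a simple cycle in $G$ with $k$ vertices $f_1,\dots,f_k$ (faces of $w$, hence disks by~\eqref{item:dfnRG1}) and $k$ edges $e_1,\dots,e_k$ of $D(w)$, where each $e_i$ corresponds to a $w$-edge separating $f_i$ from $f_{i+1}$. Because $D(w)$ is a plane graph and $C$ is a simple cycle in it, $C$ is a simple closed curve in $\RR^2$ and bounds a topological disk $D$. For each $i$, $C$ passes through the centre of $f_i$ and crosses each $e_i$ at its midpoint, so $e_{i-1}$ and $e_i$ split $\partial f_i$ into two arcs; let $\alpha_i$ denote the arc lying inside $D$, with $a_i$ edges. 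A preliminary observation is that $a_i\ge 1$ for every $i$: otherwise $e_{i-1}$ and $e_i$ would share a vertex $v$ at which the three faces $f_{i-1},f_i,f_{i+1}$ meet, all three belonging to $V(G)$, contradicting~\eqref{item:dfnRG2}.

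Let $H$ be the subgraph of $w$ made of every $w$-vertex and $w$-edge lying in the closed region $\overline{D_0}:=\overline{D\setminus \bigcup_i f_i^{\mathrm{in}}}$, where $f_i^{\mathrm{in}}$ denotes the inside half of $f_i$. Its boundary $\partial D_0$ is the closed curve obtained by concatenating $\alpha_1,\dots,\alpha_k$ at the inner endpoints $v_i^+$ of the $e_i$, contributing $V_b:=\sum_i a_i$ edges and $V_b$ vertices to $H$. Let $V_i$ count the $w$-vertices strictly inside $D_0$. Since $w$ is trivalent, the interior vertices of each $\alpha_i$ and the interior vertices of $D_0$ have all three incident $w$-edges in $H$, hence degree $3$; whereas each joining vertex $v_i^+$ has degree only $2$ in $H$, because its third $w$-edge $e_i$ leaves $\overline{D_0}$. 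Summing vertex degrees yields
\[
2|E(H)|\;=\;3(V_b-k)+2k+3V_i\;=\;3V_b+3V_i-k.
\]

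The bounded faces of $H$ are exactly the $w$-faces strictly contained in $D_0$, each of length $\ge 6$ by non-ellipticity, while the unique outer face is bounded by $\partial D_0$ and has length $V_b$. Euler's formula $V-E+F=1+c$ (with $c\ge 1$ the number of connected components of $H$) gives $|F(H)|-1=c+\tfrac12(V_b+V_i-k)$. Inserting this into the face-degree inequality $V_b+6(|F(H)|-1)\le 2|E(H)|$ and simplifying produces $V_b+6c\le 2k$, hence $V_b\le 2k-6$. Combined with $V_b=\sum_i a_i\ge k$ from the previous paragraph, this forces $k\ge 6$. The one delicate point in the bookkeeping is the correct handling of the joining vertices $v_i^+$: it is precisely the ``degree-$2$ defect'' they carry (the $-k$ term in $2|E(H)|$) that, pitted against the uniform lower bound $\ge 6$ on interior face lengths, drives the inequality $k\ge 6$.
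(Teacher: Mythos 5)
Your proof is correct and follows essentially the same route as the paper: both apply Euler's formula to the plane subgraph $H$ of $w$ nested by the cycle $C$, with non-ellipticity giving the lower bound $\ge 6$ on inner-face lengths and the red-graph conditions controlling the boundary of the nested region. The paper implements the same accounting by smoothing the $k$ bivalent vertices $v_i^+$ into a trivalent graph $H'$ and counting the ``lost sides'' of interior faces, whereas you carry the degree-$2$ defect directly through the degree-sum and face-size inequalities; your version is slightly more explicit in recording $a_i\ge 1$ (from condition~(\ref{item:dfnRG2})) and in allowing for several connected components of $H$, but the two bookkeepings are equivalent.
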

\begin{proof}Take $C$ a non-trivial simple cycle in $G$. \marginpar{We can suppose that $C$ is minimal. A red graph contains no triangle, this shows that some faces of $w$ must be nested in the cycle $C$.} We consider the collection of faces of $w$ nested by $C$ (this is non empty thanks to condition~(\ref{item:dfnRG3}) of the definition of red graphs). This defines a plane graph $H$. We define $H'$ to be the graph $H$ with the bivalent vertices smoothed (we mean here that if locally $H$ looks like $\vcenter{\hbox{\tikz{\draw (0,0)-- (1,0); \filldraw (0.5,0) circle (1pt);}}}$, then $H'$ looks like $\vcenter{\hbox{\tikz{\draw (0,0) -- (1,0);}}}$). An example of this construction is depicted on figure~\ref{fig:exlargecycle}.

  \begin{figure}[ht]
    \centering
    \begin{tikzpicture}[scale=0.95]
      \input{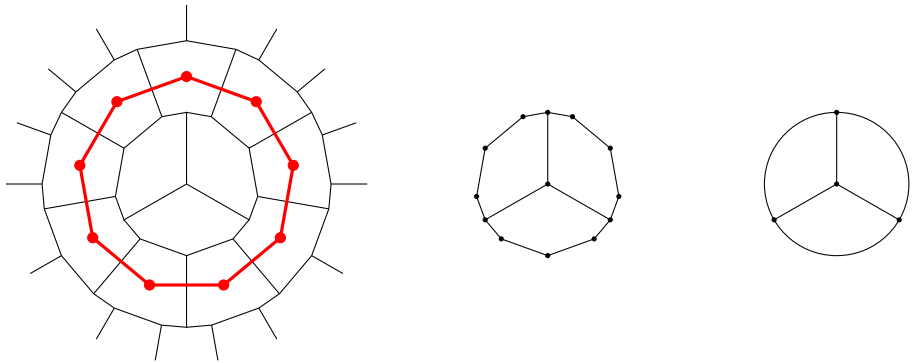}
    \end{tikzpicture}
    \caption{On the left the $\epsilon$-web $w$ and the red graph $G$, in the middle the graph $H$, and on the right, the graph $H'$.}
 \label{fig:exlargecycle}
  \end{figure}

The $\epsilon$-web $w$ being non-elliptic, each face of $H$ has at least $6$ sides. We compute the Euler characteristic of $H'$:
\[\chi(H')= \#F(H')-\#E(H')+\#V(H') =2. \]
As in proposition~\ref{prop:closed2elliptic}, this gives us $\sum_{i\in \NN} F_i(H')(1-\frac i6) =2$  where $F_i(H)$ is the number of faces of $H'$ with $i$ sides. Restricting the sum to $i\leq 5$ and considering $F'_i$ the number of bounded faces, we have:
\[\sum_{i=0}^5 F'_i(H')(6 - i) \geq 6.\]
But the bounded faces of $H'$ with less that 6 sides come from bounded faces of $H$ which have at least 6 sides. The number $n$ of bivalent vertices in $H$ is therefore greater than or equal to $\sum_{i=0}^5 F'_i(H')(6 - i)$ \ie greater than or equal to 6. But $n$ is as well the length of the cycle $C$. \marginpar{faire un exemple ?}
\end{proof}
Note that a cycle in a red graph can have an odd length (as in the example of figure~\ref{fig:exlargecycle}).
\begin{lem}
  Let $G$ be a minimal admissible red graph for a non-elliptic $\epsilon$-web $w$. Then $G$ has at least one vertex with degree 2.
\end{lem}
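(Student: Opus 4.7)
The plan is to argue by contradiction via Euler's formula applied to $G$ as a plane graph. Assume every vertex of $G$ satisfies $\deg_G(f)\ge 3$. Since $G$ is a subgraph of $D(w)$, which carries a natural planar embedding, $G$ inherits a plane embedding. By Lemma~\ref{lem:strong-connected}, $G$ is strongly connected, so in particular connected, and Euler's formula gives $V-E+F=2$, where $V,E,F$ count vertices, edges, and faces of this embedding.

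I would produce the contradiction from two inequalities. The handshake lemma combined with the degree hypothesis yields $2E=\sum_{f}\deg_G(f)\ge 3V$, i.e.\ $V\le \tfrac{2E}{3}$. The second inequality, $F\le \tfrac{E}{3}$, should come from the assertion that every face of $G$ (including the unbounded one) has boundary walk of length at least $6$. For this I need two ingredients: (a) the girth of $G$ is at least $6$, which is exactly Proposition~\ref{prop:largecycle}; and (b) $G$ is bridgeless. Point (b) follows from strong connectivity: if an edge $e$ were a bridge oriented $u\to v$ by the fitting orientation, then a directed path from $v$ back to $u$ would have to cross $e$ in the reverse direction, which is impossible. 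Granted (a) and (b), each face-boundary walk in a bridgeless plane graph decomposes into a union of simple cycles glued at cut-vertices, so its length is at least the girth, i.e.\ $\ge 6$. Summing over faces gives $6F\le \sum_f \mathrm{length}(f)=2E$, hence $F\le \tfrac{E}{3}$.

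Plugging into Euler yields $2=V-E+F\le \tfrac{2E}{3}-E+\tfrac{E}{3}=0$, a contradiction. Hence some vertex of $G$ has degree at most $2$. To upgrade this to degree exactly $2$, I rule out the two lower possibilities: Corollary~\ref{cor:noleaf4minimal} forbids degree $1$, and an isolated vertex of degree $0$ is impossible because $G$ is strongly connected with $|V(G)|\ge 2$ (by Corollary~\ref{cor:RGinNEhaveVs}). The main delicate point of the argument is step (b), namely translating strong connectivity of the oriented graph $G$ into bridgelessness of its underlying unoriented graph, and then using this to pass from the girth bound for simple cycles (Proposition~\ref{prop:largecycle}) to a uniform lower bound on face-boundary lengths in the planar embedding.
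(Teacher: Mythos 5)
Your proposal is correct and follows essentially the same route as the paper: an Euler-characteristic count under the hypothesis $\deg_G\geq 3$ everywhere forces a face with fewer than $6$ sides, contradicting Proposition~\ref{prop:largecycle}, and then leaves (Corollary~\ref{cor:noleaf4minimal}) and isolated vertices are excluded to land on degree exactly $2$. You are in fact more careful than the paper's one-line proof, which silently identifies ``face with few sides'' with ``short simple cycle''; your bridgelessness argument via strong connectivity is exactly the missing link needed to pass from the girth bound of Proposition~\ref{prop:largecycle} to a bound on face-boundary lengths.
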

\begin{proof}
  Suppose  that all vertices of $G$ have degree greater or equal to 3, then the graph $G$  would contain a face with less than 5 sides (this is the same argument than in proposition~\ref{prop:closed2elliptic} which tells that a closed web contains a circle, a digon or a square). But this contradicts lemma~\ref{prop:largecycle}. \marginpar{terminology: oriented graph or digraph ?}
\end{proof}

The proposition~\ref{prop:exist-exact} is a direct consequence of the following lemma:
\begin{lem}\label{lem:minimal2exact}
Let $w$ be a non-elliptic $\epsilon$-web and $G$ a minimal admissible red graph for $w$. Then $G$ is exact.
\end{lem}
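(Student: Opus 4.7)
The plan is to argue by contradiction: I assume $I(G)\geq 1$ and reach a contradiction with the preceding lemma on the existence of a vertex of degree~$2$ in a minimal admissible red graph.

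First I would pin down the possible values of $\ed(f)$. In any red graph $\ed(f)\geq 0$: condition~(\ref{item:dfnRG2}) of Definition~\ref{dfn:red-graph} forces that at each vertex of $\partial f$ at most one of the two edges of $\partial f$ meeting there is internal to $\bigcup_{h\in V(G)}\bar h$, so a cyclic count around $\partial f$ gives $b_f\geq n_f$, where $b_f$ and $n_f$ denote the numbers of boundary and internal edges of $\partial f$; hence $\ed(f)=b_f-n_f\geq 0$. Together with Remark~\ref{rk:evenextdeg} and Lemma~\ref{lem:RGadm2fair} this restricts $\ed(f)\in\{0,2,4\}$. Minimality then gives more: by Corollary~\ref{cor:RGinNEhaveVs} we have $\#V(G)\geq 2$, and Lemma~\ref{lem:strong-connected} ensures that every fitting orientation of $G$ is strongly connected, hence sourceless; but a vertex with $\ed(f)=4$ would be forced by $i(f)=-\#\{\text{edges pointing to }f\}\geq 0$ to be a source, which is impossible. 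Hence $\ed(f)\in\{0,2\}$ throughout $G$.

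Next I would rule out $\ed(u)=2$. Let $u\in V(G)$ have $\ed(u)=2$ and set $G'=G\setminus\{u\}$. Then $G'$ is non-empty and inherits conditions (\ref{item:dfnRG1})--(\ref{item:dfnRG3}) of Definition~\ref{dfn:red-graph} from $G$, so it is a red graph for $w$. A direct computation from Remark~\ref{req:formindex}, using $\Delta\#V=-1$, $\Delta\#E=-\deg_G(u)$ and $\Delta\sum_f\ed(f)=2\deg_G(u)-\ed(u)$, yields
\[
I(G')-I(G)\;=\;-2+\tfrac{1}{2}\ed(u)\;=\;-1,
\]
so $I(G')=I(G)-1\geq 0$ under the standing assumption. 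Proposition~\ref{prop:2admissible} then produces an admissible red graph $\widetilde G$ with $V(\widetilde G)\subseteq V(G')\subsetneq V(G)$, a proper admissible red subgraph of $G$, contradicting minimality. Therefore $\ed(f)=0$ for every $f\in V(G)$.

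Finally, the preceding lemma gives a vertex $v\in V(G)$ with $\deg_G(v)=2$; then $\ed(v)=\deg_{D(w)}(v)-4\geq 6-4=2$, using that $w$ is non-elliptic and bipartite so every face has at least $6$ edges. This contradicts $\ed(v)=0$, and we conclude $I(G)=0$, i.e.\ $G$ is exact. The delicate part of this scheme is the second step, specifically securing the identity $I(G\setminus\{u\})=I(G)-1$ when $\ed(u)=2$; once this bookkeeping is in place, the minimality hypothesis together with Proposition~\ref{prop:2admissible} does the remaining work mechanically.
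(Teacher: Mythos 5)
Your argument is correct, and it takes a genuinely different route from the paper's proof, although it leans on the same three supporting results (minimality via Lemma~\ref{lem:no_cut} and strong connectedness, and the existence of a degree-$2$ vertex in a minimal admissible red graph).

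The paper reasons entirely at the level of orientations: given a fitting orientation with some vertex $f$ of positive level, it uses strong connectedness to find an oriented path $\gamma$ from $f$ to a degree-$2$ vertex $f'$, reverses $\gamma$ to transfer the positive level onto $f'$, and then observes that a degree-$2$ vertex with $\ed\geq 2$ and positive level must be a source, contradicting Lemma~\ref{lem:no_cut}. Your proof instead works with the unoriented level formula of Remark~\ref{req:formindex}: you classify $\ed(f)\in\{0,2,4\}$ (establishing $\ed\geq 0$ along the way, something the paper uses implicitly but never proves), rule out $\ed(f)=4$ by the sourceless-via-strong-connectedness argument, rule out $\ed(f)=2$ by a vertex-deletion bookkeeping ($I(G\setminus\{u\})=I(G)-1$) fed into Proposition~\ref{prop:2admissible} and minimality, and then reach a contradiction since a degree-$2$ vertex forces $\ed\geq 2$. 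The paper's route is shorter and avoids the level arithmetic under deletion; yours is more systematic, proves the useful fact $\ed\geq 0$, and gives the incidental dividend that a minimal admissible red graph for a non-elliptic web is automatically nice (all $\ed\leq 2$). One small point worth flagging: you invoke a slight strengthening of Proposition~\ref{prop:2admissible}, namely that its output $\widetilde G$ may be taken as an induced subgraph of the input (so that $V(\widetilde G)\subseteq V(G')$). This is clear from the proposition's proof but not from its statement, so your argument should say it is using the construction rather than the statement as written.
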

\begin{proof}
  We endow $G$ with $o$ a fitting orientation. Suppose $G$ is not exact, then we can find a vertex $f$ with $i_o(f)>0$. 

We first consider the case where $\deg(f)=2$. The $\epsilon$-web $w$ being non-elliptic, $\ed(f)\geq 2$. This shows that the two edges adjacent to $f$ point away from $f$, hence, $f$ is a sink and this contradicts lemma~\ref{lem:no_cut}. 

Now, let us consider the general case. Let $f'$ be a vertex with degree 2. The lemma \ref{lem:strong-connected} implies that there exists $\gamma$ an oriented path from $f$ to $f'$. Let us reverse the orientations of the edges of $\gamma$. We denote by $o'$ this new orientation. Then we have $i_{o'}(f)=i_{o}(f)-1\geq 0$ and $i_{o'}(f')=i_G(f')+1\geq 1$. The levels of all other edges are not changed, hence $o'$ is a fitting orientation, and we are back in the first situation (where $f'$ plays the role of $f$).
\end{proof}

\section{Idempotents from red graphs}
\label{sec:RG2idem}


\begin{dfn}
  Let $w$ be an $\epsilon$-web and $G$ a paired red graph for $w$. We define the $G$-reduction of $w$ to be the $\epsilon$-web denoted by $w_G$ and constructed as follows (see figure~\ref{fig:Greduction} for an example):
  \begin{enumerate}
  \item for every face of $w$ which belongs (as a vertex) to $G$, remove all edges adjacent to this face.
  \item or every face of $w$ connect the grey half-edges of $G$ according to the pairing. 
  \end{enumerate}
\end{dfn}
Note that if $w$ is non-elliptic, $w_G$ needs not to be non-elliptic.
\begin{figure}[ht]
  \centering
  \begin{tikzpicture}[scale =0.72]
    \begin{scope}[yscale = {1}, xscale={1},decoration={markings, mark=at
     position 0.5 with {\arrow{>}}},postaction={decorate}]
\draw (-3.5,-1) .. controls +(0,2.5) and +(0,1) .. (-0.5, 0.5) -- (0.5, 0.5) .. controls +(0,1) and +(0,2.5).. (3.5, -1);
\draw (-3, -1) .. controls +(0,1) and +(-1, 0) .. (-2, 0.5) -- (2, 0.5).. controls +(+1,0) and +(0,1) .. (3, -1);
\draw (-2.5, -1) .. controls +(0, 0.5) and +(-0.5, 0) ..  (-2, -0.5) -- (2, -0.5) ..controls +(+0.5,0) and +(0,0.5) .. (2.5, -1);
\draw (-2, 0.5) -- (-2, -0.5);
\draw (-1, 0.5) -- (-1, -0.5);
\draw (1, 0.5) -- (1, -0.5);
\draw (2, 0.5) -- (2, -0.5);
\draw (-1, 0.5) -- (-1, -0.5);
\draw (-0.5,-1) -- +(0,0.5);
\draw (0.5,-1) -- +(0,0.5);
\draw[dashed] (-0.5, -0.5) -- +(0, 1);
\draw[dashed] (+0.5, -0.5) -- +(0, 1);
\draw[dashed] (-2, -0.5) .. controls +(0.3, 0) and +(0.3,0) .. +(0,1);
\draw[dashed] (2, -0.5) .. controls +(-0.3, 0) and +(-0.3,0) .. +(0,1);
\draw[fill, color =red] (0,0) circle (3pt);
\draw[fill, color =red] (-1.5,0) circle (3pt);
\draw[fill, color =red] (1.5,0) circle (3pt);
\draw[very thick, color =red] (-1.5,0) -- (1.5,0);
\end{scope}

\begin{scope}[yscale = {1}, xscale={1},decoration={markings, mark=at
     position 0.5 with {\arrow{>}}},postaction={decorate}, xshift = 10cm]
\draw (-3.5,-1) .. controls +(0,2.5) and +(0,1) .. (-0.5, 0.5) -- +(0,-1.5);
\draw (0.5, -1) -- (0.5, 0.5) .. controls +(0,1) and +(0,2.5).. (3.5, -1);
\draw (-3, -1) .. controls +(0,1) and +(-1, 0) .. (-2, 0.5) .. controls +(0.5,0) and +(0.5,0).. (-2, -0.5) ..controls +(-0.5,0) and +(0,0.5) .. (-2.5, -1);
\draw (3, -1) .. controls +(0,1) and +(1, 0) .. (2, 0.5) .. controls +(-0.5,0) and +(-0.5,0).. (2, -0.5) ..controls 
+(0.5,0) and +(0,0.5) .. (2.5, -1);
\end{scope}
  \end{tikzpicture}
  \caption{Example of a $G$-reduction of an $\epsilon$-web $w$. The dotted lines represent the pairing.}
  \label{fig:Greduction}
\end{figure}
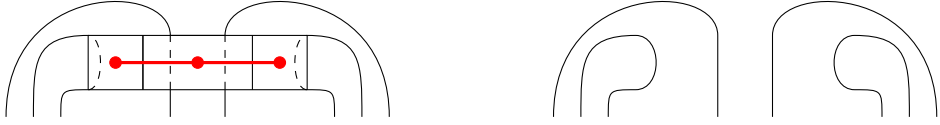

\begin{dfn}\label{dfn:wGfoams}
  Let $w$ be an $\epsilon$-web, and $G$ a fair paired red graph for $w$. We define \emph{the projection associated} with $G$ to be the $(w,w_G)$-foam denoted by $p_G$ and constructed as follows (from bottom to top):
  \begin{enumerate}
  \item For every edge $e'$ of $G$, perform an unzip (see figure~\ref{fig:unzip}) on the edge $e$ corresponding to $e$ in $w$. Note that the condition~(\ref{item:dfnRG2}) in the definition of red graph implies that all these unzip moves are all far from each other, therefore we can perform all the unzips simultaneously. Let us denote by $w'$ the $\epsilon$-web at the top of the foam after this step. Each vertex of $G$ corresponds canonically to some a face of $w'$, this faces are circles, digon or square (with an extra information given by the pairing) because $G$ being fair, every vertex of $G$ have an external degree smaller or equal to 4.
    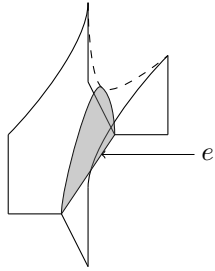
\begin{figure}[ht]
      \centering
      \begin{tikzpicture}[scale=0.35]
        \begin{scope}[xshift=10cm, yshift= 0cm, decoration={markings, mark=at
     position 0.5 with {\arrow{>}}},postaction={decorate}]
\coordinate (A2) at (1,5);
\coordinate (B2) at (4,3);
\coordinate (C2) at (-2,0);
\coordinate (D2) at (1,-2);
\coordinate (E2) at (0,-3);
\coordinate (F2) at (2,0);
\coordinate (A1) at (1,2);
\coordinate (B1) at (4,0);
\coordinate (C1) at (-2,-3);
\coordinate (D1) at (1,-5);
\coordinate (G2) at (1.5,1.8);
\draw (A1) -- (F2) -- (B1);
\draw (C1) -- (E2) -- (D1);
\draw (E2) -- (F2);
\draw[very thin, ->]  (5, -0.75) -- (1.5, -0.75);
\node at (5.5, -0.75) {$e$};
\draw (C2) .. controls +(1,1) and +(0,-1.5) .. (A2);
\draw (D2) .. controls +(0,1.5) and +(-1,-1) .. (B2);
\draw (A1) -- (A2);
\draw (B1) -- (B2);
\draw (C1) -- (C2);
\draw (D1) -- (D2);
\draw (E2) .. controls +(0,1) and  +(-0.4,0.3).. (G2).. controls +(0.4,-0.3) and +(0,0.5) .. (F2);
 \draw[dashed] (A2) .. controls +(0,0) and +( -0.5, +0.5) ..  (G2) .. controls +(0.5,-0.5) and +(0,0) .. (B2);
+(0,0).. (D2);
\fill[opacity=0.4, color= gray] (E2) .. controls +(0,1) and  +(-0.4,0.3).. (G2).. controls +(0.4,-0.3) and +(0,+0.5) .. (F2) --(E2);

\end{scope}
      \end{tikzpicture}
      \caption{Unzip on the edge $e$.}
      \label{fig:unzip}
    \end{figure}
  \item
    \begin{itemize}
    \item For each square of $w'$ which corresponds to a vertex of $G$, perform a square move on it, following the pairing information,  (see figure~\ref{fig:smdmc}).
    \item For each digon of $w'$ which corresponds to a vertex of $G$, perform a digon move  on it (see figure~\ref{fig:smdmc}).
    \item For each circle of $w'$ which corresponds to a vertex of $G$, glue a cap on it (see figure~\ref{fig:smdmc}).
    \end{itemize}
  \end{enumerate}
We define as well $i_G$, \emph{the injection associated with $G$} to be the $(w_G, w)$-foam which the mirror image of $p_G$ with respect to the horizontal plane $\RR^2\times \{\frac12\}$and $\tilde{e}_G$ to be the $(w,w)$-foam equal to $i_G \circ p_G$. An example can be seen figure~\ref{fig:exeG}.
\begin{figure}[ht]
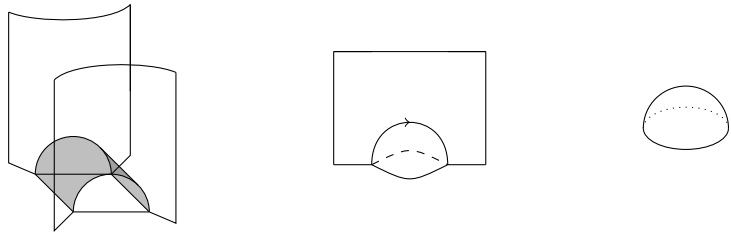

  \centering
  $\foamsquarec[0.5] \hspace{2cm} \foamgid[0.5] \hspace{2cm} \foamcap[0.7]$
  \caption{A square move, a bigon move and a cap.}
  \label{fig:smdmc}
\end{figure} \marginpar{Faire un exemple ?}
\end{dfn}

\begin{figure}[ht]
  \centering
  \begin{tikzpicture}[scale = 0.8]
    \begin{scope}[xshift= 0cm, scale = 2]
\draw (2, 0.2) -- (2,0.7);
\draw (2, 1.3) -- (2,1.7);
\draw (2,2.8) -- (2,2.3);
\draw (0.2, 1.8)-- (0.6, 1.8);
\draw (0.2, 1.2)-- (0.6, 1.2);
\draw[very thin, dashed] (0.6,1.8) .. controls +(0.2,0) and +(0.2,0) .. (0.6,1.2);
\draw[very thin, dashed] (2, 0.7) -- (2, 1.3); 
\draw[very thin, dashed] (2, 1.7) -- (2, 2.3); 
\draw[very thin, dashed] (2.55,0.75) .. controls +(0,0.2) and +(0,0.2) .. (2.85,0.85);
\draw (2.5, 0.2)-- (2.55,0.75);
\draw (3.1, 0.2)-- (2.85,0.85);
\draw (0.6,1.8)--(1, 2.1) -- (1.6, 2.3)-- (2.4, 2.3) --(3,2.1) .. controls (3.4,1.8) and (3.4,1.2) .. (3, 0.9)-- (2.4,0.7)-- (1.6,0.7) --  (1, 0.9)-- (0.6, 1.2)-- cycle;
\draw (1.3,1.7)--(2.7, 1.7) -- (2.7, 1.3)-- (1.3,1.3)--cycle;
\draw (0.6,1.7) -- (0.6,1.3);
\draw (1, 0.9)-- (1.3,1.3);
\draw (1.6,0.7) -- (1.7,1.3);
\draw (2.4,0.7)-- (2.3,1.3);
\draw (3, 0.9)-- (2.7,1.3);
\draw (3,2.1) -- (2.7,1.7);
\draw (2.4, 2.3) --(2.3,1.7);
\draw (1.6, 2.3) --(1.7,1.7);
\draw (1, 2.1) -- (1.3,1.7);
\draw (2.55,0.75)-- (2.85,0.85);
\draw[red, very thick] (2.2, 1) -- (2.6, 1.1) -- (3,1.5) -- (2.6, 1.9) --  (1.8,2) --
(1.4,1.9) -- (1,1.5) -- (1.4, 1.1) -- cycle;
\fill[red] (2.2, 1) circle (1pt);
\fill[red] (2.6, 1.1) circle (1pt);
\fill[red] (3,1.5) circle (1pt);
\fill[red] (2.6, 1.9) circle (1pt);
\fill[red] (1.8,2) circle (1pt);
\fill[red] (1.4,1.9) circle (1pt);
\fill[red] (1,1.5) circle (1pt);
\fill[red] (1.4, 1.1) circle (1pt);
\end{scope}
  \end{tikzpicture}

\vspace{0.5cm}
  \begin{tikzpicture}[scale = 0.6]
\begin{scope}
\draw[draw=  gray, line width = 2mm] (-4,0) --  (16,0); 
\draw [draw = gray, line width = 2mm] (16,3) -- (-4,3); 
\draw[draw=  gray, very thick] (0,0) -- (0,3);
\draw[draw=  gray, very thick] (4,0) -- (4,3);
\draw[draw=  gray, very thick] (8,0) -- (8,3);
\draw[draw=  gray, very thick] (12,0) -- (12,3);
\draw[draw=  gray, very thick] (-4,0) -- (-4,3);
\draw[draw=  gray, very thick] (16,0) -- (16,3);
\draw[draw= white, dotted, line width =1.2mm] (-4,0) -- (16,0);
\draw[draw= white, dotted, line width = 1.2mm] (-4,3) -- (16,3);
\end{scope}

\begin{scope}[xshift= 4cm]
 \draw (2, 0.2) -- (2,2.8);
 \draw (0.2,1.2) arc (-90:90:0.3);
 \draw (2.5,0.2) arc (180:0:0.3);
\end{scope}
\begin{scope}
 \draw (2, 0.2) -- (2,0.7);
\draw (2, 1.3) -- (2,1.7);
\draw (2,2.8) -- (2,2.3);
\draw[green] (2,1) ellipse (0.12 and 0.3);
\draw[green] (2,2) ellipse (0.12 and 0.3);
 \draw (0.2,1.2) -- +(0.5,0.2);
 \draw (0.2,1.8) -- +(0.5,-0.2);
 \draw (2.5, 0.2) -- +(0, 0.7);
 \draw (3.1, 0.2) -- +(-0.2, 0.7);
\filldraw[draw = green, fill= white] (0.9,1.5) circle (0.3);
\draw (3.1,1.5) circle (0.3);
\draw (1.4,1) circle (0.3);
\draw (1.4,2) circle (0.3);
\filldraw[fill=white, draw= green] (2.6,1) circle (0.3);
\draw (2.6,2) circle (0.3);
\end{scope}
\begin{scope}[xshift= -4cm]
\draw (2, 0.2) -- (2,0.7);
\draw (2, 1.3) -- (2,1.7);
\draw (2,2.8) -- (2,2.3);
\draw (0.2, 1.8)-- (0.6, 1.8);
\draw (0.2, 1.2)-- (0.6, 1.2);
\draw (2.5, 0.2)-- (2.55,0.75);
\draw (3.1, 0.2)-- (2.85,0.85);
\draw (0.6,1.8)--(1, 2.1) -- (1.6, 2.3)-- (2.4, 2.3) --(3,2.1) .. controls (3.4,1.8) and (3.4,1.2) .. (3, 0.9)-- (2.4,0.7)-- (1.6,0.7) --  (1, 0.9)-- (0.6, 1.2)-- cycle;
\draw (1.3,1.7)--(2.7, 1.7) -- (2.7, 1.3)-- (1.3,1.3)--cycle;
\draw[blue, thick] (0.6,1.7) -- (0.6,1.3);
\draw[blue, thick] (1, 0.9)-- (1.3,1.3);
\draw[blue, thick] (1.6,0.7) -- (1.7,1.3);
\draw[blue, thick] (2.4,0.7)-- (2.3,1.3);
\draw[blue, thick] (3, 0.9)-- (2.7,1.3);
\draw[blue, thick] (3,2.1) -- (2.7,1.7);
\draw[blue, thick] (2.4, 2.3) --(2.3,1.7);
\draw[blue, thick] (1.6, 2.3) --(1.7,1.7);
\draw[blue, thick] (1, 2.1) -- (1.3,1.7);
\draw[blue, thick] (2.55,0.75)-- (2.85,0.85);
\end{scope}
\begin{scope}[xshift= 12cm]
\draw (2, 0.2) -- (2,0.7);
\draw (2, 1.3) -- (2,1.7);
\draw (2,2.8) -- (2,2.3);
\draw (0.2, 1.8)-- (0.6, 1.8);
\draw (0.2, 1.2)-- (0.6, 1.2);
\draw (2.5, 0.2)-- (2.55,0.75);
\draw (3.1, 0.2)-- (2.85,0.85);
\draw (0.6,1.8)--(1, 2.1) -- (1.6, 2.3)-- (2.4, 2.3) --(3,2.1) .. controls (3.4,1.8) and (3.4,1.2) .. (3, 0.9)-- (2.4,0.7)-- (1.6,0.7) --  (1, 0.9)-- (0.6, 1.2)-- cycle;
\draw (1.3,1.7)--(2.7, 1.7) -- (2.7, 1.3)-- (1.3,1.3)--cycle;
\draw[blue, thick] (0.6,1.7) -- (0.6,1.3);
\draw[blue, thick] (1, 0.9)-- (1.3,1.3);
\draw[blue, thick] (1.6,0.7) -- (1.7,1.3);
\draw[blue, thick] (2.4,0.7)-- (2.3,1.3);
\draw[blue, thick] (3, 0.9)-- (2.7,1.3);
\draw[blue, thick] (3,2.1) -- (2.7,1.7);
\draw[blue, thick] (2.4, 2.3) --(2.3,1.7);
\draw[blue, thick] (1.6, 2.3) --(1.7,1.7);
\draw[blue, thick] (1, 2.1) -- (1.3,1.7);
\draw[blue, thick] (2.55,0.75)-- (2.85,0.85);
\end{scope}
\begin{scope}[xshift= 8cm]
 \draw (2, 0.2) -- (2,0.7);
\draw (2, 1.3) -- (2,1.7);
\draw (2,2.8) -- (2,2.3);
\draw[red] (2,1) ellipse (0.12 and 0.3);
\draw[red] (2,2) ellipse (0.12 and 0.3);
 \draw (0.2,1.2) -- +(0.5,0.2);
 \draw (0.2,1.8) -- +(0.5,-0.2);
 \draw (2.5, 0.2) -- +(0, 0.7);
 \draw (3.1, 0.2) -- +(-0.2, 0.7);
\filldraw[draw = red, fill= white] (0.9,1.5) circle (0.3);
\draw (3.1,1.5) circle (0.3);
\draw (1.4,1) circle (0.3);
\draw (1.4,2) circle (0.3);
\filldraw[fill=white, draw= red] (2.6,1) circle (0.3);
\draw (2.6,2) circle (0.3);
\end{scope}
  \end{tikzpicture}
  \caption{On the top a web together with a fair (actually nice) paired red graph $G$. On the bottom a movie representing $e_G$.}
  \label{fig:exeG}
\end{figure}
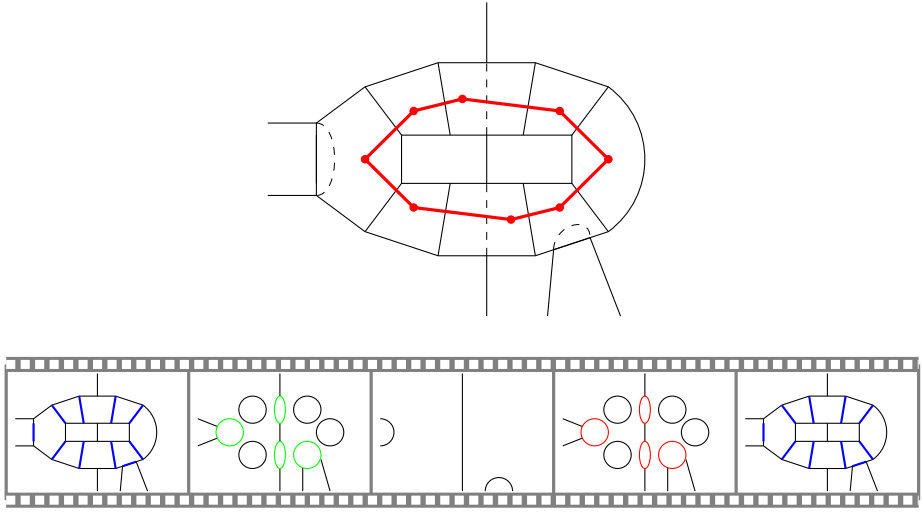

\begin{req}\label{req:ssandcap}
  It's worthwhile to note that a digon move can be seen as a unzip followed by a cap, and that a square move can be seen as two unzips followed by a cap. With this point of view, we see in that in $i_G$ (and in $p_G$), every edge of $G$ and every pair of grey half-edges corresponds a zip (or an unzip) and every vertex of $G$ corresponds a cup (or a cup).
\end{req}
The theorem \ref{thm:RG2idempotent} is an easy consequence of the  following proposition:
\begin{prop}\label{prop:pciisid}
  If $w$ is a non-elliptic web and $G$ is an exact paired red graph for $w$ then the $(w_G,w_G)$-foam $p_G \circ i_G$ is equivalent under the relations \FR{} (see proposition~\ref{prop:relFR}) to a non-zero multiple of the identity $(w_G, w_G)$-foam $w_G\times [0,1]$. 
\end{prop}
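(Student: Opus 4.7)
The foam $p_G \circ i_G$ is a $(w_G,w_G)$-foam whose structure can be made fully explicit. Going from bottom to top, one starts with $w_G$, applies $i_G$ (which, by Remark~\ref{req:ssandcap}, consists of cups at each vertex of $G$ followed by zips along each edge of $G$ and each pair of grey half-edges) to reach $w$, then applies $p_G$ (unzips along those same edges/pairs, followed by caps) to return to $w_G$. The key structural observation is that this foam is built entirely of cups, zips, unzips and caps, organised locally: near each vertex $f\in V(G)$ there is a sphere-with-handles piece obtained by gluing the cup from $i_G$ to the cap from $p_G$; along each edge $e\in E(G)$ (and each pair of grey half-edges, per the pairing) there is a zip-unzip tube connecting two such vertex pieces.

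The plan is to simplify $p_G\circ i_G$ modulo the relations \FR{} of Proposition~\ref{prop:relFR} in three stages. First, at every edge of $G$ (and every paired pair of grey half-edges) apply the bamboo relation to collapse the zip-unzip tube, decoupling the vertex pieces and producing a linear combination of configurations with dots. Second, apply the digon relation (at vertices $f$ with $\ed(f)=2$) and the square relation (at vertices with $\ed(f)=4$) to reduce what remains at each vertex region to closed local pieces. Fairness of $G$ (Lemma~\ref{lem:RGadm2fair}) guarantees that these are the only cases that occur. Third, evaluate the resulting closed dotted pieces (spheres and theta-foams) using the dotted sphere/theta evaluations; the sum of all local pieces then reassembles into a linear combination of dotted identity foams on $w_G$.

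To conclude that the outcome is a non-zero multiple of the undotted identity $w_G\times[0,1]$, I would combine a degree count with a vertex-by-vertex non-vanishing argument. The degree of $p_G\circ i_G$ is computed from the underlying Euler characteristic and turns out to equal $-2I(G)$, so exactness $I(G)=0$ forces this degree to be zero. Since any dotted identity foam of positive dot-degree would have strictly positive degree, all dotted identity terms except the plain identity must cancel, leaving a scalar $\lambda$ times $w_G\times[0,1]$. For $\lambda\neq 0$, the local contribution at each vertex $f$ must be non-zero; using a fitting orientation, this can be read off from the conditions $i_o(f)\ge 0$ and $\ed(f)\le 4$, since for each of the finitely many allowed local configurations (parametrised by $\ed(f)\in\{0,2,4\}$, $\deg_G(f)$, and the pairing) the dotted sphere/theta evaluation is manifestly non-zero.

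The main obstacle I foresee is the detailed bookkeeping in the middle stage: after applying the bamboo relations and the digon/square relations, one needs to verify that the dots produced combine exactly right at each vertex so that the local evaluation matches what is predicted by the level $i_o(f)$, and that the overall sign and scalar factors multiply to something non-zero. This will likely require systematic use of the dot-migration relations from Figure~\ref{fig:localrel} to normalise the dotted configurations before applying the theta evaluations of Figure~\ref{fig:thetaeval}, and a small case analysis according to $\ed(f)$ and the chosen pairing at each vertex of $G$.
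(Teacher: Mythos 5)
Your plan matches the paper's overall strategy: simplify $p_G\circ i_G$ by bamboo (E-), digon, square and sphere/theta relations, use the degree count (exactness $\Rightarrow$ degree zero $\Rightarrow$ no dots on the identity) to reduce to a scalar multiple of $w_G\times[0,1]$, and then argue that scalar is non-zero. The first two stages are essentially the paper's Lemmas~\ref{lem:fd2idwdots} and~\ref{lem:fde2id} (foam diagrams, E-relation expansion, degree argument).

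The gap is in your non-vanishing step. After expanding the bamboo relation along each of the $\#E(G)$ edges (and paired grey half-edges), you do not have a single local configuration at each vertex that you can evaluate independently --- you have a sum of $2^{\#E(G)}$ global terms, each with its own signed coefficient from the E-relation, and the scalar $\lambda$ is the \emph{sum} of the resulting $\pm 1$ contributions. Showing that each non-zero local evaluation is non-zero is not enough; you must rule out cancellation between distinct terms. This is where the paper does real work: it reinterprets the $2^{\#E(G)}$ expansion terms as orientations of $G$ (Lemma~\ref{lem:sumoforientation}), shows the non-fitting orientations each evaluate to zero (Lemma~\ref{lem:nonfitting20}), shows each fitting orientation evaluates to $(-1)^{\mu(o)}\,w_G\times[0,1]$ (Lemma~\ref{lem:fitting2id}), and --- crucially --- proves that the total sign $\gamma(o)+\mu(o)$ is \emph{constant} over all fitting orientations, so the contributions add coherently to $\pm\#\{\text{fitting orientations}\}$. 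Your proposal never addresses this sign-coherence, and a ``vertex-by-vertex'' argument cannot see it because the sign comparison is intrinsically global across the different orientations of $G$. (Minor point: your stated degree is $-2I(G)$; the correct comparison is $\deg(p_G\circ i_G)-\deg(w_G\times[0,1])=2I(G)$, which still gives what you need at $I(G)=0$.)
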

To prove this proposition we need to develop a framework to make some calculus with the explicit foams we gave in definition \ref{dfn:wGfoams}.

\subsection{Foam diagrams}
\label{sec:foam-diagrams}
\begin{dfn}
  Let $w$ be an $\epsilon$-web, a \emph{foam diagram} $\kappa$ for $w$ consists of the following data:
  \begin{itemize}
  \item the $\epsilon$-web $w$,
  \item a fair paired red graph $G$,
  \item a function $\delta$ (called a \emph{dot function for $w$}) from $E(w)$ the set of edges of $w$ to $\NN$ the set of non-negative integers. This function will be represented by the appropriate number of dots on each edge of $w$.
  \end{itemize}
With a foam diagram $\kappa$ we associate $f(\kappa)$ the $(w_G,w_G)$-foam given by $p_G\circ s_w(\delta) \circ i_G$, where $s_w(\delta)$ is $\id_w=w\times [0,1]$ the identity foam of $w$ with on every facet $e\times [0,1]$ (with $e\in E(w)$) exactly  $\delta(e)$ dots. The $(w_G,w_G)$-foam $f(\kappa)$ is equal to $p_G\circ i_G$, with dots encoded by $\delta$. A foam diagram will be represented by the $\epsilon$-web drawn together with the red graph, and with some dots added on the edges of the $\epsilon$-web in order to encode $\delta$.
\end{dfn}
We will often assimilate $\kappa=(w,G,\delta)$ with $f(\kappa)$ and it will be seen as an element of $\hom_{K^\epsilon}(P_{w_G},P_{w_G})$. We can rewrite
some of the relations depicted on figure~\ref{fig:localrel} in terms of foam diagrams:
\begin{prop}\label{prop:relfoamdiag} The following relations on foams associated with foam diagrams hold:
  \begin{itemize}
  \item The 3-dots relation: 
\[
\begin{tikzpicture}
\begin{scope}[yscale = {1}, xscale={1},decoration={markings, mark=at
     position 0.5 with {\arrow{>}}},postaction={decorate}]
\draw[dotted] (0,0) circle (1cm);
\draw (0,-1) -- +(0,2);
\draw[fill] (0, -0.5) circle (1.5pt);
\draw[fill] (0,0) circle (1.5pt);
\draw[fill] (0,0.5) circle (1.5pt);
\node at (2,0) {$=0$};
\end{scope}  
\end{tikzpicture}
\]
\item The sphere relations:
\[
\begin{tikzpicture}
\begin{scope}[yscale = {1}, xscale={1},decoration={markings, mark=at
     position 0.5 with {\arrow{>}}},postaction={decorate}]
\draw[dotted] (0,0) circle (1cm);
\draw (0,0) circle (0.7cm);
\draw[fill, color = red] (0,0) circle (1pt);
\node at (1.5,0) {$=$};
\end{scope}
\begin{scope}[yscale = {1}, xscale={1},decoration={markings, mark=at
     position 0.5 with {\arrow{>}}},postaction={decorate}, xshift=3cm]
\draw[dotted] (0,0) circle (1cm);
\draw (0,0) circle (0.7cm);
\draw[fill] (0.7,0) circle (1.5pt);
\draw[fill, color =red] (0,0) circle (1pt);
\node at (1.5,0) {$=0$};
\end{scope}
\begin{scope}[yscale = {1}, xscale={1},decoration={markings, mark=at
     position 0.5 with {\arrow{>}}},postaction={decorate}, xshift=8cm]
\draw[dotted] (0,0) circle (1cm);
\draw (0,0) circle (0.7cm);
\draw[fill] (0.67,0.2) circle (1.5pt);
\draw[fill] (0.67,-0.2) circle (1.5pt);
\draw[fill, color =red] (0,0) circle (1pt);
\node at (1.5,0) {$=-1$};
\end{scope}  
\end{tikzpicture}
\]\marginpar{dashed instead of dotted}
\item The digon relations:
\[
\begin{tikzpicture}
\begin{scope}[yscale = {1}, xscale={1},decoration={markings, mark=at
     position 0.5 with {\arrow{>}}},postaction={decorate}]
\draw[dotted] (0,0) circle (1cm);
\draw (0,1) -- (0, 0.8) .. controls (-0.55,0) and (-0.55,0) .. (0,-0.8) -- (0,-1);
\draw  (0, 0.8) .. controls (0.55,0) and (0.55,0) .. (0,-0.8);
\draw[fill, color = red] (0.15,0) circle (1pt);
\draw[very thin, dashed] (0, 0.8) -- (0, -0.8);
\node at (1.5,0) {$=$};
\end{scope}
\begin{scope}[yscale = {1}, xscale={1},decoration={markings, mark=at
     position 0.5 with {\arrow{>}}},postaction={decorate}, xshift=3cm]
\draw[dotted] (0,0) circle (1cm);
\draw (0,1) -- (0, 0.8) .. controls (-0.55,0) and (-0.55,0) .. (0,-0.8) -- (0,-1);
\draw  (0, 0.8) .. controls (0.55,0) and (0.55,0) .. (0,-0.8);
\draw[fill] (0.4,0) circle (1.5pt);
\draw[fill] (-0.4,0) circle (1.5pt);
\draw[fill, color = red] (0.15,0) circle (1pt);
\draw[very thin, dashed] (0, 0.8) -- (0, -0.8);
\node at (1.5,0) {$=0$};
\end{scope}  
\end{tikzpicture}
\]
\[
\begin{tikzpicture}
\begin{scope}[yscale = {1}, xscale={1},decoration={markings, mark=at
     position 0.5 with {\arrow{>}}},postaction={decorate}]
\draw[dotted] (0,0) circle (1cm);
\draw[postaction ={decorate}] (0,0.8) -- (0, 1);
\draw[postaction ={decorate}] (0,-1) -- (0, -0.8);
\draw[postaction ={decorate}] (0,0.8).. controls (-0.55,0) and (-0.55,0) .. (0,-0.8);
\draw[postaction ={decorate}] (0,0.8) .. controls (0.55,0) and (0.55,0) .. (0,-0.8);
\draw[fill] (-0.3,-0.33) circle (1.5pt);
\draw[fill, color = red] (0.15,0) circle (1pt);
\draw[very thin, dashed] (0, 0.8) -- (0, -0.8);
\node at (1.5,0) {$=-$};
\end{scope}
\begin{scope}[yscale = {1}, xscale={1},decoration={markings, mark=at
     position 0.5 with {\arrow{>}}},postaction={decorate}, xshift = 3cm]
\draw[dotted] (0,0) circle (1cm);
\draw[postaction ={decorate}] (0,0.8) -- (0, 1);
\draw[postaction ={decorate}] (0,-1) -- (0, -0.8);
\draw[postaction ={decorate}] (0,0.8).. controls (-0.55,0) and (-0.55,0) .. (0,-0.8);
\draw[postaction ={decorate}] (0,0.8) .. controls (0.55,0) and (0.55,0) .. (0,-0.8);
\draw[fill] (0.3,0.33) circle (1.5pt);
\draw[fill, color = red] (0.15,0) circle (1pt);
\draw[very thin, dashed] (0, 0.8) -- (0, -0.8);
\node at (1.5,0) {$=$};
\end{scope}
\begin{scope}[yscale = {1}, xscale={1},decoration={markings, mark=at
     position 0.5 with {\arrow{>}}},postaction={decorate}, xshift = 6cm]
\draw[dotted] (0,0) circle (1cm);
\draw[postaction ={decorate}] (0,-1) -- (0, 1);
\end{scope}  
\end{tikzpicture}
\]
\item The square relations:
\[
\begin{tikzpicture}
\begin{scope}[yscale = {1}, xscale={1},decoration={markings, mark=at
     position 0.5 with {\arrow{>}}},postaction={decorate}]
\draw[dotted] (0,0) circle (1cm);
\draw (45:1) -- (45:0.7) -- (-45:0.7) -- (-45:1);
\draw (135:1) -- (135:0.7) -- (-135:0.7) -- (-135:1);
\draw (45:0.7) -- (135:0.7);
\draw (-45:0.7) -- ( -135:0.7);
\draw[very thin, dashed] (45:0.7) .. controls (0,0) and (0,0) .. (135:0.7);
\draw[very thin, dashed] (-45:0.7) .. controls (0,0) and (0,0) .. (-135:0.7);
\draw[fill, color = red] (0,0) circle (1pt);
\node at (1.5,0) {$=-$};
\end{scope}
\begin{scope}[yscale = {1}, xscale={1},decoration={markings, mark=at
     position 0.5 with {\arrow{>}}},postaction={decorate}, xshift=3cm]
\draw[dotted] (0,0) circle (1cm);
\draw (45:1) .. controls (45:0.5) and (135:0.5) .. (135:1);
\draw(-45:1) .. controls (-45:0.5) and (-135:0.5) .. (-135:1);
\end{scope}  
\end{tikzpicture}
\]
\item The E-relation:
\[
\begin{tikzpicture}
\begin{scope}[yscale = {1}, xscale={1},decoration={markings, mark=at
     position 0.5 with {\arrow{>}}},postaction={decorate}]
\draw[dotted] (0,0) circle (1cm);
\draw[postaction ={decorate}] (0,-0.6) -- (0, 0.6);
\draw[postaction ={decorate}] (0,-0.6) -- (-120:1);
\draw[postaction ={decorate}] (0,-0.6) -- (-60:1);
\draw[postaction ={decorate}] (120:1) -- (0,0.6);
\draw[postaction ={decorate}] (60:1) -- (0,0.6);
\draw[color = red, very thick] (-1,0) -- (1,0);
\node at (1.5,0) {$=$};
\end{scope}
\begin{scope}[yscale = {1}, xscale={1},decoration={markings, mark=at
     position 0.5 with {\arrow{>}}},postaction={decorate}, xshift = 3cm]
\draw[dotted] (0,0) circle (1cm);
\draw[postaction={decorate}] (60:1) .. controls (0.2,0) and (0.2,0) .. (-60:1);
\draw[postaction={decorate}] (120:1) .. controls (-0.20,0) and (-0.2,0) .. (-120:1);
\draw[fill] (0.32,0.3) circle (1.5pt);
\node at (1.5,0) {$-$};
\end{scope}
\begin{scope}[yscale = {1}, xscale={1},decoration={markings, mark=at
     position 0.5 with {\arrow{>}}},postaction={decorate}, xshift = 6cm]
\draw[dotted] (0,0) circle (1cm);
\draw[postaction={decorate}] (60:1) .. controls (0.2,0) and (0.2,0) .. (-60:1);
\draw[postaction={decorate}] (120:1) .. controls (-0.20,0) and (-0.2,0) .. (-120:1);
\draw[fill] (-0.32,-0.3) circle (1.5pt);
\end{scope}  
\end{tikzpicture}
\]
  \end{itemize}
The dashed lines indicate the pairing, and when the orientation of the $\epsilon$-web is not depicted the relation holds for any orientation.
\end{prop}
\begin{proof}
  This is equivalent to some of the relations depicted on figure~\ref{fig:localrel}.
\end{proof}
\begin{lem}\label{lem:fd2idwdots}
  Let $w$ be an $\epsilon$-web and $\kappa = (w,G,\delta)$ a foam diagram, with $G$ a fair paired red graph. Then $f(\kappa)$ is equivalent to a $\ZZ$-linear combination of $s_{w_G}(\delta_i)= f((w_G,\emptyset, \delta_i))$ for $\delta_i$ some dots functions for $w_G$. 
\end{lem}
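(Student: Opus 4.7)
The plan is to reduce $f(\kappa)=p_G\circ s_w(\delta)\circ i_G$ to a $\ZZ$-linear combination of dotted identity foams on $w_G$ by repeated application of the local relations of Proposition \ref{prop:relfoamdiag}. The crucial structural observation is Remark \ref{req:ssandcap}: both $p_G$ and $i_G$ decompose into elementary moves, namely zips and unzips for the $G$-edges and the paired grey half-edges, and caps and cups for the vertices of $G$. As a result, $f(\kappa)$ is a sandwich in which every vertex of $G$ contributes a cup--cap pair joined to the rest of the foam through the surrounding (un)zips, every $G$-edge contributes a zip--unzip pair, and all of these local pieces are separated by the dots carried by $s_w(\delta)$.

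The first step is to use condition~(\ref{item:dfnRG2}) in the definition of a red graph to note that different vertices of $G$ occupy spatially disjoint regions of $w$, so the local pieces of $f(\kappa)$ contributed by distinct vertices of $G$ may be simplified independently. Fairness ($\ed(f)\leq 4$) forces every vertex of $G$ to appear as a circle, a digon, or a square in the intermediate web, and the pairing fixes a specific configuration in the square case. I would then treat each of these cases in turn using Proposition \ref{prop:relfoamdiag}: a circle vertex (possibly carrying dots from $s_w(\delta)$) produces a dotted sphere, evaluated to an integer by the sphere relations; a digon vertex produces a digon--reverse-digon sandwich, reducible to a $\ZZ$-linear combination of identity pieces (each carrying at most one dot) via the digon relations; and a square vertex produces a square--reverse-square block, which the square relations rewrite as a sum of identity configurations with dots migrated onto neighbouring facets. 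The residual zip--unzip pair on each $G$-edge is absorbed using the E-relation.

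Performing these local reductions one vertex (and one edge) of $G$ at a time---which can be made precise by induction on $\#V(G)+\#E(G)$---replaces every local piece by a portion of the identity foam $w_G\times[0,1]$ carrying some dots. At the end of the induction, $f(\kappa)$ has been rewritten as a $\ZZ$-linear combination of $s_{w_G}(\delta_i)$ for various dot functions $\delta_i$ on $E(w_G)$, and the $3$-dots relation can be applied afterwards to keep each $\delta_i$ bounded.

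The principal difficulty is the bookkeeping at square vertices: the square relation expands the square--reverse-square block into several terms carrying dot migrations, and one must verify that each resulting term is genuinely a piece of the identity foam on $w_G$ rather than a new configuration needing further, possibly interfering, simplification. It is precisely here that condition~(\ref{item:dfnRG2}) of the red graph definition is essential, since it ensures that the outputs of the local reductions at distinct vertices live in disjoint neighbourhoods and therefore do not interact.
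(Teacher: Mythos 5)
Your argument is correct and follows essentially the same route as the paper: the E-relation is used to eliminate the zip--unzip pairs coming from the edges of $G$, and the sphere, digon and square relations of Proposition~\ref{prop:relfoamdiag} then evaluate the remaining local pieces at each (circle, digon or square) vertex to $0$ or $\pm$ a dotted identity piece, yielding a $\ZZ$-linear combination of the $s_{w_G}(\delta_i)$. The only difference is organisational: the paper performs the two steps as global sweeps rather than as an induction on $\#V(G)+\#E(G)$, which changes nothing of substance.
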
\marginpar{Signaler qu'on travail avec la TQFT pour $R= \QQ$}
\begin{proof}
Thanks to the E-relation of proposition~\ref{prop:relfoamdiag}, one can express $f(\kappa)$ as a $\ZZ$-linear combination of $f((w_j,G_j,\delta_j))$ where the $G_j$'s are red graphs without any edge. Tanks to the sphere, the digon and square relations of  proposition~\ref{prop:relfoamdiag}, each $f((w_j,G_j,\delta_j))$ is equivalent either to 0 or to $\pm f(w_G,\emptyset, \delta'_j)$. This proves the lemma.
\end{proof}
\begin{lem}\label{lem:fde2id}
Let $w$ be an $\epsilon$-web and $\kappa = (w,G,\delta)$ a foam diagram, with $G$ exact, then $f(\kappa)$ is equivalent to a multiple of $w_G\times [0,1]$.
\end{lem}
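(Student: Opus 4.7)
My plan is to combine Lemma~\ref{lem:fd2idwdots} with a degree computation that crucially uses exactness of $G$. First I would apply Lemma~\ref{lem:fd2idwdots} to write
\[
f(\kappa) = \sum_i c_i\, s_{w_G}(\delta_i)
\]
as a $\ZZ$-linear combination of dotted identity foams on $w_G$. The task then reduces to showing that only the dotless term ($\delta_i = 0$) can survive, which I will extract from a degree argument.

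The heart of the proof is computing $\deg(p_G)$. Using Remark~\ref{req:ssandcap} I would decompose $p_G$ as a composition of $\#E(G) + \tfrac{1}{2}\sum_{f \in V(G)} \ed(f)$ unzips (each of degree $+1$) followed by $\#V(G)$ caps on circles (each of degree $-2$). Summing the contributions and invoking the formula of Remark~\ref{req:formindex} gives
\[
\deg(p_G) = \#E(G) + \tfrac{1}{2}\sum_{f \in V(G)} \ed(f) - 2\#V(G) = -I(G).
\]
Since $i_G$ is the mirror image of $p_G$ it has the same degree, and the dots of $\delta$ placed on $s_w(\delta)$ add $2|\delta|$ to the total degree, where $|\delta| = \sum_{e \in E(w)} \delta(e)$. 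Exactness $I(G)=0$ then yields $\deg(f(\kappa)) = 2|\delta|$.

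To finish, each $s_{w_G}(\delta_i)$ is homogeneous of degree $2|\delta_i|$, so homogeneity of $f(\kappa)$ forces $|\delta_i|=|\delta|$ for every $i$ with $c_i \neq 0$. In the case $\delta = 0$ (the only case actually needed for Proposition~\ref{prop:pciisid}) this collapses the decomposition to
\[
f(\kappa) = \bigl(\textstyle\sum_i c_i\bigr)\cdot \bigl(w_G \times [0,1]\bigr),
\]
a scalar multiple of the identity foam on $w_G$, as required; for $\delta \neq 0$ the same degree mismatch against the degree-$0$ foam $w_G \times [0,1]$ forces the scalar to vanish.

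The only non-routine point is the identification $\deg(p_G) = -I(G)$: the combinatorial definition of $I(G)$ in Remark~\ref{req:formindex} was set up precisely so that it matches the degree contribution of the foam $p_G$, once the digon and square moves are broken into unzips and caps via Remark~\ref{req:ssandcap}. This is the conceptual reason exactness was introduced—it is exactly the degree-$0$ condition that lets Lemma~\ref{lem:fd2idwdots} together with degree-matching pin $f(\kappa)$ down to the one-dimensional space spanned by the dotless identity.
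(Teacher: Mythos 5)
Your argument is correct and follows the paper's proof essentially verbatim: reduce via Lemma~\ref{lem:fd2idwdots} to a linear combination of dotted identity foams on $w_G$, then use the decomposition of Remark~\ref{req:ssandcap} together with the formula of Remark~\ref{req:formindex} to identify the degree of $p_G\circ i_G$ with $\pm 2I(G)=0$ and conclude by homogeneity. The only differences are immaterial: you assign the elementary foams the opposite signs from the paper (unzip $+1$, cap $-2$ versus the paper's $-1$ and $+2$), which cancels anyway since $I(G)=0$, and your explicit restriction to $\delta=0$ matches what the paper's own degree computation actually covers and what Proposition~\ref{prop:pciisid} requires.
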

\begin{proof}
  From the previous lemma we know that $f(\kappa)$ is equivalent to a $\ZZ$-linear combination of $w_G\times [0,1]$ with some dots on it. We will see that the foam $f(\kappa)$ has the same degree as the foam $w_G\times [0,1]$. This will prove the lemma because adding a dot on a foam increases its degree by 2. 

To compute the degree of $f(\kappa)$ we see it as a composition of elementary foams thanks to its definition:
\begin{align*}
\deg f(\kappa) =& \deg(w\times [0,1]) + 2\left( 2\#V(G) - \left(\#E(G)+ \frac{\#\{\textrm{grey half-edges of $G$}\}}{2}  \right) \right) \\
 =& |\partial w| + 2\cdot 0 \\
=& \deg w_G \times [0,1].
\end{align*}
The first equality is due to the decomposition pointed out in remark ~\ref{req:ssandcap} and  because an unzip (or a zip) has degree -1 and a cap (or a cup) has degree +2. The factor 2 is due to the fact $f(\kappa)$ is the composition of $i_G$ and $p_G$. The second one follows from the exactness of $G$. 
\end{proof}
To prove the proposition~\ref{prop:pciisid}, we just need to show that in the situation of the last lemma, the multiple is not equal to zero. In order to evaluate this multiple, we extend foam diagrams to (partially) oriented paired red graphs by the local relation indicated on figure~\ref{fig:fd2oRG}.

\begin{figure}[ht]
  \centering
  \begin{tikzpicture}
    \begin{scope}[yscale = {1}, xscale={1},decoration={markings, mark=at
     position 0.5 with {\arrow{>}}},postaction={decorate}]
\draw[dotted] (0,0) circle (1cm);
\draw (0,-0.6) -- (0, 0.6);
\draw (0,-0.6) -- (-120:1);
\draw (0,-0.6) -- (-60:1);
\draw (120:1) -- (0,0.6);
\draw (60:1) -- (0,0.6);
\draw[color = red, very thick, postaction={decorate}] (-1,0) -- (1,0);
\node at (1.5,0) {$\eqdef$};
\end{scope}
\begin{scope}[yscale = {1}, xscale={1},decoration={markings, mark=at
     position 0.5 with {\arrow{>}}},postaction={decorate}, xshift = 3cm]
\draw[dotted] (0,0) circle (1cm);
\draw (60:1) .. controls (0.2,0) and (0.2,0) .. (-60:1);
\draw (120:1) .. controls (-0.20,0) and (-0.2,0) .. (-120:1);
\draw[fill] (0.32,0.3) circle (1.5pt);
\end{scope}
  \end{tikzpicture}
  \caption{Extension of foam diagrams to oriented red graphs. }
  \label{fig:fd2oRG}
\end{figure}
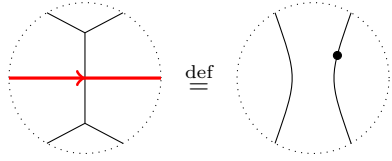

By ``partially oriented'' we mean that some edges may be oriented some may not. If $G$ is partially oriented, and $\kappa$ is a foam diagram with red graph $G$, we say that $\kappa'$ is the \emph{classical foam diagram} associated with $\kappa$ if it obtained from $\kappa$ by applying the relation of figure~\ref{fig:fd2oRG} on every oriented edges. Note that $\kappa$ and $\kappa'$ represent the same foam.
\begin{dfn}
  If $w$ is an $\epsilon$-web, $G$ a red graph for $w$ and $o$ a partial orientation of $G$ we define $\gamma(o)$ to be equal to ${\#\{\textrm{negative edges of $G$}\}}$. A \emph{negative} (or \emph{positive}) edge is an oriented edge of the red graph, and it's negativity (or positivity) is given by figure~\ref{fig:nepe}.
  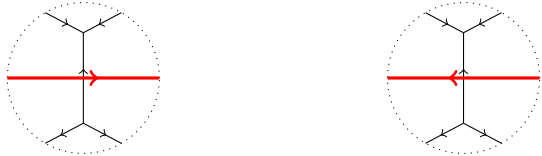
\begin{figure}[ht]
    \centering
    \begin{tikzpicture}
      \begin{scope}[yscale = {1}, xscale={1},decoration={markings, mark=at
     position 0.6 with {\arrow{>}}},postaction={decorate}]
\draw[dotted] (0,0) circle (1cm);
\draw[postaction ={decorate}] (0,-0.6) -- (0, 0.6);
\draw[postaction ={decorate}] (0,-0.6) -- (-120:1);
\draw[postaction ={decorate}] (0,-0.6) -- (-60:1);
\draw[postaction ={decorate}] (120:1) -- (0,0.6);
\draw[postaction ={decorate}] (60:1) -- (0,0.6);
\draw[color = red, very thick, postaction={decorate}] (-1,0) -- (1,0);
\end{scope}
\begin{scope}[yscale = {1}, xscale={1},decoration={markings, mark=at
     position 0.6 with {\arrow{>}}},postaction={decorate}, xshift = 5cm]
\draw[dotted] (0,0) circle (1cm);
\draw[postaction ={decorate}] (0,-0.6) -- (0, 0.6);
\draw[postaction ={decorate}] (0,-0.6) -- (-120:1);
\draw[postaction ={decorate}] (0,-0.6) -- (-60:1);
\draw[postaction ={decorate}] (120:1) -- (0,0.6);
\draw[postaction ={decorate}] (60:1) -- (0,0.6);
\draw[color = red, very thick, postaction={decorate}] (1,0) -- (-1,0);
\end{scope}
    \end{tikzpicture}
    \caption{On the left, a positive edge. On the right, a negative edge.}
    \label{fig:nepe}
  \end{figure}
\end{dfn}
\begin{lem}\label{lem:sumoforientation}
  Let $w$ be an $\epsilon$-web and $G$ a partially oriented red graph with $e$ a non-oriented edge of $G$, then we have the following equality of foams:
 \[   \begin{tikzpicture}
      \begin{scope}[yscale = {1}, xscale={1},decoration={markings, mark=at
     position 0.4 with {\arrow{>}}},postaction={decorate}]
\draw[dotted] (0,0) circle (1cm);
\draw[postaction ={decorate}] (0,-0.6) -- (0, 0.6);
\draw[postaction ={decorate}] (0,-0.6) -- (-120:1);
\draw[postaction ={decorate}] (0,-0.6) -- (-60:1);
\draw[postaction ={decorate}] (120:1) -- (0,0.6);
\draw[postaction ={decorate}] (60:1) -- (0,0.6);
\draw[color = red, very thick] (-1,0)  -- (1,0) node[above, pos=0.6] {$e$};
\node at (1.5,0) {$=$};
\end{scope}
\begin{scope}[yscale = {1}, xscale={1},decoration={markings, mark=at
     position 0.4 with {\arrow{>}}},postaction={decorate}, xshift = 3cm]
\draw[dotted] (0,0) circle (1cm);
\draw[postaction ={decorate}] (0,-0.6) -- (0, 0.6);
\draw[postaction ={decorate}] (0,-0.6) -- (-120:1);
\draw[postaction ={decorate}] (0,-0.6) -- (-60:1);
\draw[postaction ={decorate}] (120:1) -- (0,0.6);
\draw[postaction ={decorate}] (60:1) -- (0,0.6);
\draw[color = red, very thick, postaction={decorate}] (-1,0) -- (1,0) node[pos= 0.6, above] {$e$};
\node at (1.5,0) {$-$};
\end{scope}
\begin{scope}[yscale = {1}, xscale={1},decoration={markings, mark=at
     position 0.4 with {\arrow{>}}},postaction={decorate}, xshift = 6cm]
\draw[dotted] (0,0) circle (1cm);
\draw[postaction ={decorate}] (0,-0.6) -- (0, 0.6);
\draw[postaction ={decorate}] (0,-0.6) -- (-120:1);
\draw[postaction ={decorate}] (0,-0.6) -- (-60:1);
\draw[postaction ={decorate}] (120:1) -- (0,0.6);
\draw[postaction ={decorate}] (60:1) -- (0,0.6);
\draw[color = red, very thick, postaction={decorate}] (1,0) -- (-1,0) node [pos=0.4, above] {$e$};
\end{scope}
    \end{tikzpicture} \]
If $G$ is an un-oriented red graph for $w$ and $\delta$ a dots function for $w$, then:
\[f(w,G,\delta) =\sum_{o} (-1)^{\gamma(o)}f(w, G_o, \delta),\]
where $G_o$ stands for $G$ endowed with the orientation $o$, and $o$ runs through all the $2^{\#E(G)}$ complete orientations of $G$.
\end{lem}
\begin{proof}
  The first equality is the translation of the E-relation (see proposition~\ref{prop:relfoamdiag}) in terms of foam diagrams of partially oriented red graphs. The second formula is the expansion of the first one to all edges of $G$.
\end{proof}
\begin{lem}\label{lem:nonfitting20}
  If $w$ is an $\epsilon$-web, $G$ an exact paired red graph for $w$, $o$ a non-fitting orientation for $G$ and $\delta$ the null dot function on $w$, then  the $(w_g,w_G)$-foam $f(w, G_o, \delta)$ is equivalent to 0.
\end{lem}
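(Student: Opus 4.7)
My approach is to convert the oriented red graph $G_o$ into a classical foam diagram carrying dots, and then observe that a face of negative level forces too many dots to accumulate in a local capped region, which then vanishes by the sphere/digon/square relations of proposition~\ref{prop:relfoamdiag}.

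Concretely, I would first apply the relation pictured in figure~\ref{fig:fd2oRG} to every edge of $G_o$: an oriented edge is, by definition, the classical foam diagram in which an additional dot is placed on a specific one of the two $w$-edges emerging from the corresponding unzip (the one on the side of the face the edge points to). After this rewriting, $f(w, G_o, 0)$ becomes, up to the sign $(-1)^{\gamma(o)}$, a classical foam diagram $f(w, G, \delta_o)$ where the dot function $\delta_o$ deposits exactly $m_f \eqdef \#\{\textrm{edges of $G$ pointing to }f\}$ dots on the $w$-edges bounding the face $f$, all of them lying inside the region that will be capped off by $i_G$ and $p_G$.

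The second step is a local analysis at each face $f$ of $G$. By remark~\ref{req:ssandcap}, at the level of $f$ the foams $i_G$ and $p_G$ together perform $\deg_G(f)$ unzips followed by a cap, a digon move, or a square move, according to whether $\ed(f)$ equals $0$, $2$, or $4$. All $m_f$ dots produced by $\delta_o$ near $f$ are trapped on this local capped piece. A direct application of the sphere relations (for $\ed(f) = 0$), the digon relations (for $\ed(f) = 2$), or the square relations (for $\ed(f) = 4$) then shows that the local contribution at $f$ vanishes as soon as $m_f > 2 - \tfrac{1}{2}\ed(f)$, that is, as soon as $i_o(f) < 0$.

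Because $o$ is non-fitting, there is by hypothesis some $f \in V(G)$ with $i_o(f) < 0$, and the local piece at this $f$ is therefore identically zero. The foam $f(w, G_o, 0)$ factors through this local piece (it is a tensor factor sitting inside the global foam), hence it is itself zero. The main obstacle in executing this plan is the precise verification of the three local vanishing statements: the sphere case follows immediately from the $3$-dot relation and the sphere evaluations, but the digon and square cases require one to carefully migrate the trapped dots through the capped region using the digon and square relations before reducing to a dotted sphere, which is a purely local computation in the foam calculus.
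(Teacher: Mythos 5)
Your overall strategy (rewrite the oriented red graph as a classical foam diagram and then look for a vertex where the local piece collapses) is the paper's strategy, but you pick the wrong vertex, and that choice produces a genuine gap rather than just an unfinished local computation.

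You select a vertex $f$ with $i_o(f)<0$, which exists as soon as $o$ is non-fitting. The paper instead uses exactness to extract a vertex $v$ with $i_o(v)>0$: since $I(G)=\sum_f i_o(f)=0$, the presence of a negative level forces a positive one. This sign matters. With $i_o(v)>0$ one necessarily has $i_o(v)\in\{1,2\}$, which leaves only three local shapes — a $0$-dotted sphere, a $1$-dotted sphere, or a $0$-dotted digon bubble. The first two are closed components evaluating to $0$, and the third is a $(w_G,w_G)$-morphism of degree $-2$ landing in a Hom space whose degree-$(-2)$ part is zero; all three kill the foam immediately, and no square case even arises because $\ed(v)=4$ together with $i_o(v)\geq 1$ is impossible. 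In your "too many dots" regime the only case that vanishes on the nose is $\ed(f)=0$, $m_f\geq 3$ (a sphere with $\geq 3$ dots, killed by $X^3=0$). For $\ed(f)=2$ with $m_f\geq 2$ the local piece is a bubble with two dots, which is \emph{not} zero in the $\mathfrak{sl}_3$ foam calculus: by dot migration $X_1^2=X_2X_3$ across the singular circle, a bubble with two dots on one lobe equals $\pm X\cdot\mathrm{id}$ on the ambient facet. So there is no local vanishing to factor through, and the claimed implication "$i_o(f)<0\Rightarrow$ local contribution vanishes" is false. The square case $\ed(f)=4$, $m_f\geq 1$ is likewise not a direct consequence of the square relation. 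Your concluding sentence already flags the digon and square cases as needing "careful migration", but what is missing is not a careful computation — the computation actually disproves the claim. The fix is exactly the paper's: use exactness to land on a vertex of positive level, where the undershoot in dots forces the local piece into the small, genuinely vanishing configurations.
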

\begin{proof}
  The orientation $o$ is a non-fitting orientation. Hence, there is at least one vertex $v$ of $G$ so that $i_o(v)>0$. There are two different situations, either $i_o(v)=1$ or $i_o(v)=2$. Using the definition of a foam diagrams for oriented red graphs (figure~\ref{fig:fd2oRG}), we deduce that $\kappa'$ the classical foam diagram associated with $f(w, G_o, \delta)$ looks around $v$ like one of the three following situations:
\[
\begin{tikzpicture}
  \begin{scope}[yscale = {1}, xscale={1},decoration={markings, mark=at
     position 0.5 with {\arrow{>}}},postaction={decorate}]
\draw[dotted] (0,0) circle (1cm);
\draw (0,0) circle (0.7cm);
\draw[fill, color = red] (0,0) circle (1pt);
\end{scope}
\begin{scope}[yscale = {1}, xscale={1},decoration={markings, mark=at
     position 0.5 with {\arrow{>}}},postaction={decorate}, xshift=4cm]
\draw[dotted] (0,0) circle (1cm);
\draw (0,0) circle (0.7cm);
\draw[fill] (0.7,0) circle (1.5pt);
\draw[fill, color =red] (0,0) circle (1pt);
\end{scope}

\begin{scope}[yscale = {1}, xscale={1},decoration={markings, mark=at
     position 0.5 with {\arrow{>}}},postaction={decorate}, xshift = 8cm]
\draw[dotted] (0,0) circle (1cm);
\draw (0,1) -- (0, 0.8) .. controls (-0.55,0) and (-0.55,0) .. (0,-0.8) -- (0,-1);
\draw  (0, 0.8) .. controls (0.55,0) and (0.55,0) .. (0,-0.8);
\draw[fill, color = red] (0,0) circle (1pt);
\end{scope}

\end{tikzpicture}
\]
The sphere relations and the digon relations provided by proposition~\ref{prop:relfoamdiag} we see that the foam $f(w, G_o, \delta)$ is equivalent 0.
\end{proof}
\begin{lem}\label{lem:fitting2id}
  If $w$ is an $\epsilon$-web, $G$ an exact paired red graph for $w$, $o$ a fitting orientation for $G$ and $\delta$ the null dots function on $w$, then the $(w_G,w_G)$-foam $f(w, G_o, \delta)$ is equivalent to $(-1)^{\mu(o)} w_G\times I$, where $\mu(o)=\#V(G) + \#\{\textrm{positive digons of $G_o$}\}$ (see definition figure~\ref{fig:5localsituation}).
\end{lem}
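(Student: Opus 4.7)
My plan is to process the red graph $G$ vertex by vertex, evaluate each local piece of the foam using the relations of Proposition~\ref{prop:relfoamdiag}, and bookkeep the signs so that their product matches $(-1)^{\mu(o)}$.

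First, I would rewrite $f(w,G_o,\delta)$ as its associated classical foam diagram by applying the rule of figure~\ref{fig:fd2oRG} to every edge of $G$. Since $\delta$ is the null dot function on $w$, this produces a foam of the form $p_G\circ s_w(\delta')\circ i_G$ whose dots are exactly one per oriented edge of $G$, placed on the side prescribed by the orientation. Recall also from Remark~\ref{req:ssandcap} that each vertex $v$ of $G$ contributes to $p_G\circ i_G$ a cup-cap pair glued around the face of $w'$ corresponding to $v$, where the ``cup-cap'' is a sphere, a digon surgery, or a square surgery according to whether $\mathrm{ed}(v)$ equals $0$, $2$, or $4$.

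Second, I would analyse the local picture at each vertex $v\in V(G)$. Because $o$ is a fitting orientation and $I(G)=0$, the inequalities $i_o(v)\geq 0$ together with $\sum_v i_o(v)=0$ force $i_o(v)=0$ for every $v$. Using $\mathrm{ed}(v)\in\{0,2,4\}$ (which follows from Lemma~\ref{lem:RGadm2fair}) and the identity
\[ i_o(v)=2-\tfrac12\mathrm{ed}(v)-\#\{\text{edges of $G$ pointing to }v\}=0,\]
there are exactly the five possibilities drawn in figure~\ref{fig:5localsituation}: a circle face with two incoming edges; a digon face with one incoming edge, split into two cases according to how the distinguished dot sits with respect to the pairing (\emph{positive} vs. \emph{negative} digon); and a square face with all edges outgoing. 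Condition~(\ref{item:dfnRG2}) in Definition~\ref{dfn:red-graph} guarantees that these local configurations occur in disjoint regions of the foam, so their evaluations can be carried out independently.

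Third, I would apply Proposition~\ref{prop:relfoamdiag} vertex by vertex. The circle case collapses via the sphere relations: the two dots from the incoming edges sit on a dotted sphere, whose evaluation contributes a fixed scalar, which after inspection of the evaluation in figure~\ref{fig:thetaeval} is $-1$. Each digon case reduces via the digon relations to $\pm$ the identity foam on $w_G$, with opposite signs for the two digon subcases. The square case reduces via the square relations to $\pm$ identity with a fixed sign. Multiplying all these local scalars, one obtains a global factor of the form $(-1)^{\alpha\cdot\#V(G)+\beta\cdot\#\{\text{positive digons}\}}$; the only real work of the proof is to verify, by comparing the conventions of figure~\ref{fig:5localsituation} with those of the sphere, digon, and square relations of Proposition~\ref{prop:relfoamdiag}, that $\alpha=\beta=1$, giving exactly $(-1)^{\mu(o)}$. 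This sign bookkeeping is where I expect the main technical obstacle to lie; the underlying structural reduction to a multiple of $w_G\times I$ is already guaranteed by Lemma~\ref{lem:fde2id}, so only the evaluation of the multiple remains.
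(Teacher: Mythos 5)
Your proposal follows the same overall scheme as the paper's proof: pass to the classical foam diagram $\kappa'=(w',G',\delta')$ associated with $(w,G_o,\delta)$, observe that the edgeless red graph $G'$ leaves only the local configurations of figure~\ref{fig:5localsituation} to evaluate, apply the relations of Proposition~\ref{prop:relfoamdiag} vertex by vertex, and multiply the resulting $\pm 1$'s. The one genuine addition you make --- spelling out that exactness ($I(G)=0$) together with $o$ being fitting ($i_o(v)\geq 0$) forces $i_o(v)=0$ at every vertex, which is precisely why only those few local pictures occur --- is correct and is implicit rather than stated in the paper. The step you leave open (``verify $\alpha=\beta=1$'') is exactly the step the paper also discharges by assertion, namely that the positive digon is the only local configuration whose relation in Proposition~\ref{prop:relfoamdiag} carries no minus sign; granting that, your argument closes identically.

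Two small inaccuracies to flag. First, you describe the classical foam diagram as a foam ``of the form $p_G\circ s_w(\delta')\circ i_G$''; that is not right, since the classical diagram $\kappa'$ is $(w',G',\delta')$ with a \emph{different} web $w'$ (the edges of $w$ dual to edges of $G$ have been resolved) and an \emph{edgeless} red graph $G'$ --- otherwise $G'$ would fail to be fair. This does not affect your subsequent local analysis, but the expression as written is not the object you go on to evaluate. Second, you announce ``five possibilities'' but enumerate only four (circle, positive digon, negative digon, square); figure~\ref{fig:5localsituation} does contain a fifth picture. Since the paper's claim is that every configuration other than the positive digon contributes $-1$, the miscount does not change the final sign $(-1)^{\#V(G)+\#\{\text{positive digons}\}}$, but you should fill in the missing case so that the enumeration is actually exhaustive before asserting the product formula.
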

\begin{proof}
Let $\kappa'=(w',G',\delta')$ be the classical foam diagram associated with $(w, G_o,\delta)$.
The red graph $G'$ has no edge. Locally, the foam diagram $\kappa'$ corresponds to one of the 5 situation depicted on figure~\ref{fig:5localsituation}.

  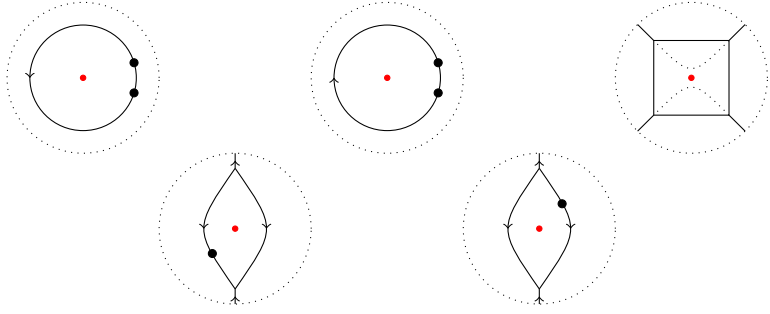
\begin{figure}[ht]
    \centering
    \begin{tikzpicture}
      \begin{scope}[yscale = {1}, xscale={1},decoration={markings, mark=at
     position 0.5 with {\arrow{>}}},postaction={decorate}, xshift=0cm]
\draw[dotted] (0,0) circle (1cm);
\draw[postaction= {decorate}] (0,0) circle (0.7cm);
\draw[fill] (0.67,0.2) circle (1.5pt);
\draw[fill] (0.67,-0.2) circle (1.5pt);
\draw[fill, color =red] (0,0) circle (1pt);
\end{scope}
\begin{scope}[yscale = {-1}, xscale={1},decoration={markings, mark=at
     position 0.5 with {\arrow{>}}},postaction={decorate}, xshift=4cm]
\draw[dotted] (0,0) circle (1cm);
\draw[postaction= {decorate}] (0,0) circle (0.7cm);
\draw[fill] (0.67,0.2) circle (1.5pt);
\draw[fill] (0.67,-0.2) circle (1.5pt);
\draw[fill, color =red] (0,0) circle (1pt);
\end{scope}
\begin{scope}[yscale = {1}, xscale={1},decoration={markings, mark=at
     position 0.5 with {\arrow{>}}},postaction={decorate}, xshift = 8cm]
\draw[dotted] (0,0) circle (1cm);
\draw (45:1) -- (45:0.7) -- (-45:0.7) -- (-45:1);
\draw (135:1) -- (135:0.7) -- (-135:0.7) -- (-135:1);
\draw (45:0.7) -- (135:0.7);
\draw (-45:0.7) -- ( -135:0.7);
\draw[dotted] (45:0.7) .. controls (0,0) and (0,0) .. (135:0.7);
\draw[dotted] (-45:0.7) .. controls (0,0) and (0,0) .. (-135:0.7);
\draw[fill, color = red] (0,0) circle (1pt);
\end{scope}
\begin{scope}[yscale = {1}, xscale={1},decoration={markings, mark=at
     position 0.5 with {\arrow{>}}},postaction={decorate},yshift = -2cm, xshift =2cm]
\draw[dotted] (0,0) circle (1cm);
\draw[postaction ={decorate}] (0,0.8) -- (0, 1);
\draw[postaction ={decorate}] (0,-1) -- (0, -0.8);
\draw[postaction ={decorate}] (0,0.8).. controls (-0.55,0) and (-0.55,0) .. (0,-0.8);
\draw[postaction ={decorate}] (0,0.8) .. controls (0.55,0) and (0.55,0) .. (0,-0.8);
\draw[fill] (-0.3,-0.33) circle (1.5pt);
\draw[fill, color = red] (0,0) circle (1pt);
\end{scope}
\begin{scope}[yscale = {1}, xscale={1},decoration={markings, mark=at
     position 0.5 with {\arrow{>}}},postaction={decorate}, xshift = 6cm, yshift = -2cm]
\draw[dotted] (0,0) circle (1cm);
\draw[postaction ={decorate}] (0,0.8) -- (0, 1);
\draw[postaction ={decorate}] (0,-1) -- (0, -0.8);
\draw[postaction ={decorate}] (0,0.8).. controls (-0.55,0) and (-0.55,0) .. (0,-0.8);
\draw[postaction ={decorate}] (0,0.8) .. controls (0.55,0) and (0.55,0) .. (0,-0.8);
\draw[fill] (0.3,0.33) circle (1.5pt);
\draw[fill, color = red] (0,0) circle (1pt);
\end{scope}
    \end{tikzpicture}
    \caption{The 5 different local situations of a foam diagram $\kappa'$ next to a vertex of $G'$. On the second line, the digon on the left is \emph{positive} and the digon on the right is \emph{negative}.}
    \label{fig:5localsituation}
  \end{figure}

But now using some relations of proposition~\ref{prop:relfoamdiag} we can remove all the vertices of $G'$, we see that $f(w,G_o, \delta)$ is equivalent to $(-1)^{\#V(G')-\#\{\textrm{positive digons}\}}$ because the positive digon is the only one with no minus sign in the relations of prop~\ref{prop:relfoamdiag}. This proves the result because $V(G)=V(G')$.
\end{proof}
\begin{lem}
  If $w$ is an $\epsilon$-web, $G$ an exact paired red graph for $w$ and $o_1$ and $o_2$ two fitting orientations for $G$, then $\mu(o_1) + \gamma(o_1) = \mu(o_2) + \gamma(o_2) $. \marginpar{ toujours nul ?}
\end{lem}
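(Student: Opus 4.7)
The strategy is to reduce to two orientations that differ by reversing a single directed cycle, compute the resulting variations of $\gamma$ and $\mu$ separately, and then verify that these variations cancel as integers by a local inspection of the defining figures.

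\emph{Reduction to a single-cycle reversal.} Since $G$ is exact, $I(G) = \sum_{v\in V(G)} i_{o}(v) = 0$, and since each $i_{o_j}(v) \ge 0$ for a fitting orientation, one must have $i_{o_1}(v) = i_{o_2}(v) = 0$ at every vertex $v$. In particular $o_1$ and $o_2$ induce the same in-degree at every vertex, so the set of edges $E_\Delta \subset E(G)$ on which they disagree forms an Eulerian sub-digraph of $G_{o_1}$ and hence decomposes into a disjoint union of directed simple cycles $C_1,\dots,C_k$ in $G_{o_1}$. Reversing these cycles one at a time produces a chain of fitting orientations interpolating from $o_1$ to $o_2$, so by induction on $k$ it suffices to prove the identity when $o_2$ is obtained from $o_1$ by reversing a single directed simple cycle $C$ of length $n$.

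\emph{Variations of $\gamma$ and $\mu$.} Every edge of $C$ has its positive/negative status flipped between $o_1$ and $o_2$ (figure~\ref{fig:nepe}). Writing $q$ for the number of negative edges of $C$ in $o_1$, one immediately gets $\gamma(o_2) - \gamma(o_1) = n - 2q$. A digon vertex $v$ (i.e. $\deg_G(v)=2$ with $\ed(v)=2$) can change its positive/negative type only if both of its two $G$-edges get reversed, which happens precisely when $v$ lies on $C$. At such $v$, reversing both adjacent edges swaps the pair of orientations and toggles $v$ between the positive and negative digon configurations of figure~\ref{fig:5localsituation}. Letting $d$ be the number of digon vertices on $C$ and $d_+$ the number of those that are positive digons in $o_1$, one obtains $\mu(o_2) - \mu(o_1) = d - 2d_+$.

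\emph{Local compatibility and main obstacle.} The equality of the lemma now reduces to the integer identity
\[
d_+ + q \;=\; \tfrac{1}{2}(d + n).
\]
Assign to each directed edge $e$ of $G$ a sign $\sigma(e) \in \{\pm 1\}$ following figure~\ref{fig:nepe} and to each digon vertex $v$ a sign $\tau(v) \in \{\pm 1\}$ following figure~\ref{fig:5localsituation}. A direct case-by-case inspection --- obtained by resolving the two red-graph edges at $v$ into dots via figure~\ref{fig:fd2oRG} and comparing with the two digon configurations listed in figure~\ref{fig:5localsituation} --- establishes the local product rule
\[
\tau(v) \;=\; \sigma(e_v^{\mathrm{in}})\cdot \sigma(e_v^{\mathrm{out}})
\]
at every digon vertex $v$ of $C$, where $e_v^{\mathrm{in}}, e_v^{\mathrm{out}}$ are the two cycle-edges incident to $v$. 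Traversing the cycle $C$ once, the product $\prod_{v \text{ digon on } C} \tau(v)$ factors as the product of the $\sigma$'s of the cycle-edges incident to a digon endpoint; an edge of $C$ with both endpoints digons contributes $\sigma(e)^{2}=1$, and the remaining non-digon portions of $C$ are tracked by the fact that the arcs of $C$ between consecutive digon vertices contribute matching factors, pinning down the integer count $d_+ + q$ rather than just its parity. The main obstacle is the local case check of the product rule: it is short but delicate, since the conventions on figures~\ref{fig:nepe},~\ref{fig:fd2oRG}, and~\ref{fig:5localsituation} must be matched carefully, and the upgrade from a mod-$2$ identity to the stated integer equality hinges on this precise matching.
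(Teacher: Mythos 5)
Your reduction to a single directed-cycle reversal is correct and is a genuinely different starting point from the paper's argument: exactness plus fittingness forces $i_{o_1}(v)=i_{o_2}(v)=0$ everywhere, the disagreement set is an Eulerian sub-digraph, and reversing its cycles one at a time stays within fitting orientations. The computation $\gamma(o_2)-\gamma(o_1)=n-2q$ is also fine, since reversing a red edge swaps the two configurations of figure~\ref{fig:nepe}. The gap is in everything after that. First, the claim that a digon vertex on $C$ necessarily toggles its positive/negative type is unjustified: a digon vertex is a vertex with $\ed(v)=2$ (not necessarily $\deg_G(v)=2$; it has a unique incoming edge but arbitrarily many outgoing ones), and its type is determined by which of the two sides of the resulting digon receives the dot coming from that unique incoming edge. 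When $C$ is reversed, the incoming edge changes to a different edge of $G$, and nothing you have said forces the new dot to land on the \emph{other} side of the digon. Establishing that requires exactly the geometric identification (digon type $=$ sign of the incoming edge) that the paper's proof extracts from the classical foam diagram, and which you do not supply.

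Second, even granting the local product rule $\tau(v)=\sigma(e_v^{\mathrm{in}})\sigma(e_v^{\mathrm{out}})$, it is a statement about signs of edges \emph{incident to digon vertices only}, and taking the product over $C$ yields at best a parity relation. It cannot determine the integer $q$, which counts negative edges along all of $C$, including arcs containing no digon vertex. The degenerate case makes this concrete: if $C$ passes only through circle vertices ($d=0$, which is possible since square vertices have in-degree $0$ and circle vertices have in-degree $2$), your target identity becomes $q=n/2$, i.e.\ exactly half the edges of $C$ are negative. This is a true but nontrivial geometric fact (each cycle edge's sign is governed by the orientation of the circle at its head, and reversal merely exchanges which cycle edge points at each circle), and the product rule is vacuous there. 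The paper avoids all of this by a direct global argument: $\gamma(o_i)$ equals the number of dots lying on clockwise-oriented edges in the classical foam diagram $\kappa_i'$, the dot configurations of two fitting orientations are forced to agree except at digon faces, and therefore $\gamma(o_1)-\gamma(o_2)$ equals the difference of the numbers of negative digons, which is $\mu(o_2)-\mu(o_1)$. To repair your route you would need to prove, from the resolution rule of figure~\ref{fig:fd2oRG}, (i) that a digon's type equals the sign of its incoming edge and (ii) that both edges entering a given circle vertex carry the same sign; at that point you have essentially reconstructed the paper's proof.
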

\begin{proof}
  We consider $\kappa'_1=(w',G',\delta_1)$ and $\kappa'_2(w',G',\delta_2)$ the two classical foam diagrams corresponding to $(w,G_{o_1},\delta)$ and $(w,G_{o_2},\delta)$, with $\delta$ the null dots function for $w$. 

The red graph $G'$ has no edge, and the local situation are depicted on figure~\ref{fig:5localsituation}. Consider $v$ a vertex of $G'$, then a side of the face of $w$ corresponding to $v$ is either clockwise or counterclockwise oriented (with respect to this face). From the definition of $\gamma$ we obtain that for $i=1,2$, $\gamma(o_i)$ is equal to the number of dots in $\kappa'_i$ on clockwise oriented edges in $w'$. The dots functions $\delta_1$ and $\delta_2$ differs only next to the digons, so that $\gamma(o_1) - \gamma(o_2)$ is equal to the number of negative digons in $\kappa'_1$ minus the number of negative digons in $\kappa'_2$. So that we have:
\begin{align*}
\gamma(o_1) - \gamma(o_2) &= \mu(o_2) -\mu(o_1) \\
\gamma(o_1)+ \mu(o_1) &= \gamma(o_2)+\mu(o_2).
\end{align*}
\end{proof}
\begin{proof}[Proof of proposition~\ref{prop:pciisid}.]
The foam $p_G\circ i_G$ is equal to $f(w,G,\delta)$ with $\delta$ the null dot function on $w$. From the lemmas~\ref{lem:sumoforientation}, \ref{lem:nonfitting20} and \ref{lem:fitting2id} we have that:
\begin{align*} 
f(w,G,\delta)& = \sum_{\textrm{$o$ fitting orientation of $G$}} (-1)^{\gamma(o)} f(w, G_o, \delta) \\
 =& \sum_{\textrm{$o$ fitting orientation of $G$}} (-1)^{\gamma(o) + \mu(o)} w_G\times [0,1]\\
 = & \pm\#\{\textrm{fitting orientations of $G$}\} w_G\times [0,1]. 
\end{align*}
The red graph $G$ is supposed to be exact. This means in particular that the set of fitting orientation is not empty. So that  $p_G\circ i_G$ is a non-trivial multiple of  $\id_{w_G}=w_G\times [0,1]$. 
\end{proof}
\begin{proof}[Proof of theorem \ref{thm:RG2idempotent}]
  From the proposition~\ref{prop:pciisid}, we know that there exists a non zero integer $\lambda_G$ such that $p_G\circ i_G= \lambda_G w_G$. Hence, $\frac{1}{\lambda_G}i_G\circ p_G$ is an idempotent. It's clear that it's non-zero. It is quite intuitive that it is not equivalent to the identity foam, for a proper proof, see proposition~\ref{prop:idfoam}.
\end{proof}


\subsection{On the identity foam}
\label{sec:identity-foam}

\begin{dfn}
  Let $w$ be an $\epsilon$-web, and $f$ a $(w,w)$-foam, we say that $f$ is \emph{reduced} if every facet of $f$ is diffeomorphic to a disk and if $f$ contains no singular circle (\ie only singular arcs). In particular this implies that every facet of $f$ meets $w\times \{0\}$ or $w\times\{1\}$.
\end{dfn}

The aim of this section is to prove the following proposition:
\begin{prop}\label{prop:idfoam}
  Let $w$ be a non-elliptic $\epsilon$-web. If $f$ is a reduced $(w,w)$-foam which is equivalent (under the foam relations \FR{}) to a non-zero multiple of $w\times [0,1]$, then the underlying pre-foam is diffeomorphic to $w\times [0,1]$ and contains no dot.
\end{prop}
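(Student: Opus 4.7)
The plan is to combine a degree-preservation argument with a topological rigidity statement for reduced foams. Since the relations of \FR{} preserve the degree, and $f$ is equivalent to $\lambda(w\times[0,1])$ with $\lambda\neq 0$, we have $\deg f=\deg(w\times[0,1])=0$. Using the formula of Definition~\ref{dfn:wwfoam}, and noting that puncturing a dot lowers the Euler characteristic of the punctured CW-complex by one, one obtains
\[ \chi(\Sigma_f)\;=\;\chi(w)+d, \]
where $\Sigma_f$ denotes the CW-complex underlying $f$ and $d$ the number of dots on $f$.

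The core of the proof is then the following rigidity lemma, to be established separately: for every reduced $(w,w)$-foam $g$ with $w$ non-elliptic, $\chi(\Sigma_g)\leq\chi(w)$, with equality if and only if the underlying pre-foam is diffeomorphic to $w\times[0,1]$. Granting this, the conclusion is immediate: applied to $f$ it gives $\chi(w)+d\leq\chi(w)$, so $d=0$, and the equality case yields $\Sigma_f\cong w\times[0,1]$.

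To prove the rigidity lemma, I would first observe that, because $g$ is reduced, the singular locus of $g$ is a disjoint union of arcs whose endpoints all lie at trivalent vertices of $w\times\{0\}\cup w\times\{1\}$ (the boundary strands of the foam carry no singularity), and exactly one singular arc meets each such trivalent vertex. Hence the singular arcs realize a perfect matching on the $2\#V_3(w)$ trivalent vertices of $w\times\{0,1\}$. Endowing $\Sigma_g$ with the natural CW-structure whose $2$-cells are the facets and whose $1$-skeleton is the singular graph together with $w\times\{0\}\cup w\times\{1\}$ and the vertical boundary strands, a direct cell count (using $2\#E(w)=3\#V_3(w)+l(\epsilon)$) reduces the inequality $\chi(\Sigma_g)\leq\chi(w)$ to the combinatorial inequality $F(g)\leq\#E(w)$, where $F(g)$ is the number of facets of $g$. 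Since the total number of $w$-edges appearing on all facet boundaries is $2\#E(w)$, it suffices to show that every facet carries at least two $w$-edges on its boundary; this follows from the fact that the singular graph is a matching, which prohibits both cycles (excluding facets bounded entirely by singular arcs) and paths of length $\geq 2$ (excluding facets with only one $w$-edge). In the equality case $F(g)=\#E(w)$, every facet is forced to be a ``quadrilateral'' bounded by one $w$-edge at $w\times\{0\}$, one at $w\times\{1\}$ and two singular arcs; a planar Euler-characteristic argument in the spirit of Proposition~\ref{prop:closed2elliptic} then uses the non-elliptic hypothesis on $w$ to rule out any matching of trivalent vertices other than the vertical one $(v,0)\leftrightarrow(v,1)$, thereby reconstructing $\Sigma_g$ as $w\times[0,1]$.

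The main obstacle is the equality case of the rigidity lemma: one must exclude every non-vertical matching of the trivalent vertices of $w\times\{0,1\}$. Such a matching would either produce a facet with more than two $w$-edges on its boundary (breaking the equality $F(g)=\#E(w)$) or would correspond to a planar configuration forcing $w$ to contain a digon or a square, contradicting non-ellipticity via the same Euler-characteristic argument used to prove Proposition~\ref{prop:closed2elliptic}. Formalizing this case analysis — and in particular handling the interaction of facets with the boundary strands coming from $l(\epsilon)$ — is the technical heart of the proof.
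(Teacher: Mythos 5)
Your reduction of the statement to the facet count $\#F(g)\leq\#E(w)$ (via the degree formula and the perfect-matching structure of the singular arcs) is correct, as is the degree argument eliminating dots once the topology is pinned down. The gap is in the proof of the inequality itself: the claim that every facet of a reduced $(w,w)$-foam carries at least two $w$-edges on its boundary is false. The boundary circle of a facet alternates between $w$-edges and connectors (singular arcs or vertical boundary segments), so a facet can be a \emph{bigon} bounded by a single edge $e\times\{0\}$ and a single singular arc joining the two endpoints of $e$; your matching argument rules out connector-paths of length $\geq 2$ and connector-cycles, but not this length-one configuration. Such facets genuinely occur: the composite of an unzip and a zip along an internal edge $e$ of a non-elliptic $w$ is a reduced $(w,w)$-foam with two bigon facets (it still satisfies $\#F\leq\#E(w)$, but only because two other facets carry four $w$-edges each — a compensation your local argument cannot see). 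Worse, the inequality $\chi(\Sigma_g)\leq\chi(w)$ is genuinely global: for $w$ a circle the cup--cap foam is reduced with $\chi(\Sigma_g)=2>0=\chi(w)$, and a digon in $w$ produces four bigon facets along two singular arcs. So non-ellipticity must already enter the proof of the \emph{inequality}, whereas your argument only invokes it in the equality case — which is itself the part you explicitly leave as "the technical heart" without carrying it out. As it stands, neither half of the rigidity lemma is established.

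For comparison, the paper takes a different route that bypasses this counting altogether. It first proves algebraically (Lemma~\ref{lem:NEwebwithdots}) that for non-elliptic $w$ the dotted identity foams $f(w,e)$, $e\in E(w)$, are pairwise non-equivalent; composing $f$ with $f(w,e_1)$ and $f(w,e_2)$ and sliding the dot along the common facet then forces every facet of $f$ to meet $w\times\{0\}$ (and $w\times\{1\}$) in at most one edge. Lemma~\ref{lem:red1touch2wI} concludes by an elementary induction on the vertices of $w$ that a reduced foam with this property is isotopic to $w\times[0,1]$. If you want to rescue your purely combinatorial route, you would need a global Euler-characteristic argument in the spirit of Proposition~\ref{prop:closed2elliptic}, applied to the planar diagram of the matching, to control both the number of bigon facets and the equality case; this is substantially more work than the local observation you give.
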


For this purpose we begin with a few technical lemmas:

\begin{lem} \label{lem:foamwithdots}
  Let $w$ be a closed web and $e$ an edge of $w$. Then there exists $f$ a $(\emptyset, w)$-foam which is not equivalent to 0 such that the facet of $f$ touching the edge $e$ contains at least one dot.
\end{lem}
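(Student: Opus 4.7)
The plan is to proceed by induction on the number of trivalent vertices of $w$, reducing via the local foam relations (Proposition~\ref{prop:relFR} and Figure~\ref{fig:localrel}).

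For the base case, suppose $w$ has no vertex, so $w$ is a disjoint union of vertexless circles. Then by Khovanov's TQFT we have $\F(w) \cong \A^{\otimes n}$, where $n$ is the number of components, and the edge $e$ lies in a unique circle $c_0$. Take $f$ to be the disjoint union of disks capping off each circle, with one dot placed on the disk capping $c_0$. This foam represents the tensor $1 \otimes \cdots \otimes X \otimes \cdots \otimes 1 \in \A^{\otimes n}$ (with $X$ in the slot corresponding to $c_0$), which is non-zero since $X \in \A = \QQ[X]/(X^3)$ is non-zero.

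For the inductive step, if $w$ has a vertex, then by Proposition~\ref{prop:closed2elliptic} (applied after removing any disjoint vertexless circles, which are handled by the base case on each factor) $w$ contains a digon or a square. Applying the digon or square relation of Figure~\ref{fig:localrel}, we obtain a decomposition $\F(w) \cong \F(w')\{-1\} \oplus \F(w')\{1\}$ in the digon case, or $\F(w) \cong \F(w_1) \oplus \F(w_2)$ in the square case, realized by explicit foam isomorphisms and with $w', w_1, w_2$ having strictly fewer vertices than $w$. The edge $e$ of $w$ corresponds to an edge $e'$ of (at least one of) the reduced webs: either $e$ survives unchanged if it lies outside the local region of the reduction, or it is identified with a specific edge in the reduced configuration. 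Applying the induction hypothesis to the pair $(w', e')$ or $(w_i, e')$, we obtain a non-zero dotted foam with a dot on the facet at $e'$; pulling back through the foam isomorphism yields a non-zero dotted foam $f$ for $w$ with a dot on the facet at $e$.

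The main obstacle is the case analysis for how the facet at $e$ transforms under the local reductions when $e$ belongs to the digon or square being reduced. In such cases the facet at $e$ may be merged with a neighbouring facet (or split) in the reduced web, and one must verify using the explicit foam moves (the digon and square relations of Figure~\ref{fig:localrel}) that a dot on the facet at $e$ in $w$ does correspond, under the isomorphism, to a dotted foam pulled back from a dotted foam with an appropriately placed dot in the reduced web.
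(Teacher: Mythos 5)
Your overall strategy is the same as the paper's: induct on the size of $w$, reduce a digon or square, and pull a non-zero dotted foam back through the (injective) digon- or square-move foam. But what you call ``the main obstacle'' --- the case where $e$ is itself an edge of the digon or square being reduced, so that its facet is merged or split under the reduction --- is exactly the point where your argument is incomplete, and you leave it unresolved. This is a genuine gap, not a routine verification: after composing with the digon or square move, the dot sits on a facet coming from the reduced web $w'$, not on the newly created facet touching $e$, and transporting it there requires the dot-migration relations together with a check that the resulting foam is still non-zero. None of that is carried out in your proposal.

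The paper avoids this case entirely by a small but essential refinement: it always chooses a digon or square \emph{far from} $e$, so that the facet touching $e$ is untouched by the reduction and the dot pulls back trivially. The price is that this choice is impossible for exactly two connected closed webs --- the circle and the theta web (where every edge borders every digon) --- and these are added as explicit base cases, handled by a dotted cap and a dotted half-theta foam respectively (both non-zero by the sphere and theta evaluations). To repair your proof you should either adopt this ``far from $e$'' selection (and justify that such a face exists for every connected closed web other than the circle and the theta web, adding the theta web to your base case), or else actually carry out the dot-migration computation in the case $e$ lies on the reduced face; as written, the induction does not close.
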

\begin{proof}
We prove the lemma by induction on the number of edges of the web $w$.
  It is enough to consider the case $w$ connected because the functor $\F$ is monoidal.  If the web $w$ is a circle this is clear, since a cap with one dot on it is not equivalent to 0. If $w$ is the theta web, then this is clear as well, since the half theta foam with one dot on the facet meeting $e$ is not equivalent to 0. 

Else,  there exists a square or digon in $w$ somewhere far from $e$. Let us denote $w'$ the web similar to $w$ but with the digon replaced by a single strand or the square smoothed in one way or the other. By induction we can find an $(\emptyset, w')$-foam $f'$ non-equivalent to 0 with one dot on the facet touching $e$. 

Next to the strand or the smoothed square, we consider a digon move or a square move (move upside down the pictures of figure~\ref{fig:smdmc}). Seen as a $(w',w)$-foam it induces an injective map. Therefore, the composition of $f'$ with this $(w',w)$-foam is not equivalent to 0 and has one dot on the facet touching $e$.
\end{proof}
\begin{notation}
  Let $w$ be an $\epsilon$-web, and $e$ be an edge of $w$. We denote by $f(w,e)$ the $(\emptyset,\overline{w}w)$-foam which is diffeomorphic to $w\times [0,1]$ with one dot on the facet $e\times [0,1]$. We denote by $f(w,\emptyset)$ the $(\emptyset,\overline{w} w)$-foam which is diffeomorphic to $w\times [0,1]$ with no dot on it.
\end{notation}
\begin{cor}
  Let $w$ be an $\epsilon$-web, and $e$ an edge of $w$, then $f(w,e)$ is non-equivalent to 0. 
\end{cor}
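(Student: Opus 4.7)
The plan is to deduce this directly from Lemma~\ref{lem:foamwithdots}, applied to the closed web $\bar{w}w$ and to the edge of $\bar{w}w$ containing $e$ (which equals $e$ itself when $e$ lies in the interior of $w$, and otherwise contains $e$ as a sub-arc along which the facet is unchanged). The lemma produces a $(\emptyset,\bar{w}w)$-foam $g$ not equivalent to~$0$, carrying at least one dot on the facet meeting~$e$. Let $g_{0}$ denote $g$ with one such dot erased.

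Next, I want to introduce the operator $D_{e}\colon \mathcal{F}(\bar{w}w)\to \mathcal{F}(\bar{w}w)$ defined by ``add a dot on the facet of $e$''. This is well-defined modulo \FR{}, since all foam relations are local and may be invoked away from the added dot, and by construction $D_{e}(g_{0})=g\not\equiv 0$, so $D_{e}$ is a non-zero linear operator. The key claim is that, under the identification $\mathcal{F}(\bar{w}w)\simeq \mathrm{End}_{K^{\epsilon}}(P_{w})$ obtained by folding, $D_{e}$ coincides with composition by $f(w,e)$; in particular one has $D_{e}(f(w,\emptyset))=f(w,e)$. This is a direct foam calculation: stacking $f(w,e)$ next to an arbitrary $(w,w)$-foam $x$ glues the two facets of $e$ into a single facet along which the dot may be slid freely, so the composition is equivalent to $x$ with an added dot on its facet of $e$.

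Once the identification is in hand, the corollary follows by contradiction: were $f(w,e)$ equivalent to $0$, the operator $D_{e}$ would vanish identically on $\mathcal{F}(\bar{w}w)$, contradicting $D_{e}(g_{0})=g\not\equiv 0$. The only point that requires a small check in the write-up is the identification of $D_{e}$ with composition by $f(w,e)$, and this reduces to the familiar principle that dots may be translated along a facet without altering the equivalence class of a foam. No further ingredients beyond Lemma~\ref{lem:foamwithdots} and the foam relations \FR{} are needed.
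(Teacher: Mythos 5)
Your argument is correct and is in substance the same as the paper's: both obtain from Lemma~\ref{lem:foamwithdots} a non-zero foam $g$ with a dot on the facet of $e$, strip off that dot to get $g_0$, and observe that $g$ factors as $f(w,e)$ composed with $g_0$ (so $f(w,e)\not\equiv 0$). The "$D_e$ operator" wrapper is an inessential reformulation of the paper's one-line factorisation argument, and your parenthetical remark about $e$ becoming a sub-arc of an edge of $\bar{w}w$ near the boundary is a correct (if tacit in the paper) point of care.
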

\begin{proof}
  From lemma~\ref{lem:foamwithdots} we know that for any $w$, there exists a $(w,w)$-foam which is non equivalent to 0 and is the product of $f(w,e)$ with another $(w,w)$-foam. This proves that the $(w,w)$-foam $f(w,e)$ is not equivalent to 0.
\end{proof}
\begin{dfn}
  If $w$ is an $\epsilon$-web. We say that it contains a $\lambda$ (resp.~a $\cap$, resp.~a $H$) if next to the border $w$ looks like one of the pictures of figure~\ref{fig:dfnUHY}.
\begin{figure}[ht]
  \centering
  \begin{tikzpicture}
    \input{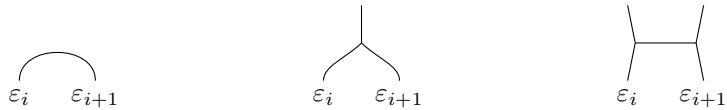}[scale=0.6]
  \end{tikzpicture}
  \caption{From left to right: a $\lambda$, a $\cap$ and a $H$.}
  \label{fig:dfnUHY}
\end{figure}
\end{dfn}
\begin{lem}\label{lem:UHYinNE}
Every non-elliptic $\epsilon$-web contains at least a $\lambda$, a $\cap$ or an $H$.
\end{lem}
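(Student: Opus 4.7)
The plan is to run an Euler-characteristic argument on the plane graph formed by $w$ together with the boundary of the disk $D$ containing it. Denote by $V$ the number of trivalent internal vertices of $w$, by $E$ the number of web edges, and set $n=l(\epsilon)$. The degree-sum identity $3V + n = 2E$ together with $\chi(D)=1$ yields
\[
F_{\mathrm{int}} + F_{\mathrm{ext}} \;=\; 1 + \frac{E + n}{3},
\]
where $F_{\mathrm{int}}$ and $F_{\mathrm{ext}}$ count the internal and boundary-touching (``external'') faces of $D$ minus the graph. Counting sides with multiplicity, each web edge contributes $2$ and each of the $n$ boundary arcs between consecutive boundary points contributes $1$, so $\sum_f s(f) = 2E + n$ and hence $\sum_f (s(f) - 6) = -(n+6)$.

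I would then bound the left-hand side from below. By non-ellipticity every internal face has $s(f)\geq 6$. For an external face $f$, let $k(f)$ denote the number of boundary arcs of $\partial f$; its boundary alternates between $k(f)$ boundary arcs and $k(f)$ web paths of length at least one, so $s(f) \geq 2k(f)$, and summing over external faces gives $\sum k(f) = n$. The crucial step is the local classification of external faces with $k(f)=1$ and small $s(f)$: the case $s(f)=2$ forces a single web edge between two adjacent boundary points (a $\cap$); the case $s(f)=3$ forces two web edges meeting at a single internal trivalent vertex, since boundary points carry only one web half-edge (a $\lambda$); the case $s(f)=4$ forces three web edges forming a simple path through two internal trivalent vertices (an $H$).

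Assuming by contradiction that $w$ contains no $\cap$, no $\lambda$ and no $H$, every external face with $k=1$ then has $s \geq 5$ and contributes at least $-1$, every $k=2$ face has $s\geq 4$ contributing at least $-2$, and every $k\geq 3$ face has $s\geq 2k\geq 6$ contributing at least $0$. Writing $n_k$ for the number of external faces with $k(f)=k$ and using $n_1 + 2n_2 + 3n_3 + \cdots = n$, we obtain
\[
-(n+6) \;=\; \sum_f (s(f)-6) \;\geq\; -n_1 - 2n_2 \;\geq\; -(n_1 + 2n_2 + 3n_3 + \cdots) \;=\; -n,
\]
a contradiction, so $w$ must contain a $\cap$, a $\lambda$ or an $H$. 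The step I expect to be the most delicate is the local case analysis matching $(k(f), s(f)) \in \{(1,2), (1,3), (1,4)\}$ with $\cap$, $\lambda$ and $H$ respectively: one must exclude degenerate configurations in which a vertex of the bounding web path is in fact a boundary point (impossible thanks to the univalence of boundary points, but worth writing out) and check simplicity of the path when $s(f)=4$.
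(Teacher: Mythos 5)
Your proof is correct, but it takes a genuinely different route from the paper. The paper's proof is a two-line reduction: it forms the closed web $\overline{w}w$, invokes the earlier fact that every (non-empty) closed web contains a circle, a digon or a square, and observes that since $w$ itself is non-elliptic this small face must straddle the gluing line, where a circle, digon or square unfolds precisely to a $\cap$, a $\lambda$ or an $H$ in $w$. You instead run the Euler-characteristic discharging directly on the half-picture $w\cup\partial D$, which is essentially the closed-web argument redone by hand with the boundary faces classified according to the number $k(f)$ of boundary arcs they carry. Both are valid; the doubling trick is shorter because it recycles Proposition~\ref{prop:closed2elliptic}, whereas your direct count is self-contained and yields strictly more: the identity $\sum_f(s(f)-6)=-(n+6)$ leaves a deficit of $6$ after the $-n$ absorbed by the boundary faces, which is exactly the kind of refinement the paper needs later (compare Lemma~\ref{lem:bg1UYH}, which asserts the existence of \emph{two} $\cap$'s or \emph{three} $\lambda$'s or $H$'s via the same computation on $D$-graphs). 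Two small points to tidy up if you write this in full: you should note that non-ellipticity forces $\partial$-connectedness, so $w\cup\partial D$ is connected and all faces are disks (otherwise the side-count per face needs more care); and in the local classification for $(k,s)=(1,4)$ the two internal vertices of the bounding path are automatically distinct, since identifying them would require four half-edges at a trivalent vertex — this, together with the univalence of boundary points that you already flag, disposes of the degenerate configurations.
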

\begin{proof}
  The closed web $\bar{w}w$ contains a circle a digon or a square, and this happens only if $w$ contains a $\cap$ a $\lambda$ or a $H$.
\end{proof}
\begin{req}
  In fact, one can ``build'' every non-elliptic web with this three elementary webs. This is done via the ``growth algorithm'' (see~\cite{MR1684195}).
\end{req}
\begin{lem}\label{lem:NEwebwithdots}
  Let $w$ be a non-elliptic $\epsilon$-web. Then the elements of $\left( f(w,e)\right)_{e\in E(w)}$ are pairwise non-equivalent (but they may be linearly dependant).
\end{lem}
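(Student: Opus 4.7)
The plan is to argue by contradiction, using the algebra structure on $\F(\bar w w)\cong\mathrm{End}_{K^\epsilon}(P_w)$ in which the product is composition of foams and dots on distinct facets of the identity cylinder commute. Suppose that $f(w,e_1)\equiv f(w,e_2)$ for some pair of distinct edges $e_1,e_2$ of $w$. Multiplying both sides by $f(w,e_1)^2$ yields
\[
f(w,e_1)^3 \equiv f(w,e_1)^2\cdot f(w,e_2);
\]
the left-hand side vanishes by the 3-dots relation (Proposition~\ref{prop:relfoamdiag}), so the identity cylinder on $w$ carrying two dots on the facet $e_1\times[0,1]$ and one dot on the facet $e_2\times[0,1]$ must be equivalent to the zero foam in $\F(\bar w w)$. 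The rest of the proof consists in contradicting this equivalence.

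To do so, I would establish the following strengthening of Lemma~\ref{lem:foamwithdots}: \emph{for any closed web $c$ and any ordered pair of distinct edges $\epsilon_1,\epsilon_2$ of $c$, there exists a $(\emptyset,c)$-foam, not equivalent to zero, whose facet touching $\epsilon_1$ carries at least two dots and whose facet touching $\epsilon_2$ carries at least one dot.} The proof would follow the structure of Lemma~\ref{lem:foamwithdots}: induction on the number of edges of $c$. The base case is the theta web, for which the half-theta foam with dots $(2,1,0)$ on the three edges is non-zero because pairing it with the dotless half-theta produces the $(2,1,0)$-dotted closed theta foam, whose non-zero evaluation appears in Figure~\ref{fig:thetaeval}. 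The inductive step uses Proposition~\ref{prop:closed2elliptic} to locate a square, digon or vertexless circle in $c$ and performs the corresponding reduction, always on a face disjoint from $\epsilon_1,\epsilon_2$ when possible, and carefully tracking the distinguished edges through the reduction otherwise.

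Applying this strengthening to $c=\bar w w$ and to the copies of $e_1,e_2$ lying in $w\subset \bar w w$, I obtain a non-zero $(\emptyset,\bar w w)$-foam $F$ with the prescribed dot pattern. Since each facet of $F$ is a connected surface and a collar neighborhood of the boundary is a product $\bar w w\times[0,\eta]$, I would slide the two dots and the one dot along their respective facets down into that collar, and then cut $F$ horizontally at height $\eta$. This exhibits $F$ as a composition $G\circ I$, where $I$ is the identity cylinder on $w$ with two dots on $e_1\times[0,1]$ and one dot on $e_2\times[0,1]$, and $G$ is a $(w,w)$-foam; since $F\not\equiv 0$, necessarily $I\not\equiv 0$, contradicting the conclusion of the first paragraph and completing the proof. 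The main difficulty is the bookkeeping in the strengthened lemma when a reducible face abuts one of the distinguished edges; once this is dealt with, the dot-sliding factorisation is a direct adaptation of the proof of the corollary to Lemma~\ref{lem:foamwithdots}.
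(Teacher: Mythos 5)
Your opening reduction is a genuinely different idea from the paper's, and it is correct as far as it goes: composing with $f(w,e_1)^2$ and invoking the 3-dots relation converts pairwise distinctness into the single non-vanishing claim that the cylinder with two dots on $e_1\times[0,1]$ and one dot on $e_2\times[0,1]$ is non-zero, and your dot-sliding factorisation correctly identifies this claim with your strengthened version of Lemma~\ref{lem:foamwithdots} for $c=\overline{w}w$ (provided the two facets are required to be \emph{distinct}: if they coincide you would need three dots on a single facet, which already kills the foam). The fatal problem is that the strengthened lemma is false, so the induction you propose cannot be carried out. A quick way to see that it must fail: your first and last paragraphs use nothing about non-ellipticity, so the strengthened lemma for all closed webs would prove the present lemma for all $\epsilon$-webs — but the paper points out after Proposition~\ref{prop:idfoam} that the lemma genuinely fails for elliptic webs. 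Concretely, suppose $c$ contains a digon with edges $b,c$, and let $\epsilon_1,\epsilon_2$ be the two edges of $c$ adjacent to the digon at its two vertices (the smallest example is the four-vertex web obtained from a $4$-cycle by doubling two opposite edges). Writing $X_e$ for the operator ``add a dot on the facet touching $e$'', the dot migration relations of figure~\ref{fig:localrel} at the two seams give $X_{\epsilon_1}=-X_b-X_c=X_{\epsilon_2}$ on $\F(c)$, hence $X_{\epsilon_1}^2X_{\epsilon_2}=X_{\epsilon_1}^3=0$: \emph{every} foam carrying two dots on a facet touching $\epsilon_1$ and one dot on a distinct facet touching $\epsilon_2$ is equivalent to zero. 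Since the class of closed webs over which you induct inevitably contains such configurations (your own reductions create digons and squares), the inductive statement is false on that class; the difficulty you defer as ``bookkeeping'' is not bookkeeping but a counterexample.

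Even if you restrict the target to $c=\overline{w}w$ with $w$ non-elliptic and $e_1,e_2\in E(w)$ — where the digons of $\overline{w}w$ straddle the boundary and have one adjacent edge in $w$ and one in $\overline{w}$, so the obstruction above does not apply verbatim — the claim $X_{e_1}^2X_{e_2}\cdot\mathrm{id}_w\neq 0$ is strictly stronger than the lemma (which only asserts $X_{e_1}\mathrm{id}_w\neq X_{e_2}\mathrm{id}_w$), and you offer no argument that stays inside this restricted class. Contrast this with the paper's proof, which inducts on the non-elliptic $\epsilon$-web $w$ itself: Lemma~\ref{lem:UHYinNE} supplies a $\cap$, a $\lambda$ or an $H$ on the boundary, and $f(w,e_1)$, $f(w,e_2)$ are separated by composing with an explicit test foam (a dotted cap, a digon move or a square move) that sends one to zero and the other to a foam known to be non-zero on a strictly smaller \emph{non-elliptic} web — so the induction never leaves the class where the statement holds. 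To salvage your approach you would have to formulate and prove the correct non-vanishing statement on a class stable under your reductions, which appears to be at least as delicate as the paper's case analysis.
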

\begin{proof}[Sketch of the proof]
  We proceed by induction on the number of edges of $w$. The initiation is straightforward since if $w$ has only one edge there is nothing to prove. We can distinguish several case thanks to lemma~\ref{lem:UHYinNE}:

If $w$ contains a $\cap$, we denote by $e$ the edge of this $\cap$, and by $w'$ the $\epsilon'$-web similar to $w$ but with the cap removed. Suppose that $e_1=e$, then $e_2\neq e$ and, then the $(\emptyset,\overline{w}w)$-foams $f(w,e_1)$ and $f(w,e_2)$ are different because if we cap the cup (we mean $e\times I$) by a cap with one dot on it, on the one hand we obtain a $(\emptyset,\overline{w'}w')$-foam equivalent to $0$ and on the other hand a  $(\emptyset,\overline{w'}w')$-foam equivalent to $f(w',\emptyset)$. Thanks to lemma~\ref{lem:foamwithdots},
we know that this last $(\emptyset,\overline{w'}w')$-foam is not equivalent to 0. If $e_1$ and $e_2$ are different from $e$, it is clear as well, because  $f(w, e_1)$ and $f(w,e_2)$ can be seen as compositions of  $f(w', e_1)$ and $f(w',e_2)$ with a birth (seen as a $(\overline{w'}w', \overline{w}w)$-foam) which is known to correspond to  injective map.

This is the same kind of argument for the two other cases. The digon relations and the square relations instead of the sphere relations.
\end{proof}
\begin{lem}\label{lem:red1touch2wI}
  Let $w$ be an $\epsilon$-web and $f$ a reduced $(w,w)$-foam $f$.  Suppose that every facet touches $w\times\{0\}$ on at most one edge, and touches $w\times\{1\}$ on at most one edge, then it is isotopic to $w\times [0,1]$.
\end{lem}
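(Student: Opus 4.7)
The plan is to show that the hypotheses force every facet of $f$ to be a rectangular four-sided disk, after which the combinatorial structure of $f$ coincides with that of the cylinder and an isotopy finishes the argument.

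First I would set up the combinatorial bookkeeping. By the boundary conditions built into the definition of a $(w,w)$-foam, $f$ is cylindrical in small neighborhoods of $w\times\{0\}$ and of $w\times\{1\}$. In particular, each edge of $w\times\{0\}$ lies in the boundary of exactly one facet, and similarly for each edge of $w\times\{1\}$. Combined with the hypothesis that no facet touches more than one edge on either side, this produces two injections from the edges of $w$ into the set of facets; in particular, at every trivalent vertex $v$ of $w\times\{0,1\}$ the three facets incident to the three web edges at $v$ are pairwise distinct. Furthermore, since $f$ is reduced, the local Y-shape $\times$ interval structure at each trivalent vertex forces a unique singular arc emanating from $v$; the singular arcs are embedded smooth curves whose endpoints lie in the trivalent vertices of $w\times\{0,1\}$, so they constitute a perfect matching of these $2V$ vertices, giving exactly $V$ singular arcs, each shared by three facets.

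Next I would analyse the boundary cycle of an arbitrary facet $F$. The cycle is made up of arcs on $w\times\{0\}$, on $w\times\{1\}$, and on singular arcs. The crucial observation is that two singular arcs cannot be consecutive along the cycle: at their joining vertex there is only one singular arc available, so the cycle must continue along a web edge. Together with the at-most-one-edge-per-side hypothesis, this forces each boundary cycle to have the form $e_b\cdot s_1\cdot e_t\cdot s_2$, a 4-cycle with one bottom edge, one singular arc going up, one top edge, and one singular arc returning. The only potentially degenerate case is a facet touching just one side of $f$, whose boundary would then be $e_b\cdot s$ with $s$ a singular arc joining the two endpoints of $e_b$. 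A local analysis of the three facets meeting along $s$ at both of its endpoints then either produces some other facet using two distinct bottom (or top) edges, contradicting the hypothesis, or forces several parallel bottom edges between the endpoints of $e_b$, which is excluded in the intended application by the absence of digon sub-webs.

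Once every facet is a four-sided rectangle, the combinatorial data of $f$ is precisely that of the cylinder: the singular arcs bijectively match bottom vertices with top vertices, and the facets bijectively match bottom edges with top edges in a way compatible with the vertex matching, so they assemble into an isomorphism of trivalent graphs $w\times\{0\}\to w\times\{1\}$ which, by the fixed boundary data, is the identity. A fibre-preserving isotopy in $\RR\times[0,1]^2$ straightening each rectangular facet to $e\times[0,1]$ then exhibits $f$ as isotopic to $w\times[0,1]$. The main obstacle is the exclusion of the degenerate cap configurations, which requires the careful bookkeeping along a putative cap singular arc at both of its endpoints described above.
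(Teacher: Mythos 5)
Your argument is essentially correct in the setting where the lemma is actually used, but it takes a genuinely different route from the paper. The paper's proof is an induction on the number of vertices of $w$: it picks a vertex $v$ joined by an edge $e$ to the boundary $\epsilon$, shows the singular arc issuing from $v\times\{0\}$ must run up to $v\times\{1\}$ (otherwise the facet containing $e\times\{0\}$ would meet a second bottom or top edge), concludes that this facet is a product rectangle, removes a neighbourhood of it and recurses on a smaller web. You instead classify all facets simultaneously by analysing their boundary cycles and then assemble the product structure globally. Your version exhibits the whole combinatorial structure in one pass, at the price of having to confront the ``cap'' configuration directly --- a configuration the paper's induction never meets, because it only ever inspects facets anchored to the free vertical boundary of the foam.

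Two caveats. First, a facet boundary cycle consists not only of web edges and singular arcs but also of the vertical free-boundary arcs $\{p\}\times[0,1]$ over the points $p$ of $\epsilon$; for facets over edges of $w$ adjacent to $\epsilon$ the normal form is $e_b\cdot s_1\cdot e_t\cdot s_2$ with one or both $s_i$ such a vertical arc rather than a singular arc. (A count forces this: there are $\#V(w)$ singular arcs, each on $3$ facets, but $\#E(w)$ facets with two slots each, and $2\#E(w)=3\#V(w)+l(\epsilon)$.) These arcs are always flanked by one bottom and one top edge, so your case analysis survives unchanged, but they should appear in it. Second, your exclusion of the cap facet genuinely needs an extra hypothesis: pushing your own analysis, the two other facets along the cap's singular arc each meet a bottom edge at both endpoints $u,v$ of $e_b$, so the hypothesis forces three parallel edges between $u$ and $v$, i.e.\ a closed theta component of $w$. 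This is impossible for a non-elliptic (or merely $\partial$-connected) web, but it is not excluded by the statement --- and in fact the lemma as stated fails for $w$ the theta web: glueing a half-theta cup to the bottom copy and a half-theta cap to the top copy gives a reduced foam in which every facet is a disk meeting exactly one web edge, yet which is not the cylinder. The paper's own proof tacitly makes the same assumption (it needs a vertex adjacent to $\epsilon$ at every stage of its induction), so this is a defect of the statement rather than of your argument; you should simply invoke non-ellipticity explicitly, which is the only case needed for Proposition~\ref{prop:idfoam}.
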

\begin{proof}
  The proof is inductive on the number of vertices of $w$. If $w$ is a collection of arcs, the foam $f$ has no singular arc. As $f$ is supposed to be reduced, it has no singular circle. Therefore it is a collection of disks which corresponds to the arcs of $w$, and this proves the result in this case.

We suppose now that $w$ has at least one vertex. Let us pick a vertex $v$ which is a neighbour (via an edge that we call $e$) of the boundary $\epsilon$ of $w$. We claim that the singular arc $\alpha$ starting at $v\times\{0\}$ must end at $v\times\{1\}$. 

Indeed, the arc $\alpha$ cannot end on $w\times\{0\}$,  for otherwise, the facet $f$ touching $e$ would touch another edge of $w$. Therefore the arc $\alpha$ ends on $w\times\{1\}$. 
For exactly the same reasons, it has to end on $v\times\{1\}$, so that the facet which touches  $e\times\{0\}$ is isotopic to $e\times I$, now we can remove a neighbourhood of this facet and we are back in the same situation with a $\epsilon'$-web with less vertices, and this concludes.
\end{proof}
\begin{proof}[Proof of proposition~\ref{prop:idfoam}]
  We consider $w$ a non-elliptic $\epsilon$-web. Let $f$ be a reduced $(w,w)$-foam such that $f$ is equivalent to $w\times I$ up to a non-trivial scalar. Because of lemma~\ref{lem:NEwebwithdots}, the foam $f$ satisfies the hypotheses of lemma~\ref{lem:red1touch2wI}, so that $f$ is isotopic to $w\times [0,1]$.
\end{proof}
We conjecture that the proposition~\ref{prop:idfoam} still holds without the non-ellipticity hypothesis. However the proof has to be changed since lemma \ref{lem:NEwebwithdots} cannot be extend to elliptic webs (consider the facets around a digon).
\begin{cor}
  If $w$ is a non-elliptic $\epsilon$-web and $w'$ is an $\epsilon$-web with strictly less vertices than $w$, then if $f$ is a $(w,w')$-foam  and $g$ is a $(w',w)$-foam, then the $(w,w)$-foam $fg$ cannot be equal to a scalar times the identity.
\end{cor}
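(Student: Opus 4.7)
Assume for contradiction that $fg = \lambda \cdot (w\times[0,1])$ in $\hom_{\mathbb{WT}}(w,w)$ with $\lambda \neq 0$, so that $fg$ and $\lambda\cdot\id_w$ represent the same class modulo~\FR. The approach is to reduce $fg$ to a reduced $(w,w)$-foam and then invoke proposition~\ref{prop:idfoam}. By iteratively applying the bubble, sphere, digon, and square relations of~\FR{}, one expresses $fg$ modulo~\FR{} as a $\QQ$-linear combination $\sum_i c_i h_i$ of reduced $(w,w)$-foams. Since this combination equals $\lambda\cdot(w\times[0,1])$ in the quotient and $w\times[0,1]$ is itself reduced, one of the $h_i$ must be equivalent to a nonzero multiple of $\id_w$. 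Proposition~\ref{prop:idfoam} then forces that $h_i$ to be diffeomorphic to $w\times[0,1]$ with no dots; in particular it has $\#V(w)$ singular arcs, each joining a vertex of $w\times\{0\}$ to the corresponding vertex of $w\times\{1\}$.

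The intended contradiction comes from the factorisation of $fg$ through $w'$. At the midlevel, the horizontal cross-section of $fg$ is the web $w'$, so only $\#V(w')<\#V(w)$ singular arcs of $fg$ cross that level transversally. Iteratively reducing the elliptic faces of $w'$ by the Kuperberg relations (equivalently, applying the corresponding local~\FR{} moves to $fg$ in a neighbourhood of the midlevel) can only decrease the vertex count. I would then like to conclude that every reduced representative of $fg$ still admits some horizontal cross-section of the form $w''$ with $\#V(w'')\leq \#V(w')<\#V(w)$, and hence crosses strictly fewer than $\#V(w)$ singular arcs at that horizontal level. This is incompatible with being diffeomorphic to $w\times[0,1]$, where every generic horizontal cross-section recovers $w$ itself and meets all $\#V(w)$ singular arcs.

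The principal obstacle is to formalise this ``persistence of a small waist'' through the~\FR{} relations. These relations are local but can genuinely alter the topology of the foam, so it is not \emph{a priori} clear that every reduced representative obtained from $fg$ inherits a horizontal cross-section with at most $\#V(w')$ vertices. A complete proof would require either a case-by-case verification that no~\FR{} move can introduce new singular arcs threading through a generic waist, or an inductive complexity argument on the number of~\FR{} moves used in the reduction; together with the information extracted from proposition~\ref{prop:idfoam} this would close the argument.
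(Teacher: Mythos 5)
The paper gives no proof of this corollary — it is stated immediately after Proposition~\ref{prop:idfoam} with no argument attached — so there is nothing to compare your reasoning against directly. That said, your proposal does not close the argument, and both weaknesses you flag are genuine.

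The first and more fatal gap is the step ``one of the $h_i$ must be equivalent to a nonzero multiple of $\id_w$.'' Reduced $(w,w)$-foams do not form a basis of $\hom_{\mathbb{WT}}(w,w)$; they are in general linearly dependent modulo~\FR. From $\sum_i c_i h_i \equiv \lambda\,\id_w$ one cannot conclude that any single $h_i$ is itself equivalent to a nonzero multiple of $\id_w$: the $h_i$ could each be far from $\id_w$ individually while their sum collapses to $\lambda\,\id_w$ in the quotient. So even if you could control the ``waist'' of each $h_i$, Proposition~\ref{prop:idfoam} would not apply, since it needs a \emph{single} reduced foam equivalent to a nonzero multiple of the identity. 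The second gap — persistence of a small waist — is also real for exactly the reason you give: the surgery relation in particular changes the global topology of the foam and can sever or create singular strata near any chosen horizontal level, so a reduced representative produced by applying~\FR{} moves has no \emph{a priori} reason to inherit a horizontal cross-section with at most $\#V(w')$ vertices. To salvage this strategy you would need to replace both steps by a single argument showing directly, by tracking how each~\FR{} move interacts with the minimal number of singular-arc crossings of a generic horizontal level, that the reduction of $fg$ can never produce $\id_w$ with nonzero coefficient — but this is not established by pointing at Proposition~\ref{prop:idfoam}, which concerns a single reduced foam rather than a linear combination.
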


\begin{landscape}
\begin{figure}
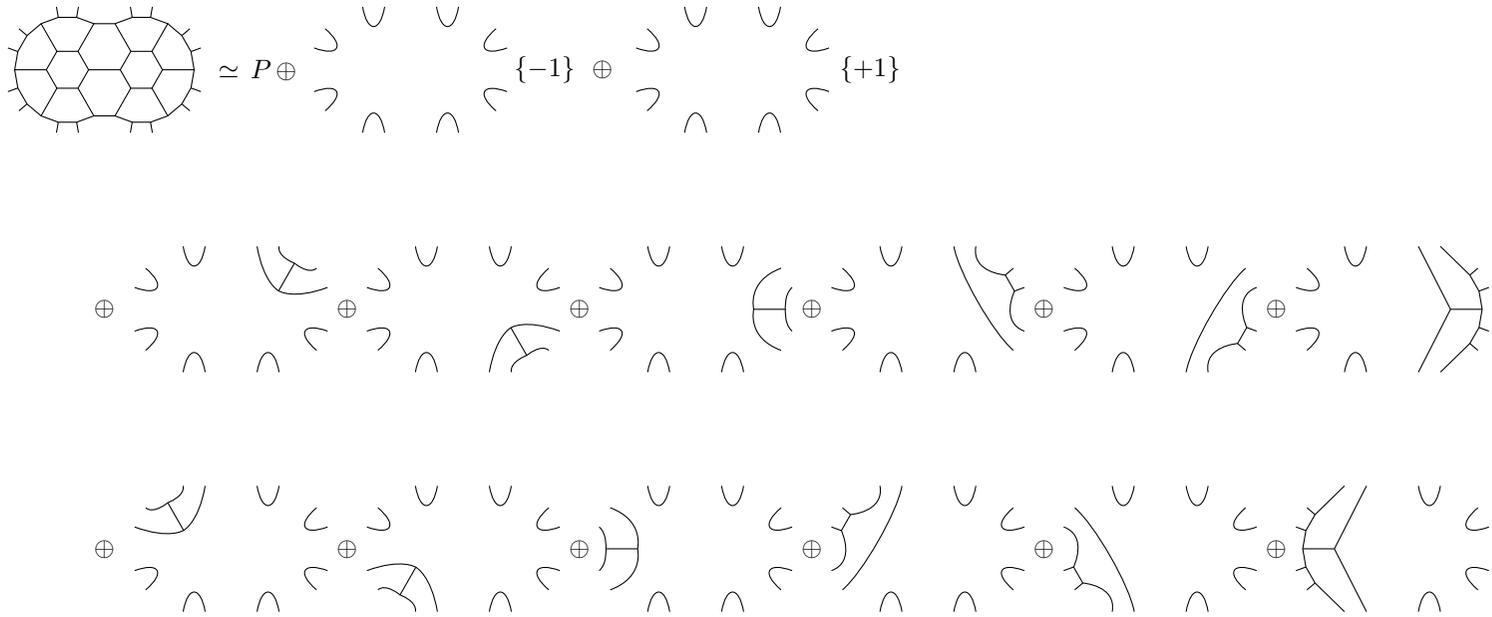

  \begin{tikzpicture} [scale = 0.7]
    \begin{scope}
      \input{\imagesfolder/th_exdecofmodules}
    \end{scope}
    \node at (2.9, 0) {$\simeq \, P \, \oplus$};
      \begin{scope}[xshift = 5.8cm]
        \input{\imagesfolder/th_exdecofmodules1}
      \end{scope}
       \begin{scope}[xshift = 11.9cm]
        \input{\imagesfolder/th_exdecofmodules1}
      \end{scope}
        \node at (8.7, 0) {$\{-1\}\,\,\,\,\oplus$};
        \node at (14.5,0) {$\{+1\}$};
      \begin{scope}[yshift =-4.5cm, xshift = -2cm]
        \node at (2, 0) {$\oplus$};
      \begin{scope}[xshift = 4.4cm]
        \input{\imagesfolder/th_exdecofmodules2}
      \end{scope}
      \node at (6.6, 0) {$\oplus$};
     \begin{scope}[xshift = 8.8cm, yscale= -1]
        \input{\imagesfolder/th_exdecofmodules2}
      \end{scope}
      \node at (11, 0) {$\oplus$};
     \begin{scope}[xshift = 13.2cm, yscale= -1]
        \input{\imagesfolder/th_exdecofmodules3}
      \end{scope}
     \node at (15.4, 0) {$\oplus$};
     \begin{scope}[xshift = 17.6cm, yscale= 1]
        \input{\imagesfolder/th_exdecofmodules4}
      \end{scope} 
     \node at (19.8, 0) {$\oplus$};
     \begin{scope}[xshift = 22.0cm, yscale= -1]
        \input{\imagesfolder/th_exdecofmodules4}
      \end{scope} 
      \node at (24.2, 0) {$\oplus$};
     \begin{scope}[xshift = 26.4cm, yscale= 1]
        \input{\imagesfolder/th_exdecofmodules5}
      \end{scope} 
    \end{scope} 
\begin{scope}[yshift =-9cm, xshift = -2cm]
        \node at (2, 0) {$\oplus$};
      \begin{scope}[xshift = 4.4cm, xscale = -1]
        \input{\imagesfolder/th_exdecofmodules2}
      \end{scope}
      \node at (6.6, 0) {$\oplus$};
     \begin{scope}[xshift = 8.8cm, yscale= -1, xscale = -1]
        \input{\imagesfolder/th_exdecofmodules2}
      \end{scope}
      \node at (11, 0) {$\oplus$};
     \begin{scope}[xshift = 13.2cm, yscale= -1, xscale = -1]
        \input{\imagesfolder/th_exdecofmodules3}
      \end{scope}
     \node at (15.4, 0) {$\oplus$};
     \begin{scope}[xshift = 17.6cm, yscale= 1, xscale = -1]
        \input{\imagesfolder/th_exdecofmodules4}
      \end{scope} 
     \node at (19.8, 0) {$\oplus$};
     \begin{scope}[xshift = 22.0cm, yscale= -1, xscale = -1]
        \input{\imagesfolder/th_exdecofmodules4}
      \end{scope} 
      \node at (24.2, 0) {$\oplus$};
     \begin{scope}[xshift = 26.4cm, yscale= 1, xscale = -1]
        \input{\imagesfolder/th_exdecofmodules5}
      \end{scope} 
    \end{scope} 
  \end{tikzpicture}
\hspace{1cm}
  \caption{Example of a decomposition of a web-module into indecomposable modules. All direct factors which are web-modules are obtained through idempotents associated with red graphs. The module $P$ is not a web-module but is a projective indecomposable module.}
\end{figure}
\end{landscape}

\section{Characterisation of indecomposable web-modules}
\label{sec:kup2RG}

\subsection{General View}
\label{sec:general-view}

The lemma~\ref{lem:monic2indec} states that the indecomposability of a web-modules $P_w$ can be deduced from the Laurent polynomial $\kup{\overline{w}w}$. In this section we will show a reciprocal statement. We first need a definition: 

\begin{dfn}
   Let $\epsilon$ be an admissible sequence of signs of length $n$, an $\epsilon$-web $w$ is said to be \emph{virtually indecomposable} if $\kup{\overline{w}w}$ is a monic symmetric Laurent polynomial of degree $n$.
An $\epsilon$-web which is not virtually indecomposable is \emph{virtually decomposable}. If $w$ is a virtually decomposable $\epsilon$-web, we define the \emph{level of $w$} to be the integer $\frac12 (\deg \kup{\overline{w}w} -n)$.
\end{dfn}
Despite of its fractional definition, the level is an integer.
With this definition, lemma~\ref{lem:monic2indec} can be rewritten:
\begin{lem}
\label{lem:VI2indec}
  If $w$ is a virtually indecomposable $\epsilon$-web, then $M(w)$ is an indecomposable $K^\epsilon$-module. 
\end{lem}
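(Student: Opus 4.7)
The plan is to mimic the proof of Lemma~\ref{lem:monic2indec} essentially verbatim, the only novelty being that we are working with $M(w)$ for a possibly elliptic $w$ rather than with $P_w$ for a non-elliptic one. Concretely, I would first note that by the $0+1+1$ TQFT construction sketched in the remark after the definition of $K^\epsilon$, the $K^\epsilon$-module $M(w)=\F(w)=\bigoplus_{u\in\mathrm{ob}(\tilde{K}^\epsilon)}\F(\bar uw)$ is a finite-dimensional graded $K^\epsilon$-module, and exactly the same computation as in the non-elliptic case gives
\[\dim_q \hom_{K^\epsilon}(M(w),M(w)) = \kup{\bar ww}\cdot q^{l(\epsilon)}.\]

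Once this graded dimension formula is in hand, the argument is identical to the one in Lemma~\ref{lem:monic2indec}. Writing $\kup{\bar ww}=\sum_i a_iq^i$, the virtual indecomposability of $w$ says precisely that $\kup{\bar ww}$ is monic of degree $n=l(\epsilon)$, and by symmetry in $q$ and $q^{-1}$ we get $a_{-n}=a_n=1$. Hence the degree-$0$ part of $\hom_{K^\epsilon}(M(w),M(w))$ is one-dimensional. Since any idempotent in a graded endomorphism ring must itself be of degree $0$, and since $\mathrm{id}_{M(w)}$ is already a nonzero degree-$0$ endomorphism, the space of degree-$0$ endomorphisms is exactly $\QQ\cdot\id$, which admits no non-trivial idempotent. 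Therefore $M(w)$ is indecomposable.

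The only point that is not completely mechanical is the justification of the graded dimension formula for $\hom_{K^\epsilon}(M(w),M(w))$ when $w$ is elliptic; I expect this to be the main (minor) obstacle. The cleanest way to handle it is to apply the Kuperberg relations of Proposition~\ref{prop:Kup} to write $w$ as a formal combination of non-elliptic webs with appropriate grading shifts, observe that the TQFT $\F$ turns these relations into direct sum decompositions at the level of modules, and then invoke the formula already known for non-elliptic webs together with the fact that $\kup{\bar ww}$ is multiplicative under the corresponding decomposition on both sides of the desired equality.
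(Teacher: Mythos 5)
Your interpretation of $M(w)$ as $\F(w)$ is the right one (the notation is not defined in the paper; it is clearly shorthand for the web-module $P_w$, equal to $\F(w)$ when $w$ is non-elliptic), and your Schur-lemma argument from the graded dimension formula is the same one the paper uses in Lemma~\ref{lem:monic2indec}. The difference is in how the elliptic case is handled. The paper presents Lemma~\ref{lem:VI2indec} as a verbatim restatement of Lemma~\ref{lem:monic2indec} in the new vocabulary, with the non-ellipticity hypothesis silently discharged by the observation made in the remark after Theorem~\ref{thm:Charac}: if $w$ is elliptic, then $\kup{\overline{w}w}$ cannot be monic of degree $l(\epsilon)$. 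Indeed, a circle or digon in $w$ contributes a factor $[3]^2$ or $[2]^2$ that strictly raises the degree beyond $l(\epsilon)$, and a square forces either a degree increase or a leading coefficient $\geq 2$ in the four-term expansion, since each diagonal term $\kup{\overline{w_i}w_i}$ already has degree at least $l(\epsilon)$ and all coefficients are non-negative. So a virtually indecomposable $w$ is automatically non-elliptic, and no extension of the graded dimension formula to elliptic $w$ is required.

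Your proposal instead derives the formula $\dim_q \hom_{K^\epsilon}(\F(w),\F(w)) = \kup{\overline{w}w}\cdot q^{l(\epsilon)}$ for possibly elliptic $w$ by decomposing $\F(w)$ via the categorified Kuperberg relations. This works and is consistent with the properties of the TQFT $\F$, but it is more machinery than the statement needs: the hypothesis already rules out elliptic $w$, so the concern you flag as the ``main (minor) obstacle'' never arises. Both routes lead to the same conclusion; the paper's is shorter because it spends one remark on the ellipticity observation rather than one paragraph on the general dimension formula.
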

The purpose in this section is to prove a reciprocal statement in order to have:
\begin{thm}\label{thm:Charac}
  Let $\epsilon$ be an admissible sequence of signs of length $n$, and $w$ an $\epsilon$-web. Then the $K^\epsilon$-module $P_w$ is indecomposable if and only if $w$ is virtually indecomposable.
\end{thm}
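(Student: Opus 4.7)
The plan is to observe that Theorem~\ref{thm:Charac} is simply the packaging of the two main results of the chapter together with Lemma~\ref{lem:VI2indec}. The ``if'' direction has already been recorded as Lemma~\ref{lem:VI2indec}: when $\kup{\overline{w}w}$ is monic of degree $n$, the symmetry of $\kup{\overline{w}w}$ together with the identification $\dim_q \hom_{K^\epsilon}(P_w,P_w) = q^n \kup{\overline{w}w}$ forces the degree-zero part of $\hom_{K^\epsilon}(P_w,P_w)$ to be one-dimensional, so $P_w$ admits no non-trivial idempotent and is therefore indecomposable.

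For the converse I would argue by contraposition. Since $P_w$ is defined only for non-elliptic $w$, I may assume $w$ is non-elliptic and virtually decomposable, i.e.~$\kup{\overline{w}w}$ either has degree strictly greater than $n$ or its leading coefficient at $q^n$ is at least $2$. Theorem~\ref{thm:on-monic-2-RG} then yields an exact red graph $G$ for $w$, and Theorem~\ref{thm:RG2idempotent} (after choosing any pairing, which exists because the admissibility of $G$ forces $G$ to be fair by Lemma~\ref{lem:RGadm2fair}) attaches to $G$ a non-trivial idempotent $e_G \in \hom_{K^\epsilon}(P_w,P_w)$ whose image is a web-module, namely $P_{w_G}$. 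Consequently the decomposition
\[
P_w = e_G P_w \oplus (1-e_G)P_w
\]
is non-trivial, $P_w$ is decomposable, and moreover one of the direct factors is the web-module $P_{w_G}$, which is the strengthening promised in the chapter-opening theorem.

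All the substantive work has already been carried out upstream: the explicit foam construction of the idempotent associated with a paired red graph (Theorem~\ref{thm:RG2idempotent}) and the combinatorial production of an exact red graph whenever the Kuperberg bracket fails to witness indecomposability (Theorem~\ref{thm:on-monic-2-RG}). Thus there is no genuine new obstacle at this stage; the theorem drops out by combining these two statements with Lemma~\ref{lem:VI2indec}, and the proof reduces to a few lines assembling the pieces.
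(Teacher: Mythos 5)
Your proof is correct and follows essentially the same route as the paper: the easy direction is Lemma~\ref{lem:VI2indec}, and the converse is obtained by producing an exact (paired) red graph and feeding it into Theorem~\ref{thm:RG2idempotent} to obtain the idempotent splitting off $P_{w_G}$. The only cosmetic difference is that the paper phrases the reduction via Theorem~\ref{thm:thmwithRG} combined with Propositions~\ref{prop:2admissible} and~\ref{prop:exist-exact}, whereas you cite the already-packaged Theorem~\ref{thm:on-monic-2-RG}; also, the reduction to non-elliptic $w$ is justified in the paper not because $P_w$ is undefined otherwise but because, for elliptic $w$, both sides of the equivalence fail (see the remark following the theorem), though this does not affect the validity of your argument.
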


\begin{req}
  Note that we do not suppose that $w$ is non-elliptic, but as a matter of fact, if $w$ is elliptic then $\kup{\overline{w}w}$ is not monic of degree $n$ and the module $P_w$ is decomposable.
\end{req}

To prove the unknown direction of theorem~\ref{thm:Charac} we use red graphs developed in the previous section and will show a more precise version of the theorem:

\begin{thm}\label{thm:thmwithRG}
  If $w$ is a non-elliptic virtually decomposable $\epsilon$-web of level $k$, then $w$ contains an admissible red graph of level $k$, hence $\mathrm{End}_{K^\epsilon}(P_w)$ contains a non-trivial idempotent and $P_w$ is decomposable.
\end{thm}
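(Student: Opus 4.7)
The plan is to construct the required admissible red graph by induction on the complexity of $w$ (say, the number of faces of $w$), extracting combinatorial data from the evaluation of $\kup{\overline{w}w}$.

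First I would analyze the closed web $\overline{w}w$. Since $w$ is non-elliptic, any vertex-less circle, digon, or square appearing in $\overline{w}w$ must be created by the gluing along the middle interface, and so it corresponds to a sub-configuration of faces of $w$ that meet the boundary of $w$ in a specific way. Computing $\kup{\overline{w}w}$ by iterated application of the Kuperberg relations (Proposition \ref{prop:Kup}), the degree contributions decompose cleanly: each vertex-less circle contributes at most $q^2$, each digon at most $q$, and the two square resolutions preserve the top degree. Hence the degree excess $\deg\kup{\overline{w}w}-n=2k$ is accounted for by a finite collection of such elliptic features of $\overline{w}w$, each canonically associated with a face of $w$ or a small cluster thereof.

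Next, I would let $V(G)$ be the set of faces of $w$ whose reflection across the interface produces these elliptic features in $\overline{w}w$, and define $G\subseteq D(w)$ as the induced subgraph on $V(G)$. The three axioms of Definition \ref{dfn:red-graph} have to be checked: (\ref{item:dfnRG1}) holds because the chosen faces are interior (hence disks); (\ref{item:dfnRG3}) is automatic since $G$ is induced by construction; the subtle point is (\ref{item:dfnRG2}), which demands that no three chosen faces share a vertex of $w$. Using Remark \ref{req:formindex}, a careful accounting — matching each unit of excess degree in $\kup{\overline{w}w}$ with the contribution $2-\tfrac{1}{2}\deg_{D(w)}(f)+\cdots$ of a face $f$ of $G$ — should then give $I(G)=k$. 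If the resulting $G$ fails to be admissible, Proposition \ref{prop:2admissible} upgrades it to an admissible red graph of level at least $k$, and a minimality argument analogous to Lemma \ref{lem:minimal2exact} pins the level down to $k$ exactly.

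The hard part will be enforcing Axiom (\ref{item:dfnRG2}) while simultaneously achieving $I(G)=k$. Because hexagonal faces of a non-elliptic web share many triple points, any naive choice of $V(G)$ tends to violate this axiom; a careful greedy selection, or an iterative argument that repeatedly removes a witness of violation and compensates by enlarging the collection elsewhere, will be needed. Once an admissible red graph of level $k$ is secured, Proposition \ref{prop:exist-exact} provides an exact red graph for $w$, and Theorem \ref{thm:RG2idempotent} produces a non-trivial idempotent in $\mathrm{End}_{K^\epsilon}(P_w)$, completing the proof that $P_w$ is decomposable.
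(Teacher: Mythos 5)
Your proposal tries to read the red graph $G$ directly off the computation of $\kup{\overline{w}w}$, identifying $V(G)$ with the faces of $w$ ``responsible'' for the degree excess $2k$. This is a genuinely different strategy from the paper's, but it contains a gap precisely at the point you flag as ``the hard part,'' and that gap is the whole theorem. The Kuperberg relations cascade: removing one circle, digon, or square in $\overline{w}w$ creates new elliptic faces deeper inside, and the leading term of $\kup{\overline{w}w}$ is determined by the full reduction tree, not by a static collection of elliptic features sitting across the interface. Consequently there is no ``canonical association'' of units of degree excess with a fixed set of faces of $w$, and no obvious way to both satisfy axiom~(\ref{item:dfnRG2}) of Definition~\ref{dfn:red-graph} and force $I(G)=k$ by this accounting. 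The phrases ``should then give $I(G)=k$'' and ``a careful greedy selection \dots\ will be needed'' mark an unproven claim, not a proof. (Also, your appeal to Lemma~\ref{lem:minimal2exact} is inverted: that lemma shows a minimal admissible red graph is exact, i.e.\ has level $0$, so it would \emph{decrease} the level below $k$ rather than pin it at $k$; this is not fatal for the decomposability conclusion, since an exact red graph is exactly what Theorem~\ref{thm:RG2idempotent} wants, but it is a misapplication.)

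The paper takes a different and much more structured route. Theorem~\ref{thm:thmwithRG} is deduced from the three-part Proposition~\ref{prop:techRG}, which is proved by induction on the number of edges of $w$. Because reducing a digon, a square, or a boundary $\cap$, $\lambda$, $H$ can produce an elliptic or even disconnected web, the induction must be set up for $\partial$-connected webs, not merely non-elliptic ones, and it must track \emph{stacks} of red graphs (iterated reductions), not single ones. The technical heart is the flattening Lemma~\ref{lem:tech}: given a nice red graph $G$ of $w$ and a nice red graph $G'$ of $w_G$, one can produce a single red graph $G''$ of $w$ with $w_{G''}=(w_G)_{G'}$ and level at least $I(G)+I(G')$. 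Proving this requires the alternative ``simplification/essential-face'' description of red graphs of Section~\ref{sec:new-approach-to-red-graph} and the $D$-graph/cut combinatorics (Lemmas~\ref{lem:techtech0} and~\ref{lem:techtech2}). None of this machinery appears in your sketch, and without it the inductive bookkeeping you gesture at cannot close. In short: your final paragraph correctly identifies that once an admissible red graph is in hand, Propositions~\ref{prop:2admissible} and~\ref{prop:exist-exact} and Theorem~\ref{thm:RG2idempotent} finish the job, but the construction of that red graph from the hypothesis on $\kup{\overline{w}w}$ is exactly what remains unproven.
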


\begin{proof}[Proof of theorem \ref{thm:Charac} assuming  theorem \ref{thm:thmwithRG}] Let $w$ be a virtually decomposable $\epsilon$-web and let us denote by $k$ its level. From theorem~\ref{thm:thmwithRG} we know that there exists a red graph $G''$ of level $k$. But then, thanks to proposition~\ref{prop:2admissible}, there exists $G'$ a sub red graph of $G''$ which is admissible. And finally, the proposition \ref{prop:exist-exact} shows the existence of an exact red graph $G$ in $w$. We can apply theorem \ref{thm:RG2idempotent} to $G$ and this tells that $P_w$ is decomposable.  
\end{proof}

The proof of theorem \ref{thm:thmwithRG} is a recursion on the number of edges of the web $w$. But for the recursion to work, we need to handle elliptic webs as well. We will actually show the following:

\begin{prop}\label{prop:techRG}
  \begin{enumerate}
  \item\label{it:ptRG1} If $w$ is a $\partial$-connected $\epsilon$-web which is virtually decomposable of level $k\geq 1$ then there exists $S$ a stack of nice red graphs for $w$ of level greater or equal to $k$ such that $w_S$ is $\partial$-connected.
  \item\label{it:ptRG2} If $w$ is a $\partial$-connected $\epsilon$-web which is virtually decomposable of level $k\geq 1$, contains no digon and contains exactly one square which is supposed to be adjacent the unbounded face then there exists a nice red graph $G$ in $w$ of level greater or equal to $k$ such that $w_G$ is $\partial$-connected.
  \item\label{it:ptRG3} If $w$ is a non-elliptic $\epsilon$-web which is virtually decomposable of level $k\geq 0$ then there exists a nice red graph $G$ in $w$ of level greater of equal to $k$ such that $w_G$ is $\partial$-connected.
  \end{enumerate}
\end{prop}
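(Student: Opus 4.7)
The three statements are coupled and I would prove them by simultaneous strong induction on $\#E(w)$. The coupling is as follows: statement~(\ref{it:ptRG1}) reduces to either (\ref{it:ptRG2}) or (\ref{it:ptRG3}) after peeling off the elliptic faces of $w$ one at a time; statement~(\ref{it:ptRG2}) reduces to (\ref{it:ptRG3}) after resolving the unique remaining square; statement~(\ref{it:ptRG3}) is the genuine combinatorial heart and is the part that must be attacked directly. The base case of the induction (webs with very few edges) is easy to verify by hand.

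For part~(\ref{it:ptRG1}), I would exploit the Kuperberg relations of Proposition~\ref{prop:Kup}. If $w$ contains a vertexless circle, the face it bounds is a nice red graph of level $2$ (since $\deg_{D(w)}(f)=0$ forces $i(f)=2$, matching $\deg [3] = 2$); the $G$-reduction $w_G$ removes the circle, lowers the level by $2$, and remains $\partial$-connected, so the induction hypothesis completes the stack. The digon case is analogous with $i(f)=1$ matching $\deg [2] = 1$. For a square not adjacent to the unbounded face, one chooses the resolution that preserves $\partial$-connectedness, uses the square face as a red graph of level $0$, and iterates. When no circle, no digon, and no ``interior'' square remains, either $w$ is non-elliptic and (\ref{it:ptRG3}) applies directly, or $w$ satisfies the hypothesis of part~(\ref{it:ptRG2}). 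Part~(\ref{it:ptRG2}) is then handled by observing that at least one of the two resolutions of the unique boundary-adjacent square preserves $\partial$-connectedness; combining this resolution with the square face as a vertex of level $0$ and passing to the result reduces to part~(\ref{it:ptRG3}).

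The main obstacle is part~(\ref{it:ptRG3}), where no elliptic face is available to be peeled off. The plan is to use Lemma~\ref{lem:UHYinNE} to locate a boundary pattern ($\lambda$, $\cap$, or $H$) in $w$, then perform a local simplification toward the boundary producing a web $w'$ with strictly fewer edges, which will no longer be non-elliptic in general. The Kuperberg skein of Proposition~\ref{prop:Kup} applied near the modified region translates $\kup{\overline{w}w}$ into a combination of brackets of simpler webs $\widehat{w'}$, and the virtual decomposability hypothesis $k\geq 0$ forces at least one of these simpler webs to itself be virtually decomposable of level matching the change induced by the local move. Applying the induction hypothesis (now of type~(\ref{it:ptRG1})) to $\widehat{w'}$ yields a stack of nice red graphs there, which I then lift back to a red graph in $w$ by adjoining the new boundary face as a vertex absorbing the level contribution of the simplification. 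The hardest verifications are that this lifting preserves niceness and conditions~(\ref{item:dfnRG2})--(\ref{item:dfnRG3}) of Definition~\ref{dfn:red-graph}, and that the resulting $w_G$ is $\partial$-connected; this requires a careful case analysis across the three boundary patterns of Lemma~\ref{lem:UHYinNE}, and is the technical crux of the argument.
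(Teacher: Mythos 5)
Your broad plan — simultaneous induction on $\#E(w)$, with parts coupled by peeling off elliptic faces and boundary patterns — matches the paper's skeleton, and the treatment of digons in part~(\ref{it:ptRG1}) (the digon face is a nice red graph of level $1$, reduce and recurse) is exactly right. But there are two genuine gaps that sink the argument as stated.

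First, using a square face as a red graph vertex does not give a \emph{nice} red graph. A lone square face $f$ has $\deg_{D(w)}(f)=4$ and hence $\ed(f)=4$, so the resulting one-vertex red graph is fair but not nice, and you have claimed to build nice stacks throughout. The paper avoids this by never treating the square as a red-graph vertex in part~(\ref{it:ptRG1}); instead it smooths a $\partial$-square, applies the induction hypothesis to the smoothed web, and \emph{lifts the stack back}, modifying any red graph of the stack that sits near the square location (the moves on figure~\ref{fig:S2Sp}). Your part~(\ref{it:ptRG2}) and the square case of part~(\ref{it:ptRG1}) both rely on the un-nice square vertex, so both steps need to be rebuilt.

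Second, and more seriously, your reductions (\ref{it:ptRG2})$\to$(\ref{it:ptRG3}) and the $H$-case of (\ref{it:ptRG3}) produce a \emph{stack} of red graphs in the smaller web (via the induction hypothesis for type~(\ref{it:ptRG1})), but the conclusions of (\ref{it:ptRG2}) and (\ref{it:ptRG3}) demand a \emph{single} nice red graph. There is no obvious way to collapse a stack $(G_1,\dots,G_l)$ of nice red graphs into one red graph of the original web with the same total level; this is the entire content of Lemma~\ref{lem:tech} (the ``flattening'' lemma), whose proof occupies sections~\ref{sec:new-approach-to-red-graph}--\ref{sec:proof-lemmatech} and requires the reinterpretation of red graphs as simplifications, the $D$-graph machinery, cuts, and careful face-counting inequalities (Lemmas~\ref{lem:techtech0} and~\ref{lem:techtech2}). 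Your proposal treats part~(\ref{it:ptRG3}) as ``the genuine combinatorial heart,'' but in the paper (\ref{it:ptRG3}) follows from (\ref{it:ptRG2}) rather easily; the heart is part~(\ref{it:ptRG2}), proved by taking a minimal-length stack from (\ref{it:ptRG1}) and invoking Lemma~\ref{lem:tech} to shrink it to length~$1$. A proposal that omits the flattening step has not engaged with the hard part of the proposition.

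Smaller remarks: the circle case in your part~(\ref{it:ptRG1}) cannot occur, since a $\partial$-connected web contains no vertexless loop; and your phrase ``will no longer be non-elliptic in general'' for the reduced web $w'$ is misleading for the $\cap$ and $\lambda$ reductions, which do preserve non-ellipticity — only the $H$-reduction $w_{\_}$ introduces a square, and it is precisely for that case that part~(\ref{it:ptRG2}) is needed.
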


Before proving the proposition we need to introduce \emph{stacks of red graphs} (see below), and the notion of $\partial$-connectedness (see section \ref{sec:part-conn}).
Then we will prove the proposition~\ref{prop:techRG} thanks to a technical lemma (lemma \ref{lem:tech}) which will be proven in section~\ref{sec:proof-lemmatech} after an alternative glance on red graphs (section~\ref{sec:new-approach-to-red-graph}).

\begin{req}
  It is easy to see that a non-elliptic superficial $\epsilon$-web contains no red graphs of non-negative level, hence this result is strictly stronger than the result of~\cite{LHR1}. 
\end{req}



\begin{dfn}
  Let $w$ be an $\epsilon$-web, \emph{a stack of red graphs $S=(G_1, G_2,\dots, G_l)$ for $w$} is a finite sequence of paired red graphs such that $G_1$ is a red graph of $w_1\eqdef w$, $G_2$ is a red graph of $w_2\eqdef w_{G_1}$, $G_3$ is a red graph of $w_3\eqdef (w_{G_1})_{G_2} = (w_2)_{G_2}$ etc. We denote $(\cdots((w_{G_1})_{G_2})\cdots)_{G_l}$  by $w_S$ and we denote $l$ by $l(S)$ and we say that it is \emph{the length of $S$}. We define the level of a stack to be the sum of the levels of the red graphs of the stack.\marginpar{change level into level and $k$-irregular into $k$ -virtually decomposable} 
\end{dfn}
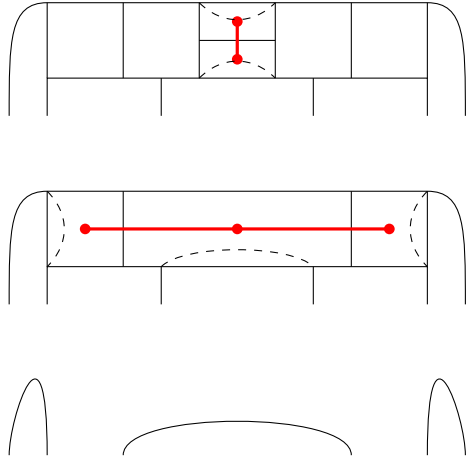
\begin{figure}[ht]
  \centering
  \begin{tikzpicture}
    \begin{scope}
  \draw (-3, 0).. controls +(0,1) and +(-0.5,0) .. (-2.5, 1.5) -- (2.5,1.5) ..  controls +(0.5,0) and +(0,1) .. (3,0);
\draw (-2.5,0)-- (-2.5, 0.5) -- (2.5, 0.5) -- (2.5,0);
\draw (-2.5, 0.5) -- +(0, 1);
\draw (-1.5, 0.5) -- +(0, 1);
\draw (-0.5, 0.5) -- +(0, 1);
\draw (0.5, 0.5) -- +(0, 1);
\draw (1.5, 0.5) -- +(0, 1);
\draw (2.5, 0.5) -- +(0, 1);
\draw (-0.5,1)-- +(1,0); 
\draw (1, 0) -- +(0, 0.5);
\draw (-1, 0) -- +(0, 0.5);
\fill[red] (0, 0.75) circle (2pt);
\fill[red] (0, 1.25) circle (2pt);
\draw [red, very thick] (0, 0.75) --  (0, 1.25);
\draw[dashed] (-0.5, 1.5) .. controls +(0.3,-0.3) and +(-0.3,-0.3) .. (0.5, 1.5);
\draw[dashed] (-0.5, 0.5) .. controls +(0.3,0.3) and +(-0.3,0.3) .. (0.5, 0.5);
\end{scope}
\begin{scope}[yshift = -2.5cm]
  \draw (-3, 0).. controls +(0,1) and +(-0.5,0) .. (-2.5, 1.5) -- (2.5,1.5) ..  controls +(0.5,0) and +(0,1) .. (3,0);
\draw (-2.5,0)-- (-2.5, 0.5) -- (2.5, 0.5) -- (2.5,0);
\draw (-2.5, 0.5) -- +(0, 1);
\draw (-1.5, 0.5) -- +(0, 1);
\draw (1.5, 0.5) -- +(0, 1);
\draw (2.5, 0.5) -- +(0, 1);
\draw (1, 0) -- +(0, 0.5);
\draw (-1, 0) -- +(0, 0.5);
\fill[red] (-2, 1) circle (2pt);
\fill[red] (0, 1) circle (2pt);
\fill[red] (2, 1) circle (2pt);
\draw [red, very thick] (2, 1) --  (-2, 1);
\draw[dashed] (-1, 0.5) .. controls +(0.3,0.3) and +(-0.3,0.3) .. (1, 0.5);
\draw[dashed] (-2.5, 1.5) .. controls +(0.3,-0.3) and +(0.3,0.3) .. (-2.5, 0.5);
\draw[dashed] (2.5, 1.5) .. controls +(-0.3,-0.3) and +(-0.3,0.3) .. (2.5, 0.5);
\end{scope}
\begin{scope}[yshift = -4.5cm]
\draw (-1.5, 0) .. controls +(0,0.6) and +(0,0.6) .. (1.5, 0);
\draw (-3, 0) .. controls +(0,0.5) and +(0,2) .. (-2.5, 0);
\draw (3, 0) .. controls +(0,0.5) and +(0,2) .. (2.5, 0);
\end{scope}
  \end{tikzpicture}
  \caption{A stack of red graphs of length 2.}
  \label{fig:stack}
\end{figure}

\begin{dfn}
A stack of red graphs is \emph{nice} if all its red graphs are nice. Note that in this case the pairing condition on red graphs is empty.
\end{dfn}


\subsection{The $\partial$-connectedness}
\label{sec:part-conn}

\begin{dfn}
  An $\epsilon$-web is \emph{$\partial$-connected} if every connected component of $w$ touches the border.
\end{dfn}
A direct consequence is that a $\partial$-connected $\epsilon$-web contains no circle. 
\begin{lem}
  A non-elliptic $\epsilon$-web is $\partial$-connected. 
\end{lem}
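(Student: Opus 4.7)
I will prove the contrapositive: any $\epsilon$-web that fails to be $\partial$-connected is elliptic.

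Assume $w$ contains a connected component disjoint from $\partial([0,1]^2)$, and choose such a component $C$ that is \emph{innermost}, in the sense that no other connected component of $w$ sits inside any bounded region of $\mathbb{R}^2\setminus C$. Such a $C$ exists because $w$ has finitely many connected components and the containment relation on those in the interior defines a forest whose leaves are innermost.

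Viewed in isolation, $C$ is itself a closed web in the sense of Definition~\ref{dfn:closed-web}. If $C$ is a vertexless circle, then $w$ already contains one and is elliptic, so we are done. Otherwise $C$ is a 3-regular bipartite 2-connected plane graph, and I adapt the Euler characteristic computation of Proposition~\ref{prop:closed2elliptic}: writing $B_i$ for the number of bounded faces of $C$ with $i$ sides and $u$ for the number of sides of its unbounded face, the identities $V(C)-E(C)+F(C)=2$, $3V(C)=2E(C)$ and $\sum_{i} iB_i + u = 2E(C)$ combine to give
\[\sum_{i\ge 1}(6-i)B_i \;=\; 6+u \;\ge\; 6,\]
which forces some $B_i$ with $i<6$ to be nonzero. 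Hence $C$ admits a bounded face that is a vertexless circle, a digon, or a square.

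By the innermost choice of $C$, every bounded face of $C$ coincides with a face of $w$, so the elliptic feature just exhibited lives in $w$ itself, contradicting non-ellipticity. The main subtlety of the argument is precisely the innermost choice of $C$: one cannot simply invoke Proposition~\ref{prop:closed2elliptic} on an arbitrary interior component, because the face of $C$ responsible for its ellipticity could be the unbounded one (relative to $\mathbb{R}^2$), which in general contains other pieces of $w$ and is not a face of $w$. The sharpened Euler computation above is what guarantees a \emph{bounded} small face, making the innermost reduction effective.
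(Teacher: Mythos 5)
Your proof is correct and follows the same argument as the paper --- a non-$\partial$-connected web has a closed connected component, and by the Euler characteristic count of Proposition~\ref{prop:closed2elliptic} that component has a face with fewer than six sides, hence $w$ is elliptic --- while carefully tightening two points that the paper's one-line proof leaves implicit: you refine the count to guarantee a \emph{bounded} small face (so one cannot accidentally land on the unbounded face of the component, which is not a face of $w$), and you pick an innermost closed component so that its bounded faces really are faces of $w$. Both refinements are needed for full rigor, so this is the intended proof made precise.
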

\begin{proof}
  An $\epsilon$-web which is not $\partial$-connected has a closed connected component, this connected component contains at least a circle, a digon or a square and hence is elliptic.
\end{proof}
\begin{lem}
  Let $w$ be a $\partial$-connected $\epsilon$-web with a digon, the web $\epsilon$-web $w'$ equal to $w$ except that the digon reduced (see figure~\ref{fig:reddig}) is still $\partial$-connected. In other words $\partial$-connectedness is preserved by digon-reduction.
\end{lem}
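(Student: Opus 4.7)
The plan is to observe that digon reduction is a purely local move that induces a natural bijection between the connected components of $w$ and those of $w'$, and that this bijection preserves the property of touching the boundary.

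First, I would recall what a digon reduction does: a digon consists of two trivalent vertices joined by two parallel edges, and each of those two vertices has one further edge leaving the digon. Reducing the digon deletes the two vertices and the two parallel edges, and concatenates the two outgoing edges into a single edge. This takes place inside a small open disk $D$ in the plane whose intersection with $w$ is precisely the digon plus two short half-edges, and whose intersection with $\partial([0,1]^2)$ is empty (because a digon is a bounded face of $w$, hence sits in the interior).

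Next, I would set up a bijection between connected components. Let $C$ be the connected component of $w$ containing the digon. Outside $D$, the webs $w$ and $w'$ coincide, so the components of $w$ disjoint from $D$ are in obvious bijection with the components of $w'$ disjoint from $D$. Inside $D$, the reduction keeps the four points of $w\cap\partial D$ in the same connected component (they are all joined to each other through $D$ both before and after), so the component $C$ maps to a well-defined component $C'$ of $w'$ containing these four points. This gives the desired bijection.

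Finally, I would conclude: assume $w$ is $\partial$-connected, and let $C'$ be any connected component of $w'$. By the bijection, $C' $ corresponds to a component $C$ of $w$, with $C\cap([0,1]^2\setminus D)=C'\cap([0,1]^2\setminus D)$. Since $D$ is disjoint from $\partial([0,1]^2)$, we have $C\cap\partial([0,1]^2)=C'\cap\partial([0,1]^2)$; by hypothesis the left-hand side is non-empty, hence so is the right-hand side, proving $w'$ is $\partial$-connected. There is no real obstacle here; the only thing to be careful about is noting that the digon lies away from the boundary so the reduction never touches $\partial([0,1]^2)$.
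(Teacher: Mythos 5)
Your proof is correct and takes essentially the same approach as the paper, which simply notes that ``every path in $w$ can be projected onto a path in $w'$''; your component-by-component argument is just a more explicit rendering of that one-line observation. One small slip: a digon has only two half-edges leaving it, so $w\cap\partial D$ consists of two points, not four, but this does not affect the argument.
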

\begin{figure}[ht]
  \centering
  \begin{tikzpicture}
    \input{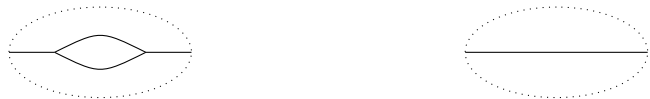}
  \end{tikzpicture}
  \caption{On the left $w$, on the right $w'$.}
  \label{fig:reddig}
\end{figure}

\begin{proof}
This is clear because every path in $w$ can be projected onto a path in $w'$.  
\end{proof}
Note that $\partial$-connectedness is not preserved by square reduction, see for example figure~\ref{fig:sq2partial}. 
\begin{figure}[ht]
  \centering
  \begin{tikzpicture}[scale=1.2]
    \input{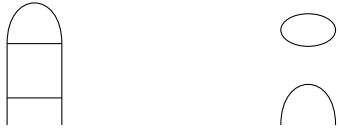}
  \end{tikzpicture}
  \caption{The $\partial$-connectedness is not preserved by square reduction.}
  \label{fig:sq2partial}
\end{figure}
However we have the following lemma:
\begin{lem}   If $w$ is a $\partial$-connected $\epsilon$-web which contains a square $S$ then one of the two $\epsilon$-webs obtained from $w$ by a reduction of $S$ (see figure~\ref{fig:tworeduction}) is $\partial$-connected.
 \end{lem}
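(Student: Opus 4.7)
The plan is to focus on the connected component $W$ of $w$ containing $S$---the other components of $w$ are unaffected by the reduction and remain $\partial$-connected---and show that at least one of the two local reductions of $W$ stays $\partial$-connected. Let $e_1,e_2,e_3,e_4$ be the external edges of $S$ in cyclic order, let $v_i$ be the endpoint of $e_i$ on $S$, and let $f_i$ be the other endpoint. Let $X$ be the subgraph of $W$ obtained by deleting the four vertices of $S$, its four internal edges, and the four external edges $e_i$, and let $X_i$ denote the component of $X$ containing $f_i$ (possibly $X_i=X_j$ for distinct $i,j$). Since $W$ is connected, every component of $X$ must contain at least one $f_i$, otherwise it would be a connected component of $W$ disjoint from $S$. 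The alternating source/sink structure around $S$ allows only the two pairings $(f_1,f_2)(f_3,f_4)$ and $(f_1,f_4)(f_2,f_3)$ as reductions (the opposite pairing would connect two sources or two sinks by a single arc), so the two reduced webs $W_A,W_B$ have, as components, the amalgamations $X_1\cup X_2,\,X_3\cup X_4$ and $X_1\cup X_4,\,X_2\cup X_3$ respectively, where coinciding $X_i$'s merge the two amalgamations into a single component.

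The key auxiliary statement is the following \emph{singleton lemma}: if $X_i$ is distinct from $X_j$ for every $j\neq i$, then $X_i$ touches $\partial$. I would prove it by a source/sink parity argument. Assume for contradiction that $X_i$ is a singleton in the partition and has no $\partial$-vertex. Then $X_i\cup\{e_i\}$ is a portion of $W$ attached to the rest only through the half-edge at $v_i$, since removing $e_i$ disconnects $X_i$ from everything else. All interior vertices of this sub-web are trivalent, and every edge goes from a source to a sink. Counting edge-endpoints at the sources ($s$ in number) versus the sinks ($t$ in number)---noting that the only unmatched half-edge at $v_i$ is a source-side or a sink-side according to whether $v_i$ is a source or a sink---one obtains an equation of the form $3s=3t\pm 1$, which has no integer solution. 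Equivalently, this sub-web has a boundary of length $1$, which violates the admissibility condition.

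With the singleton lemma in hand, the lemma follows by contradiction. Suppose neither $w_1$ nor $w_2$ is $\partial$-connected. For $W_A$ to have a component not touching $\partial$, its two amalgamations must be distinct, so $\{X_1,X_2\}$ and $\{X_3,X_4\}$ must be disjoint as sets of $X$-components; the analogous condition for $W_B$ forces $\{X_1,X_4\}$ and $\{X_2,X_3\}$ to be disjoint. Combining these six inequalities yields that $X_1,X_2,X_3,X_4$ are pairwise distinct, i.e.\ each $X_i$ is a singleton in the partition. The singleton lemma then implies that every $X_i$ touches $\partial$, hence both amalgamations of $W_A$ touch $\partial$, contradicting the assumption. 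The only delicate point is the singleton lemma; everything else is a short case-free bookkeeping argument.
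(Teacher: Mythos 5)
Your proof is correct, and while it relies on the same underlying combinatorial fact as the paper (conservation of the flow modulo~3), it is organized quite differently, so it is worth contrasting the two.

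The paper works with $\tilde{w}$ (the web with $S$ and its four incident half-edges deleted) and directly \emph{classifies} the components that fail to reach the boundary by how many of the four dangling ends $E_S$ they contain: by the flow condition the count is $0$, $2$ consecutive, or $4$; the cases $0$ and $4$ are excluded because they would create a closed component of $w$; two distinct ``bad'' components would together absorb all four ends and again produce a closed component of $w$, so there is at most one bad component $t$, carrying exactly two consecutive ends. The proof then \emph{exhibits} the good smoothing: the one that pairs $t$'s two ends with the two ends not in $t$. Your argument instead isolates the single consequence of the flow condition it actually needs, in the form of the ``singleton lemma'' (a component of $X$ meeting only one $f_i$ and no boundary point is impossible, because $3s = 3t \pm 1$ has no integer solution, i.e.\ its boundary sequence of length one violates admissibility), and then proceeds by contradiction: if both smoothings fail, the disjointness constraints on $\{X_1,X_2\}$, $\{X_3,X_4\}$ and on $\{X_1,X_4\}$, $\{X_2,X_3\}$ force the four $X_i$ to be pairwise distinct, whence the singleton lemma makes them all reach $\partial$, contradicting the failure of $W_A$. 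Both routes are sound; yours dispenses with the explicit $0/2/4$ enumeration and the explicit choice of smoothing at the price of being non-constructive, while the paper's version also tells you \emph{which} reduction works. One small remark on phrasing: the ``unmatched half-edge'' you single out lives at $f_i$ (the end of $e_i$ inside $X_i\cup\{e_i\}$), not at $v_i$; and one should note that $f_i = f_j$ for adjacent $i,j$ is impossible (it would create a triangle in a bipartite graph), while for opposite $i,j$ it simply forces $X_i = X_j$ and hence is subsumed by your overlap case. Neither point affects the validity of the argument.
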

 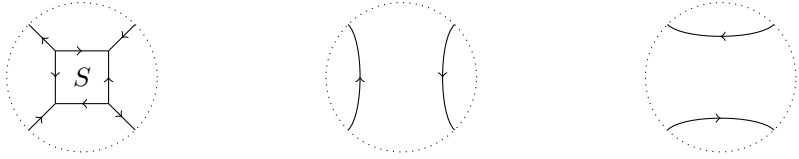
\begin{figure}[!ht]
   \centering
   \begin{tikzpicture}[scale=0.7]
     \begin{scope}[scale={1},decoration={markings, mark=at
     position 0.5 with {\arrow{>}}},postaction={decorate}]
   \draw[dotted] (1.5,1.5) circle (1.414 cm);
   \node at (1.5,1.5) {$S$};
   \draw[postaction = {decorate}] (0.5,0.5)--(1,1);
   \draw[postaction = {decorate}] (2.5,2.5)--(2,2);
   \draw[postaction = {decorate}] (2,1)--(1,1);
   \draw[postaction = {decorate}] (2,1)--(2,2);
   \draw[postaction = {decorate}] (2,1)--(2.5,0.5);
   \draw[postaction = {decorate}] (1,2)--(2,2);
   \draw[postaction = {decorate}] (1,2)--(1,1);
   \draw[postaction = {decorate}] (1,2)--(0.5,2.5);
 \end{scope}
\begin{scope}[xshift= 6cm, scale={1},decoration={markings, mark=at
     position 0.5 with {\arrow{>}}},postaction={decorate}]
   \draw[dotted] (1.5,1.5) circle (1.414 cm);
   \draw[postaction = {decorate}] (0.5,0.5).. controls +(0.3,0.3) and +(0.3,-0.3) ..(0.5,2.5);
   \draw[postaction = {decorate}] (2.5,2.5).. controls +(-0.3,-0.3) and +(-0.3,0.3) ..(2.5,0.5);
 \end{scope}
\begin{scope}[xshift= 12cm, scale={1},decoration={markings, mark=at
     position 0.5 with {\arrow{>}}},postaction={decorate}]
   \draw[dotted] (1.5,1.5) circle (1.414 cm);
   \draw[postaction = {decorate}] (0.5,0.5).. controls +(0.3,0.3) and +(-0.3,0.3) ..(2.5,0.5);
   \draw[postaction = {decorate}] (2.5,2.5).. controls +(-0.3,-0.3) and +(0.3,-0.3) ..(0.5,2.5);
 \end{scope}
   \end{tikzpicture}
   \caption{On the right the $\epsilon$-web $w$ with the square $S$, on the middle and on the right, the two reductions of the square $S$.}
   \label{fig:tworeduction}
 \end{figure}
 \begin{proof} Consider the oriented graph $\tilde{w}$ obtained from $w$ by removing the square $S$ and the 4 half-edges adjacent to it (see figure~\ref{fig:sqremoved}). 
   \begin{figure}[ht]
     \centering
     \begin{tikzpicture}[scale= 0.7]
       \begin{scope}[scale={1},decoration={markings, mark=at
     position 0.5 with {\arrow{>}}},postaction={decorate}]
   \draw[dotted] (1.5,1.5) circle (1.414 cm);
   \node at (1.5,1.5) {$S$};
   \draw[postaction = {decorate}] (0.5,0.5)--(1,1);
   \draw[postaction = {decorate}] (2.5,2.5)--(2,2);
   \draw[postaction = {decorate}] (2,1)--(1,1);
   \draw[postaction = {decorate}] (2,1)--(2,2);
   \draw[postaction = {decorate}] (2,1)--(2.5,0.5);
   \draw[postaction = {decorate}] (1,2)--(2,2);
   \draw[postaction = {decorate}] (1,2)--(1,1);
   \draw[postaction = {decorate}] (1,2)--(0.5,2.5);
 \end{scope}
\begin{scope}[xshift= 8cm, scale={1},decoration={markings, mark=at
     position 0.5 with {\arrow{>}}},postaction={decorate}]
   \draw[dotted] (1.5,1.5) circle (1.414 cm);
   \draw[postaction = {decorate}] (0.5,0.5)--(0.8,0.8);
   \draw[postaction = {decorate}] (2.5,2.5)--(2.2,2.2);
   \draw[postaction = {decorate}] (2.2,0.8)--(2.5,0.5);
   \draw[postaction = {decorate}] (0.8,2.2)--(0.5,2.5);
 \end{scope}
     \end{tikzpicture}
     \caption{On the left $w$, on the right $\tilde{w}$.}
     \label{fig:sqremoved}
   \end{figure}
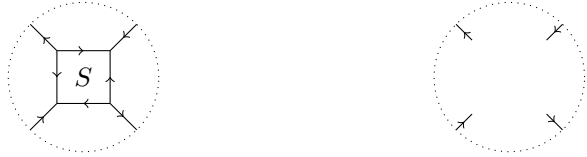
We obtain a graph with 4 less cubic vertices than $w$ and 4 more vertices of degree 1 than $w$. We call $E_S$ the cyclically ordered set of the 4 vertices of $\tilde{w}$ of degree 1 next to the removed square $S$. The orientations of the vertices in $E_S$ are $(+,-,+,-)$. Note that in $\tilde{w}$, the flow modulo 3 is preserved everywhere, so that the sum of orientation of vertices of degree 1 of any connected component must be equal to 0 modulo 3. Suppose now that there is a connected component $t$ of $\tilde{w}$ which has all its vertices of degree 1 in $E_S$, the flow condition implies that either all vertices of $E_S$ are vertices of $t$ or exactly two consecutive vertices of $E_S$ are vertices of $t$, or that $t$ has no vertex of degree 1. The first situation cannot happen because by adding the square to $t$ we would construct a free connected component of $w$ which is supposed to be $\partial$-connected, the last situation neither for the same reason. So the only thing that can happen is the second situation. If there were two different connected components $t_1$ and $t_2$ of $\tilde{w}$ such that $t_1$ and $t_2$ have all their vertices of degree 1 in $E_S$, then adding the square to $t_1\cup t_2$ would lead to a free connected component of $w$, so their is at most one connected component of $\tilde{w}$ with all this vertex of degree 1 in $E_S$ call this vertices $e_+$ and $e_-$, and call $e'_+$ and $e'_-$ the two other vertices of $E_S$ (the indices gives the orientation). If we choose $w'$ to be the $\epsilon$-web corresponding  to the smoothing which connects $e_+$ with $e'_-$ and $e_-$ with $e'_+$, then $w'$ is $\partial$-connected.
\end{proof}
\begin{dfn}
  Let $w$ be a $\partial$-connected $\epsilon$-web and $S$ a square in $w$. The square $S$ is \emph{a $\partial$-square} if the two $\epsilon$-webs $w_{=}$ and $w_{||}$  obtained from $w$  by the two reductions by the square $S$ are $\partial$-connected.
\end{dfn}
 \begin{lem}\label{lem:pc2ps}
   If $w$ is a $\partial$-connected web, then either it is non-elliptic, or it contains either a digon or a $\partial$-square.
 \end{lem}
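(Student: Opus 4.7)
The plan is to reduce to the critical case and then use a minimal-pocket argument. Since $w$ is $\partial$-connected it contains no vertexless loop, so if $w$ is elliptic it must contain a digon or a square. If it contains a digon we are done, so assume $w$ contains a square and no digon and must produce a $\partial$-square. Arguing by contradiction, assume every square of $w$ is non-$\partial$; then to each square $S$ we may associate the unique pocket $t_S$, a connected component of $\tilde w$ whose only degree-$1$ vertices are exactly two consecutive vertices of $E_S$ (the existence, uniqueness, and consecutive-of-opposite-signs character of these two boundary vertices comes from the flow-modulo-$3$ constraint used in the proof of the previous lemma).

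Now choose $S$ so that $|V(t_S)|$ is minimal, and view $t_S$ as a $(+,-)$-web-tangle with univalent vertices $p_1,p_2$. Let $v_1,v_2$ be the trivalent neighbours of the two pocket-side corners of $S$ inside $t_S$. These two corners are adjacent on $S$, so if $v_1=v_2$ then $v_1$ together with the shared side of $S$ would give a $3$-cycle in $w$, contradicting bipartiteness; hence $v_1\neq v_2$ and $t_S$ has at least two trivalent vertices. Moreover every bounded face of $t_S$ is also a face of $w$: any other component of $\tilde w$ trapped inside a bounded face of $t_S$ would give a closed connected component of $w$ (and $S$ itself lies in the unbounded region of $t_S$ together with the exterior of $t_S$); in particular $t_S$ has no bounded bigon, since $w$ has no digon. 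Apply the Euler-characteristic computation of Proposition~\ref{prop:closed2elliptic} to $t_S$: if all bounded faces had at least $6$ sides, the unbounded face would have degree at most $2$; but each of the two edges $v_ip_i$ has both of its sides lying in the unbounded face (they merge at the univalent end $p_i$), contributing $2+2=4$, a contradiction. Therefore $t_S$ contains a bounded face of degree less than $6$, and since digons are excluded this face is a square $S'$, itself a genuine square of $w$ contained in $t_S$.

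It remains to show that $t_{S'}\subseteq t_S\setminus S'$, which forces $|V(t_{S'})|\leq|V(t_S)|-4<|V(t_S)|$ and contradicts the minimality of $S$. By assumption $S'$ is non-$\partial$, so the complement of $t_{S'}$ in $w\setminus S'$ must contain $\partial w$. But $S$ reaches $\partial w$ through the exterior of $t_S$ without ever crossing $S'$, because $S'\subseteq t_S$ is disjoint both from $S$ and from the exterior of $t_S$. Hence $S$ and the whole exterior of $t_S$ lie on the boundary side of $S'$, and $t_{S'}$ is forced into $t_S\setminus S'$, as desired.

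The main obstacle is the Euler-type step in the second paragraph: one has to rule out that $t_S$ is itself non-elliptic as a $(+,-)$-web-tangle, and the crucial ingredient is the fact that each edge incident to a univalent vertex contributes $2$ (not $1$) to the degree of the unbounded face, pushing the count from the permitted $\leq 2$ to the forbidden $\geq 4$.
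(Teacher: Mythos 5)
Your proof is correct and follows the paper's strategy: assume every square is non-$\partial$, take the square $S$ whose pocket $t_S$ has the fewest vertices, locate a square $S'$ strictly inside the pocket, and contradict minimality. The one notable difference is in how you extract $S'$: you run the Euler count directly on the open pocket (two univalent ends, forcing the unbounded face to have degree $\geq 4$ while non-ellipticity would bound it by $2$), which cleanly absorbs the circle and digon cases that the paper handles separately after first closing $t_S$ up into a closed web; the rest of the argument, including the nesting $t_{S'}\subsetneq t_S$, is the same (your "$|V(t_{S'})|\le|V(t_S)|-4$" should really be "$-2$" after accounting for the two new cut-endpoints, but the strict inequality you need still holds).
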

 \begin{proof}
   Suppose that $w$ is not non-elliptic. As $w$ is  $\partial$-connected it contains no circle. If  must contains at least a digon or a square, if it contains a digon we are done, so suppose $w$ contains no digon. We should show that at least one square is a $\partial$-square. Suppose that there is no $\partial$-square, it means that for every square $S$, there is a reduction such that the $\epsilon$-web resulting $w_{s(S)}$ obtained by replacing $w$ by the reduction has a free connected component $t_S$. Let us consider a square $S_0$ such that $t_{S_0}$ is as small as possible (in terms of number of vertices for example). The web $t_{S_0}$ is closed and connected, so that either it is a circle, or it contains a digon or at least two square. If $t_{S_0}$ is a circle then $w$ contains a digon just next to the square $S_0$, and we excluded this case (see figure \ref{fig:circle2digon}). 
   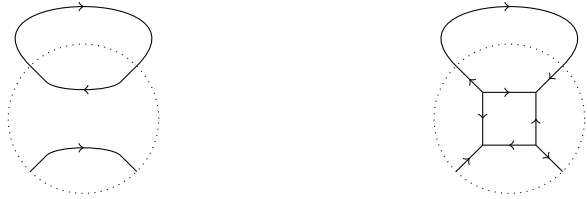
\begin{figure}[ht]
     \centering
     \begin{tikzpicture}[scale= 0.7]
       \begin{scope}[xshift= 8cm,scale={1},decoration={markings, mark=at
     position 0.5 with {\arrow{>}}},postaction={decorate}]
   \draw[dotted] (1.5,1.5) circle (1.414 cm);
   \node at (1.5,1.5) {};
   \draw[postaction = {decorate}] (0.5,0.5)--(1,1);
   \draw[postaction = {decorate}] (2.5,2.5)--(2,2);
   \draw[postaction = {decorate}] (2,1)--(1,1);
   \draw[postaction = {decorate}] (2,1)--(2,2);
   \draw[postaction = {decorate}] (2,1)--(2.5,0.5);
   \draw[postaction = {decorate}] (1,2)--(2,2);
   \draw[postaction = {decorate}] (1,2)--(1,1);
   \draw[postaction = {decorate}] (1,2)--(0.5,2.5);
\draw[postaction = {decorate}] (0.5,2.5).. controls (-1,4) and (4,4).. (2.5, 2.5); 
 \end{scope}
\begin{scope}[xshift= 0cm, scale={1},decoration={markings, mark=at
     position 0.5 with {\arrow{>}}},postaction={decorate}]
   \draw[dotted] (1.5,1.5) circle (1.414 cm);
   \draw[postaction = {decorate}] (0.5,0.5)--(0.8,0.8) .. controls (1, 1) and (2,1) ..(2.2,0.8)--(2.5,0.5);
   \draw[postaction = {decorate}] (2.5,2.5) --  (2.2,2.2) .. controls (2,2) and (1,2) .. (0.8,2.2)--(0.5,2.5);
   \draw[postaction = {decorate}] (0.5,2.5).. controls (-1,4) and (4,4).. (2.5, 2.5); 
 \end{scope}
     \end{tikzpicture}
     \caption{on the left $w_{S_0}$, on the right $w$. If $t_{S_0}$ is a circle, then $w$ contains a digon.}
     \label{fig:circle2digon}
   \end{figure}
If it contains a digon, the digon must be next to where $S_0$ was smoothed else the digon would already be in $w$. It appears hence that the digon comes from a square $S_1$ in $w$ ($S_1$ is adjacent to $S_0$), and $t_{S_1}$ has two vertices less than $T_{S_0}$ which is excluded (see figure~\ref{fig:digon2square}).
\begin{figure}[ht]
  \centering
  \begin{tikzpicture}[scale=0.7]
    \begin{scope}[xshift= 8cm,scale={1},decoration={markings, mark=at
     position 0.5 with {\arrow{>}}},postaction={decorate}]
   \draw[dotted] (1.5,1.5) circle (1.414 cm);
   \node at (1.5,1.5) {};
   \draw[postaction = {decorate}] (0.5,0.5)--(1,1);
   \draw[postaction = {decorate}] (2.5,2.5)--(2,2);
   \draw[postaction = {decorate}] (2,1)--(1,1);
   \draw[postaction = {decorate}] (2,1)--(2,2);
   \draw[postaction = {decorate}] (2,1)--(2.5,0.5);
   \draw[postaction = {decorate}] (1,2)--(2,2);
   \draw[postaction = {decorate}] (1,2)--(1,1);
   \draw[postaction = {decorate}] (1,2)--(0.5,2.5);
   \draw[postaction = {decorate}] (0.5,2.5).. controls (0,3) and +(0,0) ..(0,3);
   \draw[postaction= {decorate}] (3,3) -- (2.5, 2.5); 
   \draw[postaction= {decorate}] (3,3) -- (3.5, 3.5);       
   \draw[postaction={decorate}](-0.5, 3.5) -- (0,3);
   \draw[postaction= {decorate}] (3,3)-- (0, 3);
 
 \end{scope}
\begin{scope}[xshift= 0cm, scale={1},decoration={markings, mark=at
     position 0.5 with {\arrow{>}}},postaction={decorate}]
   \draw[dotted] (1.5,1.5) circle (1.414 cm);
   \draw[postaction = {decorate}] (0.5,0.5)--(0.8,0.8) .. controls (1, 1) and (2,1) ..(2.2,0.8)--(2.5,0.5);
   \draw[postaction = {decorate}] (2.5,2.5) --  (2.2,2.2) .. controls (2,2) and (1,2) .. (0.8,2.2)--(0.5,2.5);
   \draw[postaction = {decorate}] (0.5,2.5).. controls (0,3) and +(0,0) ..(0,3);
   \draw[postaction= {decorate}] (3,3) -- (2.5, 2.5); 
  \draw[postaction= {decorate}] (3,3) -- (3.5, 3.5);       
   \draw[postaction={decorate}](-0.5, 3.5) --(0,3);
   \draw[postaction= {decorate}] (3,3)-- (0, 3);
 \end{scope}
  \end{tikzpicture}
  \caption{On the left $w_{S_0}$, on the right $w$. If $t_{S_0}$ contains a digon then $w$ contains a square adjacent to $S_0$.}
  \label{fig:digon2square}
\end{figure}
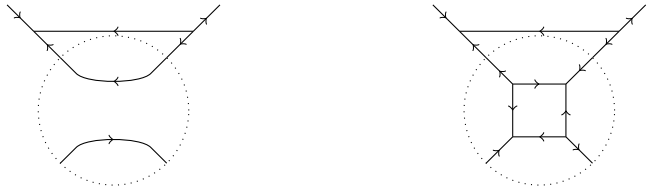
The  closed web $t_{S_0}$ contains at least two squares so that we can pick up one, we denote it by $S'$, which is far from $S_0$ and hence comes from a square in $w$. Now at least one of the two smoothings of the square $S'$ must disconnect $t_{s_0}$ else the square $S'$ would be a $\partial$-square in $w$. But as it disconnects $t_{S_{0}}$, $t_{S'}$ is a strict sub graph of $t_{S_0}$, and this contradict the minimality of $S_0$. And this concludes that $w$ must contain a $\partial$-square.
 \end{proof}

\subsection{Proof of proposition~\ref{prop:techRG}}
\label{sec:proof-propkey}

In this section we prove the proposition~\ref{prop:techRG} admitting the following technical lemma:

\begin{lem}\label{lem:tech}
  Let $w$ be a $\partial$-connected $\epsilon$-web which contains, no digon and one square which touches the unbounded face. Let $G$ be a nice red graphs of $w$ and $G'$ a nice red graph of $w_{G}$ such that $w_G$ and $w_{G'}$ are $\partial$-connected, then there exists $G''$ a red graph of $w$ such that $(w_{G})_{G'} = w_{G''}$ and the level of $G''$ is bigger or equal to the level of $G$ plus the level of $G'$.
\end{lem}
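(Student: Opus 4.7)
The strategy is to \emph{pull back} the red graph $G'$ from $w_G$ to $w$ and merge it with $G$ into a single red graph $G''$ for $w$. A face $f'$ of $w_G$ either coincides with an untouched face of $w$, or is obtained by merging several faces of $w$ (those cut off by the removed edges around $V(G)$-vertices and then re-glued according to the pairing); in either case, let $\Phi(f')$ denote the set of faces of $w$ whose plane support lies inside $f'$. I would then define $V(G'') = V(G) \cup \bigcup_{f'\in V(G')}\Phi(f')$ (a disjoint union because faces in $V(G)$ are deleted by the reduction and so are not in any $\Phi(f')$), and take $G''$ to be the subgraph of $D(w)$ induced on $V(G'')$, with pairing inherited from the pairings of $G$ and $G'$ by unfolding each pairing arc of $G'$ back through the $G$-reduction.

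Next I would verify the three axioms of Definition~\ref{dfn:red-graph} for $G''$. Axiom (iii) is automatic. For axiom (i), faces in $V(G)$ are disks by hypothesis, and each face in $\Phi(f')$ is a bounded face of the $\partial$-connected web $w$, hence a disk; the hypothesis that $w_G$ and $(w_G)_{G'}$ are $\partial$-connected prevents the unbounded face of $w$ from entering $V(G'')$. Axiom (ii)—no three vertices of $G''$ sharing a vertex of $w$—is the delicate point: one must analyse the way trivalent vertices of $w$ are destroyed or preserved by the $G$-reduction (cap, digon, or square move) and read the axiom (ii) assumption for $G$ and $G'$ back to $w$. The hypotheses on $w$ (no digon, a unique square adjacent to the unbounded face) are what rule out the pathological configurations where three $\Phi(f')$'s meet at the same vertex.

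I would then check that $w_{G''} = (w_G)_{G'}$ directly from the construction: both operations remove exactly the edges of $w$ adjacent to $V(G'')$-faces, and the pairing of $G''$ was defined so that the resulting re-gluing agrees with first reducing by $G$ and then by $G'$. Finally comes the level inequality, which I expect to be the main obstacle. Using the formula of Remark~\ref{req:formindex},
\[
I(G'') \;=\; 2\#V(G'') - \#E(G'') - \tfrac{1}{2}\sum_{f\in V(G'')}\ed_{w}(f),
\]
I would compare the three terms with $I(G)+I(G')$. Vertex counts add exactly: $\#V(G'') = \#V(G) + \sum_{f'}\#\Phi(f')$. Edges of $E(G'')$ decompose as edges of $E(G)$, edges of $D(w)$ lying inside some $\Phi(f')$, and edges of $D(w)$ between distinct $\Phi(f')$'s or between a $\Phi(f')$ and $V(G)$; by tracing these edges through the $G$-reduction, each edge of $E(G')$ is realised by \emph{at least one} edge of $E(G'')$, with the extra edges only helping the inequality. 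A parallel tracking of external degrees—$\ed_{w}(f)$ for $f\in\Phi(f')$ adds up, over $f'$, to at most $\ed_{w_G}(f')$ plus correction terms that are exactly absorbed by the extra edges just mentioned—yields $I(G'') \geq I(G) + I(G')$. The heart of this step is a local accounting argument around each $V(G)$-face, for which niceness ($\ed\leq 2$) keeps the casework manageable.
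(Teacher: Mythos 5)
Your route is genuinely different from the paper's, and it has a gap that you cannot patch with the hypotheses on $w$. The paper does not form $G''$ by taking \emph{all} faces of $w$ that lie inside $V(G')$-faces; it first introduces the canonical 3-face-colouring of webs and the notion of a ``simplification'', realises $(w_G)_{G'}$ as an embedded simplification $\widetilde{w}$ of $w$, and then takes $V(\widetilde{G})$ to be only the \emph{essential} faces of $w$ with respect to $\widetilde{w}$ (those whose colour disagrees with every region of $\widetilde{w}$ they meet). This restriction is precisely what guarantees axiom (ii): at a trivalent vertex of $w$ the three faces carry the three distinct colours, so one of them necessarily matches the colour of the ambient region of $\widetilde{w}$ and is therefore not essential. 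The colour-based level formula (Lemma~\ref{lem:essfacesmatters}) then reduces the inequality $I(\widetilde{G}) \ge I(G)+I(G')$ to a count of essential faces, $\#V(\widetilde{G}) \ge \#V(G)+\#V(G')$, which is established by the $D$-graph counting Lemmas~\ref{lem:techtech0} and \ref{lem:techtech2} after a substantial case analysis.

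The concrete gap in your construction: take $g\in V(G)$ nice with $\ed(g)=2$, and let $v$ be one of the two grey vertices of $g$. The three faces of $w$ at $v$ are $g$, $h_1$, $h_2$, with $h_1,h_2\notin V(G)$. After $G$-reduction the two edges of $g$ at $v$ are removed, the strand through $g$ starts at $v$, and locally the edge $h_1h_2$ concatenates with that strand; $h_1$ and $h_2$ end up in two \emph{adjacent} faces $f_1',f_2'$ of $w_G$. Nothing in your hypotheses forbids $f_1',f_2'\in V(G')$ (adjacency of two $G'$-vertices is allowed). Then $h_1\in\Phi(f_1')$, $h_2\in\Phi(f_2')$, and your $V(G'')$ contains $g,h_1,h_2$, which all share the vertex $v$: axiom (ii) fails, so $G''$ is not a red graph at all. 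This is not a ``digon/square'' pathology; it occurs for perfectly generic hexagonal faces, so the hypotheses you invoke do not rescue it. The paper's essential-face filter excludes exactly one of $g,h_1,h_2$ at $v$ (the one whose colour matches the ambient region of $(w_G)_{G'}$), which is why its $\widetilde{G}$ is a legitimate red graph while your $G''$ is not. Finally, even granting the construction, the level inequality is asserted rather than proved, and this is where the paper spends most of its effort; ``niceness keeps the casework manageable'' underestimates the combinatorics, which in the paper needs the whole $D$-graph machinery of Section~\ref{sec:proof-techn-lemm}.
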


This lemma says that under certain condition one can ``flatten'' two red graphs.

\begin{proof}[Proof of proposition~\ref{prop:techRG}]
As we announced this will be done by recursion on the number of edges of $w$. 
We supposed than \ref{it:ptRG1}, \ref{it:ptRG2} and \ref{it:ptRG3} hold for all $\epsilon$-webs with strictly less than $n$ edges, and we 
consider an $\epsilon$-web with $n$ edges. Note that whenever $w$ is non-elliptic the statement \ref{it:ptRG3} is stronger than the statement \ref{it:ptRG1}, so that we won't prove \ref{it:ptRG1} in this case.
We first prove \ref{it:ptRG1}:

If $w$ contains a digon, then we apply the result on $w'$ the $\epsilon$-web similar to $w$ but with the digon reduced (\ie replaced by a single strand). 
The red graph $G$ which consist of only one edge (the digon) and no edge is nice and has level equal to 1 (see figure~\ref{fig:rganddig}).
\begin{figure}[ht]
  \centering
  \begin{tikzpicture}[scale = 0.7]
    \input{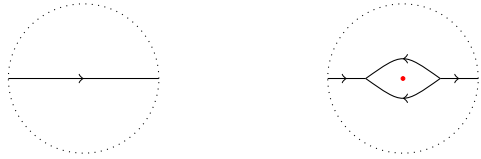}
  \end{tikzpicture}
  \caption{On the left $w'$, on the right $w$ with the red graph $G$.}
  \label{fig:rganddig}
\end{figure}

If $w'$ is not virtually decomposable or virtually decomposable of level 0, then $w$ is virtually decomposable of level 1. In this case, the stack with only one red graph equal to $G$ is convenient and we are done.
Else we know that $w'$ is of level $k-1$ and that there exists a nice stack of red graphs $S'$ of level $k-1$ in $w'$ and we consider the stack $S$ equal to the concatenation of $G$ with $S'$, it is a nice stack of red graphs of level $k$ and we are done.

Suppose now that the $\epsilon$-web $w$ contains no digon, but a square, then it contains a $\partial$-square (see lemma \ref{lem:pc2ps}). Suppose that  the level of $w$ is $k\geq 1$ (else there is nothing to show), then at least one of the two reductions is virtually decomposable of level $k$ (this is a Cauchy-Schwartz inequality see~\cite[Section 1.1]{LHRThese} for details). Then we consider $w'$ the $\epsilon$-web obtained by a reduction of the square so that it is of level $k$. From the induction hypothesis we know that there exists a stack of red graphs $S'$ in $w'$ of level $k$. If all the red graphs of $S'$ are far from the location of the square, then we can transform the stack $S'$ into a stack of $w$ with the same level. Else, we consider $G'$ the first red graph of $S'$ which is close from the square location and according to the situation we define $G$ by the moves given on figure~\ref{fig:S2Sp}.

\begin{figure}[ht]
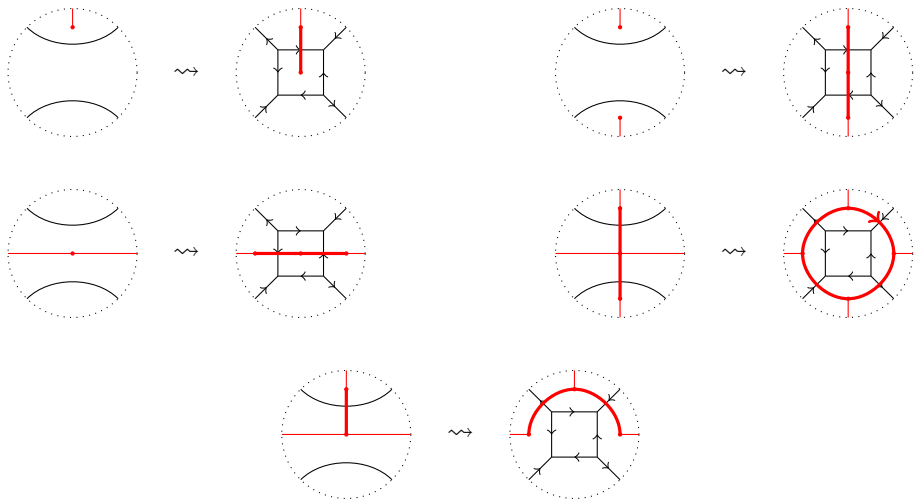

  \centering
  \begin{tikzpicture}[scale=0.6]
    \begin{scope}[xshift = 0cm, yshift= 8cm]
      \input{\imagesfolder/sw_fig1}
    \end{scope}
    \begin{scope}[xshift = 0cm, yshift= 4cm]
      \input{\imagesfolder/sw_fig2}
    \end{scope}
    \begin{scope}[xshift = 6cm, yshift= 0cm]
      \input{\imagesfolder/sw_fig3}
    \end{scope}
    \begin{scope}[xshift = 12cm, yshift= 8cm]
      \input{\imagesfolder/sw_fig4}
    \end{scope}
    \begin{scope}[xshift = 12cm, yshift= 4cm]
      \input{\imagesfolder/sw_fig5}
    \end{scope}
  \end{tikzpicture}
  \caption{Transformations of $G'$ to obtain $G$.}
  \label{fig:S2Sp}
\end{figure}
Replacing $G'$ by $G$ we can transform, the stack $S'$ into a stack for the $\epsilon$-web $w$. 

We now prove \ref{it:ptRG2}.

From what we just did, we know that $w$ contains a nice stack of red graphs of level $k$. Among all the nice stacks of red graphs of $w$ with level greater or equal to $k$, we choose one with a minimal length, we call it $S$. If its length were greater or equal to $2$, then lemma \ref{lem:tech} would tell us that we could take the first two red graphs and replace them by just one red graph with a level bigger or equal to the sum of their two levels, so that $S$ would not be minimal, this prove that $S$ has length $1$, therefore, $w$ contains a nice red graph of level at least $k$.

We now prove \ref{it:ptRG3}.

The border of $w$ contains at least a $\cap$, a $\lambda$, or an $H$ (see figure~\ref{fig:dfnUHY}). In the two first cases, we can consider $w'$ the $\epsilon$-web with the $\cap$ removed or the $\lambda$ replaced by a single strand, then $w'$ is non-elliptic and virtually decomposable of level $k$ and there exists a nice red graph in $w'$ of level at least $k$, this red graph can be seen as a red graph of $w$, and we are done. If the border of $w$ contains no $\lambda$ and no $\cap$, then it must contains an $H$. There are two ways to reduce the $H$ (see figure~\ref{fig:Hreduced}). At least one of the two following situation happens: 
$w_{||}$ is virtually decomposable of level $k$ or $w_{\_}$ is virtually decomposable of level $k+1$. 
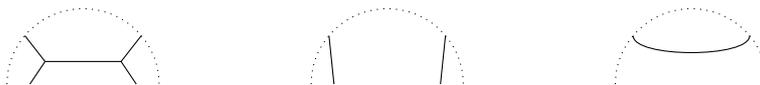
\begin{figure}[ht]
  \centering
  \begin{tikzpicture}
     \begin{scope}
  \draw[dotted] (-1,0) arc (180:0:1cm);
  \draw (-0.7,0) -- (-0.5, 0.3) -- (140:1);
\draw (0.7,0) -- (0.5, 0.3) -- (40:1);
\draw (-0.5,0.3) -- (0.5,0.3);
\end{scope}
\begin{scope}[xshift = 4cm]
  \draw[dotted] (-1,0) arc (180:0:1cm);
  \draw (-0.7,0)  -- (140:1);
\draw (0.7,0)  -- (40:1);
\end{scope}
\begin{scope}[xshift= 8cm]
  \draw[dotted] (-1,0) arc (180:0:1cm);
  \draw (40:1).. controls +(0,-0.3) and +(0,-0.3) .. (140:1);
\end{scope}
  \end{tikzpicture}
  \caption{The $H$ of $w$ (on the left) and its two reductions: $w_{||}$ (on the middle) and $w_=$ (on the right).}
  \label{fig:Hreduced}
\end{figure}
In the first situation, one can do the same reasoning as before: 
$w_{||}$ being non-elliptic, the induction hypothesis gives that we can find a nice red graph of level at least $k$ in $w_{||}$, this red graph can be seen as a red graph of $w$ and we are done. 
In the second situation, we consider $w_{\_}$, we can apply the induction hypothesis to $w_{\_}$ (we are either in case \ref{it:ptRG2} or in case \ref{it:ptRG3}), so we can find a nice red graph of level at least $k+1$, coming back to $H$ this gives us a red graph of level at least $k$ (but maybe not nice), and we can conclude via the lemma~\ref{lem:RGinNE2niceRG}.
\end{proof}

\subsection{A new approach to red graphs.}
\label{sec:new-approach-to-red-graph}

In this section we give an alternative approach to red graphs: instead of starting with a web and simplifying it with a red graph we construct a red graph from a web and a simplification of this web. For this we need a property of webs that we did not use so far.
\begin{prop}
Let $w$ be a closed web, then it admits a (canonical) face-3-colouring with the unbounded face coloured $c\in \ZZ/3\ZZ$. We call this colouring \emph{the face-colouring of base $c$} of $w$. When $c$ is not mentioned it is meant to be $0$.
\end{prop}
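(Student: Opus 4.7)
The plan is to produce $c$ as a discrete potential for the tautological mod-$3$ flow on $w$. Orient each edge of the dual graph $D(w)$ by the right-hand rule with respect to the edge orientation of $w$: the dual edge points from the face to the left of $e$ to the face to its right. I would then introduce the $\ZZ/3\ZZ$-valued $1$-cochain on $D(w)$ which assigns the value $1$ to every oriented dual edge, and show it is a coboundary. Once this is known, fixing $c$ on the vertex of $D(w)$ corresponding to the unbounded face determines a unique map $c\colon F(w)\to \ZZ/3\ZZ$ by integrating the cochain along any path in $D(w)$.

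To check the cocycle condition, since the plane is simply connected it suffices to verify that the integral of this $1$-cochain vanishes mod $3$ around each vertex of $w$ and around each vertexless loop. At a vertex $v$, the hypothesis that $v$ is either a sink or a source means that the three dual edges incident to $v$ are all oriented consistently in the cyclic order, contributing $\pm 3\equiv 0\pmod 3$. A vertexless loop $\ell$ of $w$ gives rise to a single dual edge joining the two faces it separates; any simple cycle of $D(w)$ either avoids $\ell$ or crosses it an even number of times (one entry and one exit through the same dual edge) with cancelling signs, and so contributes $0\pmod 3$. Equivalently, and perhaps more concretely, one may define $c(f)$ by picking a generic arc $\gamma$ from a basepoint in the unbounded face to a point in $f$ and setting $c(f)=c+s(\gamma)\pmod 3$, where $s(\gamma)$ counts transverse intersections of $\gamma$ with edges of $w$, with sign $+1$ when $\gamma$ crosses the edge from its left to its right and $-1$ otherwise; the cocycle check just sketched shows that $s(\gamma)\pmod 3$ depends only on $f$, and uniqueness is automatic from the recipe.

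The main obstacle is no more than the clean bookkeeping of the two local cases (trivalent sink/source versus vertexless loop); the remainder is the standard correspondence between nowhere-zero mod-$3$ flows on a plane graph and mod-$3$ face potentials, specialised here to the tautological flow that takes the value $1$ on every oriented edge of $w$. That this is genuinely a $\ZZ/3\ZZ$-flow is exactly the content of the sink/source condition, and it is this condition that makes the face-3-colouring exist and be canonical once the value on the unbounded face is fixed.
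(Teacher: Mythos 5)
Your proof is correct and is essentially the same as the paper's: the signed intersection count $s(\gamma)$ you give in your second paragraph is exactly the paper's recipe, and your well-definedness check (vanishing of the sum around each sink/source vertex and around vertexless loops, thanks to the flow condition mod $3$) is the same argument. The cohomological framing in terms of a coboundary on $D(w)$ is a tidy repackaging rather than a different route.
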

\begin{proof}
We will colour connected components of $\RR^2\setminus w$ with elements of $\ZZ/3\ZZ$. 
We can consider the only unbounded component $U$ of $\RR^2\setminus w$. We colour it by $c$, then for each other connected component $f$, we consider $p$ an oriented path from a point inside $U$ to a point inside $f$, which crosses the $w$ transversely, we then define the colour of $f$ to be the sum (modulo 3) of the signs of the intersection of the path $p$ with $w$ (see figure~\ref{fig:signcol} for signs convention). This does not depend on the path because in $w$ the flow is always preserved modulo 3. And, by definition, two adjacent faces are separated by an edge, so that they do not have the same colour.
\begin{figure}[ht]
  \centering
  \begin{tikzpicture}
    \begin{scope}
\draw[->] (0,0) -- (0,2);
\draw[dashed,->] (-1,0.8) .. controls +(0.4,-0.2) and +(-0.4,0.2) ..(1,1.2);
\node at (-0.8, 0.5) {$p$};
\end{scope}
\begin{scope}[xshift= 4cm]
\draw[->] (0,0) -- (0,2);
\draw[dashed,<-] (-1,0.8) .. controls +(0.4,-0.2) and +(-0.4,0.2) ..(1,1.2);
\node at (-0.8, 0.5) {$p$};
\end{scope}
  \end{tikzpicture}
  \caption{On the left a positive crossing, on the right a negative one. The path is dashed and the web is solid.}
  \label{fig:signcol}
\end{figure}
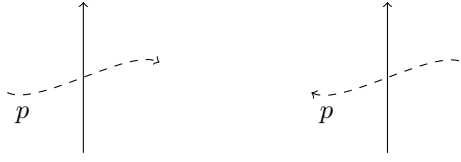
\end{proof}

\begin{cor}
  Let $w$ be an $\epsilon$-web, then the connected component of $\RR\times \RR_+ \setminus w$ admits a (canonical) 3-colouring with the unbounded connected component coloured by $c$. We call this colouring \emph{the face-colouring of base $c$} of $w$.
\end{cor}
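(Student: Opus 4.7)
The plan is to adapt the proof of the preceding proposition verbatim, exploiting the fact that the open half-plane $\RR\times\RR_+$ is simply connected (it is homeomorphic to $\RR^2$). First I would define the colouring by the same recipe: colour the unbounded component $U$ of $\RR\times\RR_+\setminus w$ by $c$, and for any other face $f$, choose an oriented path $p$ from a point of $U$ to a point of $f$ meeting $w$ transversely (and in particular avoiding the interior vertices of $w$); set the colour of $f$ to be $c+\sigma(p)\in \ZZ/3\ZZ$, where $\sigma(p)$ is the sum of signs of the crossings of $p$ with $w$, using the convention of Figure~\ref{fig:signcol}.

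The main step is to verify well-definedness. Two such paths $p_1,p_2$ for the same face $f$ differ by a closed loop $\gamma\subset \RR\times\RR_+\setminus V(w)$. Since $\RR\times\RR_+$ is simply connected, $H_1(\RR\times\RR_+\setminus V(w);\ZZ)$ is freely generated by the small positively-oriented loops around the interior vertices of $w$; signed intersection with $w$ is a homotopy invariant in this complement, so I need only check each generator. A small loop around a sink (resp.\ a source) crosses the three adjacent edges all with the same sign, giving signed intersection $\pm 3\equiv 0\pmod 3$. Hence $\sigma(\gamma)\equiv 0\pmod 3$, so the colour of $f$ is independent of $p$. Properness of the 3-colouring then follows exactly as in the closed case: two faces separated by an edge differ by a single transverse crossing, contributing $\pm 1\not\equiv 0\pmod 3$.

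I do not anticipate a real obstacle here. One might fear that the boundary points of $w$ on $\RR\times\{0\}$ contribute extra topology, but they lie outside the open half-plane and never enter the argument; the interior vertices are the only generators of $H_1$, and the flow-mod-$3$ condition at each of them is precisely what is needed. Alternatively one could close $w$ to a closed web $w'$ by adding orientation-respecting arcs in a bounded region of $\RR\times\RR_{-}$, apply the previous proposition to $w'$ with base $c$, and restrict: with such a closure, the unbounded face of $\RR^2\setminus w'$ extends the unbounded component $U$ of $\RR\times\RR_+\setminus w$, and distinct faces of $\RR\times\RR_+\setminus w$ lie in distinct faces of $\RR^2\setminus w'$ (they are separated by edges of $w\subset w'$), so the restriction yields the desired canonical face-colouring of base $c$.
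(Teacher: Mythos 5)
Your proposal is correct, but your main argument takes a genuinely different (and longer) route than the paper, whose proof is a one-liner: complete $w$ with its mirror image $\bar{w}$ to form the closed web $\bar{w}w$, apply the preceding proposition to get a face-colouring of base $c$ of this closed web, and restrict to the upper half-plane --- each region of $\RR\times\RR_+\setminus w$ lies in a unique face of $\RR^2\setminus \bar{w}w$, adjacent regions are separated by edges of $w\subset\bar{w}w$ and hence receive different colours, and the unbounded region inherits the colour $c$. You instead re-run the proof of the proposition directly in the half-plane; this works, and your justification is sound: $H_1$ of the half-plane minus the trivalent vertices is generated by small loops around those vertices, each of which has signed intersection $\pm 3\equiv 0\pmod 3$ with $w$ by the flow condition, and the degree-one endpoints of $w$ on $\RR\times\{0\}$ add no topology since deleting a boundary point of the closed half-plane leaves it simply connected. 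The paper's route buys brevity and reuses the proposition as a black box; yours buys independence from any closure. One caveat on your proposed alternative: closing $w$ by ``orientation-respecting arcs'' in the lower half-plane is not always possible, since pairing the boundary points by arcs requires $\epsilon$ to have equally many $+$'s and $-$'s, which admissibility ($\sum_i\epsilon^i\equiv 0\bmod 3$) does not guarantee --- take $\epsilon=(+,+,+)$. The closure by the mirror image $\bar{w}$ sidesteps this, as does the closure supplied by the very definition of an $\epsilon$-web as the intersection of a closed web with a square.
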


\begin{proof}
  We complete $w$ with $\overline{w}$ and we use the previous proposition to obtain a colouring of the faces. This gives us a canonical colouring for $\RR\times \RR_+ \setminus w$.
\end{proof}

Note that in this corollary  it is important to consider the connected component of  $\RR\times \RR_+ \setminus w$ instead of the connected component of $\RR^2 \setminus w$. Let us formalise this in a definition.

\begin{dfn}
If $w$ is an $\epsilon$-web, the \emph{regions} of $w$ are the connected components of $\RR\times \RR_+ \setminus w$. The \emph{faces} of $w$ are the regions which do not intersect $\RR \times \{0\}$.
\end{dfn}

\begin{dfn}
  Let $w$ be an $\epsilon$-web, an $\epsilon$-web $w'$ is a
  \emph{simplification of $w$} if 
  \begin{itemize}
  \item the set of vertices of $w'$ is
  included in the set of vertices of $w$,
\item  every edge $e$ of $w'$ is
  divide into an odd number of intervals
  $([a_i,a_{i+1}])_{i\in[0,2k]}$ such that for every $i$ in $[0, k]$,
  $[a_{2i},a_{2i+1}]$ is an edge of $w$ (with matching orientations)
  and for every $i$ in $[0, k-1]$, $[a_{2i+1},a_{2i+2}]$ lies in the
  faces of $w$ opposite to $[a_{2i}, a_{2i+1}]$ with respect to $a_{2i+1}$ (see figure~\ref{fig:simplificationedge}).
  \end{itemize}
  \begin{figure}[ht]
    \centering
    \begin{tikzpicture}[scale=0.7]
      \begin{scope}[decoration={markings, mark=at
     position 0.7 with {\arrow{>}}},postaction={decorate}]
\draw[dotted] (0,0) circle (1.5);
\draw[orange, line width= 2pt, postaction = decorate ] (290:1.5).. controls +(0,0) and (0, -0.5).. (0,0) -- (0,1.5);
\draw[->] (0,0) -- (0,1.5);
\draw[->] (0,0) -- (210:1.5);
\draw[->] (0,0) -- (330:1.5);
\node at (0.35,0.2) {$a_k$};
\end{scope}
    \end{tikzpicture}
    \caption{Local picture around $a_k$. The edge of $w'$ is orange and large, while the $\epsilon$-web $w$ is black and thin.}
    \label{fig:simplificationedge}
  \end{figure}
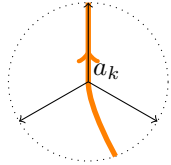
\end{dfn}
\begin{figure}[ht]
  \centering
  \begin{tikzpicture}[scale= 0.8]
\begin{scope}[xshift =0cm, orange, line width= 3pt]
   \foreach \n/\a in {a/30,b/90,c/150,d/210,e/270,f/330}
{
\draw (\a:2.5) ++(\a+60:1) -- ++(\a+180:0.5).. controls (\a+30 :2) and (\a+30:2) .. ++(\a+120:1)-- +(\a+60:0.5);
}
\end{scope}
 \begin{scope}
   \foreach \n/\a in {a/30,b/90,c/150,d/210,e/270,f/330}
{
\draw (\a:1) -- (\a:2);
\draw (\a:1) -- (\a+60:1);
\draw (\a:2) -- ++(\a-60:1) -- ++(\a-120:1) -- ++(\a-180:1);
\draw (\a:2) ++(\a-60:1) -- ++(\a:0.5);
\draw (\a:2) ++(\a-60:1) ++(\a-120:1) -- ++(\a-60:0.5);
}
\end{scope}
  \end{tikzpicture}
  \caption{The $\epsilon$-web $w$ (on black) and $w_0$ (in orange) of proposition~\ref{prop:Pwdec} seen in terms of simplification.}
  \label{fig:exsimplification}
\end{figure}
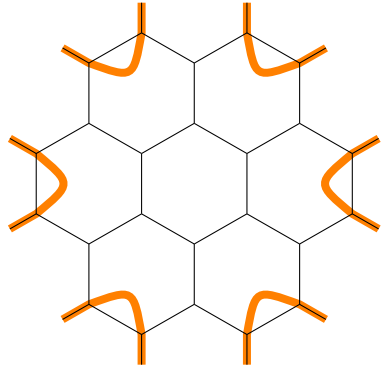

\begin{lem}\label{lem:coherent-col}
Let $w$ be a $\epsilon$-web and $w'$ a $\partial$-connected simplification of $w$ If $e$ is an edge of $w$ which is as well a (part of an) edge of $w'$, then in the face-colourings of base $c$ of $w$ and $w'$, the the regions adjacent to $e$ in $w$ and in $w'$ are coloured in the same way.
\end{lem}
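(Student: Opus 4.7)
The strategy is to express both face-colourings via signed intersection numbers and then choose a path that runs alongside a $w'$-edge containing $e$. First I unpack the construction: for any $p \in \RR \times \RR_+ \setminus w$ and any path $\gamma$ from a base point $q$ in the unbounded region to $p$ transverse to $w$, the formula $c_w(p) = c + \gamma \cdot w \pmod 3$ holds (using the sign convention of Figure~\ref{fig:signcol} and writing $c_w(p)$ for the colour of the $w$-region containing $p$). This is well-defined precisely because $w$ is a mod-$3$ cycle in the closed half-plane relative to its bottom boundary $\epsilon$: the three edges at every trivalent sink or source contribute $\pm 3 \equiv 0 \pmod 3$. The same holds for $w'$, and taking $\gamma$ transverse to both gives
\[
c_w(p) - c_{w'}(p) \equiv \gamma \cdot (w - w') \pmod 3.
\]

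Next, I pick $p$ just on one side of $e$ (the other side is symmetric) and exploit $\partial$-connectedness of $w'$ to build a convenient $\gamma$. Let $e'$ denote the edge of $w'$ containing $e$; the connected component of $w'$ containing $e'$ meets $\RR \times \{0\}$, so I can construct $\gamma$ starting from a base point in the unbounded region near this bottom contact, then hugging the chosen side of this component of $w'$ all the way up to $p$. By construction $\gamma$ is disjoint from $w'$, hence $\gamma \cdot w' = 0$, and the task reduces to showing $\gamma \cdot w \equiv 0 \pmod 3$. The intersections of $\gamma$ with $w$ occur at the vertices $a_{2j+1}$ of $w$ which are midpoints of the $w'$-edges traversed by $\gamma$: at each such vertex, $\gamma$ must cross exactly one of the two $w$-edges not used by $w'$, in order to transition from the chosen side of the incoming $w$-segment to the corresponding side of the outgoing jump segment.

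To finish, I observe that $w - w'$ has trivial mod-$3$ boundary (both chains have the same bottom boundary $\epsilon$, and vanish mod $3$ at interior vertices), so it is a mod-$3$ cycle in the contractible closed half-plane and therefore bounds some $\ZZ/3$-coefficient 2-chain $F$. Then $\gamma \cdot (w - w') \equiv m(F, p) \pmod 3$, where $m(F, p)$ is the mod-$3$ multiplicity of $F$ at $p$. The chain $F$ can be taken supported in a small neighbourhood of the non-shared parts of $w$ and $w'$, namely the $w$-edges discarded by the simplification together with the jump segments of the $w'$-edges; this neighbourhood avoids the shared edge $e$. Consequently a neighbourhood of $e$ sits in the same connected component of the complement of $\mathrm{supp}(w - w')$ as the unbounded region, forcing $m(F, p) = 0$ and hence $c_w(p) = c_{w'}(p)$.

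The main obstacle is making this last claim rigorous: I must verify that a neighbourhood of $e$ and the unbounded region indeed lie in the same component of the complement of $\mathrm{supp}(w - w')$. This is where the local geometric structure of a simplification becomes essential -- every non-shared piece is a short perturbation near a single vertex of $w$ smoothed out by $w'$ -- together with $\partial$-connectedness of $w'$, which prevents these perturbations from encircling $e$ and isolating it from $\RR \times \{0\}$. Equivalently, one can organize the local signed crossings at each midpoint $a_{2j+1}$ and invoke the mod-$3$ flow condition at that vertex (the same condition that makes the face-colouring well-defined in the first place) to conclude that the contributions to $\gamma \cdot w$ cancel mod $3$.
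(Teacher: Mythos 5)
Your strategy (comparing the two face-colourings via mod-3 intersection numbers and a homological 2-chain $F$ with $\partial F \equiv w - w'$) is genuinely different from the paper's proof, which is a one-line recursion on the combinatorial distance from $e$ to $\RR \times \{0\}$. The intersection-theoretic framework you set up is sound as far as it goes, but the proof as written has two real gaps.

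First, the construction of $\gamma$ does not work in general. You choose $\gamma$ starting in the unbounded region, ``hugging the chosen side of the component of $w'$ containing $e'$,'' and claim it is disjoint from $w'$. But the side of $e'$ that contains your chosen point $p$ need not face the unbounded region of $w'$: if $e'$ is an interior edge of its component (both sides are bounded faces of $w'$), then every path from the unbounded region to $p$ must cross $w'$, so no such $\gamma$ exists and the identity $\gamma \cdot w' = 0$ has no meaning. The $\partial$-connectedness of $w'$ only guarantees that the component touches $\RR \times \{0\}$; it does not make $p$ accessible from outside. The ``other side is symmetric'' remark doesn't rescue this, since it is possible that \emph{neither} side of $e'$ is unbounded.

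Second, even granting a suitable $\gamma$, the central claim $\gamma \cdot (w-w') \equiv 0 \pmod 3$ is never proven. You correctly reduce it to showing that $e$ and the unbounded region lie in the same component of the complement of $\mathrm{supp}(w - w')$, but then explicitly call this ``the main obstacle'' and offer only a heuristic for why the jump segments and discarded $w$-edges shouldn't encircle $e$, plus a one-sentence alternative about local cancellation at the vertices $a_{2j+1}$ that is not carried out (and is not obviously correct, since at each such vertex $\gamma$ crosses a single $w$-edge, contributing $\pm 1$, not something that visibly cancels mod 3 vertex-by-vertex). So the key inequality that makes the whole argument close is missing. By contrast, the paper disposes of this lemma with an elementary induction anchored at the bottom boundary, propagating the colour agreement edge by edge along the shared part of $w$ and $w'$, which avoids both of the issues above.
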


\begin{proof}
  This is an easy recursion on how $e$ is far from the border.
\end{proof}



Note that in this definition the embedding of $w'$ with respect to $w$ is very important.
\begin{dfn}
   Let $w$ be an $\epsilon$-web and $w'$ a simplification of $w$. We consider the face-colourings of $w$ and $w'$. A face $f$ of $w$ lies in one or several regions of $w'$. This face $f$ is \emph{essential with respect to $w'$} if all regions of $w'$ it intersects do not have the same colour as $f$.
\end{dfn}
\begin{req}
  We could have write this definition with region of $w$ instead of faces, but it is easy to see that a region of which is not a face $w$ is never essential.
\end{req}
\begin{lem}\label{lem:intersect2essential}
   Let $w$ be a $\partial$-connected $\epsilon$-web and $w'$ a $\partial$-connected simplification of $w$. If a face $f$ of $w$ is not essential with respect to $w'$ then it intersects only one region of $w'$.
\end{lem}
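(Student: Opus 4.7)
The plan is to prove the contrapositive: supposing $f$ intersects more than one region of $w'$, I will deduce that $f$ is essential with respect to $w'$, contradicting the hypothesis. The overall strategy will be to compute the $w'$-colours of the regions of $w'$ meeting $f$ and to compare them to $c_f$, the $w$-colour of $f$.

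The first step will be a local colour analysis. If $f$ intersects several regions of $w'$, then by the definition of a simplification some odd interval $I=[a_{2i+1},a_{2i+2}]$ of an edge $e'$ of $w'$ must lie entirely in $f$: indeed this is the only type of portion of $w'$ that can sit in a face of $w$, since even intervals coincide with edges of $w$ (hence lie on $\partial f$) and vertices of $w'$ are vertices of $w$. I will fix an endpoint $a \eqdef a_{2i+1}$ of $I$: it is a trivalent vertex of $w$ on $\partial f$, and the adjacent even interval $e_w \eqdef [a_{2i},a_{2i+1}]$ is one of the three edges of $w$ meeting at $a$. The three faces of $w$ around $a$ carry the three distinct colours of $\ZZ/3\ZZ$; since $f$ is the face opposite to $e_w$ at $a$, the two faces of $w$ bordering $e_w$ near $a$ carry colours $\alpha,\beta$ both distinct from $c_f$. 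By lemma~\ref{lem:coherent-col} applied to $e_w$, the two regions of $w'$ bordering $e_w$ carry the same colours $\alpha,\beta$. The key point will be that $a$ is only an interior point of the edge $e'$ of $w'$ and not a vertex of $w'$ (no other edge of $w'$ can pass through $a$, since $w'$ is properly embedded), so the two $w'$-regions adjacent to $e_w$ extend across $a$ and become the two $w'$-regions adjacent to $I$. Hence both sides of $I$ lie in $w'$-regions of colours $\alpha,\beta$, each different from $c_f$.

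The second step will be a global propagation. The (nonempty) collection of odd intervals of $w'$ contained in $f$ partitions $f$ into finitely many connected sub-parts. Each sub-part must have at least one such odd interval on its boundary, for otherwise that sub-part would coincide with all of $f$, contradicting nonemptiness. By the local analysis, any such sub-part therefore lies in a $w'$-region of colour $\neq c_f$. Consequently every $w'$-region meeting $f$ has colour different from $c_f$, which by definition means that $f$ is essential, contradicting the hypothesis.

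The main obstacle I anticipate is precisely the colour-propagation step at $a$: one must keep straight that $a$ is a $w$-vertex but only an interior point of an edge of $w'$, so that the two $w'$-regions on the two sides of $e_w$ really are the same as those on the two sides of $I$. Once this subtlety is set up, the three-colour rule at the trivalent vertex $a$ immediately forces the two $w'$-regions bordering $I$ to have colours different from $c_f$, and the global propagation step is then routine.
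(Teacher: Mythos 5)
Your proof is correct and follows the same approach as the paper: pass to the contrapositive, locate a vertex $v$ of $w$ where an odd interval of $w'$ enters the interior of $f$, and use the fact that the adjacent even interval is an edge of $w$ together with Lemma~\ref{lem:coherent-col} and the 3-colour rule at $v$ to conclude the bordering $w'$-regions have colour $\neq c_f$. Your ``global propagation'' step — arguing that every connected piece of $f$ cut out by the odd intervals has at least one odd interval on its boundary, so \emph{every} region of $w'$ meeting $f$ is covered — is left implicit in the paper's proof, and you are right to spell it out.
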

\begin{proof}
  Consider a face $f$ of $w$  which intersects more than one region of $w'$. We will prove that it is essential with respect to $w'$. Consider an edge $e'$ of $w'$ which intersects $f$ (there is at least one by hypothesis), when we look next to the border of $f$ next to $e'$ we find a vertex $v$ of $w$ (see figure~\ref{fig:localfacefp}).

  \begin{figure}[ht]
    \centering
    \begin{tikzpicture}[scale=0.7]
      \begin{scope}[decoration={markings, mark=at
     position 0.7 with {\arrow{>}}},postaction={decorate}]
\fill[gray] (0,0) -- (210:1.5) arc (210:330:1.5) -- cycle;
\node at (-0.5, -0.9) {$f'$};
\draw[dotted] (0,0) circle (1.5);
\draw[orange, line width= 2pt] (290:1.5).. controls +(0,0) and (0, -0.5).. (0,0) -- (0,1.5);
\draw (0,0) -- (0,1.5);
\draw (0,0) -- (210:1.5);
\draw (0,0) -- (330:1.5);
\node at (0.3,0.2) {$v$};
\node[orange] at (310:1.1) {$e'$};
\end{scope}
    \end{tikzpicture}
    \caption{A part of the face $f'$ next to an edge $e'$ of $w'$. Above $v$ the colours of $w$ and $w'$ are coherent thanks to lemma~\ref{lem:coherent-col}.}
    \label{fig:localfacefp}
  \end{figure}
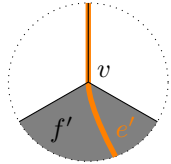
We want to prove that none of the faces of $w'$ which are adjacent to $e'$ has the same colour as the face $f$. This follows from the lemma \ref{lem:coherent-col}, and from the fact that the part of $e'$  above $v$ is an edge of $w$ (see figure~\ref{fig:localfacefp}).
\end{proof}
\begin{cor}\label{cor:how-are-essential-faces}
   Let $w$ be a $\partial$-connected $\epsilon$-web and $w'$ a $\partial$-connected simplification of $w$. If a face $f$ of $w$ intersects a region of $w'$ which has the same colour, it is not essential.
\end{cor}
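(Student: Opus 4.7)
The statement is essentially a direct reformulation of the definition of an essential face, so the proof should be very short. The plan is to simply unwind the definition and appeal, if desired, to Lemma~\ref{lem:intersect2essential} to extract a bit more structural information.

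Recall the definition: a face $f$ of $w$ is essential with respect to $w'$ if every region of $w'$ that $f$ meets has a colour different from that of $f$. The corollary's hypothesis asserts the existence of at least one region $R$ of $w'$ that $f$ meets with the \emph{same} colour as $f$. Taking the contrapositive of the defining condition, the existence of such an $R$ immediately forbids $f$ from being essential. So the main sentence of the argument is: ``Suppose $f$ is essential; then by definition every region of $w'$ meeting $f$ has colour different from that of $f$, contradicting the assumption that $R$ has the same colour as $f$.''

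If one wants a slightly more informative proof, one can also combine this with Lemma~\ref{lem:intersect2essential}: since $f$ is not essential, the lemma guarantees that $f$ meets only one region of $w'$, namely $R$ itself. This is the real geometric content that makes the corollary useful later, but it is not needed to obtain the stated conclusion. No obstacle is anticipated; the argument is a one-line contrapositive of the definition, and the only subtlety is keeping clear the distinction between ``face of $w$'' and ``region of $w'$'', which the surrounding discussion (in particular Lemma~\ref{lem:coherent-col} and Lemma~\ref{lem:intersect2essential}) has already set up carefully.
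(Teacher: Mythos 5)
Your proof is correct and matches the paper's (unwritten) argument: the corollary is simply the contrapositive of the definition of an essential face, and the paper presents it without proof precisely because it follows immediately. Your extra remark about invoking Lemma~\ref{lem:intersect2essential} is accurate but, as you note yourself, unnecessary for the stated conclusion.
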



\begin{prop}
   Let $w$ be a $\partial$-connected $\epsilon$-web (this implies that every face of $w$ is diffeomorphic to a disk) and $w'$ a $\partial$-connected simplification of $w$. Then there exists a (canonical) paired red graph $G$ such that $w'$ is equal to $w_G$. We denote it by $G_{w\to w'}$.
\end{prop}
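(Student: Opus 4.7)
The plan is to take $V(G)$ to be the set of essential faces of $w$ with respect to $w'$, to let $G$ be the induced subgraph of $D(w)$ on this vertex set (so condition~(\ref{item:dfnRG3}) of Definition~\ref{dfn:red-graph} holds automatically), and to read off the pairing from the way the edges of $w'$ jump through essential faces of $w$.

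First, condition~(\ref{item:dfnRG1}) of Definition~\ref{dfn:red-graph} is immediate: since $w$ is $\partial$-connected every face is diffeomorphic to a disk, and the unbounded region is excluded by definition. For condition~(\ref{item:dfnRG2}), I would fix a trivalent vertex $v$ of $w$ and consider the three faces $f_0, f_1, f_2$ of $w$ meeting at $v$; in the canonical face-$3$-colouring, these carry the three distinct colours $0, 1, 2$. If $v \in V(w')$, then a local extension of Lemma~\ref{lem:coherent-col} shows that the three regions of $w'$ around $v$ carry these same three colours, so one $f_i$ shares the colour of a region of $w'$ it intersects and hence fails to be essential by Corollary~\ref{cor:how-are-essential-faces}. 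If $v \notin V(w')$, then $v$ lies inside (or on the boundary of) a single region $R$ of $w'$, and the face whose colour matches $R$ necessarily intersects $R$ and is non-essential. In either case, at least one of the three faces at $v$ lies outside $V(G)$.

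Next I would set up the pairing. Decompose each edge $e'$ of $w'$ into segments $([a_i, a_{i+1}])_{i\in[0,2k]}$ as in the definition of a simplification, with odd-indexed segments being edges of $w$ and even-indexed segments being jumps through faces of $w$. I would establish two dual statements using the same colour coherence: (a) every jump of an edge of $w'$ traverses an \emph{essential} face of $w$; and (b) an edge $e$ of $w$ appears as an odd-indexed segment of some edge of $w'$ if and only if at least one of the two faces of $w$ adjacent to $e$ is non-essential. Both follow by comparing the colouring of the face(s) of $w$ across $e$ with that of the region(s) of $w'$ across $e$, via Lemma~\ref{lem:coherent-col} and Corollary~\ref{cor:how-are-essential-faces}. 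For each jump of an edge of $w'$ through an essential face $f$, the two endpoints of the jump lie on boundary edges of $f$ that survive in $w'$, so they correspond to two grey half-edges of $f$ in the sense of Definition~\ref{dfn:red-graph}; I pair them. The orientation of $e'$ at the two endpoints forces one grey half-edge to point into $f$ and the other to point out of $f$, which matches Definition~\ref{dfn:paired-RG}.

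Finally, to verify $w_G = w'$: by construction the $G$-reduction deletes the edges of $w$ adjacent to any vertex of $G$ — which by~(b) are precisely the edges of $w$ that do not survive in $w'$ — and reconnects the grey half-edges along the pairing, which by definition recovers exactly the jumps of the edges of $w'$ through the essential faces. Canonicity of $G$ is then immediate since every step of the construction is forced by the pair $(w, w')$. The main obstacle is the bookkeeping in the pairing step: I must check that every grey half-edge of every essential face arises from exactly one jump of exactly one edge of $w'$, and that the orientation requirement of Definition~\ref{dfn:paired-RG} is satisfied locally everywhere. This amounts to a careful local analysis around each vertex of $w$ lying on the boundary of an essential face, distinguishing the subcases according to whether that vertex lies in $V(w')$ or not.
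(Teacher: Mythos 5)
Your proposal matches the paper's proof: both take $V(G)$ to be the essential faces of $w$ with respect to $w'$, verify the red-graph conditions via the face-colouring lemmas (with a case analysis on whether a vertex of $w$ lies on $V(w')$, inside an edge of $w'$, or inside a region of $w'$), read off the pairing from how edges of $w'$ jump through essential faces, and then match $w_G$ with $w'$ region by region. The only remark worth making is that when you bundle the "$v$ lies inside an edge of $w'$" subcase into "$v$ lies in (or on the boundary of) a single region $R$," the phrase is slightly off --- such a $v$ borders \emph{two} regions of $w'$ --- but either choice of $R$ makes your argument go through, so this is a wording issue rather than a gap.
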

\marginpar{face vs connected component of...}
\begin{proof}
  We consider the canonical colourings of the faces of $w$ and $w'$. The red graph $G$ is the induced sub-graph of $w^\star$ (the dual graph of $w$) whose vertices are essential faces of $w$ with respect to $w'$. The pairing is given by the edges of $w'$. We need to prove first that this is indeed a red graph, and in a second step that $w_G = w'$.
We consider a vertex $v$ of $w$ and the 3 regions next to it. We want to prove that at least one of the 3 regions is not essential with respect to $w'$. If the vertex $v$ is a vertex of $w'$ then lemma~\ref{lem:coherent-col} and corollary \ref{cor:how-are-essential-faces} give that none of the three regions is essential. Else, $v$ either lies inside an edge of $w'$ or it lies in a face of $w'$ (see figure~\ref{fig:3possvertex}).

\begin{figure}[ht]
  \centering
  \begin{tikzpicture}[scale=0.7]
    \begin{scope}[decoration={markings, mark=at
     position 0.7 with {\arrow{>}}},postaction={decorate}]
\draw[dotted] (0,0) circle (1.5);
\draw[orange, line width= 2pt] (0,0) -- (0,1.5);
\draw[orange, line width= 2pt] (0,0) -- (210:1.5);
\draw[orange, line width= 2pt] (0,0) -- (330:1.5);
\draw (0,0) -- (0,1.5);
\draw (0,0) -- (210:1.5);
\draw (0,0) -- (330:1.5);
\node at (0.3,0.3) {$v$};
\end{scope}
\begin{scope}[xshift = 4cm,decoration={markings, mark=at
     position 0.7 with {\arrow{>}}},postaction={decorate}]
\draw[dotted] (0,0) circle (1.5);
\draw[orange, line width= 2pt] (290:1.5).. controls +(0,0) and (0, -0.5).. (0,0) -- (0,1.5);
\draw (0,0) -- (0,1.5);
\draw (0,0) -- (210:1.5);
\draw (0,0) -- (330:1.5);
\node at (0.3,0.3) {$v$};
\end{scope}
\begin{scope}[xshift = 8cm, decoration={markings, mark=at
     position 0.7 with {\arrow{>}}},postaction={decorate}]
\draw[dotted] (0,0) circle (1.5);
\draw (0,0) -- (0,1.5);
\draw (0,0) -- (210:1.5);
\draw (0,0) -- (330:1.5);
\node at (0.3,0.3) {$v$};
\end{scope}
  \end{tikzpicture}
  \caption{The three configurations for the vertex $v$ of $w$: it is a vertex of $w'$ (on the left), it lies inside an edge of $w'$ (on the middle), it lies inside a region of $w'$ (on the right).}
  \label{fig:3possvertex}
\end{figure}
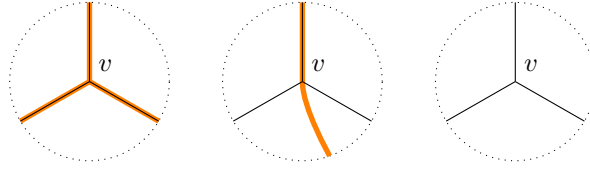

Consider the first situation: one of the 3 regions intersects two different regions of $w'$ hence it is essential thanks to lemma~\ref{lem:intersect2essential}, the two others are not thanks to corollary~\ref{cor:how-are-essential-faces}.

In the last situation, the 3 regions have different colours so that one of them has the same colour than the colour of the region of $w'$ where $v$ lies in, this region is therefore not essential (corollary \ref{cor:how-are-essential-faces}). This shows that $G$ is a red graph (we said nothing about the admissibility).

Let us now show that $w'=w_G$. We consider a collection $(N_{f})_{f\in V(G)}$ of regular neighbourhoods of essential faces of $w$ with respect to $w'$. Let us first show that for every essential face $f$ of $w$, if $N_f$ is a regular neighbourhood of $f$, the restriction of $w_G$ and $w'$ matchs. As $f$ is essential it is a vertex of $G$. Then the restriction of $w_G$ to $N_f$ is just a collection of strands joining the border to the border, just as $w'$.

In $\RR\times \RR_+\setminus \left(\bigcup_{f\in V(G)}N_f\right)$  the $\epsilon$-webs $w'$ and $w_G$ are both equal to $w$. This complete the proof.
\marginpar{illustration with $w$ and $w_0$}
\end{proof}
Note that $G_{w\to w'}$ depends on how $w'$ is embedded to see it as a simplification of $w$.

\begin{dfn}
Let $w$ a $\epsilon$-web and $w'$ a simplification of $w$, then the simplification is \emph{nice}, if for every region $r$ of $w$, $r\cap w'$ is either the empty set or connected.
\end{dfn}
We have the natural lemma:
\begin{lem}
  Let $w$ be a $\partial$-connected $\epsilon$-web and $w'$ a $\partial$-connected simplification of $w$. The simplification is nice if and only if the red graph $G_{w\to w'}$ is nice
\end{lem}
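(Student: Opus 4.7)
The plan is to translate the niceness of $w'$ as a simplification and the niceness of $G:=G_{w\to w'}$ into the same combinatorial condition on the essential faces of $w$.

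The key preliminary observation is that $w_G=w'$ by the construction of $G_{w\to w'}$, and that the two-step construction of $w_G$ (remove the edges of $w$ adjacent to $V(G)$-faces, then connect the grey half-edges using the pairing) produces arcs of $w'$ lying strictly inside essential faces of $w$, one arc per pair of grey half-edges. For every region $r$ of $w$ one then distinguishes three cases. If $r$ is a non-face region of $w$, then $r\cap w'=\emptyset$: no arc of $w'$ lies in $r$, and the $w$-portion of $w'$ lies on $w$ itself, outside the open region $r$. If $r$ is a non-essential face of $w$, then by lemma~\ref{lem:intersect2essential} it is contained in a single region of $w'$, so again $r\cap w'=\emptyset$. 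If $r$ is an essential face of $w$, that is, a vertex of $G$, then $r\cap w'$ is a disjoint union of $\ed(r)/2$ open arcs, one per pair of grey half-edges of $r$.

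Since the open region $r$ does not contain the endpoints of these arcs, two distinct arcs give two distinct connected components of $r\cap w'$ even when their closures share a point on $\partial r$. Hence the simplification is nice exactly when, for every vertex $f$ of $G$, $\ed(f)/2\le 1$, that is $\ed(f)\le 2$. Combined with the lower bound $\ed(f)\ge 2$ (a consequence of the parity remark~\ref{rk:evenextdeg} together with condition~(ii) of definition~\ref{dfn:red-graph}), this is precisely the condition that $G=G_{w\to w'}$ be nice, yielding both implications at once.

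The main bookkeeping step is the identification of arcs of $w'$ inside an essential face with the pairings of its grey half-edges, together with the observation that arcs coming from distinct pairings lie in distinct components of $f\cap w'$, since the shared endpoints on $\partial f$ do not belong to the open region $f$; once this is in hand, the equivalence follows directly from the case analysis above.
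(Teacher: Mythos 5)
Your argument is correct and follows the same route as the paper: both reduce the statement to lemma~\ref{lem:intersect2essential} (only essential faces meet $w'$) together with the count that, for an essential face $f$, the number of connected components of $f\cap w'$ equals $\ed(f)/2$, so that both niceness conditions become $\ed(f)\le 2$. Your extra remarks (distinct arcs stay in distinct components because their endpoints lie on $\partial f$, and the lower bound $\ed(f)\ge 2$) are harmless but not needed, since the equivalence is already immediate from the component count.
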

\begin{proof}
  Thanks to lemma \ref{lem:intersect2essential}, only essential faces of $w$ with respect to $w'$ can have non trivial intersection with $w'$, and for an essential face $f$, twice the number of connected component of $f\cap w'$ is equal to the exterior degree of the vertex of $G_{w\to w'}$ corresponding to $f$.
\end{proof}

\begin{lem}\label{lem:essfacesmatters}
  If $w$ is a $\partial$-connected $\epsilon$-web, and $w'$ is a $\partial$-connected simplification of $w$. Then the level of $G_{w\to w'}$ is given by the following formula:
\[
i(G_{w\to w'}) = 2\#\{\textrm{essential faces of $w$ wrt. $w'$}\} - \frac{\#V(w) - \# V(w')}{2}.
\]
\end{lem}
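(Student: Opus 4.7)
The plan is to expand $i(G_{w\to w'})=2\#V(G)-\#E(G)-\frac{1}{2}\sum_{f\in V(G)}\ed(f)$ from Remark~\ref{req:formindex}, substitute the definition $\ed(f)=\deg_{D(w)}(f)-2\deg_G(f)$, and then identify the remaining combinatorial quantities by classifying both the edges and the vertices of $w$ according to their relationship with $w'$.

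First I will classify the edges of $w$ into three types: \emph{type~I} if both adjacent faces are essential (equivalently, the edge is an edge of $G_{w\to w'}$), \emph{type~II} if exactly one adjacent face is essential, and \emph{type~III} otherwise. Writing $E_{\mathrm{I}},E_{\mathrm{II}},E_{\mathrm{III}}$ for the corresponding counts and $V_e$ for the number of essential faces, the handshake identity in $G_{w\to w'}$ gives $\sum_{f\in V(G)}\deg_G(f)=2E_{\mathrm{I}}=2\#E(G_{w\to w'})$, while counting (essential-face, incident-edge)-incidences yields $\sum_{f\in V(G)}\deg_{D(w)}(f)=2E_{\mathrm{I}}+E_{\mathrm{II}}$. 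Substituting these into the formula of Remark~\ref{req:formindex} collapses it to $i(G_{w\to w'})=2V_e-\tfrac{1}{2}E_{\mathrm{II}}$, so it will suffice to prove that $E_{\mathrm{II}}=\#V(w)-\#V(w')$.

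To establish this, I will classify each vertex $v$ of $w$ according to its position relative to $w'$---the same trichotomy used in the proof that $G_{w\to w'}$ is a red graph, see Figure~\ref{fig:3possvertex}: (a)~$v\in V(w')$; (b)~$v$ lies in the interior of an edge of $w'$ without being a vertex of $w'$ (a \emph{transition vertex}); (c)~$v$ lies in a face of $w'$. Since $V(w')\subseteq V(w)$, the vertices of type~(a) are exactly those of $w'$, so types~(b) and~(c) together account for $\#V(w)-\#V(w')$ vertices. I will then count, at each kind of vertex, how many of the three incident edges of $w$ are of type~II. In case~(a), every incident edge of $w$ is part of an edge of $w'$, so by Lemma~\ref{lem:coherent-col} and Corollary~\ref{cor:how-are-essential-faces} both of its adjacent faces are non-essential, giving three type~III edges and contribution $0$. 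In case~(b) the unique edge $e_0$ of $w$ at $v$ that sits on an edge $e'$ of $w'$ is of type~III by the same argument, while the face $f_0$ opposite to $e_0$ at $v$---which the face-segment of $e'$ enters---intersects at least two regions of $w'$ and is therefore essential by (the contrapositive of) Lemma~\ref{lem:intersect2essential}; the two remaining edges at $v$ each separate $f_0$ from one of the two non-essential faces on either side of $e_0$, so they are of type~II, contributing $2$. In case~(c) every edge at $v$ must disappear in passing to $w_G=w'$ and so must be adjacent to at least one essential face; combined with the fact that exactly one of the three adjacent faces is non-essential (namely the one whose colour matches the face of $w'$ containing $v$, by Corollary~\ref{cor:how-are-essential-faces}), this forces the other two adjacent faces to be essential, giving one type~I and two type~II incident edges, again contributing $2$.

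Summing over all vertices and recalling that each type~II edge is counted twice (once per endpoint), I obtain $2E_{\mathrm{II}}=0\cdot\#V(w')+2\bigl(\#V(w)-\#V(w')\bigr)$, whence the required identity and the lemma. The only step that is not quite mechanical is case~(c): the assertion that the two faces at $v$ not matching the colour of the ambient face of $w'$ are essential is forced by the requirement that all three incident edges disappear when constructing $w_G$; once this is granted, everything else is routine bookkeeping around the formula of Remark~\ref{req:formindex}.
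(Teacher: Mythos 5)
Your proof is correct and follows essentially the same route as the paper: both start from the formula of Remark~\ref{req:formindex}, identify $V(G_{w\to w'})$ with the essential faces, and reduce everything to the single identity $\#V(w)-\#V(w')=2\#E(G)+\sum_f \ed(f)$, which in your notation is exactly $E_{\mathrm{II}}=\#V(w)-\#V(w')$. The only difference is that the paper asserts this identity ``follows from the definition of $w_{G_{w\to w'}}=w'$'' while you verify it by the vertex-by-vertex count over the three local configurations of Figure~\ref{fig:3possvertex} --- a welcome filling-in of the detail the paper omits.
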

This shows that the embedding of $w'$ influences the level of $G_{w\to w'}$ only on the number of essential faces of $w$ with respect  to $w'$
\begin{proof}
  The level of a red graph $G$ is given by:
\[
i(G) = 2\#V(G) - \#E(G) - \sum_{f\in V(G)}\frac{\ed(f)}{2}.
\]
By definition of $G_{w\to w'}$, we have:
\[ \{\textrm{essential faces of $w$ wrt. $w'$}\} = V(G_{w\to w'}).\]
The only thing to realise is that 
we have:
\[
2\left(\#E(G)_{w\to w'} + \sum_{f\in V(G_{w\to w'})}\frac{\ed(f)}{2}\right) =  \#V(w) - \# V(w'),
\] and this follows from the definition of $w_{G_{w\to w'}}=w'$.
\end{proof}

\begin{dfn}\label{dfn:avoidfill}
  If $f$ is a face of $w$, $w'$ a simplification of $w$ and $r$ a region of $w'$, we say that $f$ avoids $r$ if $f\cap r=\emptyset$ or if  the boundary of $r$ in each connected component of $f\cap r$ joins two consecutive vertices of $f$ (see figure \ref{fig:avoiding}). In the first case we say that $f$ avoid $r$ trivially.
  \begin{figure}[ht]
    \centering
    \begin{tikzpicture}
      \input{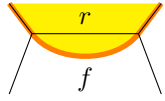}
    \end{tikzpicture}
    \caption{The local picture of a face $f$ (in white) of $w$ (in black)  non-trivially avoiding a region $r$ (in yellow) of $w'$ (in orange).}
    \label{fig:avoiding}
  \end{figure}
  If $f$ is an essential face of $w$ with respect to $w'$ and $r$ is a region of $w'$, we say that \emph{$f$ fills $r$}, if $f$ does not avoid $r$. If $F'$ is a set of region of $w'$ we say that \emph{$f$ fills (\resp avoids) $F'$} if it fills at least one region of $F'$ (\resp avoids all the regions of $F'$). We define:
\[
n(f,F') \eqdef \#\{r\in F' \,\textrm{such that $f$ fills $r$}\}.
\]
If $G'$ is a red graph of $w'$, we write $n(f,G')$ for $n(f,V(G'))$. 
\end{dfn}
With the same notations, and with $F$ a set of face of $w$, we have the following equality:
\begin{align}
\#F = \#\{\textrm{faces $f$ of $F$ avoiding $F'$}\} + \sum_{f'\in F'}\sum_{\substack{f \in F \\ \textrm{$f$ fills $f'$}}} \frac1{n(f,F')}. \label{eq:sumofcontrib}
\end{align}

\begin{lem}
  Let $w$ be a $\partial$-connected $\epsilon$-web and $w'$ a nice $\partial$-connected simplification of $w$. Let $F'$ be a collection of faces of $w'$, then for every face $f$ of $w$, we have:  $n(f, F') \leq 2$. 
\end{lem}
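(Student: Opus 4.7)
The plan is to turn the claim $n(f,F')\le 2$ into the stronger geometric statement that the face $f$ of $w$ meets at most two regions of $w'$ altogether. Since $f$ fills a region $r'$ only when $f\cap r'\neq \emptyset$, this immediately gives the bound.

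First I would record that $w$ being $\partial$-connected forces every face $f$ of $w$ to be a topological disk (two connected components of $w$ cannot nest inside one another, because a nested component could not reach $\RR\times\{0\}$). Then I would unpack $f\cap w'$ using the definition of a simplification: inside the interior of $f$, the graph $w'$ consists exclusively of the ``jump'' subintervals $[a_{2i+1},a_{2i+2}]$ of edges of $w'$, each of which is an arc whose two endpoints are vertices of $w$ lying on $\partial f$.

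The key step, and the one I expect to be the main obstacle, is to show that these jump arcs inside $f$ are pairwise disjoint — in particular that no two of them share an endpoint on $\partial f$. For a vertex $v\in\partial f$ I will split into cases: if $v\in V(w')$, then each of the three edges of $w'$ incident to $v$ starts along a $w$-edge at $v$ (the definition of simplification forces the first and last pieces $[a_0,a_1]$ and $[a_{2k},a_{2k+1}]$ of any edge of $w'$ to be $w$-edges), so no jump can begin at $v$; and if $v\notin V(w')$, at most one edge of $w'$ passes through $v$ as an interior point, contributing at most one jump to $f$ incident to $v$. Either way, jump arcs into $f$ are disjoint.

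Once disjointness is established, the niceness hypothesis applied to $r=f$ says that $f\cap w'$ is connected, so this disjoint union of arcs is either empty or reduced to a single arc. In the empty case $f$ sits in one region of $w'$ and $n(f,F')\le 1$. In the single-arc case, the arc is a chord of the disk $f$ joining two points of $\partial f$; it splits $f$ into exactly two open sub-disks, each contained in a single region of $w'$. Hence $f$ meets at most two regions of $w'$, and so $n(f,F')\le 2$.
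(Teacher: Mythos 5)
Your proof is correct and follows the same approach as the paper's (which is stated very tersely): niceness forces $f\cap w'$ to be a single arc, a chord of the disk $f$, which cuts $f$ into at most two pieces, each contained in a single region of $w'$, hence $n(f,F')\leq 2$. One small simplification: since $f$ is an \emph{open} region and $w'$ is an embedded plane graph, the interiors of distinct jump arcs are automatically pairwise disjoint inside $f$, so the niceness hypothesis already forces $f\cap w'$ to be a single open arc — the endpoint case analysis you carry out, while a correct observation, is not actually needed to reach the conclusion.
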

\begin{proof}
This is clear since $f\cap w'$ consists of at most one strand, so that it intersects at most 2 faces of $F'$.
\end{proof}

\begin{req}\label{rqe:hex2bign}
  Let $w$ be a $\epsilon$-web, $w'$ a nice $\partial$-connected simplification of $w$ and $f$ an essential face of $w$ with respect to $w'$. Suppose that $f$ has at least 6 sides of $w$. Suppose furthermore that it intersects two regions $r_1$ and $r_2$ of $w'$, then either it (non-trivially) avoids one of them, either it fills both of them. If $f$ avoids $r_2$ then at least two neighbours (in $G_{w\to w'})$ of $f$ fill $r_1$ (see picture~\ref{fig:fillhex}). 
If on the contrary $f$ has just one neighbour which fills $r_1$, then $f$ fills  $r_2$. Under this condition, for any collection $F'$ of regions of $w'$ with $\{r_1, r_2\} \subseteq F'$ we have: $n(f, F')=2$.
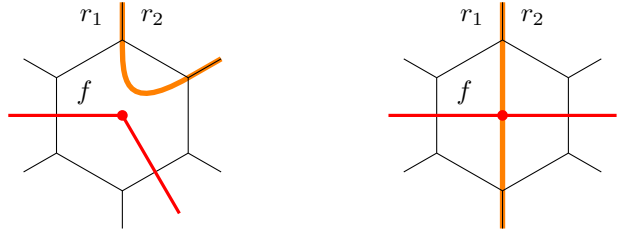
\begin{figure}[ht]
  \centering
  \begin{tikzpicture}
    \begin{scope}[decoration={markings, mark=at
     position 0.7 with {\arrow{>}}},postaction={decorate}]
\draw[orange, line width= 2pt] (90:1.5) -- (90: 1) .. controls (90:0.5) and (30:0.5 -0.5).. (30:1) -- (30:1.5);
\draw (90:1) -- (150:1) -- (210:1) -- (270:1) -- (330:1) -- (30:1)--cycle;
\draw (90:1) -- (90:1.5);
\draw (150:1) -- (150:1.5);
\draw (210:1) -- (210:1.5);
\draw (270:1) -- (270:1.5);
\draw (330:1) -- (330:1.5);
\draw (30:1) -- (30:1.5);
\fill[red] (0,0) circle (2pt);
\draw[very thick, red] (0,0) -- (180:1.5);
\draw[very thick, red] (0,0) -- (300:1.5);
\node at (0.4,1.3) {$r_2$};
\node at (-0.4,1.3) {$r_1$};
\node at (-0.5,0.3) {$f$};
\end{scope}

\begin{scope}[xshift = 5cm, decoration={markings, mark=at
     position 0.7 with {\arrow{>}}},postaction={decorate}]
\draw[orange, line width= 2pt] (90:1.5) -- (270: 1.5);
\draw (90:1) -- (150:1) -- (210:1) -- (270:1) -- (330:1) -- (30:1)--cycle;
\draw (90:1) -- (90:1.5);
\draw (150:1) -- (150:1.5);
\draw (210:1) -- (210:1.5);
\draw (270:1) -- (270:1.5);
\draw (330:1) -- (330:1.5);
\draw (30:1) -- (30:1.5);
\fill[red] (0,0) circle (2pt);
\draw[very thick, red] (0,0) -- (0:1.5);
\draw[very thick, red] (0,0) -- (180:1.5);
\node at (0.4,1.3) {$r_2$};
\node at (-0.4,1.3) {$r_1$};
\node at (-0.5,0.3) {$f$};
\end{scope}
  \end{tikzpicture}
  \caption{On the left $f$ avoids $r_2$, on the right it fills $r_1$ and $r_2$.}
  \label{fig:fillhex}
\end{figure}
\end{req}
\begin{dfn}\label{dfn:sigma}
We set $\displaystyle{\sigma(f', F\to F') \eqdef  \sum_{\substack{f \in F \\ \textrm{$f$ fills $f'$}}} \frac1{n(f,F')} }$. If $G$ is a red graph for $w$ and $G'$ a red graph for $w'$ we write $\sigma(f', G\to G' )$ for $\sigma(f', V(G)\to V(G'))$.
\end{dfn}

\subsection{Proof of lemma~\ref{lem:tech}}
\label{sec:proof-lemmatech}

In this section we use the point of view developed in section~\ref{sec:new-approach-to-red-graph} to prove the lemma~\ref{lem:tech}. We restate it with this new vocabulary:
\begin{lem}\label{lem:techNV}
Let $w$ be an a $\partial$-connected $\epsilon$-web which contains no digon and exactly one square. We suppose furthermore that this square touches the unbounded face. Let $G$ be a nice red graphs of $w$ and $G'$ a nice red graph of $w'=w_{G}$, then there exists $\widetilde{w}$ a nice simplification of $w$ such that:
\begin{enumerate}[A)]
\item\label{it:condA} the $\epsilon$-webs $(w_{G})_{G'}$ and $\widetilde{w}$ are isotopic,
\item\label{it:condB} the following equality holds:
\[
\#V( \widetilde{G}) \geq \# V(G) + \# V(G'),
\]
\end{enumerate}
where $\widetilde{G}$ denote the red graph $G_{w\to \widetilde{w}}$.
\end{lem}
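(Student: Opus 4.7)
The natural candidate is $\widetilde w := (w_G)_{G'}$, realised as a subset of the plane containing $w$ by first embedding $w_G$ as a simplification of $w$ (as in its very definition) and then embedding $(w_G)_{G'}$ as a simplification of $w_G$. Since a composition of simplifications is a simplification, $\widetilde w$ is indeed a simplification of $w$, and condition \ref{it:condA} holds by construction. To check niceness, I would argue that if a region $r$ of $w$ has $r\cap\widetilde w$ disconnected, then either $r\cap w_G$ was already disconnected (contradicting niceness of $G$), or the disconnecting arc was created by $G'$ inside a single face of $V(G)$, which together with the pairing data forces $G'$ not to be nice or else produces a short face in $w$ forbidden by the hypothesis that $w$ has no digon and only one square (touching the unbounded face).

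For condition \ref{it:condB}, write $\widetilde G := G_{w\to\widetilde w}$ and use the identification of $V(\widetilde G)$ with the set of essential faces of $w$ with respect to $\widetilde w$. The plan is to exhibit an injection
\[
V(G)\sqcup V(G')\hookrightarrow V(\widetilde G).
\]
To $f\in V(G)$ I assign $f$ itself, and to $f'\in V(G')$, viewed as a region of $w_G$ and hence as a union of faces of $w$, I assign a canonically chosen face $\phi(f')$ of $w$ contained in $f'$, of color different from the color of $f'$ and of all the regions of $\widetilde w$ into which $f'$ dissolves. The existence of $\phi(f')$ will follow from the fact that $f'$, being essential in $w_G$ with respect to $\widetilde w$, must contain at least one face of $w$ whose color is opposite to that of $f'$---otherwise $f'$ would be recolourable into a single uniform region of $\widetilde w$, contradicting its essentiality. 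Injectivity of $\phi$ is automatic because distinct $f'$'s are disjoint as regions of $w_G$, and $\phi(f')\notin V(G)$ because $\phi(f')$ lies inside the essential region $f'$ and hence (by Lemma~\ref{lem:intersect2essential} applied to the simplification $w\to w_G$) cannot itself have been essential with respect to $w_G$.

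The main obstacle is verifying the first half of the injection, namely that every $f\in V(G)$ remains essential with respect to $\widetilde w$, not merely with respect to $w_G$. The case where the region $r'$ of $w_G$ containing $f$ is not in $V(G')$ is straightforward: $r'$ survives in $\widetilde w$ and the color test is preserved. The delicate case is $r'\in V(G')$: then $r'$ is dissolved in $\widetilde w$ and split into pieces coloured by the two colors distinct from that of $r'$, and one of those two colors equals the color of $f$. One must show that $f$ never actually touches a piece of its own color inside $r'$. Here the hypotheses enter decisively: niceness of $G'$ forces the pairing on the boundary of $r'$ to match grey half-edges that separate regions of compatible colors, the absence of digons in $w$ forbids the degenerate configurations in which $f$ could be adjacent to both adjacent colors inside $r'$, and the single-square-touching-the-unbounded-face condition rules out the one remaining exceptional pattern. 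Once this local colouring check is in place, the sum
\[
\#V(\widetilde G)\;\geq\;\#V(G)+\#V(G')
\]
follows from the injection.
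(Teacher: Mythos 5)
Your construction of $\widetilde w$ as an embedding of $(w_G)_{G'}$ is the same as the paper's, and condition~\ref{it:condA} is fine. The problem is the injection $V(G)\sqcup V(G')\hookrightarrow V(\widetilde G)$, and in particular the first half of it: it is \emph{not} true that every vertex $f\in V(G)$ remains an essential face of $w$ with respect to $\widetilde w$. When the region $f'\in V(G')$ of $w_G$ containing (part of) $f$ is dissolved, the new regions of $\widetilde w$ inside $f'$ acquire the two colours different from the colour of $f'$, and one of those two colours \emph{is} the colour of $f$; nothing in the hypotheses prevents the new strand from bounding a region of colour $c_f$ adjacent to $f$. Indeed, the paper's own proof makes this loss explicit: inside $N_{f'}$ it considers the bipartite plane graph $G_{f'}$ formed by the vertices of $G$ filling $f'$, and only the vertices of \emph{one} of the two colour classes of $G_{f'}$ become vertices of $\widetilde G$ --- the other colour class is lost, and is compensated by the bounded faces of $G_{f'}$ becoming new vertices of $\widetilde G$. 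This is precisely why the argument is run through the weighted sums $\sigma(f',\cdot\to G')$ with weights $1/n(f,F')\in\{1,\tfrac12\}$ rather than a literal injection, and why a $-\tfrac12$ slack (absorbed at the end by integrality) appears when the unique square sits inside $N_{f'}$.

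The second half of the injection is also not established: the existence of a face $\phi(f')\subset f'$ whose colour differs both from $c_{f'}$ and from every region of $\widetilde w$ meeting $f'$ does not follow from ``otherwise $f'$ could be recoloured'' --- the face colouring is forced by the flow modulo~$3$, not a free choice --- and even if such a face existed you would still have to check it is essential with respect to $\widetilde w$ (i.e.\ that \emph{its own} adjacent $\widetilde w$-regions all avoid its colour), which is a different condition. The correct replacement for both halves is the Euler-characteristic bookkeeping on the disc graph $G_{f'}$ carried out in Lemmas~\ref{lem:techtech0} and~\ref{lem:techtech2}: those lemmas produce an inequality of the form $\#F(G_{f'})\geq 1+\sum_v 1/n(v)$, which after the summation over $f'\in V(G')$ and the identity~(\ref{eq:sumofcontrib}) gives $\#V(\widetilde G)\geq \#V(G)+\#V(G')-\tfrac12$, hence the claim by integrality. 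Your outline as written does not reach this; the ``delicate case'' you flag is exactly where the argument must become quantitative rather than set-theoretic.
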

\begin{proof}
  Because of the condition~\ref{it:condA}, we already know the isotopy class of the web $\widetilde{w}$. To describe it completely, we only need to specify how $\widetilde{w}$ is embedded. For each face $f'$ of $w'$ which is a vertex of $G'$, let us denote $N_{f'}$ a regular neighbourhood of $f'$. We consider $U$ the complementary of $\bigcup_{f'} N_{f'}$. Provided this is done in a coherent fashion, it's enough to specify how $\widetilde{w}$ looks like in $U$ and in $N_{f'}$ for each face $f'$ of $w'$.

If $f'$ is a face of $w'$ which is in $G'$, we consider two different cases: 
\begin{enumerate}
\item\label{it:face0} the face $f'$ corresponds to a vertex of $G'$ with exterior degree equal to 0,
\item\label{it:face2}the face $f'$ corresponds to a vertex of $G'$ with exterior degree equal to 2.
\end{enumerate}
These are the only cases to consider since $G'$ is nice.

Let us denote by $w''$ the $\epsilon$-web $(w')_{G'}$. We want $\widetilde{w}$ and $w''$ to be isotopic. So let us look at $w''\cap U$ and at $w''\cap N_{f'}$ in the two cases.

Around $U$, the $\epsilon$-web $w'$ does not ``see'' the red graph $G'$, so that $U\cap w''= U\cap w'_{G'} = U\cap w'$. 

If the face $f'$ has exterior degree equal to 0 (case~\ref{it:face0}), then we have: $N_{f'}\cap w'' =  U\cap w'_{G'}= \emptyset$. \marginpar{example ?}

If the face $f'$ has exterior degree equal to 2 (case~\ref{it:face2}), then we have: $N_{f'}\cap w'' =  U\cap w'_{G'}$ is a single strands cutting $N_{f'}$ into two parts. \marginpar{example}

We embed $\widetilde{w}$ such that $U\cap \widetilde{w}$ and  $U\cap w''$ are equal and for each face $f'$ corresponding to a vertex of $G'$,
$N_{f'}\cap \widetilde{w}$ and $N_{f'}\cap w''$ are isotopic (relatively to the boundary).

We claim that if $f'$ is a vertex with external degree equal to 0 then:
\begin{align}\begin{cases}\sigma (f', \widetilde{G}\to G') \geq \sigma(f', G \to G') +\frac12 & \textrm {if $S \subseteq N_{f'}$,} \\
\sigma (f', \widetilde{G}\to G') \geq \sigma(f', G \to G') +1 & \textrm {if $S\nsubseteq N_{f'}$,}
\end{cases}\label{eq:sigmaface0}
\end{align}  
where $S$ is the square of $w$.

The restriction\footnote{We only consider the vertices of $G$ which fill $f'$.} $G_{f'}$ of $G'$ to $f'$ is a graph which satisfies the following conditions:
\begin{itemize}
\item it is bi-coloured (because the vertices of $G$ are essential faces of $w$ with respect to $w'$),
\item it is naturally embedded in a disk because $N_{f'}$ is diffeomorphic to a disk,
\item the degree of the vertices inside the disks have degree at least three (because the only possible square of $w$ touches the border) and the vertices on the border (these are the one which intersect an other region of $w'$) have degree at least 1.
\end{itemize}

The regions of $G_{f'}$ and the vertices of one of the two colours of $G_{f'}$ become vertices of $\widetilde{G}$ (see example depicted on figure~\ref{fig:exampleed0}). To proves the inequality (\ref{eq:sigmaface0}), one should carefully count regions of $G_{f'}$. There are two different cases in  (\ref{eq:sigmaface0}) because the remark~\ref{rqe:hex2bign} do not apply to the square.
Hence we can apply the lemma~\ref{lem:techtech0} which proves (\ref{eq:sigmaface0}).

\begin{figure}[ht]
  \centering
  \begin{tikzpicture}[scale= 1.4]
    \input{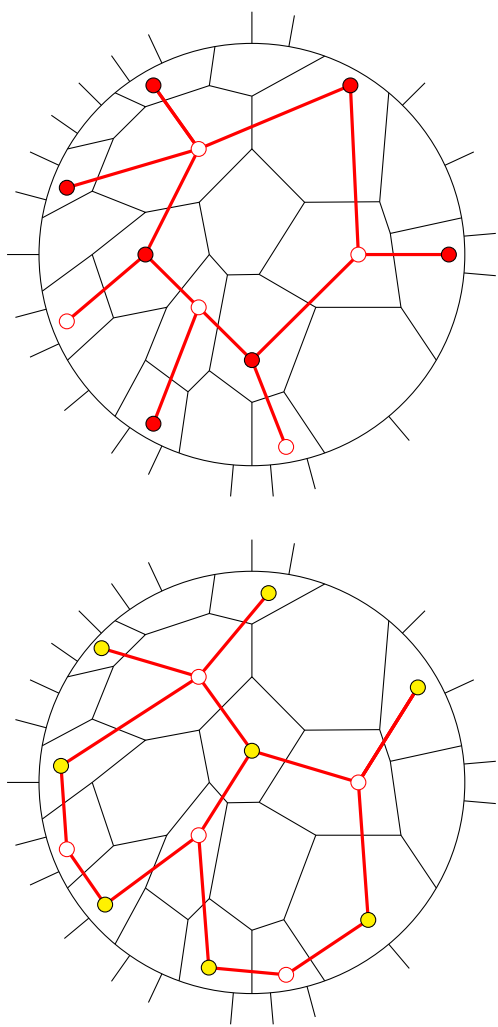}
  \end{tikzpicture}
  \caption{Example of the procedure to define $\widetilde{G}$ when the exterior degree of $f'$ is equal to 0.}
  \label{fig:exampleed0}
\end{figure}

The only thing remaining to specify is how $\widetilde{w}$ looks in $N_{f'}$ where $f'$ is a vertex of $G'$ of exterior degree equal to 2. Note that we need to embed $\widetilde{w}$ so that it is a nice simplification of $w$. We claim that it is always possible to find such an embedding so that the inequality (\ref{eq:sigmaface0}) is satisfied. In this case the graph $G_{f'}$ is in the same situation as before, but is important to notice that the faces of $G_{f'}$ have at least 6 sides (this is a consequence of proposition~\ref{prop:largecycle}).

The vertices of $\widetilde{G}$ are the regions of $G_{f'}$ and the vertices of $G_{f'}$ of one of the two colours on one side of the strand and the vertices of  $G_{f'}$ of the other colour on the other side of the strand (see figure~\ref{fig:exampleed0} for an example). Hence in order to show that the inequality (\ref{eq:sigmaface0}) holds, one should carefully count the regions and the vertices of $G_{f'}$, this is done by lemma~\ref{sec:case-with-exterior2}.

\begin{figure}[ht]
  \centering
  \begin{tikzpicture}[scale =1.4]
    \input{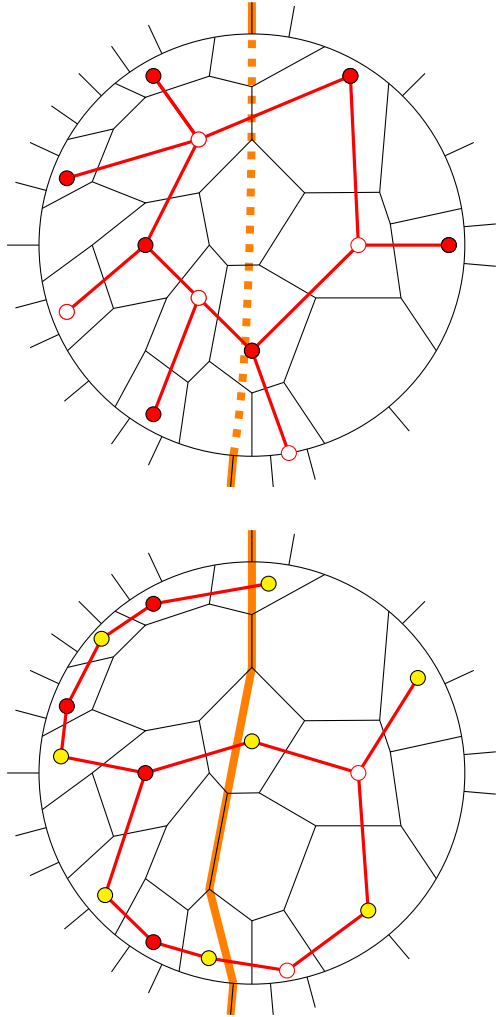}
  \end{tikzpicture}
  \caption{Example of the procedure to define $\widetilde{G}$ when the exterior degree of $f'$ is equal to 2.}
  \label{fig:exampleed2}
\end{figure}

So now we have a simplification $\widetilde{w}$ of $w$, such that the graph $\widetilde{G}=G_{w\to \widetilde{w}}$ satisfies (\ref{prop:largecycle}) for each region $f'$ of $G'$. The square $S$ of $w$ is in at most one $N_{f'}$ so that if we sum  (\ref{prop:largecycle}) for all the vertices of $f'$, we obtain:
\[
\sum_{f'\in F'} \sigma(f', \widetilde{G}\to G') \geq \sum_{f'\in F'} \sigma(f', G \to G') + \#V(G') - \frac12,
\]
and using (\ref{eq:sumofcontrib}), we have:
\[
\#V(\widetilde{G}) \geq \#V(G') + \#V(G) - \frac12,
\]
but $\#V(\widetilde{G})$ being an integer we have $V(\widetilde{G}) \geq \#V(G') + \#V(G)$
\end{proof}

\subsection{Proof of combinatorial lemmas}
\label{sec:proof-techn-lemm}

This proof is dedicated to the two technical lemmas used in the last section. We first introduce the ad-hoc objects and then state and prove the lemmas.

\begin{dfn}
  A \emph{$D$-graph} is a graph $G$ embedded in the disk $D^2$. The set of vertices $V(G)$ is partitioned in two sets: $V^\partial(G)$ contains the vertices lying on $\partial D^2$, while $V^\textrm{in}$ contains the others. The set $F(G)$ of connected components of $D^2\setminus G$ is partitioned into two sets $F^\textrm{in}$ contains the connected component included in $\mathring{D^2}$, while $F^\partial$ contains the others.

  A $D$-graph is said to be \emph{non-elliptic} if: 
  \begin{itemize}
  \item every vertex $v$ of $V^\textrm{in}$ has degree greater or equal to 3,
  \item every vertex $v$ of $V^\partial$ has degree greater or equal than 1,
  \item the faces of $F^\mathrm{in}$ are of size at least 6.
  \end{itemize}

A \emph{coloured $D$-graph} is a $D$-graph $G$ together with: 
\begin{itemize}
\item a vertex-2-colouring (by $\green$ and $\blue$) of the vertices of $G$ (this implies that $G$ is bipartite),
\item a subdivision of $\D^2$ into two intervals (we allow one interval to be the empty set and the other one to be the full circle, in this case we say that $G$ is \emph{circled-coloured}): a $\green$ one and a $\blue$ one (denoted by $I_\blue$ and $I_\green$) when they are real intervals we defines $x$ and $y$ to be the two intersection points of $I_\green$ and $I_\blue$ with the convention that when one scans $\partial D^2$ clockwise,  one see $x$, then $I_\green$, then $y$ and finally $I_\blue$.
\end{itemize}

The vertices of $V^\partial$ are supposed neither on $x$ nor on $y$. The colour of a vertex is not supposed to fit the colour of the interval it lies on. We set $V_{\green}$ (\resp $V_\blue$) the set of $\green$, (\resp $\blue$) vertices, and  
 $V_{\green}^\partial$, $V_{\green}^\mathrm{in}$, $V_{\blue}^\partial$ and $V_{\blue}^\mathrm{in}$  in the obvious way.

If $G$ is a coloured $D$-graph, and $v$ is a vertex of $V^\partial$ of we set:
\[
n(v) = \begin{cases} 2 & \textrm{if $v$ has degree 1 and the colour of $v$ fits the colour of the interval,} \\ 1 & \textrm{else.}
\end{cases}
\]
If $v$ is a vertex of $V^\mathrm{in}$, we set $n(v)=1$. Note that this definition of $n$ is a translation of the $n$ of the previous section (see remark~\ref{rqe:hex2bign}).
\end{dfn}

\subsubsection{Case with exterior degree equal to 0}
\label{sec:case-with-exterior0}

\begin{lem} \label{lem:techtech0}
  Let $G$ be a non-elliptic circled-coloured $D$-graph (with the circle coloured by  a colour $c$), then:
\[\#F \geq 1 + \sum_{v\in V_c} \frac{1}{n(v)}.
\]
\end{lem}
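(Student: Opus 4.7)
The plan is to combine Euler's formula on the disk with the non-ellipticity bounds, in a preliminary structural step followed by a short Euler-plus-bipartite computation.

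First I would verify that every connected component of $G$ meets $\partial D^2$. If some component $C$ were contained in the interior of $D^2$, all its vertices would have degree $\geq 3$ in $C$ and all its bounded faces (viewed on the sphere) would be interior faces of $G$, hence of size $\geq 6$. Writing $L$ for the length of $C$'s outer face on the sphere, Euler's formula $V_C - E_C + F_C = 2$ combined with $2E_C \geq L + 6(F_C - 1)$ and $2E_C \geq 3V_C$ would give $L \leq 6V_C - 4E_C - 6 \leq -6$, a contradiction. In particular, either $V^\partial \geq 1$ or $G = \emptyset$; the latter is trivial, so I henceforth assume $V^\partial \geq 1$.

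Since $G \cup \partial D^2$ is then a connected planar CW-decomposition of the closed disk, Euler's formula $\chi(D^2) = 1$, applied after treating the $V^\partial$ arcs of $\partial D^2 \setminus V^\partial$ as additional $1$-cells, yields
\[
F \;=\; 1 + E - V^{\textrm{in}}.
\]
The lemma therefore reduces to the inequality $E - V^{\textrm{in}} \geq \sum_{v \in V_c} 1/n(v)$.

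For the final step I would exploit bipartiteness: since every edge of $G$ has one endpoint in each colour class, $E = \sum_{v \in V_{c'}} \deg(v) \geq 3 V_{c'}^{\textrm{in}}$ and $E = \sum_{v \in V_c} \deg(v) \geq 3 V_c^{\textrm{in}} + V_c^\partial$. Summing gives $V^{\textrm{in}} \leq (2E - V_c^\partial)/3$, i.e.\ $E - V^{\textrm{in}} \geq (E + V_c^\partial)/3$. It remains to check term-by-term that
\[
E + V_c^\partial \;=\; \sum_{v \in V_c^{\textrm{in}}} \deg(v) + \sum_{v \in V_c^\partial} (\deg(v) + 1) \;\geq\; 3 \sum_{v \in V_c} \frac{1}{n(v)},
\]
which is immediate from the three local inequalities $\deg(v) \geq 3 = 3/n(v)$ for $v \in V_c^{\textrm{in}}$, $\deg(v) + 1 \geq 3 = 3/n(v)$ for $v \in V_c^\partial$ of degree $\geq 2$, and $\deg(v) + 1 = 2 \geq 3/2 = 3/n(v)$ for $v \in V_c^\partial$ of degree $1$ (where the boundary colour matches $c$, so $n(v) = 2$). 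The main obstacle is the first step; once floating interior components have been excluded, the remainder is a mechanical Euler-plus-bipartite count with no choices to make.
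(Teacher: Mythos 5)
Your proof is correct and rests on the same two ingredients as the paper's: an Euler-characteristic count relating $\#F$, $\#E$ and $\#V$, plus degree bounds for interior and boundary vertices exploited via bipartiteness. The implementation differs in two small ways worth noting. First, you apply Euler's formula directly to the CW-structure on the disk (after the preliminary step excluding interior-only components), obtaining the clean identity $\#F = 1 + \#E - \#V^{\mathrm{in}}$; the paper instead doubles $G$ across $\partial D^2$ to a graph $H$ in the sphere and absorbs disconnectedness into a $\#C(H)$ term, which buys it the freedom to skip your floating-component lemma but at the price of a slightly delicate intermediate identity $\#F^\partial(G) = \#V^\partial(G) + 1 - \#C(H)$. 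Second, you sum the two bipartite degree inequalities symmetrically to get $\#E - \#V^{\mathrm{in}} \geq \frac13(\#E + \#V_c^\partial)$ and then bound the right-hand side termwise by $\sum_{v\in V_c} 1/n(v)$, whereas the paper takes the weighted combination $\frac23$ of one inequality plus $\frac13$ of the other; both organize the same arithmetic and give the same final estimate. Your route is marginally cleaner in that it makes the connectivity issue explicit and avoids the doubling construction entirely.
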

\begin{proof}By symmetry, we may suppose that $c=\green$.
To show this we consider the graph $H$ obtain by gluing to copies of $G$ along the boundary of $D^2$ this is naturally embedded in the sphere. We write the Euler characteristic:
\begin{align}
\#F(H) - \#E(H) + \#V(H) = 1 + \#C(H), \label{eq:eulerchar}
\end{align}
where $C(H)$ is the set of connected components of $H$. We have the following equalities:
\begin{align*}
  \#F(H) &= 2\#F^{\textrm{in}}(G) + \#F^{\partial}(G),\\
  \#F^{\partial}(G) &= {\#V^{\partial}(G)} + 1 -\#C(H), \\
  \#E(H) &= 2 \#E(G) = \sum_{v\in V(G)} \deg(v) = 2\sum_{v\in V_{\green}(G)} \deg(v),\\
    \#V(H) &= 2\#V^{\textrm{in}}(G) + \#V^{\partial}(G)
\end{align*}
So that we can rewrite (\ref{eq:eulerchar}):
\begin{align}
2\#F^{\textrm{in}}(G) + 2\#F^{\partial}(G) +2\#V^{\textrm{in}}(G)  = 2 +  2 \#E(G). \label{eq:EX2}
\end{align}
Now we use the what we know about degrees of the vertices:
\begin{align*}
  \#E(G)&\geq \frac{3}2 \# V^{\textrm{in}}(G)+ \frac12 \#V^{\partial,1}(G) +1 \#V^{\partial,>1}(G), \\
  \#E(G)&\geq  3 \# V_{\green}^{\textrm{in}}(G)+ \#V_{\green}^{\partial,1}(G) +2 \#V_{\green}^{\partial,>1}(G).
\end{align*}
Where $V^{\partial,1}$ (\resp $V^{\partial,>1}$) denotes the subset of $V^\partial$ with degree equal to $1$ (\resp with degree strictly bigger than 1). 
If we sum $\frac23$ of the first inequality and $\frac13$ of the second one, and inject this in~(\ref{eq:EX2}) we obtain:
\begin{align*}
   \#F(G) + \#V^{\textrm{in}}(G) &\geq  1+  \#E(G) \\
\#F(G) + \#V^{\textrm{in}}(G)    &\geq V^{\textrm{in}}(G)+ \frac13 \#V^{\partial,1}(G) +\frac23 \#V^{\partial,>1}(G) \\
   & \quad + \# V_{\green}^{\textrm{in}}(G) + \frac13\#V_{\green}^{\partial,1}(G) + \frac{2}3 \#V_{\green}^{\partial,>1}(G) \\
 \#F(G) &\geq \# V_{\green}^{\textrm{in}}(G)+ \frac23 \#V_{\green}^{\partial,1}(G) +\frac43 \#V_{\green}^{\partial,>1}(G) \\
&\quad + \frac13\#V_{\blue}^{\partial,1}(G) + \frac{2}3 \#V_{\blue}^{\partial,>1}(G) \\
&\geq  \# V_{\green}^{\textrm{in}}(G)+ \frac12 \#V_{\green}^{\partial,1}(G) + \#V_{\green}^{\partial,>1}(G) \\
& \geq \sum_{v\in V_\green} \frac{1}{n(v)}.
\end{align*}
\end{proof}

\subsubsection{Case with exterior degree equal to 2}
\label{sec:case-with-exterior2}

\begin{lem}\label{lem:bg1UYH}
  If $G$ is a non-elliptic $D$-graph, then all the faces of $F$ are diffeomorphic to disks, and if it is non-empty, then at least one of the following situations happens:
  \begin{enumerate}[(1)]
  \item\label{it:bg1} the set $V^{\partial,>1}$ is non empty,
  \item\label{it:cap} there exists, two $\cap$'s (see figure~\ref{fig:UDgraphUHY}) (if $G$ consists of only one edge, the two $\cap$'s are actually the same one counted two times because it can be seen as a $\cap$ on its two sides),
  \item\label{it:3YH} there exists three $\lambda$'s or $H$'s (see figure~\ref{fig:UDgraphUHY}).
    \begin{figure}[ht]
      \centering
      \begin{tikzpicture}[scale=1.1]
        \begin{scope}[decoration={markings, mark=at
     position 0.7 with {\arrow{>}}},postaction={decorate}]
\coordinate (V1) at (-0.7,-0);
\coordinate (V2) at (0.7,-0);
\draw[dotted] (-1,0) arc (180:0:1 and 1);
\draw[gray, very thick] (170:1) .. controls +(0,0) and +(-0.3,0.05) .. (V1) ..controls +(+0.3,-0.05)  and +(-0.3,-0.05) .. (V2) .. controls +(+0.3,0.05) and +(0,0).. (10:1);
\fill (V1) circle (2pt);
\fill (V2) circle (2pt);
\draw (V1).. controls +(0.2,0.8) and +(-0.2, 0.8).. (V2);
\end{scope}
\begin{scope}[xshift = 4cm, decoration={markings, mark=at
     position 0.7 with {\arrow{>}}},postaction={decorate}]
\coordinate (V1) at (-0.7,-0);
\coordinate (V2) at (0.7,-0);
\coordinate (V3) at (0, 0.5);
\draw[dotted] (-1,0) arc (180:0:1 and 1);
\draw[gray, very thick] (170:1) .. controls +(0,0) and +(-0.3,0.05) .. (V1) ..controls +(+0.3,-0.05)  and +(-0.3,-0.05) .. (V2) .. controls +(+0.3,0.05) and +(0,0).. (10:1);
\fill (V1) circle (2pt);
\fill (V2) circle (2pt);
\fill (V3) circle (2pt);
\draw (V1) -- (V3);
\draw (V2) -- (V3);
\draw (V3) -- (100:1);
\draw (V3) -- (90:1);
\draw (V3) -- (80:1);
\end{scope}
\begin{scope}[xshift = 8cm, decoration={markings, mark=at
     position 0.7 with {\arrow{>}}},postaction={decorate}]
\coordinate (V1) at (-0.7,-0);
\coordinate (V2) at (0.7,-0);
\coordinate (V3) at (-0.5, 0.4);
\coordinate (V4) at (0.5, 0.4);
\draw[dotted] (-1,0) arc (180:0:1 and 1);
\draw[gray, very thick] (170:1) .. controls +(0,0) and +(-0.3,0.05) .. (V1) ..controls +(+0.3,-0.05)  and +(-0.3,-0.05) .. (V2) .. controls +(+0.3,0.05) and +(0,0).. (10:1);
\fill (V1) circle (2pt);
\fill (V2) circle (2pt);
\fill (V3) circle (2pt);
\fill (V4) circle (2pt);
\draw (V1) -- (V3);
\draw (V2) -- (V4);
\draw (V4) -- (V3);
\draw (V3) -- (130:1);
\draw (V3) -- (135:1);
\draw (V3) -- (140:1);
\draw (V4) -- (50:1);
\draw (V4) -- (45:1);
\draw (V4) -- (40:1);
\end{scope}
      \end{tikzpicture}
      \caption{From left to right: a $\cap$, a $\lambda$ and an $H$. The circle $\partial D^2$ is thick and grey, the $D$-graph is thin and black. Note that the vertices inside $D^2$ may have degree bigger than 3.}
      \label{fig:UDgraphUHY}
    \end{figure}
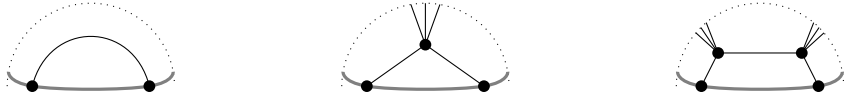
  \end{enumerate}
\end{lem}
\begin{proof}
  This is the same Euler characteristic-argument that we used in lemma~\ref{lem:UHYinNE}.
\end{proof}

\begin{dfn}
  A cut in a (not circled-) coloured $D$-graph is a simple oriented path $\gamma:[0,1] \to D$ such that:
  \begin{itemize}
  \item we have $\gamma(0)=x$ and $\gamma(1)=y$, therefore $I_\green$ is on the left and $I_\blue$ is on the right\footnote{We use the convention that the left and right side are determined when one scans $\gamma$ from $x$ to $y$.}. (see figure \ref{fig:cut}),
  \item for every face $f$ of $G$, $f\cap \gamma$ is connected,
  \item the path $\gamma$ crosses $G$ either transversely at edges joining a $\green$ vertex on left and a $\blue$ vertex on the right, or at vertices of $V^\partial$ whose colours do not fit with the intervals they lie on.
  \end{itemize}
  \begin{figure}[ht]
    \centering
    \begin{tikzpicture}[scale=1.1]
      \input{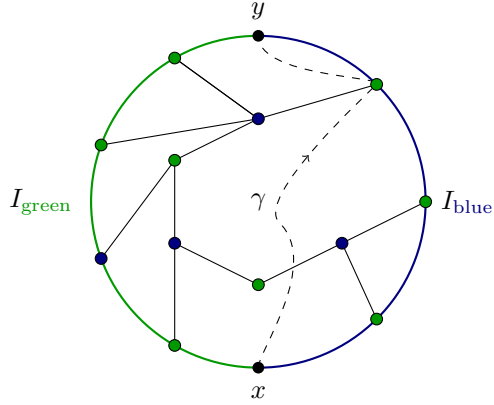}
    \end{tikzpicture}
    \caption{A cut in a coloured $D$-graph (note that $G$ is elliptic).}
    \label{fig:cut}
  \end{figure}
If $\gamma$ is a cut we denote by $\tensor[^{l(\gamma)}]{V}{}(G)$ and $\tensor[^{r(\gamma)}]{V}{}(G)$ the vertices located on the left (\resp on the right) of $\gamma$. (The vertices located on $\gamma$ are meant to be both on the left and on the right).
\end{dfn}

\begin{lem}\label{lem:techtech2}
  Let $G$ be is a non-elliptic (not circled-) coloured $D$-graph, then there exists a cut $\gamma$ such that: 
\[\#F(G) \geq 1+ \sum_{v\in \tensor[^{l(\gamma)}]{V}{_\green}} \frac1{n(v)} + \sum_{v\in \tensor[^{r(\gamma)}]{V}{_\blue}}\frac1{n(v)}. \]
\end{lem}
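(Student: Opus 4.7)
My plan is to combine an Euler-characteristic computation in $D^2$ with a careful construction of the cut $\gamma$, adapting the strategy of Lemma~\ref{lem:techtech0}. First, non-ellipticity implies that every connected component of $G$ meets $\partial D^2$ (otherwise that component would contain an interior face with strictly fewer than six sides, contradicting the hypothesis), so the $1$-complex $G \cup \partial D^2$ is connected in $D^2$. Counting the $\#V^{\partial}$ arcs into which $V^{\partial}$ subdivides $\partial D^2$ and applying $\chi(D^2) = 1$ then yields
\[
\#F(G) = 1 + \#E(G) - \#V^{\mathrm{in}}(G).
\]

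Next I would produce a cut. A path from $x$ to $y$ in $D^2$ may be chosen to cross $G$ only at the allowed places (edges with a $\green$ endpoint on the left and a $\blue$ endpoint on the right, together with colour-mismatched vertices of $V^{\partial}$); existence of such a path reduces to showing that these allowed crossing points, together with the faces of $G$, form an incidence structure between $x$ and $y$ that is connected enough, which follows from the bipartiteness of the colouring combined with simple connectedness of $D^2$. Among all cuts I would select one minimising
\[
S(\gamma) \eqdef \sum_{v \in \tensor[^{l(\gamma)}]{V}{_\green}} \tfrac{1}{n(v)} + \sum_{v \in \tensor[^{r(\gamma)}]{V}{_\blue}} \tfrac{1}{n(v)},
\]
by a local exchange argument: pushing $\gamma$ across a vertex changes $S(\gamma)$ only locally, and the bipartite degree conditions force the minimum to be small.

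Finally, I would combine the three degree bounds from non-ellipticity and bipartiteness: the global bound $\#E(G) \geq \tfrac{3}{2}\#V^{\mathrm{in}}(G) + \tfrac{1}{2}\#V^{\partial,1}(G) + \#V^{\partial,>1}(G)$, and the per-colour bounds $\#E(G) \geq 3\#V_c^{\mathrm{in}}(G) + \#V_c^{\partial,1}(G) + 2\#V_c^{\partial,>1}(G)$ for $c \in \{\green,\blue\}$. Weighting the global bound by $\tfrac{2}{3}$ and each colour bound by $\tfrac{1}{3}$ but applying each colour bound only to the corresponding side of $\gamma$ (the $\green$-bound to the left, the $\blue$-bound to the right), one obtains $\#E - \#V^{\mathrm{in}} \geq S(\gamma)$ for the optimal $\gamma$. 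Combined with the Euler identity, this proves the lemma.

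The main obstacle is the second step and its compatibility with the weighted degree bounds: one must show that the cut can simultaneously be taken to satisfy the rigid combinatorial constraints and to minimise $S(\gamma)$, and verify that for such $\gamma$ the colour-specific bounds split correctly across the two sides. In particular, the treatment of boundary vertices of non-fitting colour (which may be absorbed onto $\gamma$ itself and thus counted in both $\tensor[^{l(\gamma)}]{V}{_\green}$ and $\tensor[^{r(\gamma)}]{V}{_\blue}$) is delicate. A reductive approach peeling off $\cap$'s, $\lambda$'s, $H$'s, or high-degree boundary vertices via Lemma~\ref{lem:bg1UYH}, applying the inductive hypothesis to the smaller $D$-graph, and extending the resulting cut across the removed piece, may ultimately be the cleanest route.
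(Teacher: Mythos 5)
Your closing paragraph identifies the actual strategy of the paper: the proof proceeds by induction on $s(G)\eqdef 3\#E(G)+4\#V^{\partial,>1}(G)$, using Lemma~\ref{lem:bg1UYH} to locate a high-degree boundary vertex, a $\cap$, a $\lambda$, or an $H$; deleting it to form a smaller non-elliptic coloured $D$-graph $G'$; invoking the inductive hypothesis to get a cut $\gamma'$ for $G'$; and then in each of several sub-cases (depending on where $x$, $y$, and the removed configuration sit relative to $\gamma'$) explicitly lifting $\gamma'$ to a cut $\gamma$ for $G$ and checking that $\#F(G)-\#F(G')\geq C(G,\gamma)-C(G',\gamma')$. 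Both the existence of a cut satisfying the three constraints of the definition and the numerical inequality are produced \emph{simultaneously} by this induction; the paper does not (and cannot easily) arrange them independently.

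Your primary route has a real gap at precisely the point you flag. The Euler identity $\#F(G)=1+\#E(G)-\#V^{\mathrm{in}}(G)$ is fine (non-ellipticity does force every component of $G$ to reach $\partial D^2$, by the usual $\sum_i F_i(1-i/6)$ count), but the per-colour degree bounds do not split cleanly across a cut. Any edge crossed by $\gamma$ has, by the definition of a cut, its $\green$ endpoint in $\tensor[^{l(\gamma)}]{V}{_\green}$ and its $\blue$ endpoint in $\tensor[^{r(\gamma)}]{V}{_\blue}$, so it is incident to a vertex counted on each side of $\gamma$; the same is true of edges touching colour-mismatched boundary vertices lying on $\gamma$. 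Hence the ``$\green$-bound restricted to the left'' and the ``$\blue$-bound restricted to the right'' compete for the same edges, and the na\"{\i}ve weighted combination you suggest does not produce $\#E-\#V^{\mathrm{in}}\geq S(\gamma)$ without an extra term controlling the number of crossings of $\gamma$ with $G$ --- and no bound on that quantity is offered. (This is exactly why Lemma~\ref{lem:techtech0}, the circled case with only one colour to track, admits a direct Euler argument while Lemma~\ref{lem:techtech2} does not.) Likewise, the existence of \emph{any} cut meeting all the constraints --- crossing only correctly-oriented edges or colour-mismatched boundary vertices, with $f\cap\gamma$ connected for every face $f$ --- is not secured by bipartiteness and simple-connectedness; in the paper it only emerges from the induction. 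To complete the argument you would have to supply the case analysis your last sentence alludes to.
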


\begin{proof}
  The proof is done by induction on $s(G)\eqdef 3\# E(G) +4\# V^{\partial,>1}(G)$. If this quantity is equal to zero then the $D$-graph is empty, then we choose $\gamma$ to be any simple arc joining $x$ to $y$, and the lemma says $1\geq 1$ which is true.
We set:
\[
C(G,\gamma)\eqdef\sum_{v\in \tensor[^{l(\gamma)}]{V}{_\green}} \frac1{n(v)} + \sum_{v\in \tensor[^{r(\gamma)}]{V}{_\blue}}\frac1{n(v)}.
\]
 It is enough to check the situations (\ref{it:bg1}), (\ref{it:cap}) and (\ref{it:3YH})  described in lemma \ref{lem:bg1UYH}.

\paragraph*{Situation (\ref{it:bg1})}
Let us denote by $v$ a vertex of $V^{\partial, >1}$. There are two cases: the colour of $v$ fits with the colours of the intervals it lies on or not. 

If the colours fit, say both are $\green$, we consider $G'$ the same coloured $D$-graph as $G$ but with $v$ split into $v_1$, $v_2$ \dots $v_{\deg(v)}$ all in $V^{\partial,1}(G')$ (see figure~\ref{fig:GGpsit1}). We have $s(G')= S(G)-4< s(G)$ and $G'$ non-elliptic, therefore we can apply the induction hypothesis. We can find a cut $\gamma'$ with $\#F(G')\geq 1+ C(G',\gamma')$. Note that $\gamma'$ does not cross any $v'$, so that we can lift $\gamma'$ in the $D$-graph $G$. This gives us $\gamma$. We have:
\begin{align*}
C(G,\gamma)&= C(G',\gamma') + \frac1{n(v)} - \sum_{k=1}^{\deg(v)}\frac1{n(v_k)} \\
&=  C(G',\gamma') + 1 - \frac{\deg(v)}2 \\
&\geq C(G',\gamma').
\end{align*}
On the other hand $\#F(G)= \#F(G')$ so that we have $\#F(G)\geq 1+ C(G,\gamma)$.

\begin{figure}[ht]
  \centering
  \begin{tikzpicture}[scale=1.1]
    \input{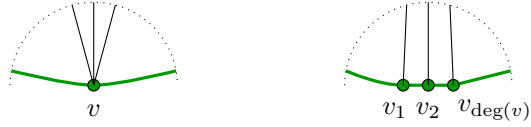}
  \end{tikzpicture}
  \caption{Local picture of $G$ (on the left) and $G'$ (on the right) around, when $v$ is $\green$ and lies on $I_\green$.}
  \label{fig:GGpsit1}
\end{figure}

If the colours do not fit (say $v$ is $\blue$), we construct $G'$ a coloured $D$-graph which is similar to $G$ every where but next to $v$. The vertex $v$ is pushed in $D^2$ (we denote it by $v'$) and we add a new vertex $v''$ on $\partial D^2$ and an edge $e$ joining $v'$ and $v''$. The coloured $D$-graph $G'$ is non-elliptic and $s(G')= s(G)-4+3<s(G)$ so that we can apply the induction hypothesis and find a cut $\gamma'$ with $\#F(G')\geq 1+ C(G',\gamma')$. 

\begin{figure}[ht]
  \centering
  \begin{tikzpicture}[scale=1.1]
    \input{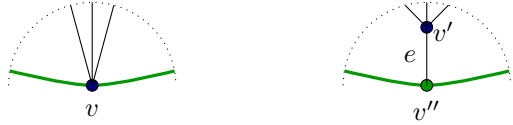}
  \end{tikzpicture}
  \caption{Local picture of $G$ (on the left) and $G'$ (on the right) around $v$, when $v$ is $\blue$ and lies on $I_\green$.}
  \label{fig:GGpsit1b}
\end{figure}

If $\gamma'$ does not cross $e$ we can lift $\gamma'$ in $G$ (this gives us $\gamma$). We have 
\begin{align*}C(G,\gamma)= C(G',\gamma')+ \frac{1}{n(v)} - \frac{1}{n(v')} =C(G',\gamma')+ 1- 1 = C(G',\gamma').
 \end{align*}
On the other hand, we have $F(G)= F(G')$, so that $\#F(G)\geq 1+ C(G,\gamma)$. 

Consider now the case where $\gamma'$ crosses $e$. Then we consider the cut $\gamma$ of $G$ which is the same as $\gamma$ far from $v$, and which around $v$ crosses $G$ in $v$ (see figure~\ref{fig:GGpsitc}). We have:
\begin{align*}
C(G,\gamma)&= C(G',\gamma')+ \frac{1}{n(v)} - \frac{1}{n(v')} - \frac{1}{n(v'')}\\
C(G,\gamma)&= C(G',\gamma') +1-1-\frac12\\
C(G,\gamma)&\geq  C(G',\gamma').
\end{align*}
But $\#F(G)= \#F(G')$, so that we have $\#F(G)\geq 1+ C(G,\gamma)$.
\begin{figure}[ht]
  \centering
  \begin{tikzpicture}[scale=1.1]
    \input{\imagesfolder/th_GGpsit1c}
  \end{tikzpicture}
  \caption{How to transform $\gamma'$ into $\gamma$.}
  \label{fig:GGpsitc}
\end{figure}

\paragraph*{Situation (\ref{it:cap})}
We now suppose that $G$ contains two $\cap$'s. Let us denote by $v_g$ (\resp $v_b$) the $\green$ (\resp $\blue$) vertex of the $\cap$ and by $e$ the edge of the cap. There are different possible configurations depending where $x$ and $y$ lies. As there are at least two caps, we may suppose $y$ is far from the $\cap$.

There are 3 different configurations (see figure~\ref{fig:3configsit2}):
\begin{itemize}
\item the point $x$ is far from the $\cap$,
\item the point $x$ is in the $\cap$ and $v_g\in I_\green$ and $v_b \in I_\blue$,
\item the point $x$ is in the $\cap$ and $v_g\in I_\blue$ and $v_b \in I_\green$,
\end{itemize}

\begin{figure}[ht]
  \centering
 \begin{tikzpicture}[scale=1.1]
    \input{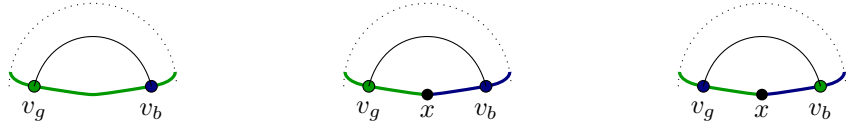}
  \end{tikzpicture} 
  \caption{The three possible configurations.}
  \label{fig:3configsit2}
\end{figure}

 We consider $G'$ the coloured $D$-graph similar to $G$ except that the $\cap$ is removed. The coloured $D$-graph $G'$ is non-elliptic and $s(G')= s(G)- 3 <s(G)$ so that we can apply the induction hypothesis and find a cut $\gamma'$ with $\#F(G')\geq 1+ C(G',\gamma')$.

Let us suppose first that $x$ is far from the $\cap$, then $v_b$ and $v_g$ both lie either on $I_\green$ or on $I_\blue$. By symmetry we may consider that they both lie on $I_\green$. We can lift $\gamma'$ in $G$ (this gives $\gamma$) so that it does not meet the $\cap$. We have:
\begin{align*}
C(G,\gamma)&= C(G',\gamma')+ \frac{1}{n(v_g)} \\
&= C(G',\gamma')+ \frac12.
\end{align*}
But $\#F(G) = \#F(G')+1$, hence   $\#F(G)\geq 1+ C(G,\gamma)$.

Suppose now that the point $x$ is in the $\cap$ and $v_g\in I_\green$ and $v_b \in I_\blue$. We can lift $\gamma'$ in $G$ so that it crosses $e$ (see figure~\ref{fig:ggpsit2b}). 
\begin{figure}[ht]
  \centering
 \begin{tikzpicture}[scale=1.1]
    \input{\imagesfolder/th_ggpsit2b}
  \end{tikzpicture} 
  \caption{How to transform $\gamma'$ into $\gamma$.}
  \label{fig:ggpsit2b}
\end{figure}
We have:
\begin{align*}
C(G,\gamma)&= C(G',\gamma')+ \frac{1}{n(v_g)} +\frac{1}{n(v_b)} \\
&= C(G',\gamma')+ \frac12+\frac12  \\
&=  C(G',\gamma')+1.
\end{align*}
But $\#F(G) = \#F(G')+1$, hence   $\#F(G)\geq 1+ C(G,\gamma)$.

Suppose now that the point $x$ is in the $\cap$ and $v_g\in I_\blue$ and $v_b \in I_\green$. We can lift  $\gamma'$ in $G$ so that it crosses\footnote{We could have chosen to cross $v_b$.} $v_g$ (see figure~\ref{fig:ggpsit2c}). 
\begin{figure}[ht]
  \centering
 \begin{tikzpicture}[scale=1.1]
    \begin{scope}[xshift = 0cm,decoration={markings, mark=at
     position 0.7 with {\arrow{>}}},postaction={decorate}]
\coordinate (V1) at (-0.7,-0);
\coordinate (V2) at (0.7,-0);
\coordinate (V) at (0, -0.1);
\draw[dotted] (-1,0) arc (180:0:1 and 1);
\draw[darkgreen, very thick] (170:1) .. controls +(0,0) and +(-0.3,0.05) .. (V1) ..controls +(+0.3,-0.05)  and +(-0.1,0) .. (V);
\draw[darkblue, very thick] (V) .. controls +(0.1,0) and +(-0.3,-0.05) .. (V2) .. controls +(+0.3,0.05) and +(0,0).. (10:1);
\fill (V) circle (2pt);
\draw[dashed] (V) -- (0,1);
\node at (0, -0.3) {$x$};
\node at (0, 1.3) {$\gamma'$};
\end{scope}
\node at (3,0.5) {$\leadsto$};
\begin{scope}[xshift = 6cm,decoration={markings, mark=at
     position 0.7 with {\arrow{>}}},postaction={decorate}]
\coordinate (V1) at (-0.7,-0);
\coordinate (V2) at (0.7,-0);
\coordinate (V) at (0, -0.1);
\draw[dotted] (-1,0) arc (180:0:1 and 1);
\draw[darkgreen, very thick] (170:1) .. controls +(0,0) and +(-0.3,0.05) .. (V1) ..controls +(+0.3,-0.05)  and +(-0.1,0) .. (V);
\draw[darkblue, very thick] (V) .. controls +(0.1,0) and +(-0.3,-0.05) .. (V2) .. controls +(+0.3,0.05) and +(0,0).. (10:1);
\filldraw[fill= darkblue] (V1) circle (2pt);
\filldraw[fill=darkgreen] (V2) circle (2pt);
\fill (V) circle (2pt);
\draw[dashed] (V) .. controls +(0,0.3) and +(-0.1,0.1) .. (V1).. controls +(0.1,0.1) and +(-0.2,-0.2) .. (-0.6, 0.6) .. controls +(0.2,0.2) and +(-0.4,0) .. (0,1);
\draw (V1).. controls +(0.2,0.8) and +(-0.2, 0.8).. (V2);
\node at (-0.7, -0.3) {$v_g$};
\node at (+0.7, -0.3) {$v_b$};
\node at (0, -0.3) {$x$};
\node at (0, 1.3) {$\gamma$};
\end{scope}
  \end{tikzpicture} 
  \caption{How to transform $\gamma'$ into $\gamma$.}
  \label{fig:ggpsit2c}
\end{figure}
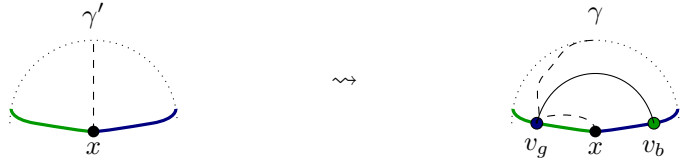
We have:
 \begin{align*}
C(G,\gamma)&= C(G',\gamma')+ \frac{1}{n(v_g)} \\
&=  C(G',\gamma')+1.
\end{align*}
But $\#F(G) = \#F(G') +1 $, hence   $\#F(G)\geq 1+ C(G,\gamma)$.

\paragraph{Situation (\ref{it:3YH})}
\label{sec:situation-3}

We suppose now that there are three $\lambda$'s or $H$'s. One can suppose that a $\lambda$ or an $H$ is far from $x$ and from $y$. 

Consider first that there is a $\lambda$ far from $x$ and $y$. Let us denote by $v_1$ and $v_2$ the two vertices of the $\lambda$ which belongs to $V^\partial(G)$, by $v$ the vertex of the $\lambda$ which is  in $V^\textrm{in}(G)$ and by $e_1$ (\resp $e_2$) the edge joining $v$ to $v_1$ (\resp $v_2$). We consider $G'$ the $D$-graph where the $\lambda$ is replaced by a single strand: the edges $e_1$ and $e_2$ and the vertices $v_1$ and $v_2$ are suppressed. The vertex $v$ is moved to $\partial D^2$ (and renamed $v'$). This is depicted on figure~\ref{fig:2confY}. The coloured $D$-graph $G'$ is non-elliptic and $s(G')<s(G)$ so that we can apply the induction hypothesis and find a cut $\gamma'$ with $\#F(G')\geq 1+ C(G',\gamma')$. 

The vertices $v_1$ and $v_2$ have the same colour, by symmetry we may suppose that they are both $\green$. It implies that $v$ and $v'$ are both $\blue$.

There are two different configurations:
\begin{itemize}
\item the vertices $v_1$ and $v_2$ lie on $I_\green$,
\item the vertices $v_1$ and $v_2$ lie on $I_\blue$.
\end{itemize}

\begin{figure}[ht]
  \centering
 \begin{tikzpicture}[scale=1]
    \input{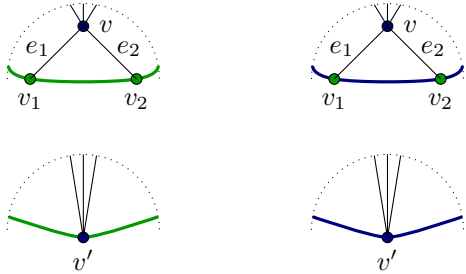}
  \end{tikzpicture} 
  \caption{On the center, the two possible configurations for a $\lambda$, on the sides, the $D$-graphs $G'$ obtained from $G$.}
  \label{fig:2confY}
\end{figure}

Let us first suppose that the vertices $v_1$ and $v_2$ lie on $I_\green$. If the cut $\gamma'$ does not cross $v'$ then we can canonically lift it in $G$. This gives us $\gamma$. We have:
\begin{align*}
C(G,\gamma)&= C(G',\gamma')+ \frac{1}{n(v_1)} +  \frac{1}{n(v_2)} \\
&=  C(G',\gamma')+\frac12 + \frac12.
\end{align*}
But $\#F(G) =\# F(G')+1$, hence $\#F(G)\geq 1+ C(G,\gamma)$.

If the cut $\gamma'$ crosses $v'$, we lift $\gamma'$ in $G$ so that it crosses $e_1$ and $e_2$ (see figure~\ref{fig:ggpsit2c}). 
\begin{figure}[ht]
  \centering
 \begin{tikzpicture}[scale=1.1]
    \begin{scope}[xshift = 0cm, decoration={markings, mark=at
     position 0.7 with {\arrow{>}}},postaction={decorate}]
\coordinate (V1) at (-0.7,-0);
\coordinate (V2) at (0.7,-0);
\coordinate (V3) at (0, 0.7);
\draw[dotted] (-1,0) arc (180:0:1 and 1);
\draw[darkgreen, very thick] (170:1) .. controls +(0,0) and +(-0.3,0.05) .. (V1) ..controls +(+0.3,-0.05)  and +(-0.3,-0.05) .. (V2) .. controls +(+0.3,0.05) and +(0,0).. (10:1);
\draw[dashed] (150:1) .. controls +(0.3,-0.2) and +(-0.3,-0.2) .. (30:1);
\filldraw[fill= darkgreen] (V1) circle (2pt);
\filldraw[fill= darkgreen] (V2) circle (2pt);
\filldraw[fill= darkblue] (V3) circle (2pt);
\draw (V1) -- (V3);
\draw (V2) -- (V3);
\draw (V3) -- (100:1);
\draw (V3) -- (90:1);
\draw (V3) -- (80:1);
\node at (-1.2, 0.6) { $\gamma$};
\end{scope}
\node at (-3,0.5) {$\leadsto$};
\begin{scope}[xshift = -6cm, decoration={markings, mark=at
     position 0.7 with {\arrow{>}}},postaction={decorate}]
\coordinate (V3) at (0, -0.10);
\draw[dotted] (-1,0) arc (180:0:1 and 1);
\draw[darkgreen, very thick] (170:1) .. controls +(0,0) and +(-0.2,0) .. (V3) .. controls +(+0.2,0) and +(0,0).. (10:1);
\filldraw[fill= darkblue] (V3) circle (2pt);
\draw[dashed] (150:1) .. controls (0,0) and +(-0.1, 0) ..(V3) .. controls +(0.1,0) and (0,0) .. (30:1);
\draw (V3) -- (100:1);
\draw (V3) -- (90:1);
\draw (V3) -- (80:1);
\node at (-1.2, 0.6) { $\gamma'$};
\end{scope}
  \end{tikzpicture} 
  \caption{How to transform $\gamma'$ into $\gamma$.}
  \label{fig:ggpYsit2}
\end{figure}
In this case we have:
\begin{align*}
C(G,\gamma)&= C(G',\gamma')+ + \frac{1}{n(v)} - \frac1{n(v')} +  \frac{1}{n(v_1)} +  \frac{1}{n(v_2)} \\
&=  C(G',\gamma')+1 -1+ \frac12 + \frac12.
\end{align*}
Hence, $\#F(G)\geq 1+ C(G,\gamma)$.

Now suppose that the vertices $v_1$ and $v_2$ lie on $I_\blue$, this implies that $\gamma'$ does not meet $v'$, so that we can lift $\gamma'$ canonically in $G$, this gives us $\gamma$, we have:
\begin{align*}
C(G,\gamma)&= C(G',\gamma')+ \frac{1}{n(v)} -  \frac{1}{n(v')} \\
&=  C(G',\gamma')+1-1.
\end{align*}
Hence $\#F(G)\geq 1+ C(G,\gamma)$.

We finally consider a $H$ far from $x$ and $y$. We take notation of the figure~\ref{fig:GGsitH} to denote vertices and edges of the $H$, we consider $G'$ the $D$-graph where the the $H$ is simplified (see figure~\ref{fig:GGsitH} for details and notation). The coloured $D$-graph $G'$ is non-elliptic and $s(G')= s(G) -3\times 3 + 2\times 4<s(G)$ so that we can apply the induction hypothesis and find a cut $\gamma'$ with $\#F(G')\geq 1+ C(G',\gamma')$. 

\begin{figure}[ht]
  \centering
 \begin{tikzpicture}[scale=1.4]
    \input{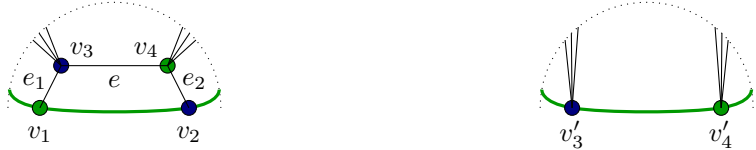}
  \end{tikzpicture} 
  \caption{How to transform $G$ into $G'$.}
  \label{fig:GGsitH}
\end{figure}

Up to symmetry there is only one configuration, therefore we may suppose that $v_1$ is $\green$ and lies on $I_\green$.  This implies that $v_2$ and $v_3$ are $\blue$  and that  $v_4$ is $\green$. Because of the colour condition, the cut $\gamma'$ does not cross $v'_4$ and may cross $v'_3$. If it does not cross $v'_3$, one can canonically lift $\gamma'$ in $G'$ and we have:
\begin{align*}
C(G,\gamma)&= C(G',\gamma')+ \frac{1}{n(v_1)} +  \frac{1}{n(v_4)} - \frac{1}{n(v'_4)} \\
&\geq  C(G',\gamma')+\frac12 +1 -1. \\
&\geq C(G',\gamma') +\frac12.
\end{align*}
But $\#F(G) =\# F(G')+1$, hence $\#F(G)\geq 1+ C(G,\gamma)$.

If the cut $\gamma'$ crosses $v'_3$, on lift it in $\G$ so that it crosses $e_1$ and $e_2$ (see figure~\ref{fig:GGsitHb}).
\begin{figure}[ht]
  \centering
 \begin{tikzpicture}[scale=1.4]
    \begin{scope}[xshift = -6cm, decoration={markings, mark=at
     position 0.7 with {\arrow{>}}},postaction={decorate}]
\coordinate (V3) at (-0.7,-0);
\coordinate (V4) at (0.7,-0);
\draw[dotted] (-1,0) arc (180:0:1 and 1);
\draw[darkgreen, very thick] (170:1) .. controls +(0,0) and +(-0.3,0.05) .. (V3) ..controls +(+0.3,-0.05)  and +(-0.3,-0.05) .. (V4) .. controls +(+0.3,0.05) and +(0,0).. (10:1);
\draw[dashed] (160:1).. controls +(0.1,-0.1) and +(0,0).. (V3) .. controls +(0,0) and +(0,-0.5) .. (0,1);
\filldraw[fill= darkblue] (V3) circle (2pt);
\filldraw[fill= darkgreen] (V4) circle (2pt);
\draw (V3) -- (130:1);
\draw (V3) -- (135:1);
\draw (V3) -- (140:1);
\draw (V4) -- (50:1);
\draw (V4) -- (45:1);
\draw (V4) -- (40:1);
\node at (0,1.3) {$\gamma'$};
\end{scope}
\node at (-3,0.5) {$\leadsto$};
\begin{scope}[xshift = 0cm, decoration={markings, mark=at
     position 0.7 with {\arrow{>}}},postaction={decorate}]
\coordinate (V1) at (-0.7,-0);
\coordinate (V2) at (0.7,-0);
\coordinate (V3) at (-0.5, 0.4);
\coordinate (V4) at (0.5, 0.4);
\draw[dotted] (-1,0) arc (180:0:1 and 1);
\draw[darkgreen, very thick] (170:1) .. controls +(0,0) and +(-0.3,0.05) .. (V1) ..controls +(+0.3,-0.05)  and +(-0.3,-0.05) .. (V2) .. controls +(+0.3,0.05) and +(0,0).. (10:1);
\draw[dashed] (160:1).. controls +(0.6,-0.2) and +(0,-0.9) .. (0,1);
\filldraw[fill= darkgreen] (V1) circle (2pt);
\filldraw[fill= darkblue] (V2) circle (2pt);
\filldraw[fill= darkblue] (V3) circle (2pt);
\filldraw[fill= darkgreen] (V4) circle (2pt);
\draw (V1) -- (V3);
\draw (V2) -- (V4);
\draw (V4) -- (V3);
\draw (V3) -- (130:1);
\draw (V3) -- (135:1);
\draw (V3) -- (140:1);
\draw (V4) -- (50:1);
\draw (V4) -- (45:1);
\draw (V4) -- (40:1);
\node at (0, 1.3) {$\gamma$};
\end{scope}
  \end{tikzpicture} 
  \caption{How to transform $\gamma'$ into $\gamma$.}
  \label{fig:GGsitHb}
\end{figure}
So that we have:
 \begin{align*}
C(G,\gamma)&= C(G',\gamma')+ \frac{1}{n(v_1)} +  \frac{1}{n(v_3)} - \frac{1}{n(v'_3)} + \frac{1}{n(v_4)} - \frac{1}{n(v'_4)}  \\
&\geq  C(G',\gamma')+\frac12 +1 -1 + 1 -\frac12. \\
&\geq C(G',\gamma') +1.
\end{align*}
But $\#F(G) =\# F(G')+1$, hence $\#F(G)\geq 1+ C(G,\gamma)$.
\paragraph{Conclusion}
\label{sec:conclusion}

For all situations, using the induction hypothesis we can construct a cut $\gamma$ such that: $\#F(G)\geq 1+ C(G,\gamma)$. This proves the lemma.

\end{proof}


\bibliographystyle{alpha}
\bibliography{biblio}
\end{document}